\newtheorem{theorem}{Theorem}[section]
\theoremstyle{plain}
\newtheorem{corollary}[theorem]{Corollary}
\newtheorem{definition}[theorem]{Definition}
\newtheorem{lemma}[theorem]{Lemma}
\newtheorem{proposition}[theorem]{Proposition}
\newtheorem{remark}[theorem]{Remark}
\numberwithin{equation}{section}
\begin{document}
\title[On Kato-Sobolev spaces]{On Kato-Sobolev spaces}
\author{Gruia Arsu}
\address{Institute of Mathematics of The Romanian Academy\\
P.O. Box 1-174\\
RO-70700\\
Bucharest \\
Romania}
\email{Gruia.Arsu@imar.ro, agmilro@yahoo.com, agmilro@hotmail.com}
\subjclass[2010]{Primary 35S05, 46E35, 47-XX; Secondary 42B15, 42B35, 46H30.}
\keywords{Calder\'{o}n, Kato, Schatten-von Neumann, Sobolev, Wiener-L\'{e}vy
theorem, pseudo-differential operators.}
\begin{abstract}
We investigate some multiplication properties of Kato-Sobolev spaces by
adapting the techniques used in the study of Beurling algebras by Coifman
and Meyer \cite{Meyer}. We develop an analytic functional calculus for
Kato-Sobolev algebras based on an integral representation formula belonging
A. P. Calder\'{o}n. Also we study the Schatten-von Neumann properties of
pseudo-differential operators with symbols in the Kato-Sobolev spaces.
\end{abstract}
\maketitle

\tableofcontents

\section{Introduction}

In this paper we study some multiplication properties of Kato-Sobolev
spaces, we develop an analytic functional calculus for Kato-Sobolev algebras
and we prove Schatten-von Neumann class properties for pseudo-differential
operators with symbols in the Kato-Sobolev spaces. Kato-Sobolev spaces $%
\mathcal{H}_{\mathtt{ul}}^{s}$ were introduced in \cite{Kato} by Tosio Kato
and are known as uniformly local Sobolev spaces. The uniformly local Sobolev
spaces can be seen as a convenient class of functions with the local Sobolev
property and certain boundedness at infinity. We mention that $\mathcal{H}_{%
\mathtt{ul}}^{s}$ were defined only for integers $s\geq 0$ and play an
essential part in the paper. In this paper, Kato-Sobolev spaces are defined
for arbitrary orders and are proved some embedding theorems (in the spirit
of the \cite{Kato}) which expresses the multiplication properties of the
Kato-Sobolev spaces. The techniques we use in establishing these results are
inspired by techniques used in the study of Beurling algebras by Coifman and
Meyer \cite{Meyer}. Also we develop an analytic functional calculus for
Kato-Sobolev algebras based on an integral representation formula of A. P.
Calder\'{o}n. This part corresponds to the section of \cite{Kato} where the
invertible elements of the algebra $\mathcal{H}_{\mathtt{ul}}^{s}$ are
determined and which has as main result a Wiener type lemma for $\mathcal{H}%
_{\mathtt{ul}}^{s}$. In our case, the main result is the Wiener-L\'{e}vy
theorem for Kato-Sobolev algebras. This theorem allows a spectral analysis
of these algebras. In Section 2 we define Sobolev spaces of multiple order.
Uniformly local Sobolev spaces of multiple order were used as spaces of
symbols of pseudo-differential operators in many papers \cite{Boulkhemair 1}%
, \cite{Boulkhemair 2},.... By adapting the techniques of Coifman and Meyer,
used in the study of Beurling algebras, we prove a result that allows us to
extend the embedding theorems of Kato in the case when the order of $%
\mathcal{H}_{\mathtt{ul}}^{s}$ is not an integer $\geq 0$. In Section 3 we
study an increasing family of spaces $\left\{ \mathcal{K}_{p}^{\mathbf{s}%
}\right\} _{1\leq p\leq \infty }$ for which $\mathcal{K}_{\infty }^{\mathbf{s%
}}=\mathcal{H}_{\mathtt{ul}}^{\mathbf{s}}$. The Wiener-L\'{e}vy theorem for
Kato-Sobolev algebras is established in Section 4. Using this theorem we
build an analytic functional calculus for Kato-Sobolev algebras. The
Schatten-von Neumann class properties for pseudo-differential operators with
symbols in the Kato-Sobolev spaces are presented in the last section.

\section{Sobolev spaces of multiple order}

Let $j\in \left\{ 1,...,n\right\} $. Suppose that $%
%TCIMACRO{\U{211d} }%
%BeginExpansion
\mathbb{R}
%EndExpansion
^{n}=%
%TCIMACRO{\U{211d} }%
%BeginExpansion
\mathbb{R}
%EndExpansion
^{n_{1}}\times ...\times 
%TCIMACRO{\U{211d} }%
%BeginExpansion
\mathbb{R}
%EndExpansion
^{n_{j}}$, where $n_{1},...,n_{j}\in 
%TCIMACRO{\U{2115} }%
%BeginExpansion
\mathbb{N}
%EndExpansion
^{\ast }$. We have a partition of variables corresponding to this orthogonal
decomposition, $\left\{ 1,...,n\right\} =\bigcup_{l=1}^{j}N_{l}$, where $%
N_{l}=\left\{ k:n_{0}+...+n_{l-1}<k\leq n_{0}+...+n_{l}\right\} $. Here $%
n_{0}=0$ such that $N_{1}=\left\{ 1,...,n_{1}\right\} $.

Let $\mathbf{s}=\left( s_{1},...,s_{j}\right) \in 
%TCIMACRO{\U{211d} }%
%BeginExpansion
\mathbb{R}
%EndExpansion
^{j}$. We find it convenient to introduce the following space 
\begin{gather*}
\mathcal{H}^{\mathbf{s}}\left( 
%TCIMACRO{\U{211d} }%
%BeginExpansion
\mathbb{R}
%EndExpansion
^{n}\right) =\left\{ u\in \mathcal{S}^{\prime }\left( 
%TCIMACRO{\U{211d} }%
%BeginExpansion
\mathbb{R}
%EndExpansion
^{n}\right) :\left( 1-\triangle _{%
%TCIMACRO{\U{211d} }%
%BeginExpansion
\mathbb{R}
%EndExpansion
^{n_{1}}}\right) ^{s_{1}/2}\otimes ...\otimes \left( 1-\triangle _{%
%TCIMACRO{\U{211d} }%
%BeginExpansion
\mathbb{R}
%EndExpansion
^{n_{j}}}\right) ^{s_{j}/2}u\in L^{2}\left( 
%TCIMACRO{\U{211d} }%
%BeginExpansion
\mathbb{R}
%EndExpansion
^{n}\right) \right\} , \\
\left\Vert u\right\Vert _{\mathcal{H}^{\mathbf{s}}}=\left\Vert \left(
1-\triangle _{%
%TCIMACRO{\U{211d} }%
%BeginExpansion
\mathbb{R}
%EndExpansion
^{n_{1}}}\right) ^{s_{1}/2}\otimes ...\otimes \left( 1-\triangle _{%
%TCIMACRO{\U{211d} }%
%BeginExpansion
\mathbb{R}
%EndExpansion
^{n_{j}}}\right) ^{s_{j}/2}u\right\Vert _{L^{2}},\quad u\in \mathcal{H}^{%
\mathbf{s}}.
\end{gather*}%
For $\mathbf{s}=\left( s_{1},...,s_{j}\right) \in 
%TCIMACRO{\U{211d} }%
%BeginExpansion
\mathbb{R}
%EndExpansion
^{j}$ we define the function%
\begin{gather*}
\left\langle \left\langle \cdot \right\rangle \right\rangle ^{\mathbf{s}}:%
\mathbb{R}^{n}=\mathbb{R}^{n_{1}}\times ...\times \mathbb{R}%
^{n_{j}}\rightarrow \mathbb{R}, \\
\left\langle \left\langle \cdot \right\rangle \right\rangle ^{\mathbf{s}%
}=\left\langle \cdot \right\rangle _{\mathbb{R}^{n_{1}}}^{s_{1}}\otimes
...\otimes \left\langle \cdot \right\rangle _{\mathbb{R}^{n_{j}}}^{s_{j}},
\end{gather*}%
where $\left\langle \cdot \right\rangle _{%
%TCIMACRO{\U{211d} }%
%BeginExpansion
\mathbb{R}
%EndExpansion
^{m}}=\left( 1+\left\vert \cdot \right\vert _{%
%TCIMACRO{\U{211d} }%
%BeginExpansion
\mathbb{R}
%EndExpansion
^{m}}^{2}\right) ^{1/2}$.Then 
\begin{equation*}
\left\langle \left\langle D\right\rangle \right\rangle ^{\mathbf{s}}=\left(
1-\triangle _{\mathbb{R}^{n_{1}}}\right) ^{s_{1}/2}\otimes ...\otimes \left(
1-\triangle _{\mathbb{R}^{n_{j}}}\right) ^{s_{j}/2},
\end{equation*}%
and 
\begin{gather*}
\mathcal{H}^{\mathbf{s}}=\left\{ u\in \mathcal{S}^{\prime }\left( 
%TCIMACRO{\U{211d} }%
%BeginExpansion
\mathbb{R}
%EndExpansion
^{n}\right) :\left\langle \left\langle D\right\rangle \right\rangle ^{%
\mathbf{s}}u\in L^{2}\left( 
%TCIMACRO{\U{211d} }%
%BeginExpansion
\mathbb{R}
%EndExpansion
^{n}\right) \right\} , \\
\left\Vert u\right\Vert _{\mathcal{H}^{\mathbf{s}}}=\left\Vert \left\langle
\left\langle D\right\rangle \right\rangle ^{\mathbf{s}}u\right\Vert
_{L^{2}},\quad u\in \mathcal{H}^{\mathbf{s}}.
\end{gather*}%
Let us note an immediate consequence of Peetre's inequality:%
\begin{equation*}
\left\langle \left\langle \xi +\eta \right\rangle \right\rangle ^{\mathbf{s}%
}\leq 2^{\left\vert \mathbf{s}\right\vert _{1}/2}\left\langle \left\langle
\xi \right\rangle \right\rangle ^{\mathbf{s}}\left\langle \left\langle \eta
\right\rangle \right\rangle ^{\left\vert \mathbf{s}\right\vert },\quad \xi
,\eta \in \mathbb{R}^{n}
\end{equation*}%
where $\left\vert \mathbf{s}\right\vert _{1}=\left\vert s_{1}\right\vert
+...+\left\vert s_{j}\right\vert $ and $\left\vert \mathbf{s}\right\vert
=\left( \left\vert s_{1}\right\vert ,...,\left\vert s_{j}\right\vert \right)
\in 
%TCIMACRO{\U{211d} }%
%BeginExpansion
\mathbb{R}
%EndExpansion
^{j}$. Also we have 
\begin{equation*}
\left\langle \left\langle \xi \right\rangle \right\rangle ^{\mathbf{s}}\leq
\left\langle \xi \right\rangle ^{\left\vert \mathbf{s}\right\vert
_{1}},\quad \xi \in 
%TCIMACRO{\U{211d} }%
%BeginExpansion
\mathbb{R}
%EndExpansion
^{n}.
\end{equation*}

Let $k$ be an integer $\geq 0$ or $k=\infty $. We shall use the following
standard notations:%
\begin{gather*}
\mathcal{BC}^{k}\left( \mathbb{R}^{n}\right) =\left\{ f\in \mathcal{C}%
^{k}\left( \mathbb{R}^{n}\right) :f\text{ \textit{and its derivatives of
order }}\leq k\text{ \textit{are bounded}}\right\} , \\
\left\Vert f\right\Vert _{\mathcal{BC}^{l}}=\max_{m\leq l}\sup_{x\in
X}\left\Vert f^{\left( m\right) }\left( x\right) \right\Vert <\infty ,\quad
l<k+1.
\end{gather*}

\begin{proposition}
\label{ks3}Suppose that $%
%TCIMACRO{\U{211d} }%
%BeginExpansion
\mathbb{R}
%EndExpansion
^{n}=%
%TCIMACRO{\U{211d} }%
%BeginExpansion
\mathbb{R}
%EndExpansion
^{n_{1}}\times ...\times 
%TCIMACRO{\U{211d} }%
%BeginExpansion
\mathbb{R}
%EndExpansion
^{n_{j}}$. Let $\mathbf{s}\in 
%TCIMACRO{\U{211d} }%
%BeginExpansion
\mathbb{R}
%EndExpansion
^{j}$, $k_{s}=\left[ \left\vert \mathbf{s}\right\vert _{1}\right] +n+2$ and $%
m_{s}=\left[ \left\vert \mathbf{s}\right\vert _{1}+\frac{n+1}{2}\right] +1$.

$\left( \mathtt{a}\right) $ $u\in \mathcal{H}^{\mathbf{s}}\left( 
%TCIMACRO{\U{211d} }%
%BeginExpansion
\mathbb{R}
%EndExpansion
^{n}\right) $ if and only if there is $l\in \left\{ 1,...,j\right\} $ such
that $u,\partial _{k}u\in \mathcal{H}^{\mathbf{s}-\mathbf{\delta }%
_{l}}\left( 
%TCIMACRO{\U{211d} }%
%BeginExpansion
\mathbb{R}
%EndExpansion
^{n}\right) $ for any $k\in N_{l}$, where $\mathbf{\delta }_{l}=\left(
\delta _{l1},...,\delta _{lj}\right) $.

$\left( \mathtt{b}\right) $ If $\chi \in H^{\left\vert \mathbf{s}\right\vert
_{1}+\frac{n+1}{2}}\left( 
%TCIMACRO{\U{211d} }%
%BeginExpansion
\mathbb{R}
%EndExpansion
^{n}\right) $, then for every $u\in \mathcal{H}^{\mathbf{s}}\left( 
%TCIMACRO{\U{211d} }%
%BeginExpansion
\mathbb{R}
%EndExpansion
^{n}\right) $ we have $\chi u\in \mathcal{H}^{\mathbf{s}}\left( 
%TCIMACRO{\U{211d} }%
%BeginExpansion
\mathbb{R}
%EndExpansion
^{n}\right) $ and%
\begin{eqnarray*}
\left\Vert \chi u\right\Vert _{\mathcal{H}^{\mathbf{s}}} &\leq &C\left(
s,n,\chi \right) \left\Vert u\right\Vert _{\mathcal{H}^{\mathbf{s}}} \\
&\leq &C\left( s,n\right) \left\Vert \chi \right\Vert _{H^{\left\vert 
\mathbf{s}\right\vert _{1}+\frac{n+1}{2}}}\left\Vert u\right\Vert _{\mathcal{%
H}^{\mathbf{s}}},
\end{eqnarray*}%
where 
\begin{eqnarray*}
C\left( s,n,\chi \right) &=&\left( 2\pi \right) ^{-n}2^{\left\vert \mathbf{s}%
\right\vert _{1}/2}\left( \int \left\langle \eta \right\rangle ^{\left\vert 
\mathbf{s}\right\vert _{1}}\left\vert \widehat{\chi }\left( \eta \right)
\right\vert \mathtt{d}\eta \right) \\
&\leq &\left( 2\pi \right) ^{-n}2^{\left\vert \mathbf{s}\right\vert
_{1}/2}\left\Vert \left\langle \cdot \right\rangle ^{-n-1}\right\Vert
_{L^{1}}\left\Vert \chi \right\Vert _{H^{\left\vert \mathbf{s}\right\vert
_{1}+\frac{n+1}{2}}} \\
&=&C\left( s,n\right) \left\Vert \chi \right\Vert _{H^{\left\vert \mathbf{s}%
\right\vert _{1}+\frac{n+1}{2}}}\leq C\left( s,n\right) \left(
\sum_{\left\vert \alpha \right\vert \leq m_{s}}\left\Vert \partial ^{\alpha
}\chi \right\Vert _{L^{2}}\right) .
\end{eqnarray*}%
Here $H^{m}\left( 
%TCIMACRO{\U{211d} }%
%BeginExpansion
\mathbb{R}
%EndExpansion
^{n}\right) $ is the usual Sobolev space, $m\in 
%TCIMACRO{\U{211d} }%
%BeginExpansion
\mathbb{R}
%EndExpansion
$.

$\left( \mathtt{c}\right) $ If $\chi \in \mathcal{C}^{k_{s}}\left( 
%TCIMACRO{\U{211d} }%
%BeginExpansion
\mathbb{R}
%EndExpansion
^{n}\right) $ is $%
%TCIMACRO{\U{2124} }%
%BeginExpansion
\mathbb{Z}
%EndExpansion
^{n}$-periodic, then for every $u\in \mathcal{H}^{\mathbf{s}}\left( 
%TCIMACRO{\U{211d} }%
%BeginExpansion
\mathbb{R}
%EndExpansion
^{n}\right) $ we have $\chi u\in \mathcal{H}^{\mathbf{s}}\left( 
%TCIMACRO{\U{211d} }%
%BeginExpansion
\mathbb{R}
%EndExpansion
^{n}\right) $.

$\left( \mathtt{d}\right) $ If $s_{1}>n_{1}/2,...,s_{j}>n_{j}/2$, then $%
\mathcal{H}^{\mathbf{s}}\left( 
%TCIMACRO{\U{211d} }%
%BeginExpansion
\mathbb{R}
%EndExpansion
^{n}\right) \subset \mathcal{F}^{-1}L^{1}\left( 
%TCIMACRO{\U{211d} }%
%BeginExpansion
\mathbb{R}
%EndExpansion
^{n}\right) \subset \mathcal{C}_{\infty }\left( 
%TCIMACRO{\U{211d} }%
%BeginExpansion
\mathbb{R}
%EndExpansion
^{n}\right) $.

$\left( \mathtt{e}\right) $ Let $\mathbf{k}\in 
%TCIMACRO{\U{2115} }%
%BeginExpansion
\mathbb{N}
%EndExpansion
^{j}$. If $s_{1}>n_{1}/2+k_{1},...,s_{j}>n_{j}/2+k_{j}$, then for every $%
\alpha =\left( \alpha _{1},...,\alpha _{j}\right) \in 
%TCIMACRO{\U{2115} }%
%BeginExpansion
\mathbb{N}
%EndExpansion
^{n}=%
%TCIMACRO{\U{2115} }%
%BeginExpansion
\mathbb{N}
%EndExpansion
^{n_{1}}\times ...\times 
%TCIMACRO{\U{2115} }%
%BeginExpansion
\mathbb{N}
%EndExpansion
^{n_{j}}$, $\left\vert \alpha _{1}\right\vert \leq k_{1}$, $...$, $%
\left\vert \alpha _{j}\right\vert \leq k_{j}$ we have 
\begin{equation*}
u\in \mathcal{H}^{\mathbf{s}}\left( 
%TCIMACRO{\U{211d} }%
%BeginExpansion
\mathbb{R}
%EndExpansion
^{n}\right) \text{ }\Rightarrow \text{ }\partial _{%
%TCIMACRO{\U{211d} }%
%BeginExpansion
\mathbb{R}
%EndExpansion
^{n_{1}}}^{\alpha _{1}}...\partial _{%
%TCIMACRO{\U{211d} }%
%BeginExpansion
\mathbb{R}
%EndExpansion
^{n_{j}}}^{\alpha _{j}}u\in \mathcal{H}^{\mathbf{s-k}}\left( 
%TCIMACRO{\U{211d} }%
%BeginExpansion
\mathbb{R}
%EndExpansion
^{n}\right) \subset \mathcal{F}^{-1}L^{1}\left( 
%TCIMACRO{\U{211d} }%
%BeginExpansion
\mathbb{R}
%EndExpansion
^{n}\right) \subset \mathcal{C}_{\infty }\left( 
%TCIMACRO{\U{211d} }%
%BeginExpansion
\mathbb{R}
%EndExpansion
^{n}\right) .
\end{equation*}
\end{proposition}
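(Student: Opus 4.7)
The plan is to reduce each assertion to a Fourier-side estimate, using $\widehat{\langle\langle D\rangle\rangle^{\mathbf s}u}(\xi)=\langle\langle\xi\rangle\rangle^{\mathbf s}\widehat u(\xi)$ and Plancherel throughout. For part $(\mathtt a)$, I would write $\langle\langle\xi\rangle\rangle^{\mathbf s-\mathbf{\delta}_l}=\langle\langle\xi\rangle\rangle^{\mathbf s}/\langle\xi_{(l)}\rangle$ with $\xi_{(l)}$ the $\mathbb R^{n_l}$-component of $\xi$; the forward direction follows from $\langle\xi_{(l)}\rangle\geq 1$ and $|\xi_k|\leq\langle\xi_{(l)}\rangle$ for $k\in N_l$, while the converse uses $\langle\xi_{(l)}\rangle\leq 1+\sum_{k\in N_l}|\xi_k|$ together with the triangle inequality in $L^2$.

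For $(\mathtt b)$ I would convert multiplication to convolution, $\widehat{\chi u}=(2\pi)^{-n}\widehat\chi\ast\widehat u$, and apply the Peetre-type inequality stated before the proposition together with $\langle\langle\cdot\rangle\rangle^{|\mathbf s|}\leq\langle\cdot\rangle^{|\mathbf s|_1}$ to obtain
\begin{equation*}
\langle\langle\xi\rangle\rangle^{\mathbf s}|\widehat{\chi u}(\xi)|\leq (2\pi)^{-n}2^{|\mathbf s|_1/2}\int\langle\xi-\eta\rangle^{|\mathbf s|_1}|\widehat\chi(\xi-\eta)|\,\langle\langle\eta\rangle\rangle^{\mathbf s}|\widehat u(\eta)|\,\mathtt d\eta.
\end{equation*}
Young's inequality $L^1\ast L^2\to L^2$ together with Plancherel yields the first bound $C(s,n,\chi)\|u\|_{\mathcal H^{\mathbf s}}$. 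The intermediate estimate comes from a Cauchy--Schwarz split $\int\langle\eta\rangle^{|\mathbf s|_1}|\widehat\chi(\eta)|\,\mathtt d\eta\leq\|\langle\cdot\rangle^{-(n+1)/2}\|_{L^2}\|\chi\|_{H^{|\mathbf s|_1+(n+1)/2}}$, while the final estimate uses the continuous embedding $H^{m_s}\hookrightarrow H^{|\mathbf s|_1+(n+1)/2}$, immediate since $m_s$ is the smallest integer exceeding $|\mathbf s|_1+(n+1)/2$.

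For $(\mathtt c)$, I would Fourier-expand the periodic factor as $\chi(x)=\sum_{\gamma\in\mathbb Z^n}c_\gamma e^{2\pi i\gamma\cdot x}$; the hypothesis $\chi\in\mathcal C^{k_s}$ forces $|c_\gamma|\leq C_\chi\langle\gamma\rangle^{-k_s}$ via the standard integration-by-parts bound on Fourier coefficients. Modulation translates the Fourier transform of $u$ by $2\pi\gamma$, so Peetre again gives $\|e^{2\pi i\gamma\cdot x}u\|_{\mathcal H^{\mathbf s}}\leq C\langle\gamma\rangle^{|\mathbf s|_1}\|u\|_{\mathcal H^{\mathbf s}}$, and the triangle inequality yields
\begin{equation*}
\|\chi u\|_{\mathcal H^{\mathbf s}}\leq C\|u\|_{\mathcal H^{\mathbf s}}\sum_{\gamma\in\mathbb Z^n}\langle\gamma\rangle^{|\mathbf s|_1-k_s},
\end{equation*}
with the series converging because $k_s-|\mathbf s|_1>n+1$. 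Parts $(\mathtt d)$ and $(\mathtt e)$ are brief: for $(\mathtt d)$, Cauchy--Schwarz gives $\widehat u\in L^1$ whenever $\langle\langle\cdot\rangle\rangle^{-\mathbf s}\in L^2$, which by the tensor-product structure is exactly the hypothesis $s_l>n_l/2$ for every $l$, and Riemann--Lebesgue then places $u\in\mathcal C_\infty$; for $(\mathtt e)$, $\widehat{\partial^\alpha u}(\xi)=(i\xi)^\alpha\widehat u(\xi)$ with $|\xi^\alpha|\leq\langle\langle\xi\rangle\rangle^{\mathbf k}$ when $|\alpha_l|\leq k_l$, so $\partial^\alpha u\in\mathcal H^{\mathbf s-\mathbf k}$ and $(\mathtt d)$ applies with $\mathbf s-\mathbf k$ in place of $\mathbf s$.

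The main obstacle I anticipate is the bookkeeping in $(\mathtt c)$: the Peetre growth $\langle\gamma\rangle^{|\mathbf s|_1}$ picked up from modulating by $e^{2\pi i\gamma\cdot x}$ must be absorbed by the coefficient decay $\langle\gamma\rangle^{-k_s}$ with enough margin to secure $\mathbb Z^n$-summability, which is precisely why the value $k_s=[|\mathbf s|_1]+n+2$ is chosen.
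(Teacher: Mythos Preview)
Your proposal is correct and follows essentially the same route as the paper: part $(\mathtt a)$ via the Fourier-side weight factorization, part $(\mathtt b)$ via the convolution identity plus Peetre and Young/Schur followed by Cauchy--Schwarz to bound the constant, part $(\mathtt c)$ via the Fourier series of the periodic multiplier together with the modulation estimate, and parts $(\mathtt d)$--$(\mathtt e)$ via Cauchy--Schwarz on the Fourier side and Riemann--Lebesgue. The only cosmetic differences are that the paper invokes density of $\mathcal S$ at the outset of $(\mathtt b)$ and writes the intermediate constant as $\|\langle\cdot\rangle^{-n-1}\|_{L^1}$ rather than your (equivalent, indeed identical up to a square root) $\|\langle\cdot\rangle^{-(n+1)/2}\|_{L^2}$.
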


\begin{proof}
$\left( \mathtt{a}\right) $ This part is trivial.

$\left( \mathtt{b}\right) $ Since $\mathcal{S}\left( 
%TCIMACRO{\U{211d} }%
%BeginExpansion
\mathbb{R}
%EndExpansion
^{n}\right) $ is dense in $\mathcal{H}^{\mathbf{s}}\left( 
%TCIMACRO{\U{211d} }%
%BeginExpansion
\mathbb{R}
%EndExpansion
^{n}\right) $ and $\mathcal{S}\left( 
%TCIMACRO{\U{211d} }%
%BeginExpansion
\mathbb{R}
%EndExpansion
^{n}\right) $ is dense in $H^{\left\vert \mathbf{s}\right\vert _{1}+\frac{n+1%
}{2}}\left( 
%TCIMACRO{\U{211d} }%
%BeginExpansion
\mathbb{R}
%EndExpansion
^{n}\right) $ (see the $\mathcal{B}_{p.k}$ spaces in H\"{o}rmander \cite%
{Hormander} vol. 2 ), we can assume that $\chi ,u\in \mathcal{S}\left( 
%TCIMACRO{\U{211d} }%
%BeginExpansion
\mathbb{R}
%EndExpansion
^{n}\right) $. In this case we have 
\begin{equation*}
\widehat{\chi u}\left( \xi \right) =\left( 2\pi \right) ^{-n}\widehat{\chi }%
\ast \widehat{u}\left( \xi \right) .
\end{equation*}%
Now we use Peetre's inequality and $\left\langle \left\langle \xi
\right\rangle \right\rangle ^{\left\vert \mathbf{s}\right\vert }\leq
\left\langle \xi \right\rangle ^{\left\vert \mathbf{s}\right\vert _{1}}$ to
obtain%
\begin{equation*}
\left\langle \left\langle \xi \right\rangle \right\rangle ^{\mathbf{s}%
}\left\vert \widehat{\chi u}\left( \xi \right) \right\vert \leq \left( 2\pi
\right) ^{-n}2^{\left\vert \mathbf{s}\right\vert _{1}/2}\left( \int
\left\langle \xi -\eta \right\rangle ^{\left\vert \mathbf{s}\right\vert
_{1}}\left\vert \widehat{\chi }\left( \xi -\eta \right) \right\vert
\left\langle \left\langle \eta \right\rangle \right\rangle ^{\mathbf{s}%
}\left\vert \widehat{u}\left( \eta \right) \right\vert \mathtt{d}\eta
\right) .
\end{equation*}%
Then Schur's lemma implies that%
\begin{eqnarray*}
\left\Vert \chi u\right\Vert _{\mathcal{H}^{\mathbf{s}}} &=&\left\Vert
\left\langle \left\langle \cdot \right\rangle \right\rangle ^{\mathbf{s}%
}\left\vert \widehat{\chi u}\right\vert \right\Vert _{L^{2}} \\
&\leq &\left( 2\pi \right) ^{-n}2^{\left\vert \mathbf{s}\right\vert
_{1}/2}\left( \int \left\langle \eta \right\rangle ^{\left\vert \mathbf{s}%
\right\vert _{1}}\left\vert \widehat{\chi }\left( \eta \right) \right\vert 
\mathtt{d}\eta \right) \left\Vert \left\langle \left\langle \cdot
\right\rangle \right\rangle ^{\mathbf{s}}\left\vert \widehat{u}\right\vert
\right\Vert _{L^{2}} \\
&=&C\left( s,n,\chi \right) \left\Vert u\right\Vert _{\mathcal{H}^{\mathbf{s}%
}}
\end{eqnarray*}%
and Schwarz inequality gives the estimate of $C\left( s,n,\chi \right) $%
\begin{eqnarray*}
C\left( s,n,\chi \right) &=&\left( 2\pi \right) ^{-n}2^{\left\vert \mathbf{s}%
\right\vert _{1}/2}\left( \int \left\langle \eta \right\rangle ^{\left\vert 
\mathbf{s}\right\vert _{1}}\left\vert \widehat{\chi }\left( \eta \right)
\right\vert \mathtt{d}\eta \right) \\
&\leq &\left( 2\pi \right) ^{-n}2^{\left\vert \mathbf{s}\right\vert
_{1}/2}\left\Vert \left\langle \cdot \right\rangle ^{-n-1}\right\Vert
_{L^{1}}\left\Vert \chi \right\Vert _{H^{\left\vert \mathbf{s}\right\vert
_{1}+\frac{n+1}{2}}} \\
&=&C\left( s,n\right) \left\Vert \chi \right\Vert _{H^{\left\vert \mathbf{s}%
\right\vert _{1}+\frac{n+1}{2}}}\leq C\left( s,n\right) \left(
\sum_{\left\vert \alpha \right\vert \leq m_{s}}\left\Vert \partial ^{\alpha
}\chi \right\Vert _{L^{2}}\right) .
\end{eqnarray*}

$\left( \mathtt{c}\right) $ We shall use some results from \cite{Hormander}
vol. 1, pp 177-179, concerning periodic distributions. If $\chi \in \mathcal{%
C}^{k_{s}}\left( 
%TCIMACRO{\U{211d} }%
%BeginExpansion
\mathbb{R}
%EndExpansion
^{n}\right) $ is $%
%TCIMACRO{\U{2124} }%
%BeginExpansion
\mathbb{Z}
%EndExpansion
^{n}$-periodic, then 
\begin{equation*}
\chi =\sum_{\gamma \in 
%TCIMACRO{\U{2124} }%
%BeginExpansion
\mathbb{Z}
%EndExpansion
^{n}}\mathtt{e}^{2\pi \mathtt{i}\left\langle \cdot ,\gamma \right\rangle
}c_{\gamma },
\end{equation*}%
with Fourier coefficients 
\begin{equation*}
c_{\gamma }=\int_{\mathtt{I}}\chi \left( x\right) \mathtt{e}^{-2\pi \mathtt{i%
}\left\langle x,\gamma \right\rangle }\mathtt{d}x,\quad \mathtt{I}=\left[
0,1\right) ^{n},\quad \gamma \in 
%TCIMACRO{\U{2124} }%
%BeginExpansion
\mathbb{Z}
%EndExpansion
^{n},
\end{equation*}%
satisfying%
\begin{equation*}
\left\vert c_{\gamma }\right\vert \leq Cst\left\Vert \chi \right\Vert _{%
\mathcal{BC}^{k_{s}}\left( 
%TCIMACRO{\U{211d} }%
%BeginExpansion
\mathbb{R}
%EndExpansion
^{n}\right) }\left\langle 2\pi \gamma \right\rangle ^{-k_{s}},\quad \gamma
\in 
%TCIMACRO{\U{2124} }%
%BeginExpansion
\mathbb{Z}
%EndExpansion
^{n}.
\end{equation*}%
Since $\widehat{\mathtt{e}^{\mathtt{i}\left\langle \cdot ,\eta \right\rangle
}u}=\widehat{u}\left( \cdot -\eta \right) $, then Peetre's inequality
implies that%
\begin{equation*}
\left\Vert \mathtt{e}^{\mathtt{i}\left\langle \cdot ,\eta \right\rangle
}u\right\Vert _{\mathcal{H}^{\mathbf{s}}}\leq 2^{\left\vert \mathbf{s}%
\right\vert _{1}/2}\left\langle \left\langle \eta \right\rangle
\right\rangle ^{\left\vert \mathbf{s}\right\vert }\left\Vert u\right\Vert _{%
\mathcal{H}^{\mathbf{s}}}\leq 2^{\left\vert \mathbf{s}\right\vert
_{1}/2}\left\langle \eta \right\rangle ^{\left\vert \mathbf{s}\right\vert
_{1}}\left\Vert u\right\Vert _{\mathcal{H}^{\mathbf{s}}}.
\end{equation*}%
It follows that%
\begin{eqnarray*}
\left\Vert \chi u\right\Vert _{\mathcal{H}^{\mathbf{s}}} &\leq &Cst\cdot
2^{\left\vert \mathbf{s}\right\vert _{1}/2}\left\Vert \chi \right\Vert _{%
\mathcal{BC}^{k_{s}}\left( 
%TCIMACRO{\U{211d} }%
%BeginExpansion
\mathbb{R}
%EndExpansion
^{n}\right) }\left( \sum_{\gamma \in 
%TCIMACRO{\U{2124} }%
%BeginExpansion
\mathbb{Z}
%EndExpansion
^{n}}\left\langle 2\pi \gamma \right\rangle ^{-k_{s}}\left\langle 2\pi
\gamma \right\rangle ^{\left\vert \mathbf{s}\right\vert _{1}}\right)
\left\Vert u\right\Vert _{\mathcal{H}^{\mathbf{s}}} \\
&\leq &Cst\cdot 2^{\left\vert \mathbf{s}\right\vert _{1}/2}\left(
\sum_{\gamma \in 
%TCIMACRO{\U{2124} }%
%BeginExpansion
\mathbb{Z}
%EndExpansion
^{n}}\left\langle 2\pi \gamma \right\rangle ^{-n-1}\right) \left\Vert \chi
\right\Vert _{\mathcal{BC}^{k_{s}}\left( 
%TCIMACRO{\U{211d} }%
%BeginExpansion
\mathbb{R}
%EndExpansion
^{n}\right) }\left\Vert u\right\Vert _{\mathcal{H}^{\mathbf{s}}}.
\end{eqnarray*}

$\left( \mathtt{d}\right) $ Let $u\in \mathcal{H}^{\mathbf{s}}$. If $%
s_{1}>n_{1}/2,...,s_{j}>n_{j}/2$, then $\widehat{u}\in L^{1}\left( 
%TCIMACRO{\U{211d} }%
%BeginExpansion
\mathbb{R}
%EndExpansion
^{n}\right) $ since $\left\langle \left\langle \cdot \right\rangle
\right\rangle ^{-\mathbf{s}}$, $\left\langle \left\langle \cdot
\right\rangle \right\rangle ^{\mathbf{s}}\widehat{u}\in L^{2}\left( 
%TCIMACRO{\U{211d} }%
%BeginExpansion
\mathbb{R}
%EndExpansion
^{n}\right) $. Now the Riemann-Lebesgue lemma implies the result.

$\left( \mathtt{e}\right) $ is a consequence of $\left( \mathtt{a}\right) $
and $\left( \mathtt{d}\right) $.
\end{proof}

\begin{lemma}
Let $\varphi \in \mathcal{S}\left( 
%TCIMACRO{\U{211d} }%
%BeginExpansion
\mathbb{R}
%EndExpansion
^{n}\right) $ and $\theta \in \left[ 0,2\pi \right] ^{n}$. If 
\begin{equation*}
\varphi _{\theta }=\sum_{\gamma \in 
%TCIMACRO{\U{2124} }%
%BeginExpansion
\mathbb{Z}
%EndExpansion
^{n}}\mathtt{e}^{\mathtt{i}\left\langle \gamma ,\theta \right\rangle
}\varphi \left( \cdot -\gamma \right) =\sum_{\gamma \in 
%TCIMACRO{\U{2124} }%
%BeginExpansion
\mathbb{Z}
%EndExpansion
^{n}}\mathtt{e}^{\mathtt{i}\left\langle \gamma ,\theta \right\rangle }\tau
_{\gamma }\varphi ,
\end{equation*}%
then 
\begin{equation*}
\widehat{\varphi }_{\theta }=\nu _{\theta }=\left( 2\pi \right)
^{n}\sum_{\gamma \in 
%TCIMACRO{\U{2124} }%
%BeginExpansion
\mathbb{Z}
%EndExpansion
^{n}}\widehat{\varphi }\left( 2\pi \gamma +\theta \right) \delta _{2\pi
\gamma +\theta }.
\end{equation*}
\end{lemma}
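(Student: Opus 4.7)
The plan is to take the Fourier transform of the sum termwise (which is legitimate in $\mathcal{S}'$), simplify the resulting exponential series via Poisson summation, and then use the sifting property to multiply by $\widehat{\varphi}$.

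First I would justify that $\varphi_\theta$ is a well-defined tempered distribution. Since $\varphi \in \mathcal{S}(\mathbb{R}^n)$, the translates $\tau_\gamma\varphi$ decay faster than any polynomial uniformly in $\gamma$ on compact sets, so the series $\sum_{\gamma \in \mathbb{Z}^n} e^{i\langle\gamma,\theta\rangle}\tau_\gamma\varphi$ converges absolutely in $\mathcal{S}'(\mathbb{R}^n)$ (in fact to a smooth function of polynomial growth). By the continuity of the Fourier transform on $\mathcal{S}'$ and the elementary identity $\widehat{\tau_\gamma \varphi}(\xi) = e^{-i\langle\gamma,\xi\rangle}\widehat{\varphi}(\xi)$, I get
\begin{equation*}
\widehat{\varphi_\theta} = \sum_{\gamma \in \mathbb{Z}^n} e^{i\langle\gamma,\theta\rangle}\,e^{-i\langle\gamma,\cdot\rangle}\,\widehat{\varphi} = \widehat{\varphi}\cdot\Bigl(\sum_{\gamma \in \mathbb{Z}^n}e^{i\langle\gamma,\theta-\cdot\rangle}\Bigr),
\end{equation*}
where the factoring of $\widehat{\varphi}$ makes sense because $\widehat{\varphi}\in\mathcal{S}$ is a multiplier on $\mathcal{S}'$.

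The main step is the distributional identity
\begin{equation*}
\sum_{\gamma \in \mathbb{Z}^n} e^{i\langle\gamma,\theta-\xi\rangle} = (2\pi)^n \sum_{\gamma \in \mathbb{Z}^n}\delta_{2\pi\gamma+\theta}(\xi)
\end{equation*}
in $\mathcal{S}'(\mathbb{R}^n_\xi)$. This is exactly the Poisson summation formula: applied to the $2\pi$-periodic array of Dirac masses $\sum_\gamma \delta_{2\pi\gamma}$, whose Fourier expansion on the torus is the constant series $\sum_\gamma e^{i\langle\gamma,\cdot\rangle}$, one obtains $\sum_\gamma e^{i\langle\gamma,x\rangle} = (2\pi)^n \sum_\gamma \delta_{2\pi\gamma}(x)$, and translating $x \mapsto \theta - \xi$ and using the evenness of $\delta$ in each coordinate (i.e.\ replacing $\gamma$ by $-\gamma$ in the sum) yields the claimed form. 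I would cite this from Hörmander vol.\ 1, which is already invoked elsewhere in the paper.

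Finally, I multiply by $\widehat{\varphi}\in\mathcal{S}$ and use the elementary fact that $f\cdot\delta_a = f(a)\delta_a$ whenever $f$ is smooth, which gives
\begin{equation*}
\widehat{\varphi_\theta} = (2\pi)^n\sum_{\gamma\in\mathbb{Z}^n}\widehat{\varphi}(2\pi\gamma+\theta)\,\delta_{2\pi\gamma+\theta} = \nu_\theta.
\end{equation*}
The rearrangement of the resulting sum is justified by the rapid decay of $\widehat{\varphi}\in\mathcal{S}$, which guarantees absolute convergence in $\mathcal{S}'$. The only non-routine obstacle is the Poisson summation identity in distributional form, but since $\varphi\in\mathcal{S}$ every convergence question reduces to summing rapidly decaying sequences, so no delicate analysis is needed.
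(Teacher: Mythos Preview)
Your proof is correct and follows essentially the same approach as the paper: both reduce to multiplying $\widehat{\varphi}$ against a Dirac comb obtained via Poisson summation, then invoke $f\cdot\delta_a=f(a)\delta_a$. The only cosmetic difference is that the paper first rewrites $\varphi_\theta=\varphi\ast\bigl(e^{i\langle\cdot,\theta\rangle}\sum_\gamma\delta_\gamma\bigr)$ so that the Fourier transform becomes $\widehat{\varphi}\cdot\tau_\theta\widehat{S}$, whereas you take the transform termwise and factor out $\widehat{\varphi}$ directly; the Poisson step and the final simplification are identical.
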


\begin{proof}
We have%
\begin{equation*}
\varphi _{\theta }=\sum_{\gamma \in 
%TCIMACRO{\U{2124} }%
%BeginExpansion
\mathbb{Z}
%EndExpansion
^{n}}\mathtt{e}^{\mathtt{i}\left\langle \gamma ,\theta \right\rangle
}\varphi \left( \cdot -\gamma \right) =\sum_{\gamma \in 
%TCIMACRO{\U{2124} }%
%BeginExpansion
\mathbb{Z}
%EndExpansion
^{n}}\mathtt{e}^{\mathtt{i}\left\langle \gamma ,\theta \right\rangle }\delta
_{\gamma }\ast \varphi =\varphi \ast \left( \mathtt{e}^{\mathtt{i}%
\left\langle \cdot ,\theta \right\rangle }S\right) ,
\end{equation*}%
where $S=\sum_{\gamma \in 
%TCIMACRO{\U{2124} }%
%BeginExpansion
\mathbb{Z}
%EndExpansion
^{n}}\delta _{\gamma }$. We apply Poisson's summation formula, $\mathcal{F}%
\left( \sum_{\gamma \in 
%TCIMACRO{\U{2124} }%
%BeginExpansion
\mathbb{Z}
%EndExpansion
^{n}}\delta _{\gamma }\right) =\left( 2\pi \right) ^{n}\sum_{\gamma \in 
%TCIMACRO{\U{2124} }%
%BeginExpansion
\mathbb{Z}
%EndExpansion
^{n}}\delta _{2\pi \gamma }$, to obtain 
\begin{eqnarray*}
\widehat{\varphi }_{\theta } &=&\widehat{\varphi }\cdot \widehat{\left( 
\mathtt{e}^{\mathtt{i}\left\langle \cdot ,\theta \right\rangle }S\right) }=%
\widehat{\varphi }\cdot \tau _{\theta }\widehat{S}=\left( 2\pi \right) ^{n}%
\widehat{\varphi }\sum_{\gamma \in 
%TCIMACRO{\U{2124} }%
%BeginExpansion
\mathbb{Z}
%EndExpansion
^{n}}\delta _{2\pi \gamma +\theta } \\
&=&\left( 2\pi \right) ^{n}\sum_{\gamma \in 
%TCIMACRO{\U{2124} }%
%BeginExpansion
\mathbb{Z}
%EndExpansion
^{n}}\widehat{\varphi }\left( 2\pi \gamma +\theta \right) \delta _{2\pi
\gamma +\theta }.
\end{eqnarray*}
\end{proof}

Above and in the rest of the paper for any $x\in 
%TCIMACRO{\U{211d} }%
%BeginExpansion
\mathbb{R}
%EndExpansion
^{n}$ and for any distribution $u$ on $%
%TCIMACRO{\U{211d} }%
%BeginExpansion
\mathbb{R}
%EndExpansion
^{n}$, by $\tau _{x}u$ we shall denote the translation by $x$ of $u$, i.e. $%
\tau _{x}u=u\left( \cdot -x\right) =\delta _{x}\ast u$.

As we already said the techniques of Coifman and Meyer, used in the study of
Beurling algebras $A_{\omega }$ and $B_{\omega }$ (see \cite{Meyer} pp
7-10), can be adapted to the case of Sobolev spaces $\mathcal{H}^{\mathbf{s}%
}\left( 
%TCIMACRO{\U{211d} }%
%BeginExpansion
\mathbb{R}
%EndExpansion
^{n}\right) $. An example is the following result.

\begin{lemma}
\label{ks2}Let $\mathbf{s}\in 
%TCIMACRO{\U{211d} }%
%BeginExpansion
\mathbb{R}
%EndExpansion
^{j}$. Let $\left\{ u_{\gamma }\right\} _{\gamma \in 
%TCIMACRO{\U{2124} }%
%BeginExpansion
\mathbb{Z}
%EndExpansion
^{n}}$ be a a family of elements from $\mathcal{H}^{\mathbf{s}}\left( 
%TCIMACRO{\U{211d} }%
%BeginExpansion
\mathbb{R}
%EndExpansion
^{n}\right) \cap \mathcal{D}_{K}^{\prime }\left( 
%TCIMACRO{\U{211d} }%
%BeginExpansion
\mathbb{R}
%EndExpansion
^{n}\right) $, where $K\subset 
%TCIMACRO{\U{211d} }%
%BeginExpansion
\mathbb{R}
%EndExpansion
^{n}$ is a compact subset such that $\left( K-K\right) \cap 
%TCIMACRO{\U{2124} }%
%BeginExpansion
\mathbb{Z}
%EndExpansion
^{n}=\left\{ 0\right\} $. Put%
\begin{equation*}
u=\sum_{\gamma \in 
%TCIMACRO{\U{2124} }%
%BeginExpansion
\mathbb{Z}
%EndExpansion
^{n}}\tau _{\gamma }u_{\gamma }=\sum_{\gamma \in 
%TCIMACRO{\U{2124} }%
%BeginExpansion
\mathbb{Z}
%EndExpansion
^{n}}u_{\gamma }\left( \cdot -\gamma \right) =\sum_{\gamma \in 
%TCIMACRO{\U{2124} }%
%BeginExpansion
\mathbb{Z}
%EndExpansion
^{n}}\delta _{\gamma }\ast u_{\gamma }\in \mathcal{D}^{\prime }\left( 
%TCIMACRO{\U{211d} }%
%BeginExpansion
\mathbb{R}
%EndExpansion
^{n}\right) .
\end{equation*}%
Then the following statements are equivalent:

$\left( \mathtt{a}\right) $ $u\in \mathcal{H}^{\mathbf{s}}\left( 
%TCIMACRO{\U{211d} }%
%BeginExpansion
\mathbb{R}
%EndExpansion
^{n}\right) $.

$\left( \mathtt{b}\right) $ $\sum_{\gamma \in 
%TCIMACRO{\U{2124} }%
%BeginExpansion
\mathbb{Z}
%EndExpansion
^{n}}\left\Vert u_{\gamma }\right\Vert _{\mathcal{H}^{\mathbf{s}%
}}^{2}<\infty .$

Moreover, there is $C\geq 1$, which does not depend on the family $\left\{
u_{\gamma }\right\} _{\gamma \in 
%TCIMACRO{\U{2124} }%
%BeginExpansion
\mathbb{Z}
%EndExpansion
^{n}}$, such that%
\begin{equation}
C^{-1}\left\Vert u\right\Vert _{\mathcal{H}^{\mathbf{s}}}\leq \left(
\sum_{\gamma \in 
%TCIMACRO{\U{2124} }%
%BeginExpansion
\mathbb{Z}
%EndExpansion
^{n}}\left\Vert u_{\gamma }\right\Vert _{\mathcal{H}^{\mathbf{s}%
}}^{2}\right) ^{1/2}\leq C\left\Vert u\right\Vert _{\mathcal{H}^{\mathbf{s}%
}}.  \label{ks1}
\end{equation}
\end{lemma}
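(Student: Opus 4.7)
The plan is to adapt the Coifman-Meyer averaging technique built around the quasi-periodic sums $\varphi_\theta$ from the previous lemma. I begin by choosing $\varphi\in\mathcal{C}_c^\infty(\mathbb{R}^n)$ with $\varphi\equiv 1$ on an open neighborhood of $K$ and $\mathrm{supp}\,\varphi\subset V$, where $V$ is a compact neighborhood of $K$ still satisfying $(V-V)\cap\mathbb{Z}^n=\{0\}$; such a $V$ exists because $K-K$ is compact and its only integer point is the origin. For $\theta\in[0,2\pi]^n$ form
\[
\varphi_\theta(x)=\sum_{\gamma\in\mathbb{Z}^n}e^{i\langle\gamma,\theta\rangle}\varphi(x-\gamma)=e^{i\langle x,\theta\rangle}\psi_\theta(x),
\]
with $\psi_\theta(x)=\sum_\gamma e^{-i\langle x-\gamma,\theta\rangle}\varphi(x-\gamma)$ smooth and $\mathbb{Z}^n$-periodic, and set $v_\theta=\sum_\gamma e^{i\langle\gamma,\theta\rangle}\tau_\gamma u_\gamma$. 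Two identities drive the proof. The pointwise identity $v_\theta=\varphi_\theta v$ follows from $(\gamma+V)\cap(\gamma'+V)=\emptyset$ for $\gamma\neq\gamma'$ together with $\varphi\equiv 1$ on $K$; only the diagonal terms $\tau_\gamma\varphi\cdot\tau_\gamma u_\gamma=\tau_\gamma u_\gamma$ survive. A direct computation using $\int_{[0,2\pi]^n}e^{i\langle\beta,\theta\rangle}d\theta=(2\pi)^n\delta_{\beta,0}$ and translation-invariance of $\|\cdot\|_{\mathcal{H}^{\mathbf{s}}}$ yields the torus Parseval identity
\[
(2\pi)^{-n}\int_{[0,2\pi]^n}\|v_\theta\|_{\mathcal{H}^{\mathbf{s}}}^{2}\,d\theta=\sum_{\gamma\in\mathbb{Z}^n}\|u_\gamma\|_{\mathcal{H}^{\mathbf{s}}}^{2}.
\]

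For $(\mathtt{a})\Rightarrow(\mathtt{b})$ I establish the uniform multiplier bound $\|v_\theta\|_{\mathcal{H}^{\mathbf{s}}}\leq C\|v\|_{\mathcal{H}^{\mathbf{s}}}$: Proposition~\ref{ks3}(c) applies to the periodic factor $\psi_\theta$, whose $\mathcal{BC}^{k_s}$-norm is uniformly bounded in $\theta$ because on each fundamental domain only one term in the defining sum survives (disjointness of the translates of $V$); Peetre's inequality then controls the multiplier $e^{i\langle\cdot,\theta\rangle}$ by $2^{|\mathbf{s}|_1/2}\langle\langle\theta\rangle\rangle^{|\mathbf{s}|}$, uniformly bounded on $[0,2\pi]^n$. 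Integrating in $\theta$ and invoking Parseval gives the right half of \eqref{ks1}. For $(\mathtt{b})\Rightarrow(\mathtt{a})$ the key is the inversion formula
\[
v=(2\pi)^{-n}\int_{[0,2\pi]^n}\overline{\varphi_\theta}\,v_\theta\,d\theta,
\]
proved by substituting $v_\theta=\varphi_\theta v$ to obtain $\int\overline{\varphi_\theta}v_\theta d\theta=\bigl(\int|\varphi_\theta|^2\,d\theta\bigr)v=(2\pi)^n P\cdot v$, where $P(x)=\sum_\gamma|\varphi(x-\gamma)|^2$ is $\mathbb{Z}^n$-periodic. By the disjointness of the $\gamma+V$ and $\varphi\equiv 1$ near $K$, $P\equiv 1$ on an open neighborhood of $\bigcup_\gamma(\gamma+K)\supset\mathrm{supp}(v)$, so $Pv=v$. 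The same periodic multiplier argument gives $\|\overline{\varphi_\theta}v_\theta\|_{\mathcal{H}^{\mathbf{s}}}\leq C\|v_\theta\|_{\mathcal{H}^{\mathbf{s}}}$ uniformly, and Cauchy-Schwarz in $\theta$ combined with Parseval delivers the left half of \eqref{ks1}.

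The main obstacle is that in $(\mathtt{b})\Rightarrow(\mathtt{a})$ one does not yet know $v\in\mathcal{H}^{\mathbf{s}}$, so the $\mathcal{H}^{\mathbf{s}}$-valued integral appearing in the inversion formula is not a priori meaningful. I would address this by running the entire argument first on the finite truncations $v^{(N)}=\sum_{|\gamma|\leq N}\tau_\gamma u_\gamma$, which lie in $\mathcal{H}^{\mathbf{s}}$ and for which every step is legal; this yields $\|v^{(N)}\|_{\mathcal{H}^{\mathbf{s}}}\leq C\bigl(\sum_{|\gamma|\leq N}\|u_\gamma\|^{2}\bigr)^{1/2}$. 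Applying the same estimate to differences $v^{(M)}-v^{(N)}$, whose defining family $\{u_\gamma\}_{N<|\gamma|\leq M}$ still satisfies the support hypothesis, shows $\{v^{(N)}\}$ is Cauchy in $\mathcal{H}^{\mathbf{s}}$; its $\mathcal{H}^{\mathbf{s}}$-limit coincides with $v$ in $\mathcal{D}'(\mathbb{R}^n)$ by the local finiteness of the original sum, completing the proof.
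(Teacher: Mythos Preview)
Your proof is correct and follows essentially the same Coifman--Meyer averaging strategy as the paper: introduce the quasi-periodic sums $\varphi_\theta$ and $v_\theta$, use $v_\theta=\varphi_\theta v$, establish a uniform multiplier bound and the torus Parseval identity, and pass from finite to infinite families by truncation. There are two tactical differences worth noting. First, you obtain $\|\varphi_\theta v\|_{\mathcal{H}^{\mathbf s}}\leq C\|v\|_{\mathcal{H}^{\mathbf s}}$ by writing $\varphi_\theta=e^{i\langle\cdot,\theta\rangle}\psi_\theta$ and invoking Proposition~\ref{ks3}$(\mathtt c)$ for the periodic factor $\psi_\theta$; the paper instead uses the explicit measure $\widehat{\varphi}_\theta=(2\pi)^n\sum_\gamma\widehat\varphi(2\pi\gamma+\theta)\delta_{2\pi\gamma+\theta}$ from the preceding lemma and applies Peetre's inequality directly on the Fourier side. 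Second, for the reverse bound you use the integral inversion $v=(2\pi)^{-n}\int\overline{\varphi_\theta}\,v_\theta\,d\theta$ together with Cauchy--Schwarz in $\theta$, whereas the paper exploits the simpler pointwise identity $u=\varphi_\theta u_{-\theta}$ (which follows from the same disjoint-support argument you use) to get $\|u\|_{\mathcal{H}^{\mathbf s}}\leq C\|u_{-\theta}\|_{\mathcal{H}^{\mathbf s}}$ before integrating. The paper's route avoids the Bochner-integral interpretation altogether and is a shade cleaner; your route is equally valid. The approximation step you sketch for $(\mathtt b)\Rightarrow(\mathtt a)$ is exactly what the paper does; for $(\mathtt a)\Rightarrow(\mathtt b)$ the paper truncates $u$ by a dilated cutoff $\psi^\varepsilon$ rather than truncating the family, but your Parseval identity can in fact be justified directly for infinite families once one notes that $\theta\mapsto v_\theta$ is continuous into $\mathcal{H}^{\mathbf s}$ with Fourier coefficients $\tau_\gamma u_\gamma$ (tested against $\phi\in\mathcal C_c^\infty$, the sum collapses to finitely many terms).
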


\begin{proof}
Let us choose $\varphi \in \mathcal{C}_{0}^{\infty }\left( 
%TCIMACRO{\U{211d} }%
%BeginExpansion
\mathbb{R}
%EndExpansion
^{n}\right) $ such that $\varphi =1$ on $K$ and \texttt{supp}$\varphi
=K^{\prime }$ satisfies the condition $\left( K^{\prime }-K^{\prime }\right)
\cap 
%TCIMACRO{\U{2124} }%
%BeginExpansion
\mathbb{Z}
%EndExpansion
^{n}=\left\{ 0\right\} $. For $\theta \in \left[ 0,2\pi \right] ^{n}$ we set 
\begin{eqnarray*}
\varphi _{\theta } &=&\sum_{\gamma \in 
%TCIMACRO{\U{2124} }%
%BeginExpansion
\mathbb{Z}
%EndExpansion
^{n}}\mathtt{e}^{\mathtt{i}\left\langle \gamma ,\theta \right\rangle }\tau
_{\gamma }\varphi =\sum_{\gamma \in 
%TCIMACRO{\U{2124} }%
%BeginExpansion
\mathbb{Z}
%EndExpansion
^{n}}\mathtt{e}^{\mathtt{i}\left\langle \gamma ,\theta \right\rangle }\delta
_{\gamma }\ast \varphi , \\
u_{\theta } &=&\sum_{\gamma \in 
%TCIMACRO{\U{2124} }%
%BeginExpansion
\mathbb{Z}
%EndExpansion
^{n}}\mathtt{e}^{\mathtt{i}\left\langle \gamma ,\theta \right\rangle }\tau
_{\gamma }u_{\gamma }=\sum_{\gamma \in 
%TCIMACRO{\U{2124} }%
%BeginExpansion
\mathbb{Z}
%EndExpansion
^{n}}\mathtt{e}^{\mathtt{i}\left\langle \gamma ,\theta \right\rangle }\delta
_{\gamma }\ast u_{\gamma }.
\end{eqnarray*}%
Since $\left( K^{\prime }-K^{\prime }\right) \cap 
%TCIMACRO{\U{2124} }%
%BeginExpansion
\mathbb{Z}
%EndExpansion
^{n}=\left\{ 0\right\} $ we have 
\begin{equation*}
u_{\theta }=\varphi _{\theta }u,\quad \quad u=\varphi _{\theta }u_{-\theta }.
\end{equation*}

\textit{Step 1.} Suppose first that the family $\left\{ u_{\gamma }\right\}
_{\gamma \in 
%TCIMACRO{\U{2124} }%
%BeginExpansion
\mathbb{Z}
%EndExpansion
^{n}}$ has only a finite number of non-zero terms and we shall prove in this
case the estimate(\ref{ks1}). Since $u_{\theta }$, $u\in \mathcal{E}^{\prime
}\subset \mathcal{S}^{\prime }$ it follows that%
\begin{equation*}
\widehat{u}_{\theta }=\left( 2\pi \right) ^{-n}\nu _{\theta }\ast \widehat{u}%
,\quad \quad \widehat{u}=\left( 2\pi \right) ^{-n}\nu _{\theta }\ast 
\widehat{u}_{-\theta },
\end{equation*}%
where $\nu _{\theta }=\widehat{\varphi }_{\theta }=\left( 2\pi \right)
^{n}\sum_{\gamma \in 
%TCIMACRO{\U{2124} }%
%BeginExpansion
\mathbb{Z}
%EndExpansion
^{n}}\widehat{\varphi }\left( 2\pi \gamma +\theta \right) \delta _{2\pi
\gamma +\theta }$ is a measure of rapid decay at $\infty $. Since $\widehat{u%
}_{\theta }$, $\widehat{u}\in \mathcal{C}_{pol}^{\infty }\left( 
%TCIMACRO{\U{211d} }%
%BeginExpansion
\mathbb{R}
%EndExpansion
^{n}\right) $ we get the pointwise equalities%
\begin{eqnarray*}
\widehat{u}_{\theta }\left( \xi \right) &=&\sum_{\gamma \in 
%TCIMACRO{\U{2124} }%
%BeginExpansion
\mathbb{Z}
%EndExpansion
^{n}}\widehat{\varphi }\left( 2\pi \gamma +\theta \right) \widehat{u}\left(
\xi -2\pi \gamma -\theta \right) , \\
\widehat{u}\left( \xi \right) &=&\sum_{\gamma \in 
%TCIMACRO{\U{2124} }%
%BeginExpansion
\mathbb{Z}
%EndExpansion
^{n}}\widehat{\varphi }\left( 2\pi \gamma +\theta \right) \widehat{u}%
_{-\theta }\left( \xi -2\pi \gamma -\theta \right) .
\end{eqnarray*}%
By using Peetre's inequality we obtain%
\begin{multline*}
\left\langle \left\langle \xi \right\rangle \right\rangle ^{\mathbf{s}%
}\left\vert \widehat{u}_{\theta }\left( \xi \right) \right\vert \leq
2^{\left\vert \mathbf{s}\right\vert _{1}/2}\sum_{\gamma \in 
%TCIMACRO{\U{2124} }%
%BeginExpansion
\mathbb{Z}
%EndExpansion
^{n}}\left\langle \left\langle 2\pi \gamma +\theta \right\rangle
\right\rangle ^{\left\vert \mathbf{s}\right\vert }\left\vert \widehat{%
\varphi }\left( 2\pi \gamma +\theta \right) \right\vert \\
\cdot \left\langle \left\langle \xi -2\pi \gamma -\theta \right\rangle
\right\rangle ^{\mathbf{s}}\left\vert \widehat{u}\left( \xi -2\pi \gamma
-\theta \right) \right\vert ,
\end{multline*}%
and%
\begin{multline*}
\left\langle \left\langle \xi \right\rangle \right\rangle ^{\mathbf{s}}%
\widehat{u}\left( \xi \right) \leq 2^{\left\vert \mathbf{s}\right\vert
_{1}/2}\sum_{\gamma \in 
%TCIMACRO{\U{2124} }%
%BeginExpansion
\mathbb{Z}
%EndExpansion
^{n}}\left\langle \left\langle 2\pi \gamma +\theta \right\rangle
\right\rangle ^{\left\vert \mathbf{s}\right\vert }\left\vert \widehat{%
\varphi }\left( 2\pi \gamma +\theta \right) \right\vert \\
\cdot \left\langle \left\langle \xi -2\pi \gamma -\theta \right\rangle
\right\rangle ^{\mathbf{s}}\left\vert \widehat{u}_{-\theta }\left( \xi -2\pi
\gamma -\theta \right) \right\vert .
\end{multline*}%
From here we obtain further that 
\begin{eqnarray*}
\left\Vert u_{\theta }\right\Vert _{\mathcal{H}^{\mathbf{s}}} &=&\left\Vert
\left\langle \left\langle \cdot \right\rangle \right\rangle ^{\mathbf{s}}%
\widehat{u}_{\theta }\right\Vert _{L^{2}} \\
&\leq &2^{\left\vert \mathbf{s}\right\vert _{1}/2}\left( \sum_{\gamma \in 
%TCIMACRO{\U{2124} }%
%BeginExpansion
\mathbb{Z}
%EndExpansion
^{n}}\left\langle \left\langle 2\pi \gamma +\theta \right\rangle
\right\rangle ^{\left\vert \mathbf{s}\right\vert }\left\vert \widehat{%
\varphi }\left( 2\pi \gamma +\theta \right) \right\vert \right) \left\Vert
\left\langle \left\langle \cdot \right\rangle \right\rangle ^{\mathbf{s}}%
\widehat{u}\right\Vert _{L^{2}} \\
&=&2^{\left\vert \mathbf{s}\right\vert _{1}/2}\left( \sum_{\gamma \in 
%TCIMACRO{\U{2124} }%
%BeginExpansion
\mathbb{Z}
%EndExpansion
^{n}}\left\langle \left\langle 2\pi \gamma +\theta \right\rangle
\right\rangle ^{\left\vert \mathbf{s}\right\vert }\left\vert \widehat{%
\varphi }\left( 2\pi \gamma +\theta \right) \right\vert \right) \left\Vert
u\right\Vert _{\mathcal{H}^{\mathbf{s}}} \\
&=&C_{\mathbf{s},n,\varphi }\left\Vert u\right\Vert _{\mathcal{H}^{\mathbf{s}%
}}
\end{eqnarray*}%
and%
\begin{eqnarray*}
\left\Vert u\right\Vert _{\mathcal{H}^{\mathbf{s}}} &\leq &2^{\left\vert 
\mathbf{s}\right\vert _{1}/2}\left( \sum_{\gamma \in 
%TCIMACRO{\U{2124} }%
%BeginExpansion
\mathbb{Z}
%EndExpansion
^{n}}\left\langle \left\langle 2\pi \gamma +\theta \right\rangle
\right\rangle ^{\left\vert \mathbf{s}\right\vert }\left\vert \widehat{%
\varphi }\left( 2\pi \gamma +\theta \right) \right\vert \right) \left\Vert
u_{-\theta }\right\Vert _{\mathcal{H}^{\mathbf{s}}} \\
&=&C_{\mathbf{s},n,\varphi }\left\Vert u_{-\theta }\right\Vert _{\mathcal{H}%
^{\mathbf{s}}}.
\end{eqnarray*}

The above estimates can be rewritten as 
\begin{eqnarray*}
\int \left\langle \left\langle \xi \right\rangle \right\rangle ^{2\mathbf{s}%
}\left\vert \widehat{u}_{\theta }\left( \xi \right) \right\vert ^{2}\mathtt{d%
}\xi &\leq &C_{\mathbf{s},n,\varphi }^{2}\left\Vert u\right\Vert _{\mathcal{H%
}^{\mathbf{s}}}^{2}, \\
\left\Vert u\right\Vert _{\mathcal{H}^{\mathbf{s}}}^{2} &\leq &C_{\mathbf{s}%
,n,\varphi }^{2}\int \left\langle \left\langle \xi \right\rangle
\right\rangle ^{2\mathbf{s}}\left\vert \widehat{u}_{-\theta }\left( \xi
\right) \right\vert ^{2}\mathtt{d}\xi .
\end{eqnarray*}%
On the other hand, the equality $u_{\theta }=\sum_{\gamma \in 
%TCIMACRO{\U{2124} }%
%BeginExpansion
\mathbb{Z}
%EndExpansion
^{n}}\mathtt{e}^{\mathtt{i}\left\langle \gamma ,\theta \right\rangle }\tau
_{\gamma }u_{\gamma }$ implies 
\begin{equation*}
\widehat{u}_{\theta }\left( \xi \right) =\sum_{\gamma \in 
%TCIMACRO{\U{2124} }%
%BeginExpansion
\mathbb{Z}
%EndExpansion
^{n}}\mathtt{e}^{\mathtt{i}\left\langle \gamma ,\theta -\xi \right\rangle }%
\widehat{u}_{\gamma }\left( \xi \right)
\end{equation*}%
with finite sum. The functions $\theta \rightarrow \widehat{u}_{\pm \theta
}\left( \xi \right) $ are in $L^{2}\left( \left[ 0,2\pi \right] ^{n}\right) $
and 
\begin{equation*}
\left( 2\pi \right) ^{-n}\int_{\left[ 0,2\pi \right] ^{n}}\left\vert 
\widehat{u}_{\pm \theta }\left( \xi \right) \right\vert ^{2}\mathtt{d}\theta
=\sum_{\gamma \in 
%TCIMACRO{\U{2124} }%
%BeginExpansion
\mathbb{Z}
%EndExpansion
^{n}}\left\vert \widehat{u}_{\gamma }\left( \xi \right) \right\vert ^{2}.
\end{equation*}%
Integrating with respect $\theta $ the above inequalities we get that%
\begin{eqnarray*}
\sum_{\gamma \in 
%TCIMACRO{\U{2124} }%
%BeginExpansion
\mathbb{Z}
%EndExpansion
^{n}}\left\Vert u_{\gamma }\right\Vert _{\mathcal{H}^{\mathbf{s}}}^{2} &\leq
&C_{\mathbf{s},n,\varphi }^{2}\left\Vert u\right\Vert _{\mathcal{H}^{\mathbf{%
s}}}^{2}, \\
\left\Vert u\right\Vert _{\mathcal{H}^{\mathbf{s}}}^{2} &\leq &C_{\mathbf{s}%
,n,\varphi }^{2}\sum_{\gamma \in 
%TCIMACRO{\U{2124} }%
%BeginExpansion
\mathbb{Z}
%EndExpansion
^{n}}\left\Vert u_{\gamma }\right\Vert _{\mathcal{H}^{\mathbf{s}}}^{2}.
\end{eqnarray*}

\textit{Step 2. }The \textit{g}eneral case is obtained by approximation.

Suppose that $u\in \mathcal{H}^{\mathbf{s}}\left( 
%TCIMACRO{\U{211d} }%
%BeginExpansion
\mathbb{R}
%EndExpansion
^{n}\right) $. Let $\psi \in \mathcal{C}_{0}^{\infty }\left( 
%TCIMACRO{\U{211d} }%
%BeginExpansion
\mathbb{R}
%EndExpansion
^{n}\right) $ be such that $\psi =1$ on $B\left( 0,1\right) $. Then $\psi
^{\varepsilon }u\rightarrow u$ in $\mathcal{H}^{\mathbf{s}}\left( 
%TCIMACRO{\U{211d} }%
%BeginExpansion
\mathbb{R}
%EndExpansion
^{n}\right) $ where $\psi ^{\varepsilon }\left( x\right) =\psi \left(
\varepsilon x\right) $, $0<\varepsilon \leq 1$, $x\in 
%TCIMACRO{\U{211d} }%
%BeginExpansion
\mathbb{R}
%EndExpansion
^{n}$. Also we have 
\begin{equation*}
\left\Vert \psi ^{\varepsilon }u\right\Vert _{\mathcal{H}^{\mathbf{s}}}\leq
C\left( s,n,\psi \right) \left\Vert u\right\Vert _{\mathcal{H}^{\mathbf{s}%
}},\quad 0<\varepsilon \leq 1,
\end{equation*}%
where 
\begin{eqnarray*}
C\left( s,n,\psi \right) &=&\left( 2\pi \right) ^{-n}2^{\left\vert \mathbf{s}%
\right\vert _{1}/2}\sup_{0<\varepsilon \leq 1}\left( \int \left\langle \eta
\right\rangle ^{\left\vert \mathbf{s}\right\vert _{1}}\varepsilon
^{-n}\left\vert \widehat{\psi }\left( \eta /\varepsilon \right) \right\vert 
\mathtt{d}\eta \right) \\
&=&\left( 2\pi \right) ^{-n}2^{\left\vert \mathbf{s}\right\vert
_{1}/2}\sup_{0<\varepsilon \leq 1}\left( \int \left\langle \varepsilon \eta
\right\rangle ^{\left\vert \mathbf{s}\right\vert _{1}}\left\vert \widehat{%
\psi }\left( \eta \right) \right\vert \mathtt{d}\eta \right) \\
&\leq &\left( 2\pi \right) ^{-n}2^{\left\vert \mathbf{s}\right\vert
_{1}/2}\left( \int \left\langle \eta \right\rangle ^{\left\vert \mathbf{s}%
\right\vert _{1}}\left\vert \widehat{\psi }\left( \eta \right) \right\vert 
\mathtt{d}\eta \right) .
\end{eqnarray*}%
Let $m\in 
%TCIMACRO{\U{2115} }%
%BeginExpansion
\mathbb{N}
%EndExpansion
,m\geq 1$. Then there is $\varepsilon _{m}$ such that for any $\varepsilon
\in \left( 0,\varepsilon _{m}\right] $ we have 
\begin{equation*}
\psi ^{\varepsilon }u=\sum_{\left\vert \gamma \right\vert \leq m}\tau
_{\gamma }u_{\gamma }+\sum_{finite}\tau _{\gamma }\left( \left( \tau
_{-\gamma }\psi ^{\varepsilon }\right) u_{\gamma }\right) .
\end{equation*}%
By the first part we get that 
\begin{equation*}
\sum_{\left\vert \gamma \right\vert \leq m}\left\Vert u_{\gamma }\right\Vert
_{\mathcal{H}^{\mathbf{s}}}^{2}\leq C_{\mathbf{s},n,\varphi }^{2}\left\Vert
\psi ^{\varepsilon }u\right\Vert _{\mathcal{H}^{\mathbf{s}}}^{2}\leq C_{%
\mathbf{s},n,\varphi }^{2}C\left( s,n,\psi \right) ^{2}\left\Vert
u\right\Vert _{\mathcal{H}^{\mathbf{s}}}^{2}.
\end{equation*}%
Since $m$ is arbitrary, it follows that $\sum_{\gamma \in 
%TCIMACRO{\U{2124} }%
%BeginExpansion
\mathbb{Z}
%EndExpansion
^{n}}\left\Vert u_{\gamma }\right\Vert _{\mathcal{H}^{\mathbf{s}%
}}^{2}<\infty $. Further from 
\begin{equation*}
\sum_{\left\vert \gamma \right\vert \leq m}\left\Vert u_{\gamma }\right\Vert
_{\mathcal{H}^{\mathbf{s}}}^{2}\leq C_{\mathbf{s},n,\varphi }^{2}\left\Vert
\psi ^{\varepsilon }u\right\Vert _{\mathcal{H}^{\mathbf{s}}}^{2},\quad
0<\varepsilon \leq \varepsilon _{m},
\end{equation*}%
we obtain that%
\begin{equation*}
\sum_{\left\vert \gamma \right\vert \leq m}\left\Vert u_{\gamma }\right\Vert
_{\mathcal{H}^{\mathbf{s}}}^{2}\leq C_{\mathbf{s},n,\varphi }^{2}\left\Vert
u\right\Vert _{\mathcal{H}^{\mathbf{s}}}^{2},\quad \forall m\in 
%TCIMACRO{\U{2115} }%
%BeginExpansion
\mathbb{N}
%EndExpansion
.
\end{equation*}%
Hence 
\begin{equation*}
\sum_{\gamma \in 
%TCIMACRO{\U{2124} }%
%BeginExpansion
\mathbb{Z}
%EndExpansion
^{n}}\left\Vert u_{\gamma }\right\Vert _{\mathcal{H}^{\mathbf{s}}}^{2}\leq
C_{\mathbf{s},n,\varphi }^{2}\left\Vert u\right\Vert _{\mathcal{H}^{\mathbf{s%
}}}^{2}.
\end{equation*}

Now suppose that $\sum_{\gamma \in 
%TCIMACRO{\U{2124} }%
%BeginExpansion
\mathbb{Z}
%EndExpansion
^{n}}\left\Vert u_{\gamma }\right\Vert _{\mathcal{H}^{\mathbf{s}%
}}^{2}<\infty $. For $m\in 
%TCIMACRO{\U{2115} }%
%BeginExpansion
\mathbb{N}
%EndExpansion
$, $m\geq 1$ we put $u\left( m\right) =\sum_{\left\vert \gamma \right\vert
\leq m}\tau _{\gamma }u_{\gamma }$. Then 
\begin{equation*}
\left\Vert u\left( m+p\right) -u\left( m\right) \right\Vert _{\mathcal{H}^{%
\mathbf{s}}}^{2}\leq C_{\mathbf{s},n,\varphi }^{2}\sum_{m\leq \left\vert
\gamma \right\vert \leq m+p}\left\Vert u_{\gamma }\right\Vert _{\mathcal{H}^{%
\mathbf{s}}}^{2}
\end{equation*}%
It follows that $\left\{ u\left( m\right) \right\} _{m\geq 1}$ is a Cauchy
sequence in $\mathcal{H}^{\mathbf{s}}\left( 
%TCIMACRO{\U{211d} }%
%BeginExpansion
\mathbb{R}
%EndExpansion
^{n}\right) $. Let $v\in \mathcal{H}^{\mathbf{s}}\left( 
%TCIMACRO{\U{211d} }%
%BeginExpansion
\mathbb{R}
%EndExpansion
^{n}\right) $ be such that $u\left( m\right) \rightarrow v$ in $\mathcal{H}^{%
\mathbf{s}}\left( 
%TCIMACRO{\U{211d} }%
%BeginExpansion
\mathbb{R}
%EndExpansion
^{n}\right) $. Since $u\left( m\right) \rightarrow u$ in $\mathcal{D}%
^{\prime }\left( 
%TCIMACRO{\U{211d} }%
%BeginExpansion
\mathbb{R}
%EndExpansion
^{n}\right) $, it follows that $u=v$. Hence $u\left( m\right) \rightarrow u$
in $\mathcal{H}^{\mathbf{s}}\left( 
%TCIMACRO{\U{211d} }%
%BeginExpansion
\mathbb{R}
%EndExpansion
^{n}\right) $. Since we have 
\begin{equation*}
\left\Vert u\left( m\right) \right\Vert _{\mathcal{H}^{\mathbf{s}}}^{2}\leq
C_{\mathbf{s},n,\varphi }^{2}\sum_{\left\vert \gamma \right\vert \leq
m}\left\Vert u_{\gamma }\right\Vert _{\mathcal{H}^{\mathbf{s}}}^{2}\leq C_{%
\mathbf{s},n,\varphi }^{2}\sum_{\gamma \in 
%TCIMACRO{\U{2124} }%
%BeginExpansion
\mathbb{Z}
%EndExpansion
^{n}}\left\Vert u_{\gamma }\right\Vert _{\mathcal{H}^{\mathbf{s}}}^{2},\quad
\forall m\in 
%TCIMACRO{\U{2115} }%
%BeginExpansion
\mathbb{N}
%EndExpansion
.
\end{equation*}%
we obtain that 
\begin{equation*}
\left\Vert u\right\Vert _{\mathcal{H}^{\mathbf{s}}}^{2}\leq C_{\mathbf{s}%
,n,\varphi }^{2}\sum_{\gamma \in 
%TCIMACRO{\U{2124} }%
%BeginExpansion
\mathbb{Z}
%EndExpansion
^{n}}\left\Vert u_{\gamma }\right\Vert _{\mathcal{H}^{\mathbf{s}}}^{2}.
\end{equation*}
\end{proof}

To use the previous result we need a convenient partition of unity. Let $%
N\in 
%TCIMACRO{\U{2115} }%
%BeginExpansion
\mathbb{N}
%EndExpansion
$ and $\left\{ x_{1},...,x_{N}\right\} \subset 
%TCIMACRO{\U{211d} }%
%BeginExpansion
\mathbb{R}
%EndExpansion
^{n}$ be such that 
\begin{equation*}
\left[ 0,1\right] ^{n}\subset \left( x_{1}+\left[ \frac{1}{3},\frac{2}{3}%
\right] ^{n}\right) \cup ...\cup \left( x_{N}+\left[ \frac{1}{3},\frac{2}{3}%
\right] ^{n}\right)
\end{equation*}%
Let $\widetilde{h}\in \mathcal{C}_{0}^{\infty }\left( 
%TCIMACRO{\U{211d} }%
%BeginExpansion
\mathbb{R}
%EndExpansion
^{n}\right) $, $\widetilde{h}\geq 0$, be such that $\widetilde{h}=1$ on $%
\left[ \frac{1}{3},\frac{2}{3}\right] ^{n}$ and \texttt{supp}$\widetilde{h}%
\subset \left[ \frac{1}{4},\frac{3}{4}\right] ^{n}$. Then

\begin{enumerate}
\item[$\left( \mathtt{a}\right) $] $\widetilde{H}=\sum_{i=1}^{N}\sum_{\gamma
\in 
%TCIMACRO{\U{2124} }%
%BeginExpansion
\mathbb{Z}
%EndExpansion
^{n}}\tau _{\gamma +x_{i}}\widetilde{h}\in \mathcal{BC}^{\infty }\left( 
%TCIMACRO{\U{211d} }%
%BeginExpansion
\mathbb{R}
%EndExpansion
^{n}\right) $ is $%
%TCIMACRO{\U{2124} }%
%BeginExpansion
\mathbb{Z}
%EndExpansion
^{n}$-periodic and $\widetilde{H}\geq 1$.

\item[$\left( \mathtt{b}\right) $] $h_{i}=\frac{\tau _{x_{i}}\widetilde{h}}{%
\widetilde{H}}\in \mathcal{C}_{0}^{\infty }\left( 
%TCIMACRO{\U{211d} }%
%BeginExpansion
\mathbb{R}
%EndExpansion
^{n}\right) $, $h_{i}\geq 0$, \texttt{supp}$h_{i}\subset x_{i}+\left[ \frac{1%
}{4},\frac{3}{4}\right] ^{n}=K_{i}$, $\left( K_{i}-K_{i}\right) \cap 
%TCIMACRO{\U{2124} }%
%BeginExpansion
\mathbb{Z}
%EndExpansion
^{n}=\left\{ 0\right\} $, $i=1,...,N$.

\item[$\left( \mathtt{c}\right) $] $\chi _{i}=\sum_{\gamma \in 
%TCIMACRO{\U{2124} }%
%BeginExpansion
\mathbb{Z}
%EndExpansion
^{n}}\tau _{\gamma }h_{i}\in \mathcal{BC}^{\infty }\left( 
%TCIMACRO{\U{211d} }%
%BeginExpansion
\mathbb{R}
%EndExpansion
^{n}\right) $ is $%
%TCIMACRO{\U{2124} }%
%BeginExpansion
\mathbb{Z}
%EndExpansion
^{n}$-periodic, $i=1,...,N$ and $\sum_{i=1}^{N}\chi _{i}=1.$

\item[$\left( \mathtt{d}\right) $] $h=\sum_{i=1}^{N}h_{i}\in \mathcal{C}%
_{0}^{\infty }\left( 
%TCIMACRO{\U{211d} }%
%BeginExpansion
\mathbb{R}
%EndExpansion
^{n}\right) $, $h\geq 0$, $\sum_{\gamma \in 
%TCIMACRO{\U{2124} }%
%BeginExpansion
\mathbb{Z}
%EndExpansion
^{n}}\tau _{\gamma }h=$ $1$.
\end{enumerate}

\noindent A first consequence of previous results is the next proposition.

\begin{proposition}
\label{ks4}Let $\mathbf{s}\in 
%TCIMACRO{\U{211d} }%
%BeginExpansion
\mathbb{R}
%EndExpansion
^{j}$ and $m_{\mathbf{s}}=\left[ \left\vert \mathbf{s}\right\vert _{1}+\frac{%
n+1}{2}\right] +1$. Then 
\begin{equation*}
\mathcal{BC}^{m_{s}}\left( 
%TCIMACRO{\U{211d} }%
%BeginExpansion
\mathbb{R}
%EndExpansion
^{n}\right) \cdot \mathcal{H}^{\mathbf{s}}\left( 
%TCIMACRO{\U{211d} }%
%BeginExpansion
\mathbb{R}
%EndExpansion
^{n}\right) \subset \mathcal{H}^{\mathbf{s}}\left( 
%TCIMACRO{\U{211d} }%
%BeginExpansion
\mathbb{R}
%EndExpansion
^{n}\right) .
\end{equation*}
\end{proposition}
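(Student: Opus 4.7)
The plan is to use the partition of unity $\{h_i,\chi_i\}_{i=1}^N$ constructed just before the proposition together with Lemma~\ref{ks2} in both directions. Since $\sum_{i=1}^N \chi_i \equiv 1$, it suffices to show $\chi \chi_i u \in \mathcal{H}^{\mathbf{s}}$ for each $i\in\{1,\dots,N\}$. Each $\chi_i$ is $\mathbb{Z}^n$-periodic and smooth, hence certainly in $\mathcal{C}^{k_s}$, so by Proposition~\ref{ks3}(c) we already have $\chi_i u \in \mathcal{H}^{\mathbf{s}}$ with $\|\chi_i u\|_{\mathcal{H}^{\mathbf{s}}} \leq C\|u\|_{\mathcal{H}^{\mathbf{s}}}$.

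Using $\chi_i = \sum_{\gamma\in\mathbb{Z}^n}\tau_\gamma h_i$, I would expand $\chi_i u = \sum_\gamma \tau_\gamma u_{i,\gamma}$ with $u_{i,\gamma} := h_i \cdot \tau_{-\gamma} u$, which is supported in $K_i$. Since $(K_i - K_i) \cap \mathbb{Z}^n = \{0\}$, Lemma~\ref{ks2} (the direct implication $(\mathtt{a})\Rightarrow(\mathtt{b})$) gives $\sum_\gamma \|u_{i,\gamma}\|_{\mathcal{H}^{\mathbf{s}}}^2 \leq C \|\chi_i u\|_{\mathcal{H}^{\mathbf{s}}}^2 \leq C' \|u\|_{\mathcal{H}^{\mathbf{s}}}^2$. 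Multiplying by $\chi$ and pushing translations outside yields $\chi \chi_i u = \sum_\gamma \tau_\gamma v_{i,\gamma}$, where $v_{i,\gamma} := (\tau_{-\gamma}\chi)\, u_{i,\gamma}$ is again supported in $K_i$. By the converse implication of Lemma~\ref{ks2}, the problem reduces to showing $\sum_\gamma \|v_{i,\gamma}\|_{\mathcal{H}^{\mathbf{s}}}^2 < \infty$.

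The main obstacle is that $\tau_{-\gamma}\chi$ lies in $\mathcal{BC}^{m_s}$ but generally not in $H^{|\mathbf{s}|_1 + (n+1)/2}$, so Proposition~\ref{ks3}(b) cannot be applied to $\tau_{-\gamma}\chi$ directly; moreover a naive bound independent of $\gamma$ would fail to be summable. The remedy is to localize: fix once and for all a cutoff $\phi \in \mathcal{C}_0^\infty(\mathbb{R}^n)$ with $\phi \equiv 1$ on $K_i$; since $u_{i,\gamma} = \phi\, u_{i,\gamma}$, one may rewrite $v_{i,\gamma} = (\phi\,\tau_{-\gamma}\chi)\, u_{i,\gamma}$, and now $\phi\,\tau_{-\gamma}\chi$ is compactly supported in $\mathrm{supp}\,\phi$. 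A Leibniz expansion shows that the $L^2$ norms of its derivatives of order at most $m_s$ are bounded by $C(\phi) \|\chi\|_{\mathcal{BC}^{m_s}}$, uniformly in $\gamma$, so the last estimate in Proposition~\ref{ks3}(b) yields $\|\phi\, \tau_{-\gamma}\chi\|_{H^{|\mathbf{s}|_1+(n+1)/2}} \leq C(\phi)\|\chi\|_{\mathcal{BC}^{m_s}}$ uniformly in $\gamma$. Applying Proposition~\ref{ks3}(b) gives $\|v_{i,\gamma}\|_{\mathcal{H}^{\mathbf{s}}} \leq C\,\|\chi\|_{\mathcal{BC}^{m_s}}\,\|u_{i,\gamma}\|_{\mathcal{H}^{\mathbf{s}}}$, and summing in $\gamma$ against the previously established $\ell^2$ estimate for $\{u_{i,\gamma}\}$, then in $i$, closes the argument with a quantitative bound $\|\chi u\|_{\mathcal{H}^{\mathbf{s}}} \leq C(n,\mathbf{s})\|\chi\|_{\mathcal{BC}^{m_s}} \|u\|_{\mathcal{H}^{\mathbf{s}}}$.
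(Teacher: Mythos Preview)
Your argument is correct and follows essentially the same route as the paper: decompose via the partition of unity $\{h_i,\chi_i\}$, use Proposition~\ref{ks3}(c) and Lemma~\ref{ks2} in one direction to get the $\ell^2$ control on the pieces $u_{i,\gamma}=h_i\tau_{-\gamma}u$, localize the $\mathcal{BC}^{m_s}$ multiplier by a fixed cutoff equal to $1$ on $K_i$, apply Proposition~\ref{ks3}(b) with a $\gamma$-uniform bound coming from Leibniz, and then use Lemma~\ref{ks2} in the other direction. The only cosmetic differences are that the paper calls the $\mathcal{BC}^{m_s}$ function $\psi$ (avoiding the clash with your $\chi_i$), and it takes the localizing cutoff to be the function $\varphi$ already chosen in the proof of Lemma~\ref{ks2}, rather than introducing a new $\phi$.
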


\begin{proof}
Let $u\in \mathcal{H}^{\mathbf{s}}\left( 
%TCIMACRO{\U{211d} }%
%BeginExpansion
\mathbb{R}
%EndExpansion
^{n}\right) $. We use the partition of unity constructed above to obtain a
decomposition of $u$ satisfying the conditions of Lemma \ref{ks2}. Using
Proposition \ref{ks3} $\left( \mathtt{c}\right) $, it follows that $\chi
_{i}u\in \mathcal{H}^{\mathbf{s}}\left( 
%TCIMACRO{\U{211d} }%
%BeginExpansion
\mathbb{R}
%EndExpansion
^{n}\right) $, $i=1,...,N$. We have%
\begin{equation*}
u=\sum_{i=1}^{N}\chi _{i}u
\end{equation*}%
with $\chi _{i}u\in \mathcal{H}^{\mathbf{s}}\left( 
%TCIMACRO{\U{211d} }%
%BeginExpansion
\mathbb{R}
%EndExpansion
^{n}\right) $,%
\begin{gather*}
\chi _{i}u=\sum_{\gamma \in 
%TCIMACRO{\U{2124} }%
%BeginExpansion
\mathbb{Z}
%EndExpansion
^{n}}\tau _{\gamma }\left( h_{i}\tau _{-\gamma }u\right) ,\quad h_{i}\tau
_{-\gamma }u\in \mathcal{H}^{\mathbf{s}}\left( 
%TCIMACRO{\U{211d} }%
%BeginExpansion
\mathbb{R}
%EndExpansion
^{n}\right) \cap \mathcal{D}_{K_{i}}^{\prime }\left( 
%TCIMACRO{\U{211d} }%
%BeginExpansion
\mathbb{R}
%EndExpansion
^{n}\right) , \\
\left( K_{i}-K_{i}\right) \cap 
%TCIMACRO{\U{2124} }%
%BeginExpansion
\mathbb{Z}
%EndExpansion
^{n}=\left\{ 0\right\} ,\quad i=1,...,N.
\end{gather*}%
So we can assume that $u\in \mathcal{H}^{\mathbf{s}}\left( 
%TCIMACRO{\U{211d} }%
%BeginExpansion
\mathbb{R}
%EndExpansion
^{n}\right) $ is of the form described in Lemma \ref{ks2}.

Let $\psi \in \mathcal{BC}^{m_{s}}\left( 
%TCIMACRO{\U{211d} }%
%BeginExpansion
\mathbb{R}
%EndExpansion
^{n}\right) $. Then 
\begin{equation*}
\psi u=\sum_{\gamma \in 
%TCIMACRO{\U{2124} }%
%BeginExpansion
\mathbb{Z}
%EndExpansion
^{n}}\psi \tau _{\gamma }u_{\gamma }=\sum_{\gamma \in 
%TCIMACRO{\U{2124} }%
%BeginExpansion
\mathbb{Z}
%EndExpansion
^{n}}\tau _{\gamma }\left( \psi _{\gamma }u_{\gamma }\right)
\end{equation*}%
with $\psi _{\gamma }=\varphi \left( \tau _{-\gamma }\psi \right) $, where $%
\varphi \in \mathcal{C}_{0}^{\infty }\left( 
%TCIMACRO{\U{211d} }%
%BeginExpansion
\mathbb{R}
%EndExpansion
^{n}\right) $ is the function considered in the proof of Lemma \ref{ks2}. We
apply Lemma \ref{ks2} and Proposition \ref{ks3} $\left( \mathtt{b}\right) $
to obtain 
\begin{equation*}
\left\Vert \psi u\right\Vert _{\mathcal{H}^{\mathbf{s}}}^{2}\leq C_{\mathbf{s%
},n,\varphi }^{2}\sum_{\gamma \in 
%TCIMACRO{\U{2124} }%
%BeginExpansion
\mathbb{Z}
%EndExpansion
^{n}}\left\Vert \psi _{\gamma }u_{\gamma }\right\Vert _{\mathcal{H}^{\mathbf{%
s}}}^{2}
\end{equation*}%
and%
\begin{eqnarray*}
\left\Vert \psi _{\gamma }u_{\gamma }\right\Vert _{\mathcal{H}^{\mathbf{s}}}
&\leq &Cst\left( \sum_{\left\vert \alpha \right\vert \leq m_{s}}\left\Vert
\partial ^{\alpha }\left( \varphi \left( \tau _{-\gamma }\psi \right)
\right) \right\Vert _{L^{2}}\right) \left\Vert u_{\gamma }\right\Vert _{%
\mathcal{H}^{\mathbf{s}}} \\
&\leq &Cst\left\Vert \varphi \right\Vert _{H^{m_{s}}}\left\Vert \psi
\right\Vert _{\mathcal{BC}^{m_{s}}}\left\Vert u_{\gamma }\right\Vert _{%
\mathcal{H}^{\mathbf{s}}},\quad \gamma \in 
%TCIMACRO{\U{2124} }%
%BeginExpansion
\mathbb{Z}
%EndExpansion
^{n}.
\end{eqnarray*}%
Hence another application of Lemma \ref{ks2} gives 
\begin{eqnarray*}
\left\Vert \psi u\right\Vert _{\mathcal{H}^{\mathbf{s}}}^{2} &\leq
&Cst\left\Vert \varphi \right\Vert _{H^{m_{s}}}^{2}\left\Vert \psi
\right\Vert _{\mathcal{BC}^{m_{s}}}^{2}\sum_{\gamma \in 
%TCIMACRO{\U{2124} }%
%BeginExpansion
\mathbb{Z}
%EndExpansion
^{n}}\left\Vert u_{\gamma }\right\Vert _{\mathcal{H}^{\mathbf{s}}}^{2} \\
&\leq &Cst\left\Vert \varphi \right\Vert _{H^{m_{s}}}^{2}\left\Vert \psi
\right\Vert _{\mathcal{BC}^{m_{s}}}^{2}\left\Vert u\right\Vert _{\mathcal{H}%
^{\mathbf{s}}}^{2}.
\end{eqnarray*}
\end{proof}

\begin{corollary}
Let $\mathbf{s}\in 
%TCIMACRO{\U{211d} }%
%BeginExpansion
\mathbb{R}
%EndExpansion
^{j}$. Then 
\begin{equation*}
\mathcal{BC}^{\infty }\left( 
%TCIMACRO{\U{211d} }%
%BeginExpansion
\mathbb{R}
%EndExpansion
^{n}\right) \cdot \mathcal{H}^{\mathbf{s}}\left( 
%TCIMACRO{\U{211d} }%
%BeginExpansion
\mathbb{R}
%EndExpansion
^{n}\right) \subset \mathcal{H}^{\mathbf{s}}\left( 
%TCIMACRO{\U{211d} }%
%BeginExpansion
\mathbb{R}
%EndExpansion
^{n}\right) .
\end{equation*}
\end{corollary}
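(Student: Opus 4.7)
The plan is to treat this as an essentially immediate consequence of Proposition \ref{ks4}, since the statement is obtained from the previous one by weakening the regularity hypothesis on the multiplier from $\mathcal{BC}^{m_{\mathbf{s}}}$ to $\mathcal{BC}^{\infty}$, which is a subspace of $\mathcal{BC}^{m_{\mathbf{s}}}$.

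Concretely, I would first fix $\mathbf{s}\in\mathbb{R}^{j}$ and set $m_{\mathbf{s}}=\left[\left\vert\mathbf{s}\right\vert_{1}+\tfrac{n+1}{2}\right]+1$, which is a nonnegative integer depending only on $\mathbf{s}$ and $n$. Then I would observe that by definition of $\mathcal{BC}^{\infty}$ and $\mathcal{BC}^{m_{\mathbf{s}}}$, any $\psi\in\mathcal{BC}^{\infty}(\mathbb{R}^{n})$ has, in particular, bounded partial derivatives of every order $\leq m_{\mathbf{s}}$, so $\psi\in\mathcal{BC}^{m_{\mathbf{s}}}(\mathbb{R}^{n})$ and one has the trivial inclusion
\begin{equation*}
\mathcal{BC}^{\infty}(\mathbb{R}^{n})\subset\mathcal{BC}^{m_{\mathbf{s}}}(\mathbb{R}^{n}).
\end{equation*}

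To conclude, given any $u\in\mathcal{H}^{\mathbf{s}}(\mathbb{R}^{n})$ and any $\psi\in\mathcal{BC}^{\infty}(\mathbb{R}^{n})$, the inclusion above places $\psi$ in the hypothesis of Proposition \ref{ks4}, which then yields $\psi u\in\mathcal{H}^{\mathbf{s}}(\mathbb{R}^{n})$, establishing the desired inclusion $\mathcal{BC}^{\infty}(\mathbb{R}^{n})\cdot\mathcal{H}^{\mathbf{s}}(\mathbb{R}^{n})\subset\mathcal{H}^{\mathbf{s}}(\mathbb{R}^{n})$. There is no substantive obstacle here: all the real work (the partition of unity, Lemma \ref{ks2}, the Sobolev multiplication estimate from Proposition \ref{ks3}(b)) has already been absorbed in the proof of Proposition \ref{ks4}, and this corollary merely records the fact that the integer $m_{\mathbf{s}}$, though it depends on $\mathbf{s}$, is finite for every finite $\mathbf{s}$.
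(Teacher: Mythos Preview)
Your proposal is correct and matches the paper's approach exactly: the paper states this corollary immediately after Proposition~\ref{ks4} without proof, treating it as the trivial consequence of the inclusion $\mathcal{BC}^{\infty}(\mathbb{R}^{n})\subset\mathcal{BC}^{m_{\mathbf{s}}}(\mathbb{R}^{n})$ that you spell out.
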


\begin{lemma}
Let $\lambda _{1}$, $\lambda _{2}\geq 0$, $\lambda _{1}+\lambda _{2}>n/2$.
Then 
\begin{equation*}
\left\langle \cdot \right\rangle _{%
%TCIMACRO{\U{211d} }%
%BeginExpansion
\mathbb{R}
%EndExpansion
^{n}}^{-2\lambda _{1}}\ast \left\langle \cdot \right\rangle _{%
%TCIMACRO{\U{211d} }%
%BeginExpansion
\mathbb{R}
%EndExpansion
^{n}}^{-2\lambda _{2}}\leq \left\Vert \left\langle \cdot \right\rangle _{%
%TCIMACRO{\U{211d} }%
%BeginExpansion
\mathbb{R}
%EndExpansion
^{n}}^{-2\left( \lambda _{1}+\lambda _{2}\right) }\right\Vert _{L^{1}}
\end{equation*}
\end{lemma}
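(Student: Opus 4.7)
The plan is to use the subordination identity
\begin{equation*}
\langle z\rangle^{-2\lambda} = \frac{1}{\Gamma(\lambda)}\int_{0}^{\infty} e^{-t(1+|z|^{2})}\, t^{\lambda-1}\, dt,\qquad \lambda > 0,
\end{equation*}
which represents $\langle\cdot\rangle^{-2\lambda}$ as a positive superposition of Gaussians. Since convolutions of Gaussians are again Gaussians with a visible ``centred at $0$'' maximum, this will reduce the claim to a pointwise inequality on the Gaussian level. The degenerate case $\lambda_{1}=0$ or $\lambda_{2}=0$ can be dispatched separately, as it reduces the left-hand side to $\|\langle\cdot\rangle^{-2\lambda_{2}}\|_{L^{1}}$ (or $\|\langle\cdot\rangle^{-2\lambda_{1}}\|_{L^{1}}$), yielding equality with the right-hand side.

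Assuming $\lambda_{1},\lambda_{2}>0$, I would insert the subordination formula into each factor of the convolution and apply Tonelli (all integrands are nonnegative) to perform the $y$-integration first. Completing the square in $y$ gives the standard identity
\begin{equation*}
\int_{\mathbb{R}^{n}} e^{-t_{1}|x-y|^{2}-t_{2}|y|^{2}}\, dy = \Bigl(\tfrac{\pi}{t_{1}+t_{2}}\Bigr)^{n/2} e^{-\frac{t_{1}t_{2}}{t_{1}+t_{2}}|x|^{2}}.
\end{equation*}
The key observation is the pointwise bound $e^{-\frac{t_{1}t_{2}}{t_{1}+t_{2}}|x|^{2}}\leq 1$, with equality at $x=0$. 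Thus, for each fixed $(t_{1},t_{2})\in(0,\infty)^{2}$, the inner integral is maximised at $x=0$.

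Reversing the steps and comparing with the value at $x=0$ gives
\begin{equation*}
\bigl(\langle\cdot\rangle^{-2\lambda_{1}}\ast\langle\cdot\rangle^{-2\lambda_{2}}\bigr)(x)\leq \bigl(\langle\cdot\rangle^{-2\lambda_{1}}\ast\langle\cdot\rangle^{-2\lambda_{2}}\bigr)(0) = \int \langle y\rangle^{-2(\lambda_{1}+\lambda_{2})}\, dy = \bigl\|\langle\cdot\rangle^{-2(\lambda_{1}+\lambda_{2})}\bigr\|_{L^{1}},
\end{equation*}
where the hypothesis $\lambda_{1}+\lambda_{2}>n/2$ is used precisely to guarantee that this last integral is finite. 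The only mildly technical point is justifying Tonelli throughout and checking that the convolution is pointwise well-defined; both follow immediately from the positivity of every integrand combined with the finiteness of the triple integral at $x=0$. Thus no real obstacle arises beyond the bookkeeping, and the inequality reduces to the trivial bound $e^{-c|x|^{2}}\leq 1$.
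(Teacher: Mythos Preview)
Your proof is correct and takes a genuinely different route from the paper. The paper's argument is a one-line application of H\"older's inequality: setting $p_j=(\lambda_1+\lambda_2)/\lambda_j$ so that $\tfrac{1}{p_1}+\tfrac{1}{p_2}=1$, H\"older bounds the convolution pointwise by $\|\langle\cdot\rangle^{-2\lambda_1}\|_{L^{p_1}}\|\langle\cdot\rangle^{-2\lambda_2}\|_{L^{p_2}}$, and each factor computes explicitly to $\|\langle\cdot\rangle^{-2(\lambda_1+\lambda_2)}\|_{L^1}^{1/p_j}$, so the product collapses to the desired $L^1$-norm. Your subordination argument instead shows that the convolution is maximized at the origin (indeed it implicitly proves the convolution is radially nonincreasing), and then evaluates that maximum directly as $\int\langle y\rangle^{-2(\lambda_1+\lambda_2)}\,dy$. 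The paper's approach is shorter and needs nothing beyond H\"older; yours yields the extra geometric information that the supremum is attained at $0$, and the same reasoning works verbatim for any pair of functions that are positive superpositions of centred Gaussians, not just powers of $\langle\cdot\rangle$.
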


\begin{proof}
The case $\lambda _{1}\cdot \lambda _{2}=0$ is trivial. Thus we may assume
that $\lambda _{1}$, $\lambda _{2}>0$, $\lambda _{1}+\lambda _{2}>n/2$. Then 
\begin{equation*}
\left\langle \cdot \right\rangle ^{-2\lambda _{j}}\in L^{p_{j}},\quad p_{j}=%
\frac{\lambda _{1}+\lambda _{2}}{\lambda _{j}}>1,\quad j=1,2.
\end{equation*}%
Since $\frac{1}{p_{1}}+\frac{1}{p_{2}}=1$, by using H\"{o}lder's inequality
we get%
\begin{equation*}
\left\langle \cdot \right\rangle _{%
%TCIMACRO{\U{211d} }%
%BeginExpansion
\mathbb{R}
%EndExpansion
^{n}}^{-2\lambda _{1}}\ast \left\langle \cdot \right\rangle _{%
%TCIMACRO{\U{211d} }%
%BeginExpansion
\mathbb{R}
%EndExpansion
^{n}}^{-2\lambda _{2}}\leq \left\Vert \left\langle \cdot \right\rangle _{%
%TCIMACRO{\U{211d} }%
%BeginExpansion
\mathbb{R}
%EndExpansion
^{n}}^{-2\lambda _{1}}\right\Vert _{L^{p_{1}}}\left\Vert \left\langle \cdot
\right\rangle _{%
%TCIMACRO{\U{211d} }%
%BeginExpansion
\mathbb{R}
%EndExpansion
^{n}}^{-2\lambda _{2}}\right\Vert _{L^{p_{2}}}
\end{equation*}%
with 
\begin{eqnarray*}
\left\Vert \left\langle \cdot \right\rangle ^{-2\lambda _{j}}\right\Vert
_{L^{p_{j}}}^{p_{j}} &=&\int \left[ \left( 1+\left\vert \xi \right\vert
^{2}\right) ^{-\lambda _{j}}\right] ^{\frac{\lambda _{1}+\lambda _{2}}{%
\lambda _{j}}}\mathtt{d}\xi \\
&=&\int \left( 1+\left\vert \xi \right\vert ^{2}\right) ^{-\lambda
_{1}-\lambda _{2}}\mathtt{d}\xi \\
&=&\left\Vert \left\langle \cdot \right\rangle _{%
%TCIMACRO{\U{211d} }%
%BeginExpansion
\mathbb{R}
%EndExpansion
^{n}}^{-2\left( \lambda _{1}+\lambda _{2}\right) }\right\Vert _{L^{1}},\quad
j=1,2.
\end{eqnarray*}%
Therefore, 
\begin{eqnarray*}
\left\langle \cdot \right\rangle _{%
%TCIMACRO{\U{211d} }%
%BeginExpansion
\mathbb{R}
%EndExpansion
^{n}}^{-2\lambda _{1}}\ast \left\langle \cdot \right\rangle _{%
%TCIMACRO{\U{211d} }%
%BeginExpansion
\mathbb{R}
%EndExpansion
^{n}}^{-2\lambda _{2}} &\leq &\left\Vert \left\langle \cdot \right\rangle _{%
%TCIMACRO{\U{211d} }%
%BeginExpansion
\mathbb{R}
%EndExpansion
^{n}}^{-2\lambda _{1}}\right\Vert _{L^{p_{1}}}\left\Vert \left\langle \cdot
\right\rangle _{%
%TCIMACRO{\U{211d} }%
%BeginExpansion
\mathbb{R}
%EndExpansion
^{n}}^{-2\lambda _{2}}\right\Vert _{L^{p_{2}}} \\
&=&\left\Vert \left\langle \cdot \right\rangle _{%
%TCIMACRO{\U{211d} }%
%BeginExpansion
\mathbb{R}
%EndExpansion
^{n}}^{-2\left( \lambda _{1}+\lambda _{2}\right) }\right\Vert _{L^{1}}^{%
\frac{1}{p_{1}}+\frac{1}{p_{2}}} \\
&=&\left\Vert \left\langle \cdot \right\rangle _{%
%TCIMACRO{\U{211d} }%
%BeginExpansion
\mathbb{R}
%EndExpansion
^{n}}^{-2\left( \lambda _{1}+\lambda _{2}\right) }\right\Vert _{L^{1}}.
\end{eqnarray*}
\end{proof}

\begin{lemma}
Let $s$, $t\in 
%TCIMACRO{\U{211d} }%
%BeginExpansion
\mathbb{R}
%EndExpansion
$, $s+t>n/2$. For $\varepsilon \in \left( 0,s+t-n/2\right) $ we put $\sigma
\left( \varepsilon \right) =\min \left\{ s,t,s+t-n/2-\varepsilon \right\} $.
Then 
\begin{equation*}
\left\langle \cdot \right\rangle _{%
%TCIMACRO{\U{211d} }%
%BeginExpansion
\mathbb{R}
%EndExpansion
^{n}}^{-2s}\ast \left\langle \cdot \right\rangle _{%
%TCIMACRO{\U{211d} }%
%BeginExpansion
\mathbb{R}
%EndExpansion
^{n}}^{-2t}\leq C\left( s,t,\varepsilon ,n\right) \left\langle \cdot
\right\rangle _{%
%TCIMACRO{\U{211d} }%
%BeginExpansion
\mathbb{R}
%EndExpansion
^{n}}^{-2\sigma \left( \varepsilon \right) }.
\end{equation*}%
where 
\begin{equation*}
C\left( s,t,\varepsilon ,n\right) =\left\{ 
\begin{array}{ccc}
2^{2\sigma \left( \varepsilon \right) +1}\left\Vert \left\langle \cdot
\right\rangle _{%
%TCIMACRO{\U{211d} }%
%BeginExpansion
\mathbb{R}
%EndExpansion
^{n}}^{-2\left( s+t-\sigma \left( \varepsilon \right) \right) }\right\Vert
_{L^{1}} & \text{\textit{if}} & s,t\geq 0, \\ 
2^{\left\vert \sigma \left( \varepsilon \right) \right\vert }\left\Vert
\left\langle \cdot \right\rangle _{%
%TCIMACRO{\U{211d} }%
%BeginExpansion
\mathbb{R}
%EndExpansion
^{n}}^{-2\left( s+t\right) }\right\Vert _{L^{1}} & \text{\textit{if}} & s<0%
\text{ \textit{or }}t<0.%
\end{array}%
\right.
\end{equation*}
\end{lemma}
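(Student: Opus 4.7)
The plan is to handle two cases according to the sign of $s$ and $t$.

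\textbf{Case 1 ($s,t\geq 0$).} Since $\varepsilon \in (0,s+t-n/2)$, we have $s+t-n/2-\varepsilon > 0$, and combined with $s,t\geq 0$ this forces $\sigma(\varepsilon)\geq 0$. Split the convolution integral $I(x) = \int \langle x-y\rangle^{-2s}\langle y\rangle^{-2t}\,\mathtt{d}y$ over the regions $A=\{|y|\leq |x|/2\}$ and $B=\mathbb{R}^n\setminus A$. An elementary computation gives $\langle x-y\rangle \geq \tfrac{1}{2}\langle x\rangle$ on $A$ and $\langle y\rangle \geq \tfrac{1}{2}\langle x\rangle$ on $B$. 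On $A$, since $\sigma\geq 0$, factor
\[
\langle x-y\rangle^{-2s} \leq 2^{2\sigma}\langle x\rangle^{-2\sigma}\langle x-y\rangle^{-2(s-\sigma)};
\]
the residual integrand has nonnegative exponents summing to $s+t-\sigma \geq n/2+\varepsilon > n/2$, so the preceding lemma bounds its integral by $\|\langle\cdot\rangle^{-2(s+t-\sigma)}\|_{L^1}$. Symmetrically on $B$, factor $\langle y\rangle^{-2t}\leq 2^{2\sigma}\langle x\rangle^{-2\sigma}\langle y\rangle^{-2(t-\sigma)}$ and invoke the same lemma. Adding the two estimates gives the claimed constant $2^{2\sigma+1}\|\langle\cdot\rangle^{-2(s+t-\sigma)}\|_{L^1}$.

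\textbf{Case 2 ($s<0$ or $t<0$).} By the symmetry $\langle\cdot\rangle^{-2s}\ast\langle\cdot\rangle^{-2t}=\langle\cdot\rangle^{-2t}\ast\langle\cdot\rangle^{-2s}$ (change of variables $y\mapsto x-y$), I may assume $s<0$; then $s+t>n/2$ forces $t>n/2$. Apply Peetre's inequality to $\langle x-y\rangle^{-2s}$ (whose exponent $-2s>0$) to obtain $\langle x-y\rangle^{-2s}\leq 2^{|s|}\langle x\rangle^{-2s}\langle y\rangle^{-2s}$, whence
\[
I(x) \leq 2^{|s|}\langle x\rangle^{-2s}\int \langle y\rangle^{-2(s+t)}\,\mathtt{d}y = 2^{|s|}\|\langle\cdot\rangle^{-2(s+t)}\|_{L^1}\langle x\rangle^{-2s},
\]
the last integral being finite because $2(s+t)>n$. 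Since $\langle x\rangle\geq 1$ and $\sigma\leq s<0$, both $\langle x\rangle^{-2s}\leq \langle x\rangle^{-2\sigma}$ and $|s|\leq |\sigma|$ hold, so the stated bound $2^{|\sigma|}\|\langle\cdot\rangle^{-2(s+t)}\|_{L^1}\langle x\rangle^{-2\sigma}$ follows.

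The argument is essentially bookkeeping around the preceding lemma and Peetre's inequality; no step presents a genuine obstacle. The only mildly delicate point is Case 1, where one has to notice that the hypothesis $\varepsilon<s+t-n/2$ together with $s,t\geq 0$ forces $\sigma\geq 0$, so that the extraction $\langle x-y\rangle^{-2\sigma}\leq 2^{2\sigma}\langle x\rangle^{-2\sigma}$ (and its symmetric counterpart) runs in the correct direction on the two regions of the split.
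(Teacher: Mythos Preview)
Your proof is correct and follows essentially the same approach as the paper: a dyadic split of the convolution integral in the case $s,t\geq 0$, factoring out $\langle x\rangle^{-2\sigma}$ from whichever of $\langle x-y\rangle$ or $\langle y\rangle$ is large, and invoking the preceding lemma on the remainder; then Peetre's inequality when one exponent is negative. The only difference is cosmetic: the paper splits on $\{|x-y|\gtrless |x|/2\}$ and handles the second region by a change of variables reducing it to the first with $s,t$ interchanged, whereas your split on $\{|y|\lessgtr |x|/2\}$ treats the two regions symmetrically and directly. Also note that in Case~2 one actually has $\sigma=s$ exactly (since $s<0<s+t-n/2-\varepsilon$ and $s<0<t$), so the inequalities $\sigma\leq s$ and $|s|\leq|\sigma|$ you invoke are in fact equalities.
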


\begin{proof}
Let us write $\sigma $ for $\sigma \left( \varepsilon \right) $.

\textit{Step 1. }The case $s,t\geq 0$. We have 
\begin{equation*}
\left\langle \cdot \right\rangle _{%
%TCIMACRO{\U{211d} }%
%BeginExpansion
\mathbb{R}
%EndExpansion
^{n}}^{-2s}\ast \left\langle \cdot \right\rangle _{%
%TCIMACRO{\U{211d} }%
%BeginExpansion
\mathbb{R}
%EndExpansion
^{n}}^{-2t}\left( \xi \right) =\int_{\left\vert \eta -\xi \right\vert \geq 
\frac{1}{2}\left\vert \xi \right\vert }\left\langle \xi -\eta \right\rangle
_{%
%TCIMACRO{\U{211d} }%
%BeginExpansion
\mathbb{R}
%EndExpansion
^{n}}^{-2s}\left\langle \eta \right\rangle _{%
%TCIMACRO{\U{211d} }%
%BeginExpansion
\mathbb{R}
%EndExpansion
^{n}}^{-2t}\mathtt{d}\eta +\int_{\left\vert \eta -\xi \right\vert \leq \frac{%
1}{2}\left\vert \xi \right\vert }\left\langle \xi -\eta \right\rangle _{%
%TCIMACRO{\U{211d} }%
%BeginExpansion
\mathbb{R}
%EndExpansion
^{n}}^{-2s}\left\langle \eta \right\rangle _{%
%TCIMACRO{\U{211d} }%
%BeginExpansion
\mathbb{R}
%EndExpansion
^{n}}^{-2t}\mathtt{d}\eta
\end{equation*}

$\left( \mathtt{a}\right) $ If $\left\vert \eta -\xi \right\vert \geq \frac{1%
}{2}\left\vert \xi \right\vert $, then 
\begin{equation*}
\frac{1}{1+\left\vert \xi -\eta \right\vert ^{2}}\leq \frac{4}{1+\left\vert
\xi \right\vert ^{2}}\Leftrightarrow \left\langle \xi -\eta \right\rangle _{%
%TCIMACRO{\U{211d} }%
%BeginExpansion
\mathbb{R}
%EndExpansion
^{n}}^{-1}\leq 2\left\langle \xi \right\rangle _{%
%TCIMACRO{\U{211d} }%
%BeginExpansion
\mathbb{R}
%EndExpansion
^{n}}^{-1}
\end{equation*}%
and 
\begin{eqnarray*}
\left\langle \xi -\eta \right\rangle _{%
%TCIMACRO{\U{211d} }%
%BeginExpansion
\mathbb{R}
%EndExpansion
^{n}}^{-2s} &=&\left\langle \xi -\eta \right\rangle _{%
%TCIMACRO{\U{211d} }%
%BeginExpansion
\mathbb{R}
%EndExpansion
^{n}}^{-2\sigma }\cdot \left\langle \xi -\eta \right\rangle _{%
%TCIMACRO{\U{211d} }%
%BeginExpansion
\mathbb{R}
%EndExpansion
^{n}}^{-2\left( s-\sigma \right) } \\
&\leq &2^{2\sigma }\left\langle \xi \right\rangle _{%
%TCIMACRO{\U{211d} }%
%BeginExpansion
\mathbb{R}
%EndExpansion
^{n}}^{-2\sigma }\left\langle \xi -\eta \right\rangle _{%
%TCIMACRO{\U{211d} }%
%BeginExpansion
\mathbb{R}
%EndExpansion
^{n}}^{-2\left( s-\sigma \right) }
\end{eqnarray*}%
Since $s-\sigma +t=s+t-n/2-\varepsilon -\sigma +n/2+\varepsilon \geq
n/2+\varepsilon >n/2$, the previous lemma allows to evaluate the integral on
the domain $\left\vert \eta -\xi \right\vert \geq \frac{1}{2}\left\vert \xi
\right\vert $%
\begin{eqnarray*}
\int_{\left\vert \eta -\xi \right\vert \geq \frac{1}{2}\left\vert \xi
\right\vert }\left\langle \xi -\eta \right\rangle _{%
%TCIMACRO{\U{211d} }%
%BeginExpansion
\mathbb{R}
%EndExpansion
^{n}}^{-2s}\left\langle \eta \right\rangle _{%
%TCIMACRO{\U{211d} }%
%BeginExpansion
\mathbb{R}
%EndExpansion
^{n}}^{-2t}\mathtt{d}\eta &\leq &2^{2\sigma }\left\langle \xi \right\rangle
_{%
%TCIMACRO{\U{211d} }%
%BeginExpansion
\mathbb{R}
%EndExpansion
^{n}}^{-2\sigma }\int_{\left\vert \eta -\xi \right\vert \geq \frac{1}{2}%
\left\vert \xi \right\vert }\left\langle \xi -\eta \right\rangle _{%
%TCIMACRO{\U{211d} }%
%BeginExpansion
\mathbb{R}
%EndExpansion
^{n}}^{-2\left( s-\sigma \right) }\left\langle \eta \right\rangle _{%
%TCIMACRO{\U{211d} }%
%BeginExpansion
\mathbb{R}
%EndExpansion
^{n}}^{-2t}\mathtt{d}\eta \\
&\leq &2^{2\sigma }\left\langle \xi \right\rangle _{%
%TCIMACRO{\U{211d} }%
%BeginExpansion
\mathbb{R}
%EndExpansion
^{n}}^{-2\sigma }\left( \left\langle \cdot \right\rangle _{%
%TCIMACRO{\U{211d} }%
%BeginExpansion
\mathbb{R}
%EndExpansion
^{n}}^{-2\left( s-\sigma \right) }\ast \left\langle \cdot \right\rangle _{%
%TCIMACRO{\U{211d} }%
%BeginExpansion
\mathbb{R}
%EndExpansion
^{n}}^{-2t}\right) \left( \xi \right) \\
&\leq &2^{2\sigma }\left\Vert \left\langle \cdot \right\rangle _{%
%TCIMACRO{\U{211d} }%
%BeginExpansion
\mathbb{R}
%EndExpansion
^{n}}^{-2\left( s+t-\sigma \right) }\right\Vert _{L^{1}}\left\langle \xi
\right\rangle _{%
%TCIMACRO{\U{211d} }%
%BeginExpansion
\mathbb{R}
%EndExpansion
^{n}}^{-2\sigma }
\end{eqnarray*}

$\left( \mathtt{b}\right) $ If $\left\vert \eta -\xi \right\vert \leq \frac{1%
}{2}\left\vert \xi \right\vert $, then $\left\vert \eta \right\vert \geq
\left\vert \xi \right\vert -\left\vert \eta -\xi \right\vert \geq \frac{1}{2}%
\left\vert \xi \right\vert $. We can therefore use $\left( \mathtt{a}\right) 
$ to evaluate the integral on the domain $\left\vert \eta -\xi \right\vert
\leq \frac{1}{2}\left\vert \xi \right\vert $. It follows that 
\begin{eqnarray*}
\int_{\left\vert \eta -\xi \right\vert \leq \frac{1}{2}\left\vert \xi
\right\vert }\left\langle \xi -\eta \right\rangle _{%
%TCIMACRO{\U{211d} }%
%BeginExpansion
\mathbb{R}
%EndExpansion
^{n}}^{-2s}\left\langle \eta \right\rangle _{%
%TCIMACRO{\U{211d} }%
%BeginExpansion
\mathbb{R}
%EndExpansion
^{n}}^{-2t}\mathtt{d}\eta &\leq &\int_{\left\vert \eta \right\vert \geq 
\frac{1}{2}\left\vert \xi \right\vert }\left\langle \xi -\eta \right\rangle
_{%
%TCIMACRO{\U{211d} }%
%BeginExpansion
\mathbb{R}
%EndExpansion
^{n}}^{-2s}\left\langle \eta \right\rangle _{%
%TCIMACRO{\U{211d} }%
%BeginExpansion
\mathbb{R}
%EndExpansion
^{n}}^{-2t}\mathtt{d}\eta \\
&=&\int_{\left\vert \zeta -\xi \right\vert \geq \frac{1}{2}\left\vert \xi
\right\vert }\left\langle \zeta \right\rangle _{%
%TCIMACRO{\U{211d} }%
%BeginExpansion
\mathbb{R}
%EndExpansion
^{n}}^{-2s}\left\langle \xi -\zeta \right\rangle _{%
%TCIMACRO{\U{211d} }%
%BeginExpansion
\mathbb{R}
%EndExpansion
^{n}}^{-2t}\mathtt{d}\zeta \\
&\leq &2^{2\sigma }\left\Vert \left\langle \cdot \right\rangle _{%
%TCIMACRO{\U{211d} }%
%BeginExpansion
\mathbb{R}
%EndExpansion
^{n}}^{-2\left( s+t-\sigma \right) }\right\Vert _{L^{1}}\left\langle \xi
\right\rangle _{%
%TCIMACRO{\U{211d} }%
%BeginExpansion
\mathbb{R}
%EndExpansion
^{n}}^{-2\sigma }
\end{eqnarray*}

$\left( \mathtt{c}\right) $ From $\left( \mathtt{a}\right) $ and $\left( 
\mathtt{b}\right) $ we obtain%
\begin{equation*}
\left\langle \cdot \right\rangle _{%
%TCIMACRO{\U{211d} }%
%BeginExpansion
\mathbb{R}
%EndExpansion
^{n}}^{-2s}\ast \left\langle \cdot \right\rangle _{%
%TCIMACRO{\U{211d} }%
%BeginExpansion
\mathbb{R}
%EndExpansion
^{n}}^{-2t}\leq 2^{2\sigma +1}\left\Vert \left\langle \cdot \right\rangle _{%
%TCIMACRO{\U{211d} }%
%BeginExpansion
\mathbb{R}
%EndExpansion
^{n}}^{-2\left( s+t-\sigma \right) }\right\Vert _{L^{1}}\left\langle \cdot
\right\rangle _{%
%TCIMACRO{\U{211d} }%
%BeginExpansion
\mathbb{R}
%EndExpansion
^{n}}^{-2\sigma }.
\end{equation*}

\textit{Step 2.} Next we consider the case $s<0$ or $t<0$. If $s<0$ and $%
s+t>n/2$, then $\sigma =s$. In this case we use Peetre's inequality to
obtain:%
\begin{eqnarray*}
\left\langle \cdot \right\rangle _{%
%TCIMACRO{\U{211d} }%
%BeginExpansion
\mathbb{R}
%EndExpansion
^{n}}^{-2s}\ast \left\langle \cdot \right\rangle _{%
%TCIMACRO{\U{211d} }%
%BeginExpansion
\mathbb{R}
%EndExpansion
^{n}}^{-2t}\left( \xi \right) &=&\int \left\langle \xi -\eta \right\rangle _{%
%TCIMACRO{\U{211d} }%
%BeginExpansion
\mathbb{R}
%EndExpansion
^{n}}^{-2s}\left\langle \eta \right\rangle _{%
%TCIMACRO{\U{211d} }%
%BeginExpansion
\mathbb{R}
%EndExpansion
^{n}}^{-2t}\mathtt{d}\eta \\
&\leq &2^{\left\vert s\right\vert }\int \left\langle \xi \right\rangle _{%
%TCIMACRO{\U{211d} }%
%BeginExpansion
\mathbb{R}
%EndExpansion
^{n}}^{-2s}\left\langle \eta \right\rangle _{%
%TCIMACRO{\U{211d} }%
%BeginExpansion
\mathbb{R}
%EndExpansion
^{n}}^{-2\left( s+t\right) }\mathtt{d}\eta \\
&=&2^{\left\vert \sigma \right\vert }\left\Vert \left\langle \cdot
\right\rangle _{%
%TCIMACRO{\U{211d} }%
%BeginExpansion
\mathbb{R}
%EndExpansion
^{n}}^{-2\left( s+t\right) }\right\Vert _{L^{1}}\left\langle \xi
\right\rangle _{%
%TCIMACRO{\U{211d} }%
%BeginExpansion
\mathbb{R}
%EndExpansion
^{n}}^{-2\sigma }
\end{eqnarray*}%
The case $t<0$ can be treated similarly.
\end{proof}

Since $\left\langle \left\langle \cdot \right\rangle \right\rangle ^{\mathbf{%
s}}=\left\langle \cdot \right\rangle _{%
%TCIMACRO{\U{211d} }%
%BeginExpansion
\mathbb{R}
%EndExpansion
^{n_{1}}}^{s_{1}}\otimes ...\otimes \left\langle \cdot \right\rangle _{%
%TCIMACRO{\U{211d} }%
%BeginExpansion
\mathbb{R}
%EndExpansion
^{n_{j}}}^{s_{j}}$, $\mathbf{s}=\left( s_{1},...,s_{j}\right) \in 
%TCIMACRO{\U{211d} }%
%BeginExpansion
\mathbb{R}
%EndExpansion
^{j}$ we obtain

\begin{corollary}
Let $\mathbf{s}$, $\mathbf{t}$, $\mathbf{\varepsilon }$, \textbf{$\sigma $}$%
\left( \mathbf{\varepsilon }\right) \in 
%TCIMACRO{\U{211d} }%
%BeginExpansion
\mathbb{R}
%EndExpansion
^{j}$ such that, $s_{l}+t_{l}>n_{l}/2$, $0<\varepsilon
_{l}<s_{l}+t_{l}-n_{l}/2$, \textbf{$\sigma $}$_{l}\left( \mathbf{\varepsilon 
}\right) =$\textbf{$\sigma $}$_{l}\left( \varepsilon _{l}\right) =\min
\left\{ s_{l},t_{l},s_{l}+t_{l}-n_{l}/2-\varepsilon _{l}\right\} $ for any $%
l\in \left\{ 1,...,j\right\} $. Then there is $C\left( \mathbf{s},\mathbf{t},%
\mathbf{\varepsilon },n\right) >0$ such that%
\begin{equation*}
\left\langle \left\langle \cdot \right\rangle \right\rangle ^{-2\mathbf{s}%
}\ast \left\langle \left\langle \cdot \right\rangle \right\rangle ^{-2%
\mathbf{t}}\leq C\left( \mathbf{s},\mathbf{t},\mathbf{\varepsilon },n\right)
\left\langle \left\langle \cdot \right\rangle \right\rangle ^{-2\mathbf{%
\sigma }\left( \mathbf{\varepsilon }\right) }.
\end{equation*}
\end{corollary}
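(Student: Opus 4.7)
The plan is to reduce the multi-variable statement to the one-variable lemma just proved by exploiting the tensor product structure of the weight $\left\langle \left\langle \cdot \right\rangle \right\rangle^{\mathbf{s}}$. By definition,
\begin{equation*}
\left\langle \left\langle \cdot \right\rangle \right\rangle ^{-2\mathbf{s}}
= \left\langle \cdot \right\rangle _{\mathbb{R}^{n_{1}}}^{-2s_{1}}\otimes \cdots \otimes \left\langle \cdot \right\rangle _{\mathbb{R}^{n_{j}}}^{-2s_{j}},
\end{equation*}
and similarly for $\mathbf{t}$ and $\boldsymbol{\sigma}(\boldsymbol{\varepsilon})$. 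The key observation is that convolution on $\mathbb{R}^{n}=\mathbb{R}^{n_{1}}\times \cdots \times \mathbb{R}^{n_{j}}$ of tensor-product functions factors: by Fubini,
\begin{equation*}
\left( f_{1}\otimes \cdots \otimes f_{j}\right) \ast _{\mathbb{R}^{n}}\left( g_{1}\otimes \cdots \otimes g_{j}\right)
= \left( f_{1}\ast _{\mathbb{R}^{n_{1}}}g_{1}\right) \otimes \cdots \otimes \left( f_{j}\ast _{\mathbb{R}^{n_{j}}}g_{j}\right).
\end{equation*}

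First I would invoke this factorization with $f_{l}=\left\langle \cdot \right\rangle _{\mathbb{R}^{n_{l}}}^{-2s_{l}}$ and $g_{l}=\left\langle \cdot \right\rangle _{\mathbb{R}^{n_{l}}}^{-2t_{l}}$. Then for each $l\in \left\{ 1,\ldots ,j\right\} $, the hypotheses $s_{l}+t_{l}>n_{l}/2$ and $0<\varepsilon _{l}<s_{l}+t_{l}-n_{l}/2$ are precisely those required by the previous lemma applied on $\mathbb{R}^{n_{l}}$, which yields
\begin{equation*}
\left\langle \cdot \right\rangle _{\mathbb{R}^{n_{l}}}^{-2s_{l}}\ast \left\langle \cdot \right\rangle _{\mathbb{R}^{n_{l}}}^{-2t_{l}}\leq C\left( s_{l},t_{l},\varepsilon _{l},n_{l}\right) \left\langle \cdot \right\rangle _{\mathbb{R}^{n_{l}}}^{-2\sigma _{l}\left( \varepsilon _{l}\right) }.
\end{equation*}
Taking the tensor product of these pointwise bounds (all quantities being nonnegative, so the inequalities multiply) gives
\begin{equation*}
\left\langle \left\langle \cdot \right\rangle \right\rangle ^{-2\mathbf{s}}\ast \left\langle \left\langle \cdot \right\rangle \right\rangle ^{-2\mathbf{t}}\leq \left( \prod_{l=1}^{j}C\left( s_{l},t_{l},\varepsilon _{l},n_{l}\right) \right) \left\langle \left\langle \cdot \right\rangle \right\rangle ^{-2\boldsymbol{\sigma }\left( \boldsymbol{\varepsilon }\right) },
\end{equation*}
so one defines $C\left( \mathbf{s},\mathbf{t},\boldsymbol{\varepsilon },n\right) = \prod_{l=1}^{j}C\left( s_{l},t_{l},\varepsilon _{l},n_{l}\right) $.

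There is no real obstacle here: the only thing worth verifying carefully is the factorization of the convolution, which is a routine application of Fubini once one checks absolute integrability of the iterated integrand (this is immediate since each factor $\left\langle \cdot \right\rangle _{\mathbb{R}^{n_{l}}}^{-2s_{l}}\ast \left\langle \cdot \right\rangle _{\mathbb{R}^{n_{l}}}^{-2t_{l}}$ is already finite pointwise by the previous lemma). Everything else is bookkeeping.
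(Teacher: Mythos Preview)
Your proof is correct and follows exactly the approach indicated in the paper: the paper does not even spell out a separate proof, but simply notes before the corollary that $\left\langle \left\langle \cdot \right\rangle \right\rangle ^{\mathbf{s}}=\left\langle \cdot \right\rangle _{\mathbb{R}^{n_{1}}}^{s_{1}}\otimes \cdots \otimes \left\langle \cdot \right\rangle _{\mathbb{R}^{n_{j}}}^{s_{j}}$, which is precisely the tensor-product factorization you use to reduce to the preceding lemma in each block $\mathbb{R}^{n_l}$.
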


\begin{proposition}
\label{ks7}Let $\mathbf{s}$, $\mathbf{t}$, $\mathbf{\varepsilon }$, \textbf{$%
\sigma $}$\left( \mathbf{\varepsilon }\right) \in 
%TCIMACRO{\U{211d} }%
%BeginExpansion
\mathbb{R}
%EndExpansion
^{j}$ such that, $s_{l}+t_{l}>n_{l}/2$, $0<\varepsilon
_{l}<s_{l}+t_{l}-n_{l}/2$, \textbf{$\sigma $}$_{l}\left( \mathbf{\varepsilon 
}\right) =$\textbf{$\sigma $}$_{l}\left( \varepsilon _{l}\right) =\min
\left\{ s_{l},t_{l},s_{l}+t_{l}-n_{l}/2-\varepsilon _{l}\right\} $ for any $%
l\in \left\{ 1,...,j\right\} $. Then 
\begin{equation*}
\mathcal{H}^{\mathbf{s}}\left( 
%TCIMACRO{\U{211d} }%
%BeginExpansion
\mathbb{R}
%EndExpansion
^{n}\right) \cdot \mathcal{H}^{\mathbf{t}}\left( 
%TCIMACRO{\U{211d} }%
%BeginExpansion
\mathbb{R}
%EndExpansion
^{n}\right) \subset \mathcal{H}^{\mathbf{\sigma }\left( \mathbf{\varepsilon }%
\right) }\left( 
%TCIMACRO{\U{211d} }%
%BeginExpansion
\mathbb{R}
%EndExpansion
^{n}\right)
\end{equation*}
\end{proposition}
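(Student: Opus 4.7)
The plan is to reduce to $u,v\in\mathcal{S}(\mathbb{R}^n)$ by density and then estimate $\|uv\|_{\mathcal{H}^{\boldsymbol{\sigma}(\boldsymbol{\varepsilon})}}$ directly on the Fourier side, using the weighted convolution bound from the preceding corollary as the only non-trivial ingredient.

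First I would use the identity $\widehat{uv}=(2\pi)^{-n}\widehat{u}\ast\widehat{v}$ and insert weights, writing the integrand as
\begin{equation*}
|\widehat{u}(\xi-\eta)||\widehat{v}(\eta)| = \bigl[\langle\langle\xi-\eta\rangle\rangle^{-\mathbf{s}}\langle\langle\eta\rangle\rangle^{-\mathbf{t}}\bigr]\cdot\bigl[\langle\langle\xi-\eta\rangle\rangle^{\mathbf{s}}|\widehat{u}(\xi-\eta)|\,\langle\langle\eta\rangle\rangle^{\mathbf{t}}|\widehat{v}(\eta)|\bigr],
\end{equation*}
and then apply the Cauchy--Schwarz inequality in $\eta$. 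The first factor produces the convolution $\bigl(\langle\langle\cdot\rangle\rangle^{-2\mathbf{s}}\ast\langle\langle\cdot\rangle\rangle^{-2\mathbf{t}}\bigr)(\xi)$, which by the preceding corollary is bounded pointwise by $C\langle\langle\xi\rangle\rangle^{-2\boldsymbol{\sigma}(\boldsymbol{\varepsilon})}$. Multiplying by $\langle\langle\xi\rangle\rangle^{2\boldsymbol{\sigma}(\boldsymbol{\varepsilon})}$, the two weights cancel and we obtain the pointwise bound
\begin{equation*}
\langle\langle\xi\rangle\rangle^{2\boldsymbol{\sigma}(\boldsymbol{\varepsilon})}|\widehat{uv}(\xi)|^2 \leq C(2\pi)^{-2n}\int \langle\langle\xi-\eta\rangle\rangle^{2\mathbf{s}}|\widehat{u}(\xi-\eta)|^2\,\langle\langle\eta\rangle\rangle^{2\mathbf{t}}|\widehat{v}(\eta)|^2\,\mathtt{d}\eta.
\end{equation*}

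Integrating over $\xi$ and swapping the order of integration via Fubini (after the change of variable $\zeta=\xi-\eta$) then yields
\begin{equation*}
\|uv\|_{\mathcal{H}^{\boldsymbol{\sigma}(\boldsymbol{\varepsilon})}}^{2} \leq C(2\pi)^{-2n}\,\|u\|_{\mathcal{H}^{\mathbf{s}}}^{2}\,\|v\|_{\mathcal{H}^{\mathbf{t}}}^{2},
\end{equation*}
which is the desired multiplicative estimate. Density of $\mathcal{S}(\mathbb{R}^n)$ in the three Kato-Sobolev spaces involved (standard, since these are $\mathcal{B}_{p,k}$-type spaces in the sense of H\"ormander) then extends the inequality to arbitrary $u\in\mathcal{H}^{\mathbf{s}}$ and $v\in\mathcal{H}^{\mathbf{t}}$, giving the claimed inclusion.

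The only non-routine step is the pointwise convolution estimate $\langle\langle\cdot\rangle\rangle^{-2\mathbf{s}}\ast\langle\langle\cdot\rangle\rangle^{-2\mathbf{t}}\leq C\langle\langle\cdot\rangle\rangle^{-2\boldsymbol{\sigma}(\boldsymbol{\varepsilon})}$, but this is precisely the content of the corollary just established (tensorising the one-dimensional analysis based on Peetre's inequality and H\"older in the lemma above it). Hence it may be invoked here as a black box, and no further obstacle is anticipated; the proof is essentially an application of Cauchy--Schwarz plus Fubini, with the weighted convolution inequality doing the real work.
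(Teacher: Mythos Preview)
Your proposal is correct and follows essentially the same route as the paper: reduce to Schwartz functions, pass to the Fourier side, apply Cauchy--Schwarz in $\eta$ so that the weight convolution $\langle\langle\cdot\rangle\rangle^{-2\mathbf{s}}\ast\langle\langle\cdot\rangle\rangle^{-2\mathbf{t}}$ appears, invoke the preceding corollary to bound it by $C\langle\langle\cdot\rangle\rangle^{-2\boldsymbol{\sigma}(\boldsymbol{\varepsilon})}$, then integrate in $\xi$ and conclude by density. (One terminological quibble: the spaces in question here are the multi-order Sobolev spaces $\mathcal{H}^{\mathbf{s}}$, not the Kato--Sobolev spaces $\mathcal{K}_p^{\mathbf{s}}$; but the density of $\mathcal{S}$ you need is the standard one, so this does not affect the argument.)
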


\begin{proof}
Let us write $\sigma $ for $\sigma \left( \varepsilon \right) $. Let $u,v\in 
\mathcal{S}\left( 
%TCIMACRO{\U{211d} }%
%BeginExpansion
\mathbb{R}
%EndExpansion
^{n}\right) $. Then 
\begin{multline*}
\left\Vert u\cdot v\right\Vert _{\mathcal{H}^{\mathbf{\sigma }%
}}^{2}=\left\Vert \left\langle \left\langle \cdot \right\rangle
\right\rangle ^{\mathbf{\sigma }}\widehat{u\cdot v}\right\Vert
_{L^{2}}^{2}=\int \left\vert \left\langle \left\langle \xi \right\rangle
\right\rangle ^{\mathbf{\sigma }}\widehat{u\cdot v}\left( \xi \right)
\right\vert ^{2}\mathtt{d}\xi \\
=\left( 2\pi \right) ^{-2n}\int \left\vert \left\langle \left\langle \xi
\right\rangle \right\rangle ^{\mathbf{\sigma }}\widehat{u}\ast \widehat{v}%
\left( \xi \right) \right\vert ^{2}\mathtt{d}\xi
\end{multline*}%
By using Schwarz's inequality and the above corollary we can estimate the
integrand as follows%
\begin{multline*}
\left\vert \left\langle \left\langle \xi \right\rangle \right\rangle ^{%
\mathbf{\sigma }}\widehat{u}\ast \widehat{v}\left( \xi \right) \right\vert
^{2}\leq \left( \int \left\vert \left\langle \left\langle \eta \right\rangle
\right\rangle ^{\mathbf{s}}\widehat{u}\left( \eta \right) \right\vert
\left\vert \left\langle \left\langle \xi -\eta \right\rangle \right\rangle ^{%
\mathbf{t}}\widehat{v}\left( \xi -\eta \right) \right\vert \frac{%
\left\langle \left\langle \xi \right\rangle \right\rangle ^{\mathbf{\sigma }}%
}{\left\langle \left\langle \eta \right\rangle \right\rangle ^{\mathbf{s}%
}\left\langle \left\langle \xi -\eta \right\rangle \right\rangle ^{\mathbf{t}%
}}\mathtt{d}\eta \right) ^{2} \\
\leq \left( \int \left\vert \left\langle \left\langle \eta \right\rangle
\right\rangle ^{\mathbf{s}}\widehat{u}\left( \eta \right) \right\vert
^{2}\left\vert \left\langle \left\langle \xi -\eta \right\rangle
\right\rangle ^{\mathbf{t}}\widehat{v}\left( \xi -\eta \right) \right\vert
^{2}\mathtt{d}\eta \right) \left( \int \frac{\left\langle \left\langle \xi
\right\rangle \right\rangle ^{2\mathbf{\sigma }}}{\left\langle \left\langle
\eta \right\rangle \right\rangle ^{2\mathbf{s}}\left\langle \left\langle \xi
-\eta \right\rangle \right\rangle ^{2\mathbf{t}}}\mathtt{d}\eta \right) \\
\leq C\left( \mathbf{s},\mathbf{t},\mathbf{\varepsilon },n\right) \int
\left\vert \left\langle \left\langle \eta \right\rangle \right\rangle ^{%
\mathbf{s}}\widehat{u}\left( \eta \right) \right\vert ^{2}\left\vert
\left\langle \left\langle \xi -\eta \right\rangle \right\rangle ^{\mathbf{t}}%
\widehat{v}\left( \xi -\eta \right) \right\vert ^{2}\mathtt{d}\eta
\end{multline*}%
Hence 
\begin{eqnarray*}
\left\Vert u\cdot v\right\Vert _{\mathcal{H}^{\mathbf{\sigma }}}^{2} &\leq
&C^{\prime }\left( \mathbf{s},\mathbf{t},\mathbf{\varepsilon },n\right) \int
\left( \int \left\vert \left\langle \left\langle \eta \right\rangle
\right\rangle ^{\mathbf{s}}\widehat{u}\left( \eta \right) \right\vert
^{2}\left\vert \left\langle \left\langle \xi -\eta \right\rangle
\right\rangle ^{\mathbf{t}}\widehat{v}\left( \xi -\eta \right) \right\vert
^{2}\mathtt{d}\eta \right) \mathtt{d}\xi \\
&=&C^{\prime }\left( \mathbf{s},\mathbf{t},\mathbf{\varepsilon },n\right)
\left\Vert u\right\Vert _{\mathcal{H}^{\mathbf{s}}}^{2}\left\Vert
v\right\Vert _{\mathcal{H}^{\mathbf{t}}}^{2}
\end{eqnarray*}%
To conclude we use the fact that $\mathcal{S}\left( 
%TCIMACRO{\U{211d} }%
%BeginExpansion
\mathbb{R}
%EndExpansion
^{n}\right) $ is dense in any $\mathcal{H}^{\mathbf{m}}\left( 
%TCIMACRO{\U{211d} }%
%BeginExpansion
\mathbb{R}
%EndExpansion
^{n}\right) $.
\end{proof}

\begin{corollary}
Let $\mathbf{s}\in 
%TCIMACRO{\U{211d} }%
%BeginExpansion
\mathbb{R}
%EndExpansion
^{j}$. If $s_{1}>n_{1}/2,...,s_{j}>n_{j}/2$, then $\mathcal{H}^{\mathbf{s}%
}\left( 
%TCIMACRO{\U{211d} }%
%BeginExpansion
\mathbb{R}
%EndExpansion
^{n}\right) $ is a Banach algebra.
\end{corollary}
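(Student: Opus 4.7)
The plan is to derive the corollary directly from Proposition \ref{ks7} by taking $\mathbf{t}=\mathbf{s}$ and choosing the auxiliary parameters $\boldsymbol{\varepsilon}$ carefully enough that the resulting $\boldsymbol{\sigma}(\boldsymbol{\varepsilon})$ coincides with $\mathbf{s}$ itself. Since $\mathcal{H}^{\mathbf{s}}(\mathbb{R}^{n})$ is by definition a Hilbert (hence Banach) space via the Fourier-transform isomorphism with $L^{2}(\langle\langle\cdot\rangle\rangle^{2\mathbf{s}}\mathtt{d}\xi)$, the only point that actually requires argument is the continuous embedding $\mathcal{H}^{\mathbf{s}}\cdot\mathcal{H}^{\mathbf{s}}\subset\mathcal{H}^{\mathbf{s}}$.

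First I would check that the hypothesis of Proposition \ref{ks7} is satisfied with $\mathbf{t}=\mathbf{s}$: for each $l$, the requirement $s_l+t_l>n_l/2$ becomes $2s_l>n_l/2$, which is a weaker condition than the assumed $s_l>n_l/2$. Next I would pick $\varepsilon_l\in(0,\,s_l-n_l/2]$, which is a nonempty interval precisely because $s_l>n_l/2$; with such a choice,
\begin{equation*}
\boldsymbol{\sigma}_l(\boldsymbol{\varepsilon})=\min\{s_l,\,s_l,\,2s_l-n_l/2-\varepsilon_l\}=s_l,
\end{equation*}
since $2s_l-n_l/2-\varepsilon_l\geq s_l$. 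Proposition \ref{ks7} then yields $\mathcal{H}^{\mathbf{s}}\cdot\mathcal{H}^{\mathbf{s}}\subset\mathcal{H}^{\mathbf{s}}$, and inspecting the proof of that proposition (which produces an explicit continuity constant $C'(\mathbf{s},\mathbf{s},\boldsymbol{\varepsilon},n)$) gives a bound
\begin{equation*}
\|uv\|_{\mathcal{H}^{\mathbf{s}}}\leq C\,\|u\|_{\mathcal{H}^{\mathbf{s}}}\|v\|_{\mathcal{H}^{\mathbf{s}}},\qquad u,v\in\mathcal{H}^{\mathbf{s}},
\end{equation*}
with $C=C(\mathbf{s},n)$ depending only on $\mathbf{s}$ and $n$.

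Finally, to upgrade this to the formal Banach algebra axiom (submultiplicative norm with unit bound), I would replace the original norm by the equivalent norm $\|\!|u|\!\|_{\mathcal{H}^{\mathbf{s}}}=C\,\|u\|_{\mathcal{H}^{\mathbf{s}}}$, for which $\|\!|uv|\!\|_{\mathcal{H}^{\mathbf{s}}}\leq\|\!|u|\!\|_{\mathcal{H}^{\mathbf{s}}}\|\!|v|\!\|_{\mathcal{H}^{\mathbf{s}}}$. Completeness of $\mathcal{H}^{\mathbf{s}}$ is preserved, so it becomes a Banach algebra. There is no genuine obstacle: the only subtlety is the parameter-chasing to guarantee $\boldsymbol{\sigma}(\boldsymbol{\varepsilon})=\mathbf{s}$, which is exactly where the strict inequality $s_l>n_l/2$ (rather than the weaker $2s_l>n_l/2$ needed merely for the product to lie in some $\mathcal{H}^{\boldsymbol{\sigma}}$) gets used.
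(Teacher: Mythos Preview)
Your proof is correct and follows exactly the approach the paper intends: the corollary is stated without proof as an immediate consequence of Proposition \ref{ks7}, and your parameter choice $\varepsilon_l\in(0,s_l-n_l/2]$ to force $\boldsymbol{\sigma}(\boldsymbol{\varepsilon})=\mathbf{s}$ is the natural way to spell this out. One very minor remark: $\mathcal{H}^{\mathbf{s}}(\mathbb{R}^n)$ has no unit (the constant $1$ is not in $L^2$), so the phrase ``with unit bound'' is not needed---the renorming to obtain a submultiplicative norm is all that is required.
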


\section{Kato-Sobolev spaces $\mathcal{K}_{p}^{\mathbf{s}}\left( 
%TCIMACRO{\U{211d} }%
%BeginExpansion
\mathbb{R}
%EndExpansion
^{n}\right) $}

We begin by proving some results that will be useful later. Let $\varphi
,\psi \in \mathcal{C}_{0}^{\infty }\left( \mathbb{R}^{n}\right) $ (or $%
\varphi ,\psi \in \mathcal{S}\left( \mathbb{R}^{n}\right) $). Then the maps 
\begin{eqnarray*}
\mathbb{R}^{n}\times \mathbb{R}^{n} &\ni &\left( x,y\right) \overset{f}{%
\longrightarrow }\varphi \left( x\right) \psi \left( x-y\right) =\left(
\varphi \tau _{y}\psi \right) \left( x\right) \in 
%TCIMACRO{\U{2102} }%
%BeginExpansion
\mathbb{C}
%EndExpansion
, \\
\mathbb{R}^{n}\times \mathbb{R}^{n} &\ni &\left( x,y\right) \overset{g}{%
\longrightarrow }\varphi \left( y\right) \psi \left( x-y\right) =\varphi
\left( y\right) \left( \tau _{y}\psi \right) \left( x\right) \in 
%TCIMACRO{\U{2102} }%
%BeginExpansion
\mathbb{C}
%EndExpansion
,
\end{eqnarray*}%
are in $\mathcal{C}_{0}^{\infty }\left( \mathbb{R}^{n}\times \mathbb{R}%
^{n}\right) $ (respectively in $\mathcal{S}\left( \mathbb{R}^{n}\times 
\mathbb{R}^{n}\right) $). To see this we note that%
\begin{equation*}
f=\left( \varphi \otimes \psi \right) \circ T,\quad g=\left( \varphi \otimes
\psi \right) \circ S
\end{equation*}%
where 
\begin{eqnarray*}
T &:&\mathbb{R}^{n}\times \mathbb{R}^{n}\rightarrow \mathbb{R}^{n}\times 
\mathbb{R}^{n},\quad T\left( x,y\right) =\left( x,x-y\right) ,\quad T\equiv
\left( 
\begin{array}{cc}
\mathtt{I} & 0 \\ 
\mathtt{I} & -\mathtt{I}%
\end{array}%
\right) , \\
S &:&\mathbb{R}^{n}\times \mathbb{R}^{n}\rightarrow \mathbb{R}^{n}\times 
\mathbb{R}^{n},\quad S\left( x,y\right) =\left( y,x-y\right) ,\quad S\equiv
\left( 
\begin{array}{cc}
0 & \mathtt{I} \\ 
\mathtt{I} & -\mathtt{I}%
\end{array}%
\right) .
\end{eqnarray*}

Let $u\in \mathcal{D}^{\prime }\left( \mathbb{R}^{n}\right) $ (or $u\in 
\mathcal{S}^{\prime }\left( \mathbb{R}^{n}\right) $). Then using Fubini
theorem for distributions we get 
\begin{eqnarray*}
\left\langle u\otimes 1,f\right\rangle &=&\left\langle \left( u\otimes
1\right) \left( x,y\right) ,\varphi \left( x\right) \psi \left( x-y\right)
\right\rangle \\
&=&\left\langle u\left( x\right) ,\left\langle 1\left( y\right) ,\varphi
\left( x\right) \psi \left( x-y\right) \right\rangle \right\rangle \\
&=&\left\langle u\left( x\right) ,\varphi \left( x\right) \left\langle
1\left( y\right) ,\psi \left( x-y\right) \right\rangle \right\rangle \\
&=&\left\langle u\left( x\right) ,\varphi \left( x\right) \int \psi \left(
x-y\right) \mathtt{d}y\right\rangle \\
&=&\left( \int \psi \right) \left\langle u,\varphi \right\rangle
\end{eqnarray*}%
and 
\begin{eqnarray*}
\left\langle u\otimes 1,f\right\rangle &=&\left\langle 1\left( y\right)
,\left\langle u\left( x\right) ,\varphi \left( x\right) \psi \left(
x-y\right) \right\rangle \right\rangle \\
&=&\int \left\langle u,\varphi \tau _{y}\psi \right\rangle \mathtt{d}y.
\end{eqnarray*}%
It follows that%
\begin{equation*}
\left( \int \psi \right) \left\langle u,\varphi \right\rangle =\int
\left\langle u,\varphi \tau _{y}\psi \right\rangle \mathtt{d}y
\end{equation*}%
valid for

\begin{itemize}
\item[\texttt{(i)}] $u\in \mathcal{D}^{\prime }\left( \mathbb{R}^{n}\right) $%
, $\varphi ,\psi \in \mathcal{C}_{0}^{\infty }\left( \mathbb{R}^{n}\right) $;

\item[\texttt{(ii)}] $u\in \mathcal{S}^{\prime }\left( \mathbb{R}^{n}\right)
,$ $\varphi ,\psi \in \mathcal{S}\left( \mathbb{R}^{n}\right) $.
\end{itemize}

We also have%
\begin{eqnarray*}
\left\langle u\otimes 1,g\right\rangle &=&\left\langle \left( u\otimes
1\right) \left( x,y\right) ,\varphi \left( y\right) \psi \left( x-y\right)
\right\rangle \\
&=&\left\langle u\left( x\right) ,\left\langle 1\left( y\right) ,\varphi
\left( y\right) \psi \left( x-y\right) \right\rangle \right\rangle \\
&=&\left\langle u\left( x\right) ,\left( \varphi \ast \psi \right) \left(
x\right) \right\rangle \\
&=&\left\langle u,\varphi \ast \psi \right\rangle
\end{eqnarray*}%
and 
\begin{eqnarray*}
\left\langle u\otimes 1,g\right\rangle &=&\left\langle 1\left( y\right)
,\left\langle u\left( x\right) ,\varphi \left( y\right) \psi \left(
x-y\right) \right\rangle \right\rangle \\
&=&\int \varphi \left( y\right) \left\langle u,\tau _{y}\psi \right\rangle 
\mathtt{d}y.
\end{eqnarray*}%
Hence%
\begin{equation*}
\left\langle u,\varphi \ast \psi \right\rangle =\int \varphi \left(
y\right) \left\langle u,\tau _{y}\psi \right\rangle \mathtt{d}y
\end{equation*}%
true for

\begin{itemize}
\item[\texttt{(i)}] $u\in \mathcal{D}^{\prime }\left( \mathbb{R}^{n}\right) $%
, $\varphi ,\psi \in \mathcal{C}_{0}^{\infty }\left( \mathbb{R}^{n}\right) $;

\item[\texttt{(ii)}] $u\in \mathcal{S}^{\prime }\left( \mathbb{R}^{n}\right)
,$ $\varphi ,\psi \in \mathcal{S}\left( \mathbb{R}^{n}\right) $.
\end{itemize}

\begin{lemma}
Let $\varphi ,\psi \in \mathcal{C}_{0}^{\infty }\left( \mathbb{R}^{n}\right) 
$ $($or $\varphi ,\psi \in \mathcal{S}\left( \mathbb{R}^{n}\right) )$ and $%
u\in \mathcal{D}^{\prime }\left( \mathbb{R}^{n}\right) $ $($or $u\in 
\mathcal{S}^{\prime }\left( \mathbb{R}^{n}\right) )$. Then 
\begin{equation}
\left( \int \psi \right) \left\langle u,\varphi \right\rangle =\int
\left\langle u,\varphi \tau _{y}\psi \right\rangle \mathtt{d}y  \label{ks5}
\end{equation}%
\begin{equation}
\left\langle u,\varphi \ast \psi \right\rangle =\int \varphi \left(
y\right) \left\langle u,\tau _{y}\psi \right\rangle \mathtt{d}y  \label{ks9}
\end{equation}
\end{lemma}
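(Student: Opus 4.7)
The plan is to reproduce, in the lemma format, exactly the computation sketched in the paragraph immediately preceding the statement. Both identities are consequences of Fubini's theorem for distributions applied to $u\otimes 1$ paired against suitable auxiliary two-variable functions. So the real content is to check that the ingredients are legitimate, then let Fubini do the work.

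First I would set up the auxiliary kernels $f(x,y)=\varphi(x)\psi(x-y)$ and $g(x,y)=\varphi(y)\psi(x-y)$ and verify that they belong to $\mathcal{C}_0^\infty(\mathbb{R}^n\times\mathbb{R}^n)$ in case \texttt{(i)} and to $\mathcal{S}(\mathbb{R}^n\times\mathbb{R}^n)$ in case \texttt{(ii)}. This is precisely the observation already made in the excerpt: $f=(\varphi\otimes\psi)\circ T$ and $g=(\varphi\otimes\psi)\circ S$ with $T,S$ linear isomorphisms of $\mathbb{R}^{2n}$, and composition with an invertible linear map preserves both $\mathcal{C}_0^\infty$ and $\mathcal{S}$. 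Next, since $1\in\mathcal{D}'(\mathbb{R}^n)$ and (for case \texttt{(ii)}) also $1\in\mathcal{S}'(\mathbb{R}^n)$, the tensor product $u\otimes 1$ is a well-defined element of $\mathcal{D}'(\mathbb{R}^{2n})$ (respectively $\mathcal{S}'(\mathbb{R}^{2n})$), so the pairings $\langle u\otimes 1,f\rangle$ and $\langle u\otimes 1,g\rangle$ make sense.

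For \eqref{ks5} I would evaluate $\langle u\otimes 1,f\rangle$ in the two possible orders allowed by Fubini for distributions. Doing the $y$-integration first and using that the action of $1$ on a test function is ordinary integration, the inner pairing gives $\varphi(x)\int\psi(x-y)\,\mathtt{d}y=\bigl(\int\psi\bigr)\varphi(x)$, so the outer pairing produces $\bigl(\int\psi\bigr)\langle u,\varphi\rangle$. Doing the $x$-pairing first, for each fixed $y$ the inner bracket equals $\langle u,\varphi\tau_y\psi\rangle$, and the outer $y$-integration produces $\int\langle u,\varphi\tau_y\psi\rangle\,\mathtt{d}y$; equating the two expressions gives \eqref{ks5}. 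For \eqref{ks9} I would apply the same device to $g$: in one order the inner pairing against $1(y)$ yields $(\varphi*\psi)(x)$, whence $\langle u,\varphi*\psi\rangle$; in the other order it yields $\int\varphi(y)\langle u,\tau_y\psi\rangle\,\mathtt{d}y$.

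The only thing that requires care is the invocation of Fubini's theorem for distributions in each setting. In case \texttt{(i)} this is the standard version for $\mathcal{D}'$, which applies because $f,g\in\mathcal{C}_0^\infty(\mathbb{R}^{2n})$ and $u\otimes 1\in\mathcal{D}'(\mathbb{R}^{2n})$; in case \texttt{(ii)} it is the tempered analogue, applicable because $f,g\in\mathcal{S}(\mathbb{R}^{2n})$ and $u\otimes 1\in\mathcal{S}'(\mathbb{R}^{2n})$. Everything else is bookkeeping, so the ``hard part'' is no more than verifying these hypotheses; the two identities then drop out identically to the display computations already recorded above the statement.
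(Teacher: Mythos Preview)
Your proposal is correct and follows exactly the approach of the paper: the lemma is simply the formal statement of the two computations with $\langle u\otimes 1,f\rangle$ and $\langle u\otimes 1,g\rangle$ that are carried out in the paragraphs immediately preceding it, using Fubini for distributions in the $\mathcal{D}'$ and $\mathcal{S}'$ settings respectively. There is nothing to add or change.
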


If $\varepsilon _{1},...,\varepsilon _{n}$ is a basis in $\mathbb{R}^{n}$,
we say that $\Gamma =\oplus _{j=1}^{n}%
%TCIMACRO{\U{2124} }%
%BeginExpansion
\mathbb{Z}
%EndExpansion
\varepsilon _{j}$ is a lattice.

Let $\Gamma \subset \mathbb{R}^{n}$ be a lattice. Let $\psi \in \mathcal{S}%
\left( \mathbb{R}^{n}\right) $. Then $\sum_{\gamma \in \Gamma }\tau _{\gamma
}\psi =\sum_{\gamma \in \Gamma }\psi \left( \cdot -\gamma \right) $ is
uniformly convergent on compact subsets of $\mathbb{R}^{n}$. Since $\partial
^{\alpha }\psi \in \mathcal{S}\left( \mathbb{R}^{n}\right) $, it follows
that there is $\Psi \in \mathcal{C}^{\infty }\left( \mathbb{R}^{n}\right) $
such that%
\begin{equation*}
\Psi =\sum_{\gamma \in \Gamma }\tau _{\gamma }\psi =\sum_{\gamma \in \Gamma
}\psi \left( \cdot -\gamma \right) \text{\quad \textit{in }}\mathcal{C}%
^{\infty }\left( \mathbb{R}^{n}\right) .
\end{equation*}%
Moreover we have $\tau _{\gamma }\Psi =\Psi \left( \cdot -\gamma \right)
=\Psi $ for any $\gamma \in \Gamma $. From here we obtain that $\Psi \in 
\mathcal{BC}^{\infty }\left( \mathbb{R}^{n}\right) $. If $\Psi \left(
y\right) \neq 0$ for any $y\in 
%TCIMACRO{\U{211d} }%
%BeginExpansion
\mathbb{R}
%EndExpansion
^{n}$, then $\frac{1}{\Psi }\in \mathcal{BC}^{\infty }\left( \mathbb{R}%
^{n}\right) $.

Let $\varphi \in \mathcal{S}\left( \mathbb{R}^{n}\right) $. Then 
\begin{equation*}
\varphi \Psi =\sum_{\gamma \in \Gamma }\varphi \left( \tau _{\gamma }\psi
\right)
\end{equation*}%
with the series convergent in $\mathcal{S}\left( \mathbb{R}^{n}\right) $.
Indeed we have 
\begin{multline*}
\sum_{\gamma \in \Gamma }\left\langle x\right\rangle ^{k}\left\vert \partial
^{\alpha }\varphi \left( x\right) \partial ^{\beta }\psi \left( x-\gamma
\right) \right\vert \\
\leq \sup_{y}\left\langle y\right\rangle ^{n+1}\left\vert \partial ^{\beta
}\psi \left( y\right) \right\vert \sum_{\gamma \in \Gamma }\left\langle
x\right\rangle ^{k}\left\vert \partial ^{\alpha }\varphi \left( x\right)
\left\langle x-\gamma \right\rangle ^{-n-1}\right\vert \\
\leq 2^{\frac{n+1}{2}}\sup_{y}\left\langle y\right\rangle ^{n+1}\left\vert
\partial ^{\beta }\psi \left( y\right) \right\vert \sup_{z}\left\langle
z\right\rangle ^{k+n+1}\left\vert \partial ^{\alpha }\varphi \left( z\right)
\right\vert \sum_{\gamma \in \Gamma }\left\langle \gamma \right\rangle
^{-n-1}.
\end{multline*}%
This estimate proves the convergence of the series in $\mathcal{S}\left( 
\mathbb{R}^{n}\right) $. Let $\chi $ be the sum of the series $\sum_{\gamma
\in \Gamma }\varphi \left( \tau _{\gamma }\psi \right) $ in $\mathcal{S}%
\left( \mathbb{R}^{n}\right) $. Then for any $y\in 
%TCIMACRO{\U{211d} }%
%BeginExpansion
\mathbb{R}
%EndExpansion
^{n}$ we have%
\begin{eqnarray*}
\chi \left( y\right) &=&\left\langle \delta _{y},\chi \right\rangle
=\left\langle \delta _{y},\sum_{\gamma \in \Gamma }\varphi \left( \tau
_{\gamma }\psi \right) \right\rangle \\
&=&\sum_{\gamma \in \Gamma }\left\langle \delta _{y},\varphi \left( \tau
_{\gamma }\psi \right) \right\rangle =\sum_{\gamma \in \Gamma }\varphi
\left( y\right) \psi \left( y-\gamma \right) \\
&=&\varphi \left( y\right) \Psi \left( y\right) .
\end{eqnarray*}%
So $\varphi \Psi =\sum_{\gamma \in \Gamma }\varphi \left( \tau _{\gamma
}\psi \right) $ in $\mathcal{S}\left( \mathbb{R}^{n}\right) $.

If $\psi ,\varphi \in \mathcal{C}_{0}^{\infty }\left( \mathbb{R}^{n}\right) $
and $\mathcal{S}\left( \mathbb{R}^{n}\right) $ is replaced by $\mathcal{C}%
_{0}^{\infty }\left( \mathbb{R}^{n}\right) $, then the previous observations
are trivial.

\begin{lemma}
Let $u\in \mathcal{D}^{\prime }\left( \mathbb{R}^{n}\right) $ $($or $u\in 
\mathcal{S}^{\prime }\left( \mathbb{R}^{n}\right) )$ and $\psi ,\varphi \in 
\mathcal{C}_{0}^{\infty }\left( \mathbb{R}^{n}\right) $ $($or $\psi ,\varphi
\in \mathcal{S}\left( \mathbb{R}^{n}\right) )$. Then $\Psi =\sum_{\gamma \in
\Gamma }\tau _{\gamma }\psi \in \mathcal{BC}^{\infty }\left( \mathbb{R}%
^{n}\right) $ is $\Gamma $-periodic and%
\begin{equation}
\left\langle u,\Psi \varphi \right\rangle =\sum_{\gamma \in \Gamma
}\left\langle u,\left( \tau _{\gamma }\psi \right) \varphi \right\rangle .
\label{ks6}
\end{equation}
\end{lemma}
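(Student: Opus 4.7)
The proof should be essentially a bookkeeping matter: everything needed has already been established in the paragraphs preceding the lemma statement. The plan is to apply the (sequentially) continuous linear functional $u$ to the series $\varphi \Psi = \sum_{\gamma\in\Gamma} \varphi(\tau_\gamma\psi)$, whose convergence in the appropriate test-function topology was verified just above.

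In more detail, the first assertion---that $\Psi \in \mathcal{BC}^\infty(\mathbb{R}^n)$ and is $\Gamma$-periodic---is precisely what was proved in the discussion preceding the lemma (uniform convergence of $\sum_{\gamma} \tau_\gamma \partial^\alpha\psi$ on compacta together with the translation-invariance $\tau_\gamma \Psi = \Psi$). So it remains to establish the summation identity \eqref{ks6}. I would split into the two cases:

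\textbf{Case (i):} $\varphi,\psi \in \mathcal{S}(\mathbb{R}^n)$, $u\in\mathcal{S}'(\mathbb{R}^n)$. The telescope estimate done above shows that the partial sums $\sum_{|\gamma|\le N} \varphi\,(\tau_\gamma\psi)$ converge to $\varphi\Psi$ in $\mathcal{S}(\mathbb{R}^n)$. Since $u$ is continuous on $\mathcal{S}(\mathbb{R}^n)$, applying $u$ term by term yields
\begin{equation*}
\langle u,\Psi\varphi\rangle = \langle u,\varphi\Psi\rangle = \lim_{N\to\infty}\Bigl\langle u, \sum_{|\gamma|\le N}\varphi(\tau_\gamma\psi)\Bigr\rangle = \sum_{\gamma\in\Gamma}\langle u,(\tau_\gamma\psi)\varphi\rangle.
\end{equation*}

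\textbf{Case (ii):} $\varphi,\psi\in \mathcal{C}_0^\infty(\mathbb{R}^n)$, $u\in\mathcal{D}'(\mathbb{R}^n)$. Here the sum $\sum_\gamma \varphi(\tau_\gamma\psi)$ is actually finite: only those $\gamma$ with $(\mathrm{supp}\,\varphi)\cap(\mathrm{supp}\,\psi+\gamma)\neq\emptyset$ contribute. Moreover, all the terms (and the sum $\varphi\Psi$) live in $\mathcal{C}_0^\infty(K)$ for a common compact $K$, so the convergence is trivially in $\mathcal{C}_0^\infty(\mathbb{R}^n)$, and \eqref{ks6} follows by linearity.

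There is no real obstacle: the only thing to be careful about is the topology in which the identity $\varphi\Psi = \sum_\gamma \varphi(\tau_\gamma\psi)$ holds. The bound $\langle x\rangle^{k}|\partial^\alpha\varphi(x)|\,|\partial^\beta\psi(x-\gamma)| \le C\,\langle\gamma\rangle^{-n-1}$ derived above via Peetre's inequality already supplies Schwartz-seminorm convergence, after which continuity of $u$ closes the argument.
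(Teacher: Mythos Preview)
Your proposal is correct and follows exactly the route the paper intends: the lemma is stated without proof precisely because the preceding paragraphs already established that $\varphi\Psi=\sum_{\gamma}\varphi(\tau_\gamma\psi)$ in $\mathcal{S}(\mathbb{R}^n)$ (with the $\mathcal{C}_0^\infty$ case dismissed as trivial), so applying the continuous functional $u$ termwise is all that remains. Your two-case breakdown simply spells this out.
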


\begin{lemma}
$(\mathtt{a})$ Let $\chi \in \mathcal{S}\left( \mathbb{R}^{n}\right) $ and $%
u\in \mathcal{S}^{\prime }\left( \mathbb{R}^{n}\right) $. Then $\widehat{%
\chi u}\in \mathcal{S}^{\prime }\left( \mathbb{R}^{n}\right) \cap \mathcal{C}%
_{pol}^{\infty }\left( \mathbb{R}^{n}\right) $. In fact we have 
\begin{equation*}
\widehat{\chi u}\left( \xi \right) =\left\langle \mathtt{e}^{-\mathtt{i}%
\left\langle \cdot ,\xi \right\rangle }u,\chi \right\rangle =\left\langle u,%
\mathtt{e}^{-\mathtt{i}\left\langle \cdot ,\xi \right\rangle }\chi
\right\rangle ,\quad \xi \in \mathbb{R}^{n}.
\end{equation*}

$(\mathtt{b})$ Let $u\in \mathcal{D}^{\prime }\left( \mathbb{R}^{n}\right) $ 
$($or $u\in \mathcal{S}^{\prime }\left( \mathbb{R}^{n}\right) )$ and $\chi
\in \mathcal{C}_{0}^{\infty }\left( \mathbb{R}^{n}\right) $ $($or $\chi \in 
\mathcal{S}\left( \mathbb{R}^{n}\right) )$. Then 
\begin{equation*}
\mathbb{R}^{n}\times \mathbb{R}^{n}\ni \left( y,\xi \right) \rightarrow 
\widehat{u\tau _{y}\chi }\left( \xi \right) =\left\langle u,\mathtt{e}^{-%
\mathtt{i}\left\langle \cdot ,\xi \right\rangle }\chi \left( \cdot -y\right)
\right\rangle \in 
%TCIMACRO{\U{2102}}%
%BeginExpansion
\mathbb{C}%
%EndExpansion
\end{equation*}%
is a $\mathcal{C}^{\infty }$-function.
\end{lemma}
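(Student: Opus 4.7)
The plan is to prove both parts by reducing each assertion to a statement about smoothness of a parameter map into the test function space, and then composing with the (sequentially) continuous functional $u$.

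For part $\left(\mathtt{a}\right)$ I would first note that $\chi u\in \mathcal{S}^{\prime }\left( \mathbb{R}^{n}\right) $ by the standard multiplier action of $\mathcal{S}$ on $\mathcal{S}^{\prime }$, so $\widehat{\chi u}\in \mathcal{S}^{\prime }\left( \mathbb{R}^{n}\right) $. To show it is a $\mathcal{C}_{pol}^{\infty }$ function given by the stated formula, I introduce the candidate
\begin{equation*}
G\left( \xi \right) :=\left\langle u,\mathtt{e}^{-\mathtt{i}\left\langle \cdot ,\xi \right\rangle }\chi \right\rangle ,\quad \xi \in \mathbb{R}^{n},
\end{equation*}
and study the map $\Phi :\mathbb{R}^{n}\rightarrow \mathcal{S}\left( \mathbb{R}^{n}\right) $, $\Phi \left( \xi \right) =\mathtt{e}^{-\mathtt{i}\left\langle \cdot ,\xi \right\rangle }\chi $. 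The main technical point is to verify that $\Phi $ is $\mathcal{C}^{\infty }$ in the sense of Schwartz seminorms, with $\partial _{\xi }^{\alpha }\Phi \left( \xi \right) =\left( -\mathtt{i}\cdot \right) ^{\alpha }\mathtt{e}^{-\mathtt{i}\left\langle \cdot ,\xi \right\rangle }\chi $. This reduces to a routine estimate of difference quotients of $\mathtt{e}^{-\mathtt{i}\left\langle x,\xi \right\rangle }$ combined with the rapid decay of $\chi $. Composing with the continuous linear form $u$ gives $G\in \mathcal{C}^{\infty }$, and the continuity estimate $\left\vert \left\langle u,\psi \right\rangle \right\vert \leq C\sup_{\left\vert \alpha \right\vert ,\left\vert \beta \right\vert \leq N}\left\Vert x^{\alpha }\partial ^{\beta }\psi \right\Vert _{\infty }$ applied via Leibniz yields $\left\vert \partial ^{\alpha }G\left( \xi \right) \right\vert \leq C_{\alpha }\left\langle \xi \right\rangle ^{N}$, i.e.\ polynomial growth of every derivative.

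To identify $G$ with $\widehat{\chi u}$ I test against $\varphi \in \mathcal{S}\left( \mathbb{R}^{n}\right) $. By definition $\left\langle \widehat{\chi u},\varphi \right\rangle =\left\langle u,\chi \widehat{\varphi }\right\rangle $. Now the integral $\int \mathtt{e}^{-\mathtt{i}\left\langle \cdot ,\xi \right\rangle }\chi \,\varphi \left( \xi \right) \mathtt{d}\xi $ converges to $\chi \widehat{\varphi }$ in the topology of $\mathcal{S}\left( \mathbb{R}_{x}^{n}\right) $, because the integrand, viewed as an $\mathcal{S}_{x}$-valued function of $\xi $, is rapidly decaying. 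Passing $u$ through this Bochner/Riemann-sum integral gives $\left\langle u,\chi \widehat{\varphi }\right\rangle =\int G\left( \xi \right) \varphi \left( \xi \right) \mathtt{d}\xi $, hence $G=\widehat{\chi u}$. The second equality in the lemma statement is just the fact that $\mathtt{e}^{-\mathtt{i}\left\langle \cdot ,\xi \right\rangle }u$ is a well-defined tempered distribution equal to the multiplier action $u\mapsto \mathtt{e}^{-\mathtt{i}\left\langle \cdot ,\xi \right\rangle }u$.

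For part $\left(\mathtt{b}\right) $ the argument is the same, carried out jointly in the two parameters. I consider the map
\begin{equation*}
\Psi :\mathbb{R}^{n}\times \mathbb{R}^{n}\rightarrow \mathcal{E}\left( \mathbb{R}^{n}\right) ,\quad \Psi \left( y,\xi \right) =\mathtt{e}^{-\mathtt{i}\left\langle \cdot ,\xi \right\rangle }\chi \left( \cdot -y\right) ,
\end{equation*}
where $\mathcal{E}=\mathcal{S}$ in the tempered case and $\mathcal{E}=\mathcal{C}_{0}^{\infty }$ in the $\mathcal{D}^{\prime }$ case. I would verify that $\Psi $ is of class $\mathcal{C}^{\infty }$ into $\mathcal{E}\left( \mathbb{R}^{n}\right) $ with mixed partials $\partial _{y}^{\beta }\partial _{\xi }^{\alpha }\Psi \left( y,\xi \right) =\left( -\mathtt{i}\cdot \right) ^{\alpha }\mathtt{e}^{-\mathtt{i}\left\langle \cdot ,\xi \right\rangle }\left( -\partial \right) ^{\beta }\tau _{y}\chi $, by the same difference-quotient reasoning as in part $\left(\mathtt{a}\right) $; in the $\mathcal{C}_{0}^{\infty }$ case I would also check that, for $y$ in a fixed compact, the supports $y+\mathtt{supp}\,\chi $ stay in a common compact set, which is what allows convergence in $\mathcal{D}\left( \mathbb{R}^{n}\right) $. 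Composing with the continuous linear functional $u\in \mathcal{D}^{\prime }$ (resp.\ $\mathcal{S}^{\prime }$) then gives that $\left( y,\xi \right) \mapsto \left\langle u,\Psi \left( y,\xi \right) \right\rangle =\widehat{u\tau _{y}\chi }\left( \xi \right) $ is $\mathcal{C}^{\infty }$ on $\mathbb{R}^{n}\times \mathbb{R}^{n}$.

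The only genuine obstacle is the first step: showing that $\Phi $ and $\Psi $ are smooth maps with values in a test function space. This is technical rather than conceptual, and amounts to the standard fact that translation and multiplication by $\mathtt{e}^{-\mathtt{i}\left\langle \cdot ,\xi \right\rangle }$ depend smoothly on their parameters in every Schwartz seminorm; once this is in place, both parts follow immediately from the continuity of $u$.
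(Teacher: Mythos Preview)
Your proof is correct and follows a somewhat different route from the paper's. The paper's argument for $(\mathtt{a})$ works with the tempered distribution $\mathtt{e}^{-\mathtt{i}q}\left( u\otimes 1\right) \in \mathcal{S}^{\prime }\left( \mathbb{R}_{x}^{n}\times \mathbb{R}_{\xi }^{n}\right) $, where $q\left( x,\xi \right) =\left\langle x,\xi \right\rangle $, and evaluates $\left\langle \mathtt{e}^{-\mathtt{i}q}\left( u\otimes 1\right) ,\chi \otimes \varphi \right\rangle $ in two ways using the distributional Fubini theorem: one iteration yields $\left\langle \widehat{\chi u},\varphi \right\rangle $, the other yields $\int \varphi \left( \xi \right) \left\langle \mathtt{e}^{-\mathtt{i}\left\langle \cdot ,\xi \right\rangle }u,\chi \right\rangle \mathtt{d}\xi $. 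This gives the identification in one stroke, and $\mathcal{C}_{pol}^{\infty }$ regularity then comes for free from standard facts about $\mathcal{S}^{\prime }\cap \mathcal{C}^{\infty }$ representations. Your approach instead proves directly that $\xi \mapsto \mathtt{e}^{-\mathtt{i}\left\langle \cdot ,\xi \right\rangle }\chi $ is $\mathcal{C}^{\infty }$ into $\mathcal{S}$, composes with $u$, and then passes $u$ through a Bochner integral to identify the result with $\widehat{\chi u}$. The paper's tensor-product/Fubini argument is more concise and avoids verifying Fr\'{e}chet-space differentiability by hand, while your parameter-map approach is more elementary, makes the polynomial-growth bound explicit, and extends uniformly to part $(\mathtt{b})$ (which the paper does not write out). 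Both are standard and valid.
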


\begin{proof}
Let $q:\mathbb{R}_{x}^{n}\times \mathbb{R}_{\xi }^{n}\rightarrow \mathbb{R}$%
, $q\left( x,\xi \right) =\left\langle x,\xi \right\rangle $. Then $\mathtt{e%
}^{-\mathtt{i}q}\left( u\otimes 1\right) \in \mathcal{S}^{\prime }\left( 
\mathbb{R}_{x}^{n}\times \mathbb{R}_{\xi }^{n}\right) $. If $\varphi \in 
\mathcal{S}\left( \mathbb{R}_{\xi }^{n}\right) $, then we have%
\begin{eqnarray*}
\left\langle \mathtt{e}^{-\mathtt{i}q}\left( u\otimes 1\right) ,\chi \otimes
\varphi \right\rangle &=&\left\langle u\otimes 1,\mathtt{e}^{-\mathtt{i}%
q}\left( \chi \otimes \varphi \right) \right\rangle \\
&=&\left\langle u\left( x\right) ,\left\langle 1\left( \xi \right) ,\mathtt{e%
}^{-\mathtt{i}q\left( x,\xi \right) }\chi \left( x\right) \varphi \left( \xi
\right) \right\rangle \right\rangle \\
&=&\left\langle u\left( x\right) ,\chi \left( x\right) \left\langle 1\left(
\xi \right) ,\mathtt{e}^{-\mathtt{i}\left\langle x,\xi \right\rangle
}\varphi \left( \xi \right) \right\rangle \right\rangle \\
&=&\left\langle u,\chi \widehat{\varphi }\right\rangle =\left\langle 
\widehat{\chi u},\varphi \right\rangle
\end{eqnarray*}%
and 
\begin{eqnarray*}
\left\langle \widehat{\chi u},\varphi \right\rangle &=&\left\langle \mathtt{e%
}^{-\mathtt{i}q}\left( u\otimes 1\right) ,\chi \otimes \varphi \right\rangle
\\
&=&\left\langle 1\left( \xi \right) ,\left\langle u\left( x\right) ,\mathtt{e%
}^{-\mathtt{i}\left\langle x,\xi \right\rangle }\chi \left( x\right) \varphi
\left( \xi \right) \right\rangle \right\rangle \\
&=&\left\langle 1\left( \xi \right) ,\varphi \left( \xi \right) \left\langle
u,\mathtt{e}^{-\mathtt{i}\left\langle \cdot ,\xi \right\rangle }\chi
\right\rangle \right\rangle \\
&=&\left\langle 1\left( \xi \right) ,\varphi \left( \xi \right) \left\langle 
\mathtt{e}^{-\mathtt{i}\left\langle \cdot ,\xi \right\rangle }u,\chi
\right\rangle \right\rangle \\
&=&\int \varphi \left( \xi \right) \left\langle \mathtt{e}^{-\mathtt{i}%
\left\langle \cdot ,\xi \right\rangle }u,\chi \right\rangle \mathtt{d}\xi
\end{eqnarray*}%
This proves that%
\begin{equation*}
\widehat{\chi u}\left( \xi \right) =\left\langle \mathtt{e}^{-\mathtt{i}%
\left\langle \cdot ,\xi \right\rangle }u,\chi \right\rangle ,\quad \xi \in 
\mathbb{R}^{n}.
\end{equation*}
\end{proof}

Let $u\in \mathcal{D}^{\prime }\left( 
%TCIMACRO{\U{211d} }%
%BeginExpansion
\mathbb{R}
%EndExpansion
^{n}\right) $ $($or $u\in \mathcal{S}^{\prime }\left( 
%TCIMACRO{\U{211d} }%
%BeginExpansion
\mathbb{R}
%EndExpansion
^{n}\right) )$ and $\chi \in \mathcal{C}_{0}^{\infty }\left( 
%TCIMACRO{\U{211d} }%
%BeginExpansion
\mathbb{R}
%EndExpansion
^{n}\right) \smallsetminus 0$ $($or $\chi \in \mathcal{S}\left( 
%TCIMACRO{\U{211d} }%
%BeginExpansion
\mathbb{R}
%EndExpansion
^{n}\right) \smallsetminus 0)$. Let $\widetilde{\chi }\in \mathcal{C}%
_{0}^{\infty }\left( \mathbb{R}^{n}\right) $ $($or $\widetilde{\chi }\in 
\mathcal{S}\left( \mathbb{R}^{n}\right) )$and $\varphi \in \mathcal{C}%
_{0}^{\infty }\left( 
%TCIMACRO{\U{211d} }%
%BeginExpansion
\mathbb{R}
%EndExpansion
^{n}\right) $. By using (\ref{ks5}) we get 
\begin{eqnarray*}
\left\langle u\tau _{z}\widetilde{\chi },\varphi \right\rangle &=&\frac{1}{%
\left\Vert \chi \right\Vert _{L^{2}}^{2}}\int \left\langle u\tau _{z}%
\widetilde{\chi },\left( \tau _{y}\chi \right) \left( \tau _{y}\overline{%
\chi }\right) \varphi \right\rangle \mathtt{d}y \\
&=&\frac{1}{\left\Vert \chi \right\Vert _{L^{2}}^{2}}\int \left\langle
u\tau _{y}\chi ,\left( \tau _{z}\widetilde{\chi }\right) \left( \tau _{y}%
\overline{\chi }\right) \varphi \right\rangle \mathtt{d}y,
\end{eqnarray*}%
\begin{equation*}
\left\vert \left\langle u\tau _{z}\widetilde{\chi },\varphi \right\rangle
\right\vert \leq \frac{1}{\left\Vert \chi \right\Vert _{L^{2}}^{2}}\int
\left\Vert u\tau _{y}\chi \right\Vert _{\mathcal{H}^{\mathbf{s}}}\left\Vert
\left( \tau _{z}\widetilde{\chi }\right) \left( \tau _{y}\overline{\chi }%
\right) \varphi \right\Vert _{\mathcal{H}^{-\mathbf{s}}}\mathtt{d}y.
\end{equation*}

Let $\Gamma \subset \mathbb{R}^{n}$ be a lattice. Let $u\in \mathcal{D}%
^{\prime }\left( 
%TCIMACRO{\U{211d} }%
%BeginExpansion
\mathbb{R}
%EndExpansion
^{n}\right) $ $($or $u\in \mathcal{S}^{\prime }\left( 
%TCIMACRO{\U{211d} }%
%BeginExpansion
\mathbb{R}
%EndExpansion
^{n}\right) )$ and let $\chi \in \mathcal{C}_{0}^{\infty }\left( 
%TCIMACRO{\U{211d} }%
%BeginExpansion
\mathbb{R}
%EndExpansion
^{n}\right) $ $($or $\chi \in \mathcal{S}\left( 
%TCIMACRO{\U{211d} }%
%BeginExpansion
\mathbb{R}
%EndExpansion
^{n}\right) )$ be such that%
\begin{equation*}
\Psi =\Psi _{\Gamma ,\chi }=\sum_{\gamma \in \Gamma }\left\vert \tau
_{\gamma }\chi \right\vert ^{2}>0.
\end{equation*}%
Then $\Psi ,\frac{1}{\Psi }\in \mathcal{BC}^{\infty }\left( \mathbb{R}%
^{n}\right) $ and both are $\Gamma $-periodic. Let $\widetilde{\chi }\in 
\mathcal{C}_{0}^{\infty }\left( 
%TCIMACRO{\U{211d} }%
%BeginExpansion
\mathbb{R}
%EndExpansion
^{n}\right) $ $($or $\widetilde{\chi }\in \mathcal{S}\left( 
%TCIMACRO{\U{211d} }%
%BeginExpansion
\mathbb{R}
%EndExpansion
^{n}\right) )$. Using (\ref{ks6}) we obtain that 
\begin{eqnarray*}
\left\langle u\tau _{z}\widetilde{\chi },\varphi \right\rangle
&=&\sum_{\gamma \in \Gamma }\left\langle u\tau _{\gamma }\chi ,\frac{1}{\Psi 
}\left( \tau _{\gamma }\overline{\chi }\right) \left( \tau _{z}\widetilde{%
\chi }\right) \varphi \right\rangle , \\
\left\vert \left\langle u\tau _{z}\widetilde{\chi },\varphi \right\rangle
\right\vert &\leq &\sum_{\gamma \in \Gamma }\left\Vert u\tau _{\gamma }\chi
\right\Vert _{\mathcal{H}^{\mathbf{s}}}\left\Vert \frac{1}{\Psi }\left( \tau
_{\gamma }\overline{\chi }\right) \left( \tau _{z}\widetilde{\chi }\right)
\varphi \right\Vert _{\mathcal{H}^{-\mathbf{s}}} \\
&\leq &C_{\Psi }\sum_{\gamma \in \Gamma }\left\Vert u\tau _{\gamma }\chi
\right\Vert _{\mathcal{H}^{\mathbf{s}}}\left\Vert \left( \tau _{\gamma }%
\overline{\chi }\right) \left( \tau _{z}\widetilde{\chi }\right) \varphi
\right\Vert _{\mathcal{H}^{-\mathbf{s}}}.
\end{eqnarray*}%
In the last inequality we used the Proposition \ref{ks4} and the fact that $%
\frac{1}{\Psi }\in \mathcal{BC}^{\infty }\left( 
%TCIMACRO{\U{211d} }%
%BeginExpansion
\mathbb{R}
%EndExpansion
^{n}\right) $.

If $\left( Y,\mathtt{\mu }\right) $ is either $%
%TCIMACRO{\U{211d} }%
%BeginExpansion
\mathbb{R}
%EndExpansion
^{n}$ with Lebesgue measure or $\Gamma $ with the counting measure, then the
previous estimates can be written as:%
\begin{equation*}
\left\vert \left\langle u\tau _{z}\widetilde{\chi },\varphi \right\rangle
\right\vert \leq Cst\int_{Y}\left\Vert u\tau _{y}\chi \right\Vert _{\mathcal{%
H}^{\mathbf{s}}}\left\Vert \left( \tau _{z}\widetilde{\chi }\right) \left(
\tau _{y}\overline{\chi }\right) \varphi \right\Vert _{\mathcal{H}^{-\mathbf{%
s}}}\mathtt{d\mu }\left( y\right)
\end{equation*}

We shall use Proposition \ref{ks4} to estimate $\left\Vert \left( \tau _{z}%
\widetilde{\chi }\right) \left( \tau _{y}\overline{\chi }\right) \varphi
\right\Vert _{\mathcal{H}^{-\mathbf{s}}}$. Let us write $m_{\mathbf{s}}$ for$%
\left[ \left\vert \mathbf{s}\right\vert _{1}+\frac{n+1}{2}\right] +1$. Then
we have%
\begin{equation*}
\left\Vert \left( \tau _{z}\widetilde{\chi }\right) \left( \tau _{y}%
\overline{\chi }\right) \varphi \right\Vert _{\mathcal{H}^{-\mathbf{s}}}\leq
Cst\sup_{\left\vert \alpha +\beta \right\vert \leq m_{\mathbf{s}}}\left\vert
\left( \left( \tau _{z}\partial ^{\alpha }\widetilde{\chi }\right) \left(
\tau _{y}\partial ^{\beta }\overline{\chi }\right) \right) \right\vert
\left\Vert \varphi \right\Vert _{\mathcal{H}^{-\mathbf{s}}}.
\end{equation*}%
For any $N\in 
%TCIMACRO{\U{2115} }%
%BeginExpansion
\mathbb{N}
%EndExpansion
$ there is a continuous seminorm $p=p_{N,\mathbf{s}}$ on $\mathcal{S}\left( 
\mathbb{R}^{n}\right) $ so that 
\begin{eqnarray*}
\left\vert \left( \tau _{z}\partial ^{\alpha }\widetilde{\chi }\right)
\left( \tau _{y}\partial ^{\beta }\overline{\chi }\right) \left( x\right)
\right\vert &\leq &p\left( \widetilde{\chi }\right) p\left( \chi \right)
\left\langle x-z\right\rangle ^{-2N}\left\langle x-y\right\rangle ^{-2N} \\
&\leq &2^{N}p\left( \widetilde{\chi }\right) p\left( \chi \right)
\left\langle 2x-z-y\right\rangle ^{-N}\left\langle z-y\right\rangle ^{-N} \\
&\leq &2^{N}p\left( \widetilde{\chi }\right) p\left( \chi \right)
\left\langle z-y\right\rangle ^{-N},\quad \left\vert \alpha +\beta
\right\vert \leq m_{\mathbf{s}}.
\end{eqnarray*}%
Here we used the inequality%
\begin{equation*}
\left\langle X\right\rangle ^{-2N}\left\langle Y\right\rangle ^{-2N}\leq
2^{N}\left\langle X+Y\right\rangle ^{-N}\left\langle X-Y\right\rangle
^{-N},\quad X,Y\in \mathbb{R}^{m}
\end{equation*}%
which is a consequence of Peetre's inequality:%
\begin{equation*}
\left. 
\begin{array}{c}
\left\langle X+Y\right\rangle ^{N}\leq 2^{\frac{N}{2}}\left\langle
X\right\rangle ^{N}\left\langle Y\right\rangle ^{N} \\ 
\\ 
\left\langle X-Y\right\rangle ^{N}\leq 2^{\frac{N}{2}}\left\langle
X\right\rangle ^{N}\left\langle Y\right\rangle ^{N}%
\end{array}%
\right\} \Rightarrow \left\langle X+Y\right\rangle ^{N}\left\langle
X-Y\right\rangle ^{N}\leq 2^{N}\left\langle X\right\rangle ^{2N}\left\langle
Y\right\rangle ^{2N}
\end{equation*}%
Hence 
\begin{gather*}
\sup_{\left\vert \alpha +\beta \right\vert \leq m_{\mathbf{s}}}\left\vert
\left( \left( \tau _{z}\partial ^{\alpha }\widetilde{\chi }\right) \left(
\tau _{y}\partial ^{\beta }\overline{\chi }\right) \right) \right\vert \leq
2^{N}p_{N,\mathbf{s}}\left( \widetilde{\chi }\right) p_{N,\mathbf{s}}\left(
\chi \right) \left\langle z-y\right\rangle ^{-N}, \\
\left\Vert \left( \tau _{z}\widetilde{\chi }\right) \left( \tau _{y}%
\overline{\chi }\right) \varphi \right\Vert _{\mathcal{H}^{-\mathbf{s}}}\leq
C\left( N,\mathbf{s},\chi \mathbf{,}\widetilde{\chi }\right) \left\langle
z-y\right\rangle ^{-N}\left\Vert \varphi \right\Vert _{\mathcal{H}^{-\mathbf{%
s}}}, \\
\left\vert \left\langle u\tau _{z}\widetilde{\chi },\varphi \right\rangle
\right\vert \leq C\left( N,\mathbf{s},\chi \mathbf{,}\widetilde{\chi }%
\right) \left( \int_{Y}\left\Vert u\tau _{y}\chi \right\Vert _{\mathcal{H}^{%
\mathbf{s}}}\left\langle z-y\right\rangle ^{-N}\mathtt{d\mu }\left( y\right)
\right) \left\Vert \varphi \right\Vert _{\mathcal{H}^{-\mathbf{s}}}.
\end{gather*}%
The last estimate implies that 
\begin{equation*}
\left\Vert u\tau _{z}\widetilde{\chi }\right\Vert _{\mathcal{H}^{\mathbf{s}%
}}\leq C\left( N,\mathbf{s},\chi \mathbf{,}\widetilde{\chi }\right) \left(
\int_{Y}\left\Vert u\tau _{y}\chi \right\Vert _{\mathcal{H}^{\mathbf{s}%
}}\left\langle z-y\right\rangle ^{-N}\mathtt{d\mu }\left( y\right) \right)
\end{equation*}%
Let $N=n+1$ and $1\leq p<\infty $. If $\left( Z,\mathtt{\upsilon }\right) $
is either $%
%TCIMACRO{\U{211d} }%
%BeginExpansion
\mathbb{R}
%EndExpansion
^{n}$ with Lebesgue measure or a lattice with the counting measure, then
Schur's lemma implies%
\begin{equation*}
\left( \int_{Z}\left\Vert u\tau _{z}\widetilde{\chi }\right\Vert _{\mathcal{H%
}^{\mathbf{s}}}^{p}\mathtt{d\upsilon }\left( z\right) \right) ^{\frac{1}{p}%
}\leq C^{\prime }\left( n,\mathbf{s},\chi \mathbf{,}\widetilde{\chi }\right)
\left\Vert \left\langle \cdot \right\rangle ^{-n-1}\right\Vert
_{L^{1}}\left( \int_{Y}\left\Vert u\tau _{y}\chi \right\Vert _{\mathcal{H}^{%
\mathbf{s}}}^{p}\mathtt{d\mu }\left( y\right) \right) ^{\frac{1}{p}}
\end{equation*}%
For $p=\infty $ we have 
\begin{equation*}
\sup_{z}\left\Vert u\tau _{z}\widetilde{\chi }\right\Vert _{\mathcal{H}^{%
\mathbf{s}}}\leq C^{\prime }\left( n,\mathbf{s},\chi \mathbf{,}\widetilde{%
\chi }\right) \left\Vert \left\langle \cdot \right\rangle ^{-n-1}\right\Vert
_{L^{1}}\sup_{y}\left\Vert u\tau _{y}\chi \right\Vert _{\mathcal{H}^{\mathbf{%
s}}}^{p}.
\end{equation*}%
By taking different combinations of $\left( Y,\mathtt{\mu }\right) $ and $%
\left( Z,\mathtt{\upsilon }\right) $ we obtain the following result.

\begin{proposition}
\label{ks8}Let $u\in \mathcal{D}^{\prime }\left( 
%TCIMACRO{\U{211d} }%
%BeginExpansion
\mathbb{R}
%EndExpansion
^{n}\right) $ $($or $u\in \mathcal{S}^{\prime }\left( 
%TCIMACRO{\U{211d} }%
%BeginExpansion
\mathbb{R}
%EndExpansion
^{n}\right) )$ and $\chi \in \mathcal{C}_{0}^{\infty }\left( 
%TCIMACRO{\U{211d} }%
%BeginExpansion
\mathbb{R}
%EndExpansion
^{n}\right) \smallsetminus 0$ $($or $\chi \in \mathcal{S}\left( 
%TCIMACRO{\U{211d} }%
%BeginExpansion
\mathbb{R}
%EndExpansion
^{n}\right) \smallsetminus 0)$. Let $1\leq p<\infty $.

$\left( \mathtt{a}\right) $ If $\widetilde{\chi }\in \mathcal{C}_{0}^{\infty
}\left( 
%TCIMACRO{\U{211d} }%
%BeginExpansion
\mathbb{R}
%EndExpansion
^{n}\right) $ $($or $\widetilde{\chi }\in \mathcal{S}\left( 
%TCIMACRO{\U{211d} }%
%BeginExpansion
\mathbb{R}
%EndExpansion
^{n}\right) )$, then there is $C\left( n,\mathbf{s},\chi \mathbf{,}%
\widetilde{\chi }\right) >0$ such that%
\begin{eqnarray*}
\left( \int \left\Vert u\tau _{\widetilde{y}}\widetilde{\chi }\right\Vert _{%
\mathcal{H}^{\mathbf{s}}}^{p}\mathtt{d}\widetilde{y}\right) ^{\frac{1}{p}}
&\leq &C\left( n,\mathbf{s},\chi \mathbf{,}\widetilde{\chi }\right) \left(
\int \left\Vert u\tau _{y}\chi \right\Vert _{\mathcal{H}^{\mathbf{s}}}^{p}%
\mathtt{d}y\right) ^{\frac{1}{p}}, \\
\sup_{\widetilde{y}}\left\Vert u\tau _{\widetilde{y}}\widetilde{\chi }%
\right\Vert _{\mathcal{H}^{\mathbf{s}}} &\leq &C\left( n,\mathbf{s},\chi 
\mathbf{,}\widetilde{\chi }\right) \sup_{y}\left\Vert u\tau _{y}\chi
\right\Vert _{\mathcal{H}^{\mathbf{s}}}.
\end{eqnarray*}

$\left( \mathtt{b}\right) $ If $\Gamma \subset 
%TCIMACRO{\U{211d} }%
%BeginExpansion
\mathbb{R}
%EndExpansion
^{n}$ is a lattice such that 
\begin{equation*}
\Psi =\Psi _{\Gamma ,\chi }=\sum_{\gamma \in \Gamma }\left\vert \tau
_{\gamma }\chi \right\vert ^{2}>0
\end{equation*}%
and $\widetilde{\chi }\in \mathcal{C}_{0}^{\infty }\left( 
%TCIMACRO{\U{211d} }%
%BeginExpansion
\mathbb{R}
%EndExpansion
^{n}\right) $ $($or $\widetilde{\chi }\in \mathcal{S}\left( 
%TCIMACRO{\U{211d} }%
%BeginExpansion
\mathbb{R}
%EndExpansion
^{n}\right) )$, then there is $C\left( n,\mathbf{s},\Gamma ,\chi \mathbf{,}%
\widetilde{\chi }\right) >0$ such that%
\begin{eqnarray*}
\left( \int \left\Vert u\tau _{\widetilde{y}}\widetilde{\chi }\right\Vert _{%
\mathcal{H}^{\mathbf{s}}}^{p}\mathtt{d}\widetilde{y}\right) ^{\frac{1}{p}}
&\leq &C\left( n,\mathbf{s},\Gamma ,\chi \mathbf{,}\widetilde{\chi }\right)
\left( \sum_{\gamma \in \Gamma }\left\Vert u\tau _{\gamma }\chi \right\Vert
_{\mathcal{H}^{\mathbf{s}}}^{p}\right) ^{\frac{1}{p}}, \\
\sup_{\widetilde{y}}\left\Vert u\tau _{\widetilde{y}}\widetilde{\chi }%
\right\Vert _{\mathcal{H}^{\mathbf{s}}} &\leq &C\left( n,\mathbf{s},\Gamma
,\chi \mathbf{,}\widetilde{\chi }\right) \sup_{\gamma }\left\Vert u\tau
_{\gamma }\chi \right\Vert _{\mathcal{H}^{\mathbf{s}}}.
\end{eqnarray*}

$\left( \mathtt{c}\right) $ If $\widetilde{\Gamma }\subset 
%TCIMACRO{\U{211d} }%
%BeginExpansion
\mathbb{R}
%EndExpansion
^{n}$ is a lattice and $\widetilde{\chi }\in \mathcal{C}_{0}^{\infty }\left( 
%TCIMACRO{\U{211d} }%
%BeginExpansion
\mathbb{R}
%EndExpansion
^{n}\right) $ $($or $\widetilde{\chi }\in \mathcal{S}\left( 
%TCIMACRO{\U{211d} }%
%BeginExpansion
\mathbb{R}
%EndExpansion
^{n}\right) )$, then there is $C\left( n,\mathbf{s},\widetilde{\Gamma },\chi 
\mathbf{,}\widetilde{\chi }\right) >0$ such that%
\begin{eqnarray*}
\left( \sum_{\widetilde{\gamma }\in \widetilde{\Gamma }}\left\Vert u\tau _{%
\widetilde{\gamma }}\chi \right\Vert _{\mathcal{H}^{\mathbf{s}}}^{p}\right)
^{\frac{1}{p}} &\leq &C\left( n,\mathbf{s},\widetilde{\Gamma },\chi \mathbf{,%
}\widetilde{\chi }\right) \left( \int \left\Vert u\tau _{y}\chi \right\Vert
_{\mathcal{H}^{\mathbf{s}}}^{p}\mathtt{d}y\right) ^{\frac{1}{p}}, \\
\sup_{\widetilde{\gamma }}\left\Vert u\tau _{\widetilde{\gamma }}\widetilde{%
\chi }\right\Vert _{\mathcal{H}^{\mathbf{s}}} &\leq &C\left( n,\mathbf{s},%
\widetilde{\Gamma },\chi \mathbf{,}\widetilde{\chi }\right)
\sup_{y}\left\Vert u\tau _{y}\chi \right\Vert _{\mathcal{H}^{\mathbf{s}}}.
\end{eqnarray*}

$\left( \mathtt{d}\right) $ If $\Gamma ,\widetilde{\Gamma }\subset 
%TCIMACRO{\U{211d} }%
%BeginExpansion
\mathbb{R}
%EndExpansion
^{n}$ are lattices such that 
\begin{equation*}
\Psi =\Psi _{\Gamma ,\chi }=\sum_{\gamma \in \Gamma }\left\vert \tau
_{\gamma }\chi \right\vert ^{2}>0
\end{equation*}%
and $\widetilde{\chi }\in \mathcal{C}_{0}^{\infty }\left( 
%TCIMACRO{\U{211d} }%
%BeginExpansion
\mathbb{R}
%EndExpansion
^{n}\right) $ $($or $\widetilde{\chi }\in \mathcal{S}\left( 
%TCIMACRO{\U{211d} }%
%BeginExpansion
\mathbb{R}
%EndExpansion
^{n}\right) )$, then there is $C\left( n,\mathbf{s,}\Gamma ,\widetilde{%
\Gamma },\chi \mathbf{,}\widetilde{\chi }\right) >0$ such that 
\begin{eqnarray*}
\left( \sum_{\widetilde{\gamma }\in \widetilde{\Gamma }}\left\Vert u\tau _{%
\widetilde{\gamma }}\chi \right\Vert _{\mathcal{H}^{\mathbf{s}}}^{p}\right)
^{\frac{1}{p}} &\leq &C\left( n,\mathbf{s},\Gamma ,\chi \mathbf{,}\widetilde{%
\chi }\right) \left( \sum_{\gamma \in \Gamma }\left\Vert u\tau _{\gamma
}\chi \right\Vert _{\mathcal{H}^{\mathbf{s}}}^{p}\right) ^{\frac{1}{p}}, \\
\sup_{\widetilde{\gamma }}\left\Vert u\tau _{\widetilde{\gamma }}\widetilde{%
\chi }\right\Vert _{\mathcal{H}^{\mathbf{s}}} &\leq &C\left( n,\mathbf{s}%
,\Gamma ,\chi \mathbf{,}\widetilde{\chi }\right) \sup_{\gamma }\left\Vert
u\tau _{\gamma }\chi \right\Vert _{\mathcal{H}^{\mathbf{s}}}.
\end{eqnarray*}
\end{proposition}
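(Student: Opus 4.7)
The plan is to observe that Proposition \ref{ks8} is really a packaging of the pointwise estimate
\begin{equation*}
\|u\tau_z\widetilde{\chi}\|_{\mathcal{H}^{\mathbf{s}}} \leq C(N,\mathbf{s},\chi,\widetilde{\chi})\int_Y \|u\tau_y\chi\|_{\mathcal{H}^{\mathbf{s}}}\,\langle z-y\rangle^{-N}\,\mathtt{d}\mu(y)
\end{equation*}
which has already been derived in the discussion preceding the statement, valid for any $N$ and for $(Y,\mu)$ either $\mathbb{R}^n$ with Lebesgue measure (via (\ref{ks5})) or a lattice $\Gamma$ with counting measure (via (\ref{ks6}), where one uses that $\Psi_{\Gamma,\chi}>0$ implies $1/\Psi\in\mathcal{BC}^{\infty}$ so that Proposition \ref{ks4} can absorb the factor $1/\Psi$). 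The ingredient that supplies the decay $\langle z-y\rangle^{-N}$ is the elementary bound $\langle X\rangle^{-2N}\langle Y\rangle^{-2N}\leq 2^N\langle X+Y\rangle^{-N}\langle X-Y\rangle^{-N}$ combined with the $\mathcal{BC}^{m_{\mathbf{s}}}$–action given by Proposition \ref{ks4}.

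From that master estimate the four assertions all follow from a single application of Schur's lemma. Choose $N=n+1$ and view the right-hand side as the action of the positive integral operator with kernel $K(z,y)=\langle z-y\rangle^{-(n+1)}$ from $L^p(Y,\mu)$ into $L^p(Z,\upsilon)$, where $(Z,\upsilon)$ is the measure space attached to the left-hand side of the desired inequality. Schur's lemma requires that both $\sup_z\int_Y K(z,y)\,\mathtt{d}\mu(y)$ and $\sup_y\int_Z K(z,y)\,\mathtt{d}\upsilon(z)$ be finite; in the Lebesgue case these equal $\|\langle\cdot\rangle^{-(n+1)}\|_{L^1}$, and in the lattice case, by translation invariance, they equal $\sum_{\gamma\in\Gamma}\langle\gamma\rangle^{-(n+1)}$, which converges since $n+1>n$.

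The four cases are then obtained by specialization: in (a) take $Y=Z=\mathbb{R}^n$ with Lebesgue measure; in (b) take $Y=\Gamma$ with counting measure (this is where the hypothesis $\Psi_{\Gamma,\chi}>0$ is needed) and $Z=\mathbb{R}^n$ with Lebesgue measure; in (c) take $Y=\mathbb{R}^n$ and $Z=\widetilde{\Gamma}$; in (d) take $Y=\Gamma$ and $Z=\widetilde{\Gamma}$. The $p=\infty$ versions of each part are even easier: taking the $\sup$ over $z$ directly in the pointwise estimate and using the uniform bound on $\int_Y K(z,y)\,\mathtt{d}\mu(y)$ yields the supremum inequalities with no further work.

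The only delicate step is really the setup rather than the conclusion: establishing the pointwise estimate and, for the lattice case, producing the $\Gamma$-periodic weight $\Psi$ so that the reproducing identity $\varphi = \Psi^{-1}\sum_{\gamma\in\Gamma}|\tau_\gamma\chi|^2\varphi$ is legitimate in $\mathcal{H}^{-\mathbf{s}}$. Both issues are already addressed by the preparation immediately preceding the proposition, so the remaining argument reduces to invoking Schur's lemma in each of the four combinations of source and target measure.
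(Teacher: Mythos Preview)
Your proposal is correct and follows essentially the same route as the paper: the discussion immediately preceding the proposition derives precisely the pointwise estimate you quote (using (\ref{ks5}) in the continuous case and (\ref{ks6}) together with $1/\Psi\in\mathcal{BC}^\infty$ in the lattice case, then Proposition~\ref{ks4} and the Peetre-type inequality for the $\langle z-y\rangle^{-N}$ decay), and the paper concludes exactly by taking $N=n+1$, applying Schur's lemma, and specializing to the four combinations of $(Y,\mu)$ and $(Z,\upsilon)$.
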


\begin{definition}
Let $1\leq p\leq \infty $, $\mathbf{s}\in 
%TCIMACRO{\U{211d} }%
%BeginExpansion
\mathbb{R}
%EndExpansion
^{j}$ and $u\in \mathcal{D}^{\prime }\left( 
%TCIMACRO{\U{211d} }%
%BeginExpansion
\mathbb{R}
%EndExpansion
^{n}\right) $. We say that $u$ belongs to $\mathcal{K}_{p}^{\mathbf{s}%
}\left( 
%TCIMACRO{\U{211d} }%
%BeginExpansion
\mathbb{R}
%EndExpansion
^{n}\right) $ if there is $\chi \in \mathcal{C}_{0}^{\infty }\left( \mathbb{R%
}^{n}\right) \smallsetminus 0$ such that the measurable function $%
%TCIMACRO{\U{211d} }%
%BeginExpansion
\mathbb{R}
%EndExpansion
^{n}\ni y\rightarrow \left\Vert u\tau _{y}\chi \right\Vert _{\mathcal{H}^{%
\mathbf{s}}}\in 
%TCIMACRO{\U{211d} }%
%BeginExpansion
\mathbb{R}
%EndExpansion
$ belongs to $L^{p}\left( 
%TCIMACRO{\U{211d} }%
%BeginExpansion
\mathbb{R}
%EndExpansion
^{n}\right) $. We put%
\begin{eqnarray*}
\left\Vert u\right\Vert _{\mathbf{s},p,\chi } &=&\left( \int \left\Vert
u\tau _{y}\chi \right\Vert _{\mathcal{H}^{\mathbf{s}}}^{p}\mathtt{d}y\right)
^{\frac{1}{p}},\quad 1\leq p<\infty , \\
\left\Vert u\right\Vert _{\mathbf{s},\infty ,\chi } &\equiv &\left\Vert
u\right\Vert _{\mathbf{s},\mathtt{ul},\chi }=\sup_{y}\left\Vert u\tau
_{y}\chi \right\Vert _{\mathcal{H}^{\mathbf{s}}}.
\end{eqnarray*}
\end{definition}

\begin{proposition}
\label{ks15}$\left( \mathtt{a}\right) $ The above definition does not depend
on the choice of the function $\chi \in \mathcal{C}_{0}^{\infty }\left( 
\mathbb{R}^{n}\right) \smallsetminus 0$.

$\left( \mathtt{b}\right) $ If $\chi \in \mathcal{C}_{0}^{\infty }\left( 
%TCIMACRO{\U{211d} }%
%BeginExpansion
\mathbb{R}
%EndExpansion
^{n}\right) \smallsetminus 0$, then $\left\Vert \cdot \right\Vert _{\mathbf{s%
},p,\chi }$ is a norm on $\mathcal{K}_{p}^{\mathbf{s}}\left( 
%TCIMACRO{\U{211d} }%
%BeginExpansion
\mathbb{R}
%EndExpansion
^{n}\right) $ and the topology that defines does not depend on the function $%
\chi $.

$\left( \mathtt{c}\right) $ Let $\Gamma \subset 
%TCIMACRO{\U{211d} }%
%BeginExpansion
\mathbb{R}
%EndExpansion
^{n}$ be a lattice and $\chi \in \mathcal{C}_{0}^{\infty }\left( 
%TCIMACRO{\U{211d} }%
%BeginExpansion
\mathbb{R}
%EndExpansion
^{n}\right) $ be a function with the property that%
\begin{equation*}
\Psi =\Psi _{\Gamma ,\chi }=\sum_{\gamma \in \Gamma }\left\vert \tau
_{\gamma }\chi \right\vert ^{2}>0.
\end{equation*}%
Then 
\begin{equation*}
\mathcal{K}_{p}^{\mathbf{s}}\left( 
%TCIMACRO{\U{211d} }%
%BeginExpansion
\mathbb{R}
%EndExpansion
^{n}\right) \ni u\rightarrow \left\{ 
\begin{array}{cc}
\left( \sum_{\gamma \in \Gamma }\left\Vert u\tau _{\gamma }\chi \right\Vert
_{\mathcal{H}^{\mathbf{s}}}^{p}\right) ^{\frac{1}{p}} & 1\leq p<\infty \\ 
\sup_{\gamma }\left\Vert u\tau _{\gamma }\chi \right\Vert _{\mathcal{H}^{%
\mathbf{s}}} & p=\infty%
\end{array}%
\right.
\end{equation*}%
is a norm on $\mathcal{K}_{p}^{\mathbf{s}}\left( 
%TCIMACRO{\U{211d} }%
%BeginExpansion
\mathbb{R}
%EndExpansion
^{n}\right) $ and the topology that defines is the topology of $\mathcal{K}%
_{p}^{\mathbf{s}}\left( 
%TCIMACRO{\U{211d} }%
%BeginExpansion
\mathbb{R}
%EndExpansion
^{n}\right) $. We shall use the notation 
\begin{equation*}
\left\Vert u\right\Vert _{\mathbf{s},p,\Gamma ,\chi }=\left\{ 
\begin{array}{cc}
\left( \sum_{\gamma \in \Gamma }\left\Vert u\tau _{\gamma }\chi \right\Vert
_{\mathcal{H}^{\mathbf{s}}}^{p}\right) ^{\frac{1}{p}} & 1\leq p<\infty \\ 
\sup_{\gamma }\left\Vert u\tau _{\gamma }\chi \right\Vert _{\mathcal{H}^{%
\mathbf{s}}} & p=\infty%
\end{array}%
\right. .
\end{equation*}

$\left( \mathtt{d}\right) $ If $1\leq p\leq q\leq \infty $, Then%
\begin{equation*}
\mathcal{K}_{1}^{\mathbf{s}}\left( 
%TCIMACRO{\U{211d} }%
%BeginExpansion
\mathbb{R}
%EndExpansion
^{n}\right) \subset \mathcal{K}_{p}^{\mathbf{s}}\left( 
%TCIMACRO{\U{211d} }%
%BeginExpansion
\mathbb{R}
%EndExpansion
^{n}\right) \subset \mathcal{K}_{q}^{\mathbf{s}}\left( 
%TCIMACRO{\U{211d} }%
%BeginExpansion
\mathbb{R}
%EndExpansion
^{n}\right) \subset \mathcal{K}_{\infty }^{\mathbf{s}}\left( 
%TCIMACRO{\U{211d} }%
%BeginExpansion
\mathbb{R}
%EndExpansion
^{n}\right) \equiv \mathcal{H}_{\mathtt{ul}}^{\mathbf{s}}\left( 
%TCIMACRO{\U{211d} }%
%BeginExpansion
\mathbb{R}
%EndExpansion
^{n}\right) \subset \mathcal{S}^{\prime }\left( 
%TCIMACRO{\U{211d} }%
%BeginExpansion
\mathbb{R}
%EndExpansion
^{n}\right) .
\end{equation*}

$\left( \mathtt{e}\right) $ If $s_{1}^{\prime }\leq s_{1}$,...,$%
s_{j}^{\prime }\leq s_{j}$, then $\mathcal{K}_{p}^{\mathbf{s}}\left( 
%TCIMACRO{\U{211d} }%
%BeginExpansion
\mathbb{R}
%EndExpansion
^{n}\right) \subset \mathcal{K}_{p}^{\mathbf{s}^{\prime }}\left( 
%TCIMACRO{\U{211d} }%
%BeginExpansion
\mathbb{R}
%EndExpansion
^{n}\right) $.

$\left( \mathtt{f}\right) $ $\left( \mathcal{K}_{p}^{\mathbf{s}}\left( 
%TCIMACRO{\U{211d} }%
%BeginExpansion
\mathbb{R}
%EndExpansion
^{n}\right) ,\left\Vert \cdot \right\Vert _{\mathbf{s},p,\chi }\right) $ is
a Banach space.

$\left( \mathtt{g}\right) $ $u\in \mathcal{K}_{p}^{\mathbf{s}}\left( 
%TCIMACRO{\U{211d} }%
%BeginExpansion
\mathbb{R}
%EndExpansion
^{n}\right) $ if and only if there is $l\in \left\{ 1,...,j\right\} $ such
that $u,\partial _{k}u\in \mathcal{K}_{p}^{\mathbf{s-\delta }_{l}}\left( 
%TCIMACRO{\U{211d} }%
%BeginExpansion
\mathbb{R}
%EndExpansion
^{n}\right) $ for any $k\in N_{l}$, where $\mathbf{\delta }_{l}=\left(
\delta _{l1},...,\delta _{lj}\right) $.

$\left( \mathtt{h}\right) $ If $s_{1}>n_{1}/2,...,s_{j}>n_{j}/2$, then $%
\mathcal{K}_{\infty }^{\mathbf{s}}\left( 
%TCIMACRO{\U{211d} }%
%BeginExpansion
\mathbb{R}
%EndExpansion
^{n}\right) \equiv \mathcal{H}_{\mathtt{ul}}^{\mathbf{s}}\left( 
%TCIMACRO{\U{211d} }%
%BeginExpansion
\mathbb{R}
%EndExpansion
^{n}\right) \subset \mathcal{B}\mathcal{C}\left( 
%TCIMACRO{\U{211d} }%
%BeginExpansion
\mathbb{R}
%EndExpansion
^{n}\right) $.
\end{proposition}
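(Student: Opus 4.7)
The plan is to reduce everything to the estimates already proved in Proposition \ref{ks8} and to the local embeddings in Proposition \ref{ks3}, so that essentially no new analysis is needed; the only genuinely new work is the verification of the Banach space properties and the tempered-distribution inclusion in $\left( \mathtt{d}\right) $.

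For $\left( \mathtt{a}\right) $ and $\left( \mathtt{b}\right) $ I would observe that Proposition \ref{ks8}$\left( \mathtt{a}\right) $, applied with the roles of $\chi $ and $\widetilde{\chi }$ exchanged, yields two-sided bounds $C^{-1}\left\Vert u\right\Vert _{\mathbf{s},p,\chi }\leq \left\Vert u\right\Vert _{\mathbf{s},p,\widetilde{\chi }}\leq C\left\Vert u\right\Vert _{\mathbf{s},p,\chi }$. This simultaneously shows that membership in $\mathcal{K}_{p}^{\mathbf{s}}$ does not depend on $\chi $ and that the topologies defined by different test functions coincide. The norm axioms for $\left\Vert \cdot \right\Vert _{\mathbf{s},p,\chi }$ are routine: homogeneity and the triangle inequality are inherited from the $L^{p}$-norm, and definiteness follows because if $\left\Vert u\tau _{y}\chi \right\Vert _{\mathcal{H}^{\mathbf{s}}}=0$ for a.e.\ $y$, then $u\tau _{y}\chi =0$ for a.e.\ $y$, hence $u=0$ on every bounded open set covered by translates of $\mathtt{supp}\,\chi $, so $u=0$ in $\mathcal{D}^{\prime }$. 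Part $\left( \mathtt{c}\right) $ is handled the same way: Proposition \ref{ks8}$\left( \mathtt{b}\right) $--$\left( \mathtt{d}\right) $ gives the equivalence of the discrete quantity $\left\Vert u\right\Vert _{\mathbf{s},p,\Gamma ,\chi }$ with the continuous $\left\Vert u\right\Vert _{\mathbf{s},p,\chi }$, so the former is a norm defining the same topology.

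For $\left( \mathtt{d}\right) $ I would work with the discrete norm $\left\Vert u\right\Vert _{\mathbf{s},p,\Gamma ,\chi }$ provided by $\left( \mathtt{c}\right) $. Since $\ell ^{p}\left( \Gamma \right) \hookrightarrow \ell ^{q}\left( \Gamma \right) $ for $p\leq q$ with norm one, the chain of inclusions $\mathcal{K}_{1}^{\mathbf{s}}\subset \mathcal{K}_{p}^{\mathbf{s}}\subset \mathcal{K}_{q}^{\mathbf{s}}\subset \mathcal{K}_{\infty }^{\mathbf{s}}=\mathcal{H}_{\mathtt{ul}}^{\mathbf{s}}$ is automatic. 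The step that needs most care is $\mathcal{K}_{\infty }^{\mathbf{s}}\subset \mathcal{S}^{\prime }$: I would fix a partition of unity $1=\sum_{\gamma \in
\mathbb{Z}
^{n}}\tau _{\gamma }h$ as in $\left( \mathtt{d}\right) $ after Lemma \ref{ks2}, so that for $\varphi \in \mathcal{S}$
\begin{equation*}
\left\vert \left\langle u,\varphi \right\rangle \right\vert \leq \sum_{\gamma \in
\mathbb{Z}
^{n}}\left\vert \left\langle u\tau _{\gamma }h,\varphi \right\rangle \right\vert \leq \sum_{\gamma \in
\mathbb{Z}
^{n}}\left\Vert u\tau _{\gamma }h\right\Vert _{\mathcal{H}^{\mathbf{s}}}\left\Vert \varphi \right\Vert _{\mathcal{H}^{-\mathbf{s}}},
\end{equation*}
and then bound $\left\Vert \varphi \right\Vert _{\mathcal{H}^{-\mathbf{s}}}$ by a suitable Schwartz seminorm of $\left\langle \cdot \right\rangle ^{N}\varphi $ with $N$ large enough so that $\sum_{\gamma }\left\langle \gamma \right\rangle ^{-N}<\infty $; this is the main obstacle, and it is overcome exactly as in this seminorm argument.

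Part $\left( \mathtt{e}\right) $ follows because the elementary inequality $\left\langle \left\langle \xi \right\rangle \right\rangle ^{\mathbf{s}^{\prime }}\leq \left\langle \left\langle \xi \right\rangle \right\rangle ^{\mathbf{s}}$ when $\mathbf{s}^{\prime }\leq \mathbf{s}$ gives $\mathcal{H}^{\mathbf{s}}\subset \mathcal{H}^{\mathbf{s}^{\prime }}$ with norm one, and the $\mathcal{K}_{p}$ inclusion is obtained by applying this pointwise in $y$. For $\left( \mathtt{f}\right) $, given a Cauchy sequence $\left\{ u_{k}\right\} $ in $\mathcal{K}_{p}^{\mathbf{s}}$, the inclusion from $\left( \mathtt{d}\right) $ shows it is Cauchy in $\mathcal{S}^{\prime }$, so $u_{k}\rightarrow u$ in $\mathcal{S}^{\prime }$; then $u_{k}\tau _{y}\chi \rightarrow u\tau _{y}\chi $ in $\mathcal{H}^{\mathbf{s}}$ for each $y$ (by continuity of multiplication by $\tau _{y}\chi $, Proposition \ref{ks3}$\left( \mathtt{b}\right) $), and Fatou's lemma applied to $\left\Vert u_{k}\tau _{y}\chi -u_{l}\tau _{y}\chi \right\Vert _{\mathcal{H}^{\mathbf{s}}}^{p}$ lets $l\to \infty $ to conclude $u_{k}\to u$ in the $\mathcal{K}_{p}^{\mathbf{s}}$-norm. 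Part $\left( \mathtt{g}\right) $ is deduced by applying Proposition \ref{ks3}$\left( \mathtt{a}\right) $ localised through multiplication by $\tau _{y}\chi $, noting that $\partial _{k}\left( u\tau _{y}\chi \right) =\left( \partial _{k}u\right) \tau _{y}\chi +u\tau _{y}\left( \partial _{k}\chi \right) $ and using $\left( \mathtt{a}\right) $ to absorb the second term into the $\chi $-independent norm. Finally $\left( \mathtt{h}\right) $ follows from Proposition \ref{ks3}$\left( \mathtt{d}\right) $ applied to each $u\tau _{y}\chi \in \mathcal{H}^{\mathbf{s}}$, which gives a uniform bound $\left\Vert u\tau _{y}\chi \right\Vert _{L^{\infty }}\leq C\left\Vert u\tau _{y}\chi \right\Vert _{\mathcal{H}^{\mathbf{s}}}\leq C\left\Vert u\right\Vert _{\mathbf{s},\mathtt{ul},\chi }$; covering $
\mathbb{R}
^{n}$ by the supports of $\tau _{y}\chi $ shows $u\in \mathcal{BC}\left(
\mathbb{R}
^{n}\right) $.
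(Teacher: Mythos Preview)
Your overall strategy is the paper's own --- parts $(\mathtt{a})$--$(\mathtt{c})$ really are immediate from Proposition~\ref{ks8}, $(\mathtt{e})$ is trivial, and $(\mathtt{g})$, $(\mathtt{h})$ are handled exactly as you say. There are, however, two places where your execution does not go through as written.

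\textbf{The inclusion $\mathcal{H}_{\mathtt{ul}}^{\mathbf{s}}\subset\mathcal{S}'$ in $(\mathtt{d})$.} Your displayed inequality
\[
\left\vert \left\langle u,\varphi \right\rangle \right\vert \leq \sum_{\gamma \in \mathbb{Z}^{n}}\left\Vert u\tau _{\gamma }h\right\Vert _{\mathcal{H}^{\mathbf{s}}}\left\Vert \varphi \right\Vert _{\mathcal{H}^{-\mathbf{s}}}
\]
is useless for $u\in\mathcal{K}_\infty^{\mathbf{s}}$: the factor $\left\Vert \varphi \right\Vert _{\mathcal{H}^{-\mathbf{s}}}$ carries no $\gamma$-dependence, so the right side is $\left\Vert \varphi \right\Vert _{\mathcal{H}^{-\mathbf{s}}}\sum_\gamma\left\Vert u\tau _{\gamma }h\right\Vert _{\mathcal{H}^{\mathbf{s}}}$, finite only when $u\in\mathcal{K}_1^{\mathbf{s}}$. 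Your follow-up sentence about extracting $\left\langle\gamma\right\rangle^{-N}$ from a seminorm of $\varphi$ cannot work on this quantity, since $\left\Vert \varphi \right\Vert _{\mathcal{H}^{-\mathbf{s}}}$ is a single number. The missing step is to \emph{localise} $\varphi$: since $\mathtt{supp}\,\tau_\gamma h$ is compact near $\gamma$, one has $\left\langle u\tau_\gamma h,\varphi\right\rangle=\left\langle u\tau_\gamma h,(\tau_\gamma\widetilde h)\varphi\right\rangle$ for a suitable $\widetilde h\in\mathcal{C}_0^\infty$, and it is $\left\Vert(\tau_\gamma\widetilde h)\varphi\right\Vert_{\mathcal{H}^{-\mathbf{s}}}$ that decays like $\left\langle\gamma\right\rangle^{-N}$ for Schwartz $\varphi$. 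The paper does exactly this, but via the continuous decomposition formula~\eqref{ks5}: it writes $\left\langle u,\varphi\right\rangle=\left\Vert\chi\right\Vert_{L^2}^{-2}\int\left\langle u\tau_y\chi,(\tau_y\overline\chi)\varphi\right\rangle\,\mathtt{d}y$ and then estimates $\left\Vert(\tau_y\overline\chi)\varphi\right\Vert_{\mathcal{H}^{-\mathbf{s}}}\leq C\left\langle y\right\rangle^{-(n+1)}p(\varphi)$ using Proposition~\ref{ks4} together with the Peetre-type seminorm bounds established just before Proposition~\ref{ks8}.

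\textbf{The completeness argument $(\mathtt{f})$.} Your claim that $u_k\tau_y\chi\to u\tau_y\chi$ in $\mathcal{H}^{\mathbf{s}}$ ``by continuity of multiplication by $\tau_y\chi$, Proposition~\ref{ks3}$(\mathtt{b})$'' is not a valid justification: that proposition says multiplication by $\tau_y\chi$ is bounded on $\mathcal{H}^{\mathbf{s}}$, which would lift $\mathcal{H}^{\mathbf{s}}$-convergence, not $\mathcal{S}'$-convergence. The correct reason is that, by the embedding $\mathcal{K}_p^{\mathbf{s}}\subset\mathcal{K}_\infty^{\mathbf{s}}$ from $(\mathtt{d})$, the sequence $\{u_k\tau_y\chi\}$ is already Cauchy in $\mathcal{H}^{\mathbf{s}}$ for each fixed $y$; its $\mathcal{H}^{\mathbf{s}}$-limit must then coincide with $u\tau_y\chi$ since multiplication by $\tau_y\chi$ is continuous on $\mathcal{S}'$. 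Once this is fixed, your Fatou argument is fine. The paper works instead with the discrete lattice norm $\left\Vert\cdot\right\Vert_{\mathbf{s},p,\Gamma,\chi}$ and a finite-subset exhaustion of $\Gamma$, which avoids Fatou but is otherwise equivalent.
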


\begin{proof}
$\left( \mathtt{a}\right) $ $\left( \mathtt{b}\right) $ $\left( \mathtt{c}%
\right) $ are immediate consequences of the previous proposition.

$\left( \mathtt{d}\right) $ The inclusions $\mathcal{K}_{1}^{\mathbf{s}%
}\left( 
%TCIMACRO{\U{211d} }%
%BeginExpansion
\mathbb{R}
%EndExpansion
^{n}\right) \subset \mathcal{K}_{p}^{\mathbf{s}}\left( 
%TCIMACRO{\U{211d} }%
%BeginExpansion
\mathbb{R}
%EndExpansion
^{n}\right) \subset \mathcal{K}_{q}^{\mathbf{s}}\left( 
%TCIMACRO{\U{211d} }%
%BeginExpansion
\mathbb{R}
%EndExpansion
^{n}\right) \subset \mathcal{K}_{\infty }^{\mathbf{s}}\left( 
%TCIMACRO{\U{211d} }%
%BeginExpansion
\mathbb{R}
%EndExpansion
^{n}\right) $ are consequences of the elementary inclusions $l^{1}\subset
l^{p}\subset l^{q}\subset l^{\infty }$. What remains to be shown is the
inclusion $\mathcal{K}_{\infty }^{\mathbf{s}}\left( 
%TCIMACRO{\U{211d} }%
%BeginExpansion
\mathbb{R}
%EndExpansion
^{n}\right) \equiv \mathcal{H}_{\mathtt{ul}}^{\mathbf{s}}\left( 
%TCIMACRO{\U{211d} }%
%BeginExpansion
\mathbb{R}
%EndExpansion
^{n}\right) \subset \mathcal{S}^{\prime }\left( 
%TCIMACRO{\U{211d} }%
%BeginExpansion
\mathbb{R}
%EndExpansion
^{n}\right) $. Let $u\in \mathcal{H}_{\mathtt{ul}}^{\mathbf{s}}\left( 
%TCIMACRO{\U{211d} }%
%BeginExpansion
\mathbb{R}
%EndExpansion
^{n}\right) $, $\chi \in \mathcal{C}_{0}^{\infty }\left( 
%TCIMACRO{\U{211d} }%
%BeginExpansion
\mathbb{R}
%EndExpansion
^{n}\right) \smallsetminus 0$ and $\varphi \in \mathcal{C}_{0}^{\infty
}\left( 
%TCIMACRO{\U{211d} }%
%BeginExpansion
\mathbb{R}
%EndExpansion
^{n}\right) $. We have%
\begin{eqnarray*}
\left\langle u,\varphi \right\rangle &=&\frac{1}{\left\Vert \chi \right\Vert
_{L^{2}}^{2}}\int \left\langle u,\left( \tau _{y}\chi \right) \left( \tau
_{y}\overline{\chi }\right) \varphi \right\rangle \mathtt{d}y \\
&=&\frac{1}{\left\Vert \chi \right\Vert _{L^{2}}^{2}}\int \left\langle
u\tau _{y}\chi ,\left( \tau _{y}\overline{\chi }\right) \varphi
\right\rangle \mathtt{d}y,
\end{eqnarray*}%
\begin{eqnarray*}
\left\vert \left\langle u,\varphi \right\rangle \right\vert &\leq &\frac{1}{%
\left\Vert \chi \right\Vert _{L^{2}}^{2}}\int \left\vert \left\langle u\tau
_{y}\chi ,\left( \tau _{y}\overline{\chi }\right) \varphi \right\rangle
\right\vert \mathtt{d}y \\
&\leq &\frac{1}{\left\Vert \chi \right\Vert _{L^{2}}^{2}}\int \left\Vert
u\tau _{y}\chi \right\Vert _{\mathcal{H}^{\mathbf{s}}}\left\Vert \left( \tau
_{y}\overline{\chi }\right) \varphi \right\Vert _{\mathcal{H}^{-\mathbf{s}}}%
\mathtt{d}y \\
&\leq &\frac{1}{\left\Vert \chi \right\Vert _{L^{2}}^{2}}\left\Vert
u\right\Vert _{\mathbf{s},\infty ,\chi }\int \left\Vert \left( \tau _{y}%
\overline{\chi }\right) \varphi \right\Vert _{\mathcal{H}^{-\mathbf{s}}}%
\mathtt{d}y
\end{eqnarray*}%
We shall use Proposition \ref{ks4} to estimate $\left\Vert \left( \tau _{y}%
\overline{\chi }\right) \varphi \right\Vert _{\mathcal{H}^{-\mathbf{s}}}$.
Let $\widetilde{\chi }\in \mathcal{C}_{0}^{\infty }\left( 
%TCIMACRO{\U{211d} }%
%BeginExpansion
\mathbb{R}
%EndExpansion
^{n}\right) $, $\widetilde{\chi }=1$ on $\mathtt{supp}\chi $. If $m_{\mathbf{%
s}}=\left[ \left\vert \mathbf{s}\right\vert _{1}+\frac{n+1}{2}\right] +1$,
then we obtain that 
\begin{eqnarray*}
\left\Vert \left( \tau _{y}\overline{\chi }\right) \varphi \right\Vert _{%
\mathcal{H}^{-\mathbf{s}}} &\leq &C\sup_{\left\vert \alpha +\beta
\right\vert \leq m_{\mathbf{s}}}\left\vert \left( \partial ^{\alpha }\varphi
\right) \left( \tau _{y}\partial ^{\beta }\overline{\chi }\right)
\right\vert \left\Vert \tau _{y}\widetilde{\chi }\right\Vert _{\mathcal{H}^{-%
\mathbf{s}}} \\
&=&C\sup_{\left\vert \alpha +\beta \right\vert \leq m_{\mathbf{s}%
}}\left\vert \left( \partial ^{\alpha }\varphi \right) \left( \tau
_{y}\partial ^{\beta }\overline{\chi }\right) \right\vert \left\Vert 
\widetilde{\chi }\right\Vert _{\mathcal{H}^{-\mathbf{s}}}.
\end{eqnarray*}%
Since $\chi $, $\varphi \in \mathcal{S}\left( 
%TCIMACRO{\U{211d} }%
%BeginExpansion
\mathbb{R}
%EndExpansion
^{n}\right) $ it follows that there is a continuous seminorm $p=p_{n,\mathbf{%
s}}$ on $\mathcal{S}\left( 
%TCIMACRO{\U{211d} }%
%BeginExpansion
\mathbb{R}
%EndExpansion
^{n}\right) $ so that 
\begin{eqnarray*}
\left\vert \left( \partial ^{\alpha }\varphi \right) \left( \tau
_{y}\partial ^{\beta }\overline{\chi }\right) \left( x\right) \right\vert
&\leq &p\left( \varphi \right) p\left( \chi \right) \left\langle
x-y\right\rangle ^{-2\left( n+1\right) }\left\langle x\right\rangle
^{-2\left( n+1\right) } \\
&\leq &2^{n+1}p\left( \varphi \right) p\left( \chi \right) \left\langle
2x-y\right\rangle ^{-\left( n+1\right) }\left\langle y\right\rangle
^{-\left( n+1\right) } \\
&\leq &2^{n+1}p\left( \varphi \right) p\left( \chi \right) \left\langle
y\right\rangle ^{-\left( n+1\right) },\quad \left\vert \alpha +\beta
\right\vert \leq m_{\mathbf{s}}.
\end{eqnarray*}%
Hence 
\begin{equation*}
\left\vert \left\langle u,\varphi \right\rangle \right\vert \leq 2^{n+1}C%
\frac{1}{\left\Vert \chi \right\Vert _{L^{2}}^{2}}\left\Vert u\right\Vert _{%
\mathbf{s},\infty ,\chi }\left\Vert \left\langle \cdot \right\rangle
^{-\left( n+1\right) }\right\Vert _{L^{1}}\left\Vert \widetilde{\chi }%
\right\Vert _{\mathcal{H}^{-\mathbf{s}}}p\left( \chi \right) p\left( \varphi
\right) .
\end{equation*}

$\left( \mathtt{e}\right) $ is trivial.

$\left( \mathtt{f}\right) $ Let $\left\{ u_{n}\right\} $ be a Cauchy
sequence in $\mathcal{K}_{p}^{\mathbf{s}}\left( 
%TCIMACRO{\U{211d} }%
%BeginExpansion
\mathbb{R}
%EndExpansion
^{n}\right) $. Since $\mathcal{K}_{p}^{\mathbf{s}}\left( 
%TCIMACRO{\U{211d} }%
%BeginExpansion
\mathbb{R}
%EndExpansion
^{n}\right) \subset \mathcal{S}^{\prime }\left( 
%TCIMACRO{\U{211d} }%
%BeginExpansion
\mathbb{R}
%EndExpansion
^{n}\right) $ and $\mathcal{S}^{\prime }\left( 
%TCIMACRO{\U{211d} }%
%BeginExpansion
\mathbb{R}
%EndExpansion
^{n}\right) $ is sequentially complete, there is $u\in \mathcal{S}^{\prime
}\left( 
%TCIMACRO{\U{211d} }%
%BeginExpansion
\mathbb{R}
%EndExpansion
^{n}\right) $ such that $u_{n}\rightarrow u$ in $\mathcal{S}^{\prime }\left( 
%TCIMACRO{\U{211d} }%
%BeginExpansion
\mathbb{R}
%EndExpansion
^{n}\right) $.

Let $\Gamma \subset 
%TCIMACRO{\U{211d} }%
%BeginExpansion
\mathbb{R}
%EndExpansion
^{n}$ be a lattice and $\chi \in \mathcal{C}_{0}^{\infty }\left( 
%TCIMACRO{\U{211d} }%
%BeginExpansion
\mathbb{R}
%EndExpansion
^{n}\right) $ be a function with the property that%
\begin{equation*}
\Psi =\Psi _{\Gamma ,\chi }=\sum_{\gamma \in \Gamma }\left\vert \tau
_{\gamma }\chi \right\vert ^{2}>0.
\end{equation*}%
Then for any $\gamma \in \Gamma $ there is $u_{\gamma }\in \mathcal{H}^{%
\mathbf{s}}$ such that $u_{n}\tau _{\gamma }\chi \rightarrow u_{\gamma }$ in 
$\mathcal{H}^{\mathbf{s}}\left( 
%TCIMACRO{\U{211d} }%
%BeginExpansion
\mathbb{R}
%EndExpansion
^{n}\right) $. As $u_{n}\rightarrow u$ in $\mathcal{S}^{\prime }\left( 
%TCIMACRO{\U{211d} }%
%BeginExpansion
\mathbb{R}
%EndExpansion
^{n}\right) $ it follows that $u_{\gamma }=u\tau _{\gamma }\chi $ for any $%
\gamma \in \Gamma $.

Since $\left\{ u_{n}\right\} $ is a Cauchy sequence in $\mathcal{K}_{p}^{%
\mathbf{s}}\left( 
%TCIMACRO{\U{211d} }%
%BeginExpansion
\mathbb{R}
%EndExpansion
^{n}\right) $ there is $M\in \left( 0,\infty \right) $ such that $\left\Vert
u_{n}\right\Vert _{\mathbf{s},p,\Gamma ,\chi }\leq M$ for any $n\in 
%TCIMACRO{\U{2115} }%
%BeginExpansion
\mathbb{N}
%EndExpansion
$. Let $\varepsilon >0$. Then there is $n_{\varepsilon }$ such that if $m$, $%
n\geq n_{\varepsilon }$, then $\left\Vert u_{m}-u_{n}\right\Vert _{\mathbf{s}%
,p,\Gamma ,\chi }<\varepsilon $.

Let $F\subset \Gamma $ a finite subset. Then 
\begin{eqnarray*}
\left( \sum_{\gamma \in F}\left\Vert u\tau _{\gamma }\chi \right\Vert _{%
\mathcal{H}^{\mathbf{s}}}^{p}\right) ^{\frac{1}{p}} &\leq &\left(
\sum_{\gamma \in F}\left\Vert u\tau _{\gamma }\chi -u_{n}\tau _{\gamma }\chi
\right\Vert _{\mathcal{H}^{\mathbf{s}}}^{p}\right) ^{\frac{1}{p}}+\left(
\sum_{\gamma \in F}\left\Vert u_{n}\tau _{\gamma }\chi \right\Vert _{%
\mathcal{H}^{\mathbf{s}}}^{p}\right) ^{\frac{1}{p}} \\
&\leq &\left( \sum_{\gamma \in F}\left\Vert u\tau _{\gamma }\chi -u_{n}\tau
_{\gamma }\chi \right\Vert _{\mathcal{H}^{\mathbf{s}}}^{p}\right) ^{\frac{1}{%
p}}+M
\end{eqnarray*}%
By passing to the limit we obtain $\left( \sum_{\gamma \in F}\left\Vert
u\tau _{\gamma }\chi \right\Vert _{\mathcal{H}^{\mathbf{s}}}^{p}\right) ^{%
\frac{1}{p}}\leq M$ for any $F\subset \Gamma $ a finite subset. Hence $u\in 
\mathcal{K}_{p}^{\mathbf{s}}\left( 
%TCIMACRO{\U{211d} }%
%BeginExpansion
\mathbb{R}
%EndExpansion
^{n}\right) $.

For $F\subset \Gamma $ a finite subset and $m,n\geq n_{\varepsilon }$ we
have 
\begin{multline*}
\left( \sum_{\gamma \in F}\left\Vert u\tau _{\gamma }\chi -u_{n}\tau
_{\gamma }\chi \right\Vert _{\mathcal{H}^{\mathbf{s}}}^{p}\right) ^{\frac{1}{%
p}} \\
\leq \left( \sum_{\gamma \in F}\left\Vert u\tau _{\gamma }\chi -u_{m}\tau
_{\gamma }\chi \right\Vert _{\mathcal{H}^{\mathbf{s}}}^{p}\right) ^{\frac{1}{%
p}}+\left( \sum_{\gamma \in F}\left\Vert u_{n}\tau _{\gamma }\chi -u_{m}\tau
_{\gamma }\chi \right\Vert _{\mathcal{H}^{\mathbf{s}}}^{p}\right) ^{\frac{1}{%
p}} \\
\leq \left( \sum_{\gamma \in F}\left\Vert u\tau _{\gamma }\chi -u_{m}\tau
_{\gamma }\chi \right\Vert _{\mathcal{H}^{\mathbf{s}}}^{p}\right) ^{\frac{1}{%
p}}+\varepsilon
\end{multline*}%
By letting $m\rightarrow \infty $ we obtain $\left( \sum_{\gamma \in
F}\left\Vert u\tau _{\gamma }\chi -u_{n}\tau _{\gamma }\chi \right\Vert _{%
\mathcal{H}^{\mathbf{s}}}^{p}\right) ^{\frac{1}{p}}\leq \varepsilon $ for
any $F\subset \Gamma $ a finite subset and $n\geq n_{\varepsilon }$. This
implies that $u_{n}\rightarrow u$ in $\mathcal{K}_{p}^{\mathbf{s}}\left( 
%TCIMACRO{\U{211d} }%
%BeginExpansion
\mathbb{R}
%EndExpansion
^{n}\right) $. The case $p=\infty $ is even simpler.
\end{proof}

\begin{remark}
Kato-Sobolev spaces are particular cases of Wiener amalgam spaces. More
precisely we have%
\begin{equation*}
\mathcal{K}_{p}^{\mathbf{s}}\left( 
%TCIMACRO{\U{211d} }%
%BeginExpansion
\mathbb{R}
%EndExpansion
^{n}\right) =W\left( \mathcal{H}^{\mathbf{s}},L^{p}\right)
\end{equation*}%
with local component $\mathcal{H}^{\mathbf{s}}\left( 
%TCIMACRO{\U{211d} }%
%BeginExpansion
\mathbb{R}
%EndExpansion
^{n}\right) $ and global component $L^{p}\left( 
%TCIMACRO{\U{211d} }%
%BeginExpansion
\mathbb{R}
%EndExpansion
^{n}\right) $. Wiener amalgam spaces were introduced by Hans Georg
Feichtinger in 1980.
\end{remark}

\begin{proposition}
\label{ks12}Let $\mathbf{s}$, $\mathbf{t}$, $\mathbf{\varepsilon }$, \textbf{%
$\sigma $}$\left( \mathbf{\varepsilon }\right) \in 
%TCIMACRO{\U{211d} }%
%BeginExpansion
\mathbb{R}
%EndExpansion
^{j}$ such that, $s_{l}+t_{l}>n_{l}/2$, $0<\varepsilon
_{l}<s_{l}+t_{l}-n_{l}/2$, \textbf{$\sigma $}$_{l}\left( \mathbf{\varepsilon 
}\right) =$\textbf{$\sigma $}$_{l}\left( \varepsilon _{l}\right) =\min
\left\{ s_{l},t_{l},s_{l}+t_{l}-n_{l}/2-\varepsilon _{l}\right\} $ for any $%
l\in \left\{ 1,...,j\right\} $. If $\frac{1}{p}+\frac{1}{q}=\frac{1}{r}$,
then%
\begin{equation*}
\mathcal{K}_{p}^{\mathbf{s}}\left( 
%TCIMACRO{\U{211d} }%
%BeginExpansion
\mathbb{R}
%EndExpansion
^{n}\right) \cdot \mathcal{K}_{q}^{\mathbf{t}}\left( 
%TCIMACRO{\U{211d} }%
%BeginExpansion
\mathbb{R}
%EndExpansion
^{n}\right) \subset \mathcal{K}_{r}^{\mathbf{\sigma }}\left( 
%TCIMACRO{\U{211d} }%
%BeginExpansion
\mathbb{R}
%EndExpansion
^{n}\right)
\end{equation*}
\end{proposition}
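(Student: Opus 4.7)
The plan is to reduce the multiplication property on Kato--Sobolev spaces to the pointwise multiplication property on $\mathcal{H}^{\mathbf{s}}$ (Proposition \ref{ks7}) via the defining norm $\|u\|_{\mathbf{s},p,\chi}$, combined with the generalized Hölder inequality on the global component $L^p\cdot L^q\subset L^r$.

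First I would fix $\chi\in\mathcal{C}_0^{\infty}(\mathbb{R}^n)\smallsetminus 0$ and choose an auxiliary $\widetilde{\chi}\in\mathcal{C}_0^{\infty}(\mathbb{R}^n)$ with $\widetilde{\chi}=1$ on $\mathtt{supp}\,\chi$. Then the pointwise identity
\begin{equation*}
(uv)\cdot\tau_y\chi=(u\cdot\tau_y\widetilde{\chi})\cdot(v\cdot\tau_y\chi),\qquad y\in\mathbb{R}^n,
\end{equation*}
holds for $u,v\in\mathcal{D}'(\mathbb{R}^n)$ whenever the product $uv$ makes sense (which it will, since by Proposition \ref{ks15}(h) plus localization the factors can be taken in $\mathcal{H}^{\mathbf{s}}$ and $\mathcal{H}^{\mathbf{t}}$ after cutoff). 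Applying Proposition \ref{ks7} to each $y$ yields a constant $C=C(\mathbf{s},\mathbf{t},\boldsymbol{\varepsilon},n)>0$ such that
\begin{equation*}
\|(uv)\tau_y\chi\|_{\mathcal{H}^{\boldsymbol{\sigma}}}\leq C\,\|u\tau_y\widetilde{\chi}\|_{\mathcal{H}^{\mathbf{s}}}\,\|v\tau_y\chi\|_{\mathcal{H}^{\mathbf{t}}}.
\end{equation*}

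Next I would take the $L^r(\mathbb{R}^n_y)$ norm of both sides. When $1\leq p,q<\infty$ and $\frac{1}{p}+\frac{1}{q}=\frac{1}{r}$, the generalized Hölder inequality gives
\begin{equation*}
\left(\int\|(uv)\tau_y\chi\|_{\mathcal{H}^{\boldsymbol{\sigma}}}^{r}\,\mathtt{d}y\right)^{1/r}\leq C\left(\int\|u\tau_y\widetilde{\chi}\|_{\mathcal{H}^{\mathbf{s}}}^{p}\,\mathtt{d}y\right)^{1/p}\left(\int\|v\tau_y\chi\|_{\mathcal{H}^{\mathbf{t}}}^{q}\,\mathtt{d}y\right)^{1/q},
\end{equation*}
that is, $\|uv\|_{\boldsymbol{\sigma},r,\chi}\leq C\,\|u\|_{\mathbf{s},p,\widetilde{\chi}}\,\|v\|_{\mathbf{t},q,\chi}$. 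By Proposition \ref{ks15}(a)--(b) the norms $\|u\|_{\mathbf{s},p,\widetilde{\chi}}$ and $\|u\|_{\mathbf{s},p,\chi}$ are equivalent, so $uv\in\mathcal{K}_r^{\boldsymbol{\sigma}}(\mathbb{R}^n)$ with the appropriate continuous bound. The boundary cases where one or more of $p,q,r$ equals $\infty$ are handled identically, replacing the corresponding integrals by suprema (using the $p=\infty$ clause of Proposition \ref{ks8} and the trivial $L^\infty$-version of Hölder).

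To justify rigorously that the product $uv$ is a well-defined distribution in the first place, I would approximate: write $u=\sum_{\gamma\in\Gamma}\tau_\gamma(h\tau_{-\gamma}u)$ via the partition of unity $\sum_{\gamma}\tau_\gamma h=1$ constructed before Proposition \ref{ks4}, and similarly for $v$. Each $(h\tau_{-\gamma}u)(h\tau_{-\gamma'}v)$ lies in $\mathcal{H}^{\boldsymbol{\sigma}}$ by Proposition \ref{ks7}, and the support condition makes only finitely many cross-terms per unit cube contribute, so the sum converges in $\mathcal{D}'$ and the argument above applies to this product. The main obstacle is really only bookkeeping: one must check that the constants obtained through Proposition \ref{ks7} are uniform in $y$ (which they are, since they depend only on $\mathbf{s},\mathbf{t},\boldsymbol{\varepsilon},n$ and not on the test functions themselves), so that Hölder's inequality can be applied cleanly.
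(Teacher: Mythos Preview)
Your proof is correct and follows essentially the same approach as the paper: localize, apply Proposition~\ref{ks7} pointwise in $y$, then use H\"older's inequality on the global $L^p$ component. The only cosmetic difference is that the paper writes $(uv)\tau_y\chi^2=(u\tau_y\chi)(v\tau_y\chi)$ instead of introducing an auxiliary $\widetilde{\chi}$, which slightly shortens the argument by avoiding the appeal to the equivalence of norms in Proposition~\ref{ks15}.
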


\begin{proof}
Let $\chi \in \mathcal{C}_{0}^{\infty }\left( 
%TCIMACRO{\U{211d} }%
%BeginExpansion
\mathbb{R}
%EndExpansion
^{n}\right) \smallsetminus 0$, $u\in \mathcal{K}_{p}^{\mathbf{s}}\left( 
%TCIMACRO{\U{211d} }%
%BeginExpansion
\mathbb{R}
%EndExpansion
^{n}\right) $ \c{s}i $v\in \mathcal{H}_{q}^{\mathbf{t}}\left( 
%TCIMACRO{\U{211d} }%
%BeginExpansion
\mathbb{R}
%EndExpansion
^{n}\right) $. By using Proposition \ref{ks7} we obtain that $uv\tau
_{y}\chi ^{2}\in \mathcal{H}^{\mathbf{\sigma }}$ and%
\begin{equation*}
\left\Vert uv\tau _{y}\chi ^{2}\right\Vert _{\mathcal{H}^{\mathbf{\sigma }%
}}\leq C\left\Vert u\tau _{y}\chi \right\Vert _{\mathcal{H}^{\mathbf{s}%
}}\left\Vert u\tau _{y}\chi \right\Vert _{\mathcal{H}^{\mathbf{t}}}
\end{equation*}%
Finally, H\"{o}lder's inequality implies that 
\begin{equation*}
\left\Vert uv\right\Vert _{\mathbf{\sigma },r,\chi ^{2}}\leq C\left\Vert
u\right\Vert _{\mathbf{s},p,\chi }\left\Vert v\right\Vert _{\mathbf{s}%
,q,\chi }
\end{equation*}
\end{proof}

\begin{corollary}
Let $\mathbf{s}\in 
%TCIMACRO{\U{211d} }%
%BeginExpansion
\mathbb{R}
%EndExpansion
^{j}$ and $1\leq p\leq \infty $. If $s_{1}>n_{1}/2,...,s_{j}>n_{j}/2$, then $%
\mathcal{K}_{p}^{\mathbf{s}}\left( 
%TCIMACRO{\U{211d} }%
%BeginExpansion
\mathbb{R}
%EndExpansion
^{n}\right) $ is an ideal in $\mathcal{K}_{\infty }^{\mathbf{s}}\left( 
%TCIMACRO{\U{211d} }%
%BeginExpansion
\mathbb{R}
%EndExpansion
^{n}\right) \equiv \mathcal{H}_{\mathtt{ul}}^{\mathbf{s}}\left( 
%TCIMACRO{\U{211d} }%
%BeginExpansion
\mathbb{R}
%EndExpansion
^{n}\right) $ with respect to the usual product.
\end{corollary}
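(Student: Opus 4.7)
The plan is to derive this corollary as a direct specialization of Proposition \ref{ks12}, with $\mathbf{t}=\mathbf{s}$ and one of the factors taken in $\mathcal{K}_{\infty}^{\mathbf{s}}$. The point is just to check that the exponent $\boldsymbol{\sigma}(\boldsymbol{\varepsilon})$ arising from Proposition \ref{ks12} can be made equal to $\mathbf{s}$ itself under the hypothesis $s_{l}>n_{l}/2$.

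Concretely, given $u\in \mathcal{K}_{\infty}^{\mathbf{s}}$ and $v\in \mathcal{K}_{p}^{\mathbf{s}}$, I would set $\mathbf{t}=\mathbf{s}$ in Proposition \ref{ks12}, so that for each $l$ one has $s_{l}+t_{l}=2s_{l}>n_{l}>n_{l}/2$, making the hypothesis of Proposition \ref{ks12} applicable. The admissible range for $\varepsilon_{l}$ is $(0,2s_{l}-n_{l}/2)$, and the exponent produced by the proposition is
\begin{equation*}
\sigma_{l}(\varepsilon_{l})=\min\{s_{l},s_{l},2s_{l}-n_{l}/2-\varepsilon_{l}\}.
\end{equation*}
Since $s_{l}>n_{l}/2$, the interval $(0,s_{l}-n_{l}/2]$ is nonempty, and for any $\varepsilon_{l}$ in it one has $2s_{l}-n_{l}/2-\varepsilon_{l}\geq s_{l}$, so $\sigma_{l}(\varepsilon_{l})=s_{l}$. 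Thus Proposition \ref{ks12} with $\mathbf{t}=\mathbf{s}$, this choice of $\boldsymbol{\varepsilon}$, and the exponent relation $\frac{1}{\infty}+\frac{1}{p}=\frac{1}{p}$ yields
\begin{equation*}
\mathcal{K}_{\infty}^{\mathbf{s}}(\mathbb{R}^{n})\cdot \mathcal{K}_{p}^{\mathbf{s}}(\mathbb{R}^{n})\subset \mathcal{K}_{p}^{\mathbf{s}}(\mathbb{R}^{n}),
\end{equation*}
together with a continuity estimate of the form $\Vert uv\Vert_{\mathbf{s},p,\chi^{2}}\leq C\Vert u\Vert_{\mathbf{s},\infty,\chi}\Vert v\Vert_{\mathbf{s},p,\chi}$. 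This is exactly the statement that $\mathcal{K}_{p}^{\mathbf{s}}$ is an ideal in $\mathcal{H}_{\mathtt{ul}}^{\mathbf{s}}=\mathcal{K}_{\infty}^{\mathbf{s}}$.

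There is essentially no obstacle; the proof is a two-line verification once one notices that the hypothesis $s_{l}>n_{l}/2$ is precisely what is needed to ensure $\sigma_{l}$ can be chosen equal to $s_{l}$, i.e.\ to avoid any loss of regularity in the multiplication. The only mild subtlety is remembering that Proposition \ref{ks12} is stated under a strict inequality for $\varepsilon_{l}$, so one must pick $\varepsilon_{l}\in(0,s_{l}-n_{l}/2)$ when $s_{l}-n_{l}/2>0$, which is guaranteed by hypothesis.
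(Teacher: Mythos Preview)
Your proposal is correct and matches the paper's approach: the corollary is stated immediately after Proposition~\ref{ks12} without proof, precisely because it is the specialization $\mathbf{t}=\mathbf{s}$, $q=\infty$, $r=p$, with $\varepsilon_{l}\in(0,s_{l}-n_{l}/2)$ forcing $\sigma_{l}(\varepsilon_{l})=s_{l}$.
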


Now using the techniques of Coifman and Meyer, developed for the study of
Beurling algebras $A_{\omega }$ and $B_{\omega }$ (see \cite{Meyer} pp
7-10), we shall prove an interesting result.

\begin{theorem}
$\mathcal{H}^{\mathbf{s}}\left( 
%TCIMACRO{\U{211d} }%
%BeginExpansion
\mathbb{R}
%EndExpansion
^{n}\right) =\mathcal{K}_{2}^{\mathbf{s}}\left( 
%TCIMACRO{\U{211d} }%
%BeginExpansion
\mathbb{R}
%EndExpansion
^{n}\right) $.
\end{theorem}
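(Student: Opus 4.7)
The plan is to identify both norms with a common lattice sum of local pieces, using the partition of unity $\{h_i\}_{i=1}^N$ constructed just before Proposition \ref{ks4}: each $h_i \in \mathcal{C}_0^\infty$ is supported in $K_i$ with $(K_i - K_i) \cap \mathbb{Z}^n = \{0\}$, its $\mathbb{Z}^n$-periodization $\chi_i = \sum_{\gamma} \tau_\gamma h_i$ lies in $\mathcal{BC}^\infty$, and $\sum_i \chi_i \equiv 1$. The key point — and the reason for splitting into several pieces instead of using a single bump $\chi$ — is that each $h_i$ individually satisfies the disjointness condition required by Lemma \ref{ks2}, while the aggregate $h = \sum_i h_i$ satisfies $\sum_{\gamma} \tau_\gamma h \equiv 1$, hence $\sum_{\gamma} |\tau_\gamma h|^2 > 0$, which is what Proposition \ref{ks8}(b) needs. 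These two hypotheses cannot both be arranged for a single bump function, so the partition of unity is the bridge between the two earlier results.

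For the inclusion $\mathcal{H}^{\mathbf{s}} \subset \mathcal{K}_2^{\mathbf{s}}$, I start from $u \in \mathcal{H}^{\mathbf{s}}$ and use Proposition \ref{ks3}(c) to keep $\chi_i u \in \mathcal{H}^{\mathbf{s}}$ with $\left\Vert \chi_i u \right\Vert_{\mathcal{H}^{\mathbf{s}}} \leq C_i \left\Vert u \right\Vert_{\mathcal{H}^{\mathbf{s}}}$. Writing $\chi_i u = \sum_{\gamma} \tau_\gamma(h_i \tau_{-\gamma} u)$, each summand lies in $\mathcal{H}^{\mathbf{s}} \cap \mathcal{D}'_{K_i}$ by translation invariance and Proposition \ref{ks4}, so Lemma \ref{ks2} yields $\sum_{\gamma} \left\Vert h_i \tau_{-\gamma} u \right\Vert_{\mathcal{H}^{\mathbf{s}}}^2 \leq C^2 \left\Vert \chi_i u \right\Vert_{\mathcal{H}^{\mathbf{s}}}^2$. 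Translation invariance of the $\mathcal{H}^{\mathbf{s}}$-norm rewrites each summand as $\left\Vert u \tau_\gamma h_i \right\Vert_{\mathcal{H}^{\mathbf{s}}}$. Summing over $i$ and using $\left\Vert u \tau_\gamma h \right\Vert_{\mathcal{H}^{\mathbf{s}}} \leq \sum_i \left\Vert u \tau_\gamma h_i \right\Vert_{\mathcal{H}^{\mathbf{s}}}$, I would obtain $\sum_{\gamma} \left\Vert u \tau_\gamma h \right\Vert_{\mathcal{H}^{\mathbf{s}}}^2 < \infty$. Proposition \ref{ks8}(b), applicable because $\sum_{\gamma} |\tau_\gamma h|^2 > 0$, then promotes this discrete estimate to $\int \left\Vert u \tau_y \widetilde{\chi} \right\Vert_{\mathcal{H}^{\mathbf{s}}}^2\,\mathtt{d}y < \infty$ for any $\widetilde{\chi} \in \mathcal{C}_0^\infty \setminus 0$, so $u \in \mathcal{K}_2^{\mathbf{s}}$.

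For the reverse inclusion I run the argument backwards. Given $u \in \mathcal{K}_2^{\mathbf{s}}$, Proposition \ref{ks8}(c) bounds each $\sum_{\gamma} \left\Vert u \tau_\gamma h_i \right\Vert_{\mathcal{H}^{\mathbf{s}}}^2$ by the defining integral, so it is finite; translation invariance gives $\sum_{\gamma} \left\Vert h_i \tau_{-\gamma} u \right\Vert_{\mathcal{H}^{\mathbf{s}}}^2 < \infty$. The converse direction of Lemma \ref{ks2} then forces $\chi_i u = \sum_{\gamma} \tau_\gamma(h_i \tau_{-\gamma} u) \in \mathcal{H}^{\mathbf{s}}$, and summing over $i$ produces $u = \sum_i \chi_i u \in \mathcal{H}^{\mathbf{s}}$. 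The main obstacle is really the structural one noted above: matching the incompatible hypotheses of Lemma \ref{ks2} and Proposition \ref{ks8}(b) via the partition-of-unity decomposition, together with the small but essential bookkeeping of translating $h_i \tau_{-\gamma} u$ into $u \tau_\gamma h_i$ so that the two sides of the proposed identity are phrased in the same language.
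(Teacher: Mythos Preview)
Your proof is correct and follows essentially the same approach as the paper: both directions rest on the partition of unity $\{h_i\}$, with Lemma~\ref{ks2} converting between $\chi_i u \in \mathcal{H}^{\mathbf{s}}$ and $\ell^2$-summability of $\|(\tau_\gamma h_i)u\|_{\mathcal{H}^{\mathbf{s}}}$, and Proposition~\ref{ks8} (equivalently Proposition~\ref{ks15}(c)) identifying the $\ell^2$-sum over $\mathbb{Z}^n$ with the $\mathcal{K}_2^{\mathbf{s}}$-norm. Your explicit articulation of why the single bump $h$ cannot serve both roles, and the bookkeeping via translation invariance, matches the paper's argument precisely.
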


To prove the result, we shall use partition of unity built in the previous
section. Let $N\in 
%TCIMACRO{\U{2115} }%
%BeginExpansion
\mathbb{N}
%EndExpansion
$ and $\left\{ x_{1},...,x_{N}\right\} \subset 
%TCIMACRO{\U{211d} }%
%BeginExpansion
\mathbb{R}
%EndExpansion
^{n}$ be such that 
\begin{equation*}
\left[ 0,1\right] ^{n}\subset \left( x_{1}+\left[ \frac{1}{3},\frac{2}{3}%
\right] ^{n}\right) \cup ...\cup \left( x_{N}+\left[ \frac{1}{3},\frac{2}{3}%
\right] ^{n}\right)
\end{equation*}%
Let $\widetilde{h}\in \mathcal{C}_{0}^{\infty }\left( 
%TCIMACRO{\U{211d} }%
%BeginExpansion
\mathbb{R}
%EndExpansion
^{n}\right) $, $\widetilde{h}\geq 0$, be such that $\widetilde{h}=1$ on $%
\left[ \frac{1}{3},\frac{2}{3}\right] ^{n}$ and \texttt{supp}$\widetilde{h}%
\subset \left[ \frac{1}{4},\frac{3}{4}\right] ^{n}$. Then

\begin{enumerate}
\item[$\left( \mathtt{a}\right) $] $\widetilde{H}=\sum_{i=1}^{N}\sum_{\gamma
\in 
%TCIMACRO{\U{2124} }%
%BeginExpansion
\mathbb{Z}
%EndExpansion
^{n}}\tau _{\gamma +x_{i}}\widetilde{h}\in \mathcal{BC}^{\infty }\left( 
%TCIMACRO{\U{211d} }%
%BeginExpansion
\mathbb{R}
%EndExpansion
^{n}\right) $ is $%
%TCIMACRO{\U{2124} }%
%BeginExpansion
\mathbb{Z}
%EndExpansion
^{n}$-periodic and $\widetilde{H}\geq 1$.

\item[$\left( \mathtt{b}\right) $] $h_{i}=\frac{\tau _{x_{i}}\widetilde{h}}{%
\widetilde{H}}\in \mathcal{C}_{0}^{\infty }\left( 
%TCIMACRO{\U{211d} }%
%BeginExpansion
\mathbb{R}
%EndExpansion
^{n}\right) $, $h_{i}\geq 0$, \texttt{supp}$h_{i}\subset x_{i}+\left[ \frac{1%
}{4},\frac{3}{4}\right] ^{n}=K_{i}$, $\left( K_{i}-K_{i}\right) \cap 
%TCIMACRO{\U{2124} }%
%BeginExpansion
\mathbb{Z}
%EndExpansion
^{n}=\left\{ 0\right\} $, $i=1,...,N$.

\item[$\left( \mathtt{c}\right) $] $\chi _{i}=\sum_{\gamma \in 
%TCIMACRO{\U{2124} }%
%BeginExpansion
\mathbb{Z}
%EndExpansion
^{n}}\tau _{\gamma }h_{i}\in \mathcal{BC}^{\infty }\left( 
%TCIMACRO{\U{211d} }%
%BeginExpansion
\mathbb{R}
%EndExpansion
^{n}\right) $ is $%
%TCIMACRO{\U{2124} }%
%BeginExpansion
\mathbb{Z}
%EndExpansion
^{n}$-periodic, $i=1,...,N$ and $\sum_{i=1}^{N}\chi _{i}=1.$

\item[$\left( \mathtt{d}\right) $] $h=\sum_{i=1}^{N}h_{i}\in \mathcal{C}%
_{0}^{\infty }\left( 
%TCIMACRO{\U{211d} }%
%BeginExpansion
\mathbb{R}
%EndExpansion
^{n}\right) $, $h\geq 0$, $\sum_{\gamma \in 
%TCIMACRO{\U{2124} }%
%BeginExpansion
\mathbb{Z}
%EndExpansion
^{n}}\tau _{\gamma }h=$ $1$.
\end{enumerate}

\begin{lemma}
$\mathcal{K}_{2}^{\mathbf{s}}\left( 
%TCIMACRO{\U{211d} }%
%BeginExpansion
\mathbb{R}
%EndExpansion
^{n}\right) \subset \mathcal{H}^{\mathbf{s}}\left( 
%TCIMACRO{\U{211d} }%
%BeginExpansion
\mathbb{R}
%EndExpansion
^{n}\right) $.
\end{lemma}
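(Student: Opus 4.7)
The plan is to exploit the partition of unity $1 = \sum_{i=1}^{N} \chi_i$ with $\chi_i = \sum_{\gamma \in \mathbb{Z}^n} \tau_\gamma h_i$ constructed just before the lemma, which lets us write $u = \sum_{i=1}^{N} \chi_i u$ in $\mathcal{D}'(\mathbb{R}^n)$, with
\begin{equation*}
\chi_i u = \sum_{\gamma \in \mathbb{Z}^n} (\tau_\gamma h_i)\, u = \sum_{\gamma \in \mathbb{Z}^n} \tau_\gamma\bigl(h_i\, \tau_{-\gamma} u\bigr).
\end{equation*}
Each summand $h_i\, \tau_{-\gamma} u$ is supported in $K_i = x_i + [\tfrac{1}{4}, \tfrac{3}{4}]^n$, and the partition of unity was engineered so that $(K_i - K_i) \cap \mathbb{Z}^n = \{0\}$, which is precisely the geometric hypothesis required for Lemma \ref{ks2} to apply to the family $\{h_i\, \tau_{-\gamma} u\}_{\gamma \in \mathbb{Z}^n}$.

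Next I would verify the $\ell^2$ summability condition $(\mathtt{b})$ of Lemma \ref{ks2} for each fixed $i$. Since $\|\cdot\|_{\mathcal{H}^{\mathbf{s}}}$ is translation invariant, one has
\begin{equation*}
\|h_i\, \tau_{-\gamma} u\|_{\mathcal{H}^{\mathbf{s}}} = \|\tau_\gamma(h_i\, \tau_{-\gamma} u)\|_{\mathcal{H}^{\mathbf{s}}} = \|u \cdot \tau_\gamma h_i\|_{\mathcal{H}^{\mathbf{s}}}.
\end{equation*}
Then Proposition \ref{ks8}$(\mathtt{c})$, applied with $\widetilde{\Gamma} = \mathbb{Z}^n$, $\widetilde{\chi} = h_i$, $p = 2$, and any reference $\chi \in \mathcal{C}_0^{\infty}(\mathbb{R}^n) \setminus 0$ witnessing $u \in \mathcal{K}_2^{\mathbf{s}}$, yields
\begin{equation*}
\sum_{\gamma \in \mathbb{Z}^n} \|h_i\, \tau_{-\gamma} u\|_{\mathcal{H}^{\mathbf{s}}}^{2} \leq C \int \|u\, \tau_y \chi\|_{\mathcal{H}^{\mathbf{s}}}^{2}\, \mathtt{d}y = C\|u\|_{\mathbf{s}, 2, \chi}^{2} < \infty.
\end{equation*}
By the implication $(\mathtt{b}) \Rightarrow (\mathtt{a})$ of Lemma \ref{ks2}, $\chi_i u \in \mathcal{H}^{\mathbf{s}}(\mathbb{R}^n)$ with $\|\chi_i u\|_{\mathcal{H}^{\mathbf{s}}} \leq C\|u\|_{\mathbf{s}, 2, \chi}$, and summing the finitely many indices $i = 1, \ldots, N$ gives $u \in \mathcal{H}^{\mathbf{s}}(\mathbb{R}^n)$ together with a continuous embedding $\mathcal{K}_2^{\mathbf{s}} \hookrightarrow \mathcal{H}^{\mathbf{s}}$.

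The main obstacle is the identification that the hypothesis $(\mathtt{b})$ of Lemma \ref{ks2} is implied by membership in $\mathcal{K}_2^{\mathbf{s}}$; this is exactly where Proposition \ref{ks8}$(\mathtt{c})$ does the essential work, converting the continuous $L^2$-norm in the definition of $\mathcal{K}_2^{\mathbf{s}}$ into a discrete $\ell^2$-norm over $\mathbb{Z}^n$. Everything else is a routine consequence of the structural properties of the partition of unity, in particular the support separation $(K_i - K_i) \cap \mathbb{Z}^n = \{0\}$, and of the translation invariance of the $\mathcal{H}^{\mathbf{s}}$-norm.
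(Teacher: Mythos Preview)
Your proof is correct and follows essentially the same route as the paper's own argument: decompose $u$ via the partition of unity $1=\sum_{i=1}^N \chi_i$, use Proposition~\ref{ks8} to convert the $\mathcal{K}_2^{\mathbf{s}}$ hypothesis into the $\ell^2$ summability of $\{\|u\,\tau_\gamma h_i\|_{\mathcal{H}^{\mathbf{s}}}\}_\gamma$, and then apply Lemma~\ref{ks2} to conclude $\chi_i u\in\mathcal{H}^{\mathbf{s}}$. Your write-up is in fact slightly more explicit than the paper's, spelling out the translation invariance step and singling out part~$(\mathtt{c})$ of Proposition~\ref{ks8}.
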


\begin{proof}
Let $u\in \mathcal{K}_{2}^{\mathbf{s}}\left( 
%TCIMACRO{\U{211d} }%
%BeginExpansion
\mathbb{R}
%EndExpansion
^{n}\right) $. We have 
\begin{equation*}
u=\sum_{j=1}^{N}\chi _{j}u\quad \text{\textit{with}}\quad \chi
_{j}u=\sum_{\gamma \in 
%TCIMACRO{\U{2124} }%
%BeginExpansion
\mathbb{Z}
%EndExpansion
^{n}}\left( \tau _{\gamma }h_{j}\right) u.
\end{equation*}%
Since $u\in \mathcal{K}_{2}^{\mathbf{s}}\left( 
%TCIMACRO{\U{211d} }%
%BeginExpansion
\mathbb{R}
%EndExpansion
^{n}\right) $ applying Proposition \ref{ks8} we get that 
\begin{equation*}
\sum_{\gamma \in 
%TCIMACRO{\U{2124} }%
%BeginExpansion
\mathbb{Z}
%EndExpansion
^{n}}\left\Vert \left( \tau _{\gamma }h_{j}\right) u\right\Vert _{\mathcal{H}%
^{\mathbf{s}}}^{2}<\infty .
\end{equation*}%
Using Lemma \ref{ks2} it follows that $\chi _{j}u\in \mathcal{H}^{\mathbf{s}%
}\left( 
%TCIMACRO{\U{211d} }%
%BeginExpansion
\mathbb{R}
%EndExpansion
^{n}\right) $ and 
\begin{equation*}
\left\Vert \chi _{j}u\right\Vert _{\mathcal{H}^{\mathbf{s}}}\approx \left(
\sum_{\gamma \in 
%TCIMACRO{\U{2124} }%
%BeginExpansion
\mathbb{Z}
%EndExpansion
^{n}}\left\Vert \left( \tau _{\gamma }h_{j}\right) u\right\Vert _{\mathcal{H}%
^{\mathbf{s}}}^{2}\right) ^{\frac{1}{2}}\leq C_{j}\left\Vert u\right\Vert _{%
\mathbf{s},2}
\end{equation*}%
where $\left\Vert \cdot \right\Vert _{\mathbf{s},2}$ is a fixed norm on $%
\mathcal{K}_{2}^{\mathbf{s}}\left( 
%TCIMACRO{\U{211d} }%
%BeginExpansion
\mathbb{R}
%EndExpansion
^{n}\right) $. So $u=\sum_{j=1}^{N}\chi _{j}u\in \mathcal{H}^{\mathbf{s}%
}\left( 
%TCIMACRO{\U{211d} }%
%BeginExpansion
\mathbb{R}
%EndExpansion
^{n}\right) $ and%
\begin{equation*}
\left\Vert u\right\Vert _{\mathcal{H}^{\mathbf{s}}}\leq
\sum_{j=1}^{N}\left\Vert \chi _{j}u\right\Vert _{\mathcal{H}^{\mathbf{s}%
}}\leq \left( \sum_{j=1}^{N}C_{j}\right) \left\Vert u\right\Vert _{\mathbf{s}%
,2}.
\end{equation*}
\end{proof}

\begin{lemma}
$\mathcal{H}^{\mathbf{s}}\left( 
%TCIMACRO{\U{211d} }%
%BeginExpansion
\mathbb{R}
%EndExpansion
^{n}\right) \subset \mathcal{K}_{2}^{\mathbf{s}}\left( 
%TCIMACRO{\U{211d} }%
%BeginExpansion
\mathbb{R}
%EndExpansion
^{n}\right) $.
\end{lemma}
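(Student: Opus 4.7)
The plan is to establish the reverse inclusion by going through the discrete lattice characterization of $\mathcal{K}_2^{\mathbf{s}}$ furnished by Proposition \ref{ks8}, and to reach that characterization from the $\mathcal{H}^{\mathbf{s}}$ side by means of Lemma \ref{ks2} applied to the partition of unity $\{h_i\}_{i=1}^N$ built just before the statement. Concretely, let $u\in\mathcal{H}^{\mathbf{s}}(\mathbb{R}^n)$; I want to produce $\chi\in\mathcal{C}_0^{\infty}\setminus 0$ such that $y\mapsto\|u\tau_y\chi\|_{\mathcal{H}^{\mathbf{s}}}$ lies in $L^2$. The natural candidate is $\chi=h=\sum_{i=1}^N h_i$, which is compactly supported, satisfies $\sum_{\gamma\in\mathbb{Z}^n}\tau_\gamma h=1$, and has $\Psi_{\mathbb{Z}^n,h}=\sum_\gamma|\tau_\gamma h|^2>0$ pointwise (since each point is covered and all terms are nonnegative).

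First, I would decompose $u=\sum_{i=1}^N \chi_i u$. Each factor $\chi_i\in\mathcal{BC}^{\infty}\cap\{\mathbb{Z}^n\text{-periodic}\}$, so by Proposition \ref{ks3}(c) (or the corollary of Proposition \ref{ks4}), $\chi_i u\in\mathcal{H}^{\mathbf{s}}$ with $\|\chi_i u\|_{\mathcal{H}^{\mathbf{s}}}\leq C\|u\|_{\mathcal{H}^{\mathbf{s}}}$. Next, write
\[
\chi_i u \;=\; \sum_{\gamma\in\mathbb{Z}^n}\tau_\gamma\bigl(h_i\,\tau_{-\gamma}u\bigr),
\]
observing that each summand $h_i\,\tau_{-\gamma}u$ is supported in the compact set $K_i$ with $(K_i-K_i)\cap\mathbb{Z}^n=\{0\}$. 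This is exactly the structure Lemma \ref{ks2} requires, and since $\chi_i u\in\mathcal{H}^{\mathbf{s}}$, the lemma yields
\[
\sum_{\gamma\in\mathbb{Z}^n}\bigl\|h_i\,\tau_{-\gamma}u\bigr\|_{\mathcal{H}^{\mathbf{s}}}^{2}\;\leq\;C\,\|\chi_i u\|_{\mathcal{H}^{\mathbf{s}}}^{2}\;\leq\;C'\|u\|_{\mathcal{H}^{\mathbf{s}}}^{2}.
\]

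By translation invariance of the $\mathcal{H}^{\mathbf{s}}$ norm (the operator $\langle\langle D\rangle\rangle^{\mathbf{s}}$ commutes with translations and $L^{2}$ is translation invariant), $\|h_i\,\tau_{-\gamma}u\|_{\mathcal{H}^{\mathbf{s}}}=\|u\,\tau_{\gamma}h_i\|_{\mathcal{H}^{\mathbf{s}}}$. Summing over $i=1,\ldots,N$ and using the elementary inequality $(a_1+\cdots+a_N)^{2}\leq N(a_1^{2}+\cdots+a_N^{2})$,
\[
\sum_{\gamma\in\mathbb{Z}^n}\|u\,\tau_{\gamma}h\|_{\mathcal{H}^{\mathbf{s}}}^{2}\;\leq\;N\sum_{i=1}^{N}\sum_{\gamma\in\mathbb{Z}^n}\|u\,\tau_{\gamma}h_i\|_{\mathcal{H}^{\mathbf{s}}}^{2}\;\leq\;C''\|u\|_{\mathcal{H}^{\mathbf{s}}}^{2}.
\]
Finally, since $\Psi_{\mathbb{Z}^n,h}>0$, Proposition \ref{ks8}(b) applied with $\Gamma=\mathbb{Z}^n$, $\chi=h$, $\widetilde{\chi}=h$, and $p=2$ converts this discrete $\ell^{2}$ bound into
\[
\Bigl(\int\|u\,\tau_{\widetilde{y}}h\|_{\mathcal{H}^{\mathbf{s}}}^{2}\,\mathtt{d}\widetilde{y}\Bigr)^{1/2}\;\leq\;C'''\Bigl(\sum_{\gamma\in\mathbb{Z}^n}\|u\,\tau_{\gamma}h\|_{\mathcal{H}^{\mathbf{s}}}^{2}\Bigr)^{1/2}\;<\;\infty,
\]
so $u\in\mathcal{K}_2^{\mathbf{s}}(\mathbb{R}^n)$ with a norm estimate $\|u\|_{\mathbf{s},2,h}\leq C\|u\|_{\mathcal{H}^{\mathbf{s}}}$.

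The main obstacle is the step where Lemma \ref{ks2} is invoked: the decomposition of $u$ must respect both the lattice $\mathbb{Z}^n$ (for the translates) and the disjointness condition $(K_i-K_i)\cap\mathbb{Z}^n=\{0\}$ (so that the lemma applies term by term). This is precisely what the carefully engineered partition of unity $\{h_i,\chi_i\}$ from the previous section delivers, and then Proposition \ref{ks8}(b) is the bridge that converts lattice $\ell^{2}$ control into integral $L^{2}$ control, closing the inclusion.
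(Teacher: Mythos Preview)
Your proof is correct and follows essentially the same route as the paper: decompose $u$ via the periodic partition $\sum_i\chi_i=1$, apply Lemma~\ref{ks2} to each $\chi_i u=\sum_\gamma\tau_\gamma(h_i\tau_{-\gamma}u)$ to obtain the $\ell^2(\mathbb{Z}^n)$ bound on $\|u\tau_\gamma h_i\|_{\mathcal{H}^{\mathbf{s}}}$, and then sum over $i$ to reach the discrete $\mathcal{K}_2^{\mathbf{s}}$ norm with cutoff $h$. The only cosmetic difference is that the paper stops at the discrete norm $\|u\|_{\mathbf{s},2,\mathbb{Z}^n,h}$ (equivalent by Proposition~\ref{ks15}(c)), whereas you take one more step through Proposition~\ref{ks8}(b) to land on the integral norm; both are valid endpoints.
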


\begin{proof}
Then the following statements are equivalent:

\texttt{(i)} $u\in \mathcal{H}^{\mathbf{s}}\left( 
%TCIMACRO{\U{211d} }%
%BeginExpansion
\mathbb{R}
%EndExpansion
^{n}\right) $

\texttt{(ii)} $\chi _{j}u\in \mathcal{H}^{\mathbf{s}}\left( 
%TCIMACRO{\U{211d} }%
%BeginExpansion
\mathbb{R}
%EndExpansion
^{n}\right) $, $j=1,...,N$. (Here we use Proposition \ref{ks3} $\left( 
\mathtt{c}\right) $)

\texttt{(iii)} $\left\{ \left\Vert \left( \tau _{\gamma }h_{j}\right)
u\right\Vert _{\mathcal{H}^{\mathbf{s}}}\right\} _{\gamma \in 
%TCIMACRO{\U{2124} }%
%BeginExpansion
\mathbb{Z}
%EndExpansion
^{n}}\in l^{2}\left( 
%TCIMACRO{\U{2124} }%
%BeginExpansion
\mathbb{Z}
%EndExpansion
^{n}\right) $, $j=1,...,N$. (Here we use Lemma \ref{ks2})

Since $h=\sum_{j=1}^{N}h_{j}$ and 
\begin{equation*}
\left\Vert \left( \tau _{\gamma }h\right) u\right\Vert _{\mathcal{H}^{%
\mathbf{s}}}\leq \sum_{j=1}^{N}\left\Vert \left( \tau _{\gamma }h_{j}\right)
u\right\Vert _{\mathcal{H}^{\mathbf{s}}},\quad \gamma \in 
%TCIMACRO{\U{2124} }%
%BeginExpansion
\mathbb{Z}
%EndExpansion
^{n}
\end{equation*}%
we get that $\left\{ \left\Vert \left( \tau _{\gamma }h\right) u\right\Vert
_{\mathcal{H}^{\mathbf{s}}}\right\} _{\gamma \in 
%TCIMACRO{\U{2124} }%
%BeginExpansion
\mathbb{Z}
%EndExpansion
^{n}}\in l^{2}\left( 
%TCIMACRO{\U{2124} }%
%BeginExpansion
\mathbb{Z}
%EndExpansion
^{n}\right) $. Since $h=\sum_{j=1}^{N}h_{j}\in \mathcal{C}_{0}^{\infty
}\left( 
%TCIMACRO{\U{211d} }%
%BeginExpansion
\mathbb{R}
%EndExpansion
^{n}\right) $, $h\geq 0$, $\sum_{\gamma \in 
%TCIMACRO{\U{2124} }%
%BeginExpansion
\mathbb{Z}
%EndExpansion
^{n}}\tau _{\gamma }h=1$ it follows that $u\in \mathcal{K}_{2}^{\mathbf{s}%
}\left( 
%TCIMACRO{\U{211d} }%
%BeginExpansion
\mathbb{R}
%EndExpansion
^{n}\right) $ and 
\begin{eqnarray*}
\left\Vert u\right\Vert _{\mathbf{s},2,h} &\approx &\left\Vert \left\{
\left\Vert \left( \tau _{\gamma }h\right) u\right\Vert _{\mathcal{H}^{%
\mathbf{s}}}\right\} _{\gamma \in 
%TCIMACRO{\U{2124} }%
%BeginExpansion
\mathbb{Z}
%EndExpansion
^{n}}\right\Vert _{l^{2}\left( 
%TCIMACRO{\U{2124} }%
%BeginExpansion
\mathbb{Z}
%EndExpansion
^{n}\right) } \\
&\leq &\sum_{j=1}^{N}\left\Vert \left\{ \left\Vert \left( \tau _{\gamma
}h_{j}\right) u\right\Vert _{\mathcal{H}^{\mathbf{s}}}\right\} _{\gamma \in 
%TCIMACRO{\U{2124} }%
%BeginExpansion
\mathbb{Z}
%EndExpansion
^{n}}\right\Vert _{l^{2}\left( 
%TCIMACRO{\U{2124} }%
%BeginExpansion
\mathbb{Z}
%EndExpansion
^{n}\right) } \\
&\approx &\sum_{j=1}^{N}\left\Vert \chi _{j}u\right\Vert _{\mathcal{H}^{%
\mathbf{s}}}\leq Cst\left\Vert u\right\Vert _{\mathcal{H}^{\mathbf{s}}}.
\end{eqnarray*}
\end{proof}

\begin{corollary}[Kato]
\label{ks14}Let $\mathbf{s}$, $\mathbf{t}$, $\mathbf{\varepsilon }$, \textbf{%
$\sigma $}$\left( \mathbf{\varepsilon }\right) \in 
%TCIMACRO{\U{211d} }%
%BeginExpansion
\mathbb{R}
%EndExpansion
^{j}$ such that, $s_{l}+t_{l}>n_{l}/2$, $0<\varepsilon
_{l}<s_{l}+t_{l}-n_{l}/2$, \textbf{$\sigma $}$_{l}\left( \mathbf{\varepsilon 
}\right) =$\textbf{$\sigma $}$_{l}\left( \varepsilon _{l}\right) =\min
\left\{ s_{l},t_{l},s_{l}+t_{l}-n_{l}/2-\varepsilon _{l}\right\} $ for any $%
l\in \left\{ 1,...,j\right\} $. Then%
\begin{equation*}
\mathcal{H}_{\mathtt{ul}}^{\mathbf{s}}\left( 
%TCIMACRO{\U{211d} }%
%BeginExpansion
\mathbb{R}
%EndExpansion
^{n}\right) \cdot \mathcal{H}^{\mathbf{t}}\left( 
%TCIMACRO{\U{211d} }%
%BeginExpansion
\mathbb{R}
%EndExpansion
^{n}\right) \subset \mathcal{H}^{\mathbf{\sigma }}\left( 
%TCIMACRO{\U{211d} }%
%BeginExpansion
\mathbb{R}
%EndExpansion
^{n}\right) ,\quad \mathcal{H}^{\mathbf{s}}\left( 
%TCIMACRO{\U{211d} }%
%BeginExpansion
\mathbb{R}
%EndExpansion
^{n}\right) \cdot \mathcal{H}_{\mathtt{ul}}^{\mathbf{t}}\left( 
%TCIMACRO{\U{211d} }%
%BeginExpansion
\mathbb{R}
%EndExpansion
^{n}\right) \subset \mathcal{H}^{\mathbf{\sigma }}\left( 
%TCIMACRO{\U{211d} }%
%BeginExpansion
\mathbb{R}
%EndExpansion
^{n}\right) .
\end{equation*}
\end{corollary}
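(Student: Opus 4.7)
The plan is to reduce this statement to a direct combination of results already proved in the paper, so almost no new work is needed. The key observation is that the theorem immediately preceding the corollary gives the identification $\mathcal{H}^{\mathbf{s}}(\mathbb{R}^n) = \mathcal{K}_{2}^{\mathbf{s}}(\mathbb{R}^n)$, while by definition (cf.\ Proposition \ref{ks15}$(\mathtt{d})$) we have $\mathcal{H}_{\mathtt{ul}}^{\mathbf{s}}(\mathbb{R}^n) = \mathcal{K}_{\infty}^{\mathbf{s}}(\mathbb{R}^n)$. Once these two identifications are in place, the corollary reads as a statement about the spaces $\mathcal{K}_{p}^{\mathbf{s}}$ and not about $\mathcal{H}^{\mathbf{s}}$ and $\mathcal{H}_{\mathtt{ul}}^{\mathbf{s}}$ separately, and in that language it is precisely a special case of Proposition \ref{ks12}.

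Concretely, I would argue as follows. For the first inclusion take $p = \infty$ and $q = 2$, so that $\frac{1}{p} + \frac{1}{q} = \frac{1}{2}$; Proposition \ref{ks12}, applied with these indices and with the same hypothesis on $\mathbf{s}$, $\mathbf{t}$, $\mathbf{\varepsilon}$ and $\mathbf{\sigma}(\mathbf{\varepsilon})$, gives
\begin{equation*}
\mathcal{K}_{\infty}^{\mathbf{s}}(\mathbb{R}^n) \cdot \mathcal{K}_{2}^{\mathbf{t}}(\mathbb{R}^n) \subset \mathcal{K}_{2}^{\mathbf{\sigma}(\mathbf{\varepsilon})}(\mathbb{R}^n),
\end{equation*}
which by the two identifications above is exactly $\mathcal{H}_{\mathtt{ul}}^{\mathbf{s}} \cdot \mathcal{H}^{\mathbf{t}} \subset \mathcal{H}^{\mathbf{\sigma}(\mathbf{\varepsilon})}$. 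For the second inclusion one uses the symmetric choice $p = 2$, $q = \infty$, in which case $\frac{1}{p} + \frac{1}{q} = \frac{1}{2}$ again and the same application of Proposition \ref{ks12} yields $\mathcal{K}_{2}^{\mathbf{s}} \cdot \mathcal{K}_{\infty}^{\mathbf{t}} \subset \mathcal{K}_{2}^{\mathbf{\sigma}(\mathbf{\varepsilon})}$, i.e.\ $\mathcal{H}^{\mathbf{s}} \cdot \mathcal{H}_{\mathtt{ul}}^{\mathbf{t}} \subset \mathcal{H}^{\mathbf{\sigma}(\mathbf{\varepsilon})}$.

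There is essentially no obstacle here: the substance of the argument has been absorbed into Proposition \ref{ks12} (which rests on the pointwise convolution estimate for $\langle\langle\cdot\rangle\rangle^{-2\mathbf{s}}\ast\langle\langle\cdot\rangle\rangle^{-2\mathbf{t}}$ together with a Schur/Hölder argument) and into the theorem $\mathcal{H}^{\mathbf{s}} = \mathcal{K}_{2}^{\mathbf{s}}$ (which in turn relies on the localization lemma \ref{ks2} and the partition of unity). The only thing worth pointing out, for completeness, is that one also obtains the expected norm estimate, namely $\|uv\|_{\mathcal{H}^{\mathbf{\sigma}(\mathbf{\varepsilon})}} \leq C \, \|u\|_{\mathcal{H}_{\mathtt{ul}}^{\mathbf{s}}} \, \|v\|_{\mathcal{H}^{\mathbf{t}}}$ (and symmetrically), with the constant $C$ inherited from Proposition \ref{ks12}.
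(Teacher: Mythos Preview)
Your argument is correct and is exactly the approach implicit in the paper: the corollary is stated without proof immediately after the theorem $\mathcal{H}^{\mathbf{s}}=\mathcal{K}_{2}^{\mathbf{s}}$, precisely because it follows by combining that identification (and $\mathcal{H}_{\mathtt{ul}}^{\mathbf{s}}=\mathcal{K}_{\infty}^{\mathbf{s}}$) with Proposition~\ref{ks12} for the exponent pairs $(p,q,r)=(\infty,2,2)$ and $(2,\infty,2)$.
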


\begin{lemma}
If $1\leq p<\infty $, then $\mathcal{S}\left( 
%TCIMACRO{\U{211d} }%
%BeginExpansion
\mathbb{R}
%EndExpansion
^{n}\right) $ is dense in $\mathcal{K}_{p}^{\mathbf{s}}\left( 
%TCIMACRO{\U{211d} }%
%BeginExpansion
\mathbb{R}
%EndExpansion
^{n}\right) $.
\end{lemma}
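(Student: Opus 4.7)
The plan is a classical truncate-and-mollify argument. Given $u\in\mathcal{K}_{p}^{\mathbf{s}}(\mathbb{R}^{n})$, I would first approximate it in $\mathcal{K}_{p}^{\mathbf{s}}$ by a compactly supported element $\psi^{\varepsilon}u$, and then approximate that element by a Schwartz function obtained through convolution with a mollifier. The restriction $p<\infty$ is essential for the first step.

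For the truncation, fix $\psi\in\mathcal{C}_{0}^{\infty}(\mathbb{R}^{n})$ with $\psi\equiv 1$ on a neighborhood of the origin, put $\psi^{\varepsilon}(x)=\psi(\varepsilon x)$ for $\varepsilon\in(0,1]$, and fix $\chi\in\mathcal{C}_{0}^{\infty}(\mathbb{R}^{n})\smallsetminus 0$. I would show $\psi^{\varepsilon}u\to u$ in $\mathcal{K}_{p}^{\mathbf{s}}$ by Lebesgue's dominated convergence applied to
\[
\left\Vert \psi^{\varepsilon}u-u\right\Vert _{\mathbf{s},p,\chi}^{p}=\int\left\Vert (\psi^{\varepsilon}-1)(u\tau_{y}\chi)\right\Vert _{\mathcal{H}^{\mathbf{s}}}^{p}\,\mathtt{d}y.
\]
Proposition \ref{ks4} yields $\|(\psi^{\varepsilon}-1)(u\tau_{y}\chi)\|_{\mathcal{H}^{\mathbf{s}}}\leq C\|\psi^{\varepsilon}-1\|_{\mathcal{BC}^{m_{\mathbf{s}}}}\|u\tau_{y}\chi\|_{\mathcal{H}^{\mathbf{s}}}$, with the $\mathcal{BC}^{m_{\mathbf{s}}}$-norm bounded uniformly in $\varepsilon\in(0,1]$ (since $|\partial^{\alpha}\psi^{\varepsilon}(x)|=\varepsilon^{|\alpha|}|\partial^{\alpha}\psi(\varepsilon x)|\leq\|\psi\|_{\mathcal{BC}^{m_{\mathbf{s}}}}$); hence the dominant $C'\|u\tau_{y}\chi\|_{\mathcal{H}^{\mathbf{s}}}$ lies in $L^{p}(\mathtt{d}y)$ by the hypothesis $u\in\mathcal{K}_{p}^{\mathbf{s}}$. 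For each fixed $y$ the compact support of $\tau_{y}\chi$ is eventually contained in $\{\psi^{\varepsilon}=1\}$, so the integrand vanishes pointwise as $\varepsilon\to 0$.

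For the mollification, set $v=\psi^{\varepsilon}u$, which is compactly supported in some $K$. The function $y\mapsto\|v\tau_{y}\chi\|_{\mathcal{H}^{\mathbf{s}}}$ has compact support contained in $K-\mathtt{supp}\,\chi$ and is bounded because $v\in\mathcal{K}_{p}^{\mathbf{s}}\subset\mathcal{K}_{\infty}^{\mathbf{s}}$ (Proposition \ref{ks15}), so $v\in\mathcal{K}_{2}^{\mathbf{s}}=\mathcal{H}^{\mathbf{s}}$ by the preceding theorem. Choose a standard mollifier $\rho_{\delta}(x)=\delta^{-n}\rho(x/\delta)$ with $\rho\in\mathcal{C}_{0}^{\infty}$, $\int\rho=1$; then $v_{\delta}:=v\ast\rho_{\delta}\in\mathcal{C}_{0}^{\infty}(\mathbb{R}^{n})\subset\mathcal{S}(\mathbb{R}^{n})$ has support in a fixed compact $K'\supset K$ for $\delta\leq 1$, and $v_{\delta}\to v$ in $\mathcal{H}^{\mathbf{s}}$ is classical (on the Fourier side, multiplication by $\widehat{\rho}(\delta\cdot)$ tends to $1$ boundedly, giving convergence in $L^{2}(\langle\langle\cdot\rangle\rangle^{2\mathbf{s}}\mathtt{d}\xi)$ by dominated convergence). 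To transfer this to $\mathcal{K}_{p}^{\mathbf{s}}$, note that $(v_{\delta}-v)\tau_{y}\chi$ vanishes unless $y\in Y:=K'-\mathtt{supp}\,\chi$, while Proposition \ref{ks4} gives $\|(v_{\delta}-v)\tau_{y}\chi\|_{\mathcal{H}^{\mathbf{s}}}\leq C_{\chi}\|v_{\delta}-v\|_{\mathcal{H}^{\mathbf{s}}}$ on $Y$; hence
\[
\left\Vert v_{\delta}-v\right\Vert _{\mathbf{s},p,\chi}\leq C_{\chi}|Y|^{1/p}\left\Vert v_{\delta}-v\right\Vert _{\mathcal{H}^{\mathbf{s}}}\longrightarrow 0.
\]

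The main obstacle is the truncation step: the dominated-convergence argument genuinely requires $p<\infty$, and indeed for $p=\infty$ the conclusion fails, because $\mathcal{S}$ is not dense in $\mathcal{H}_{\mathtt{ul}}^{\mathbf{s}}=\mathcal{K}_{\infty}^{\mathbf{s}}$. The mollification step is then routine once $v$ has been identified as a compactly supported element of $\mathcal{H}^{\mathbf{s}}$.
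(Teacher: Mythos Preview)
Your proof is correct and follows essentially the same two-step strategy as the paper: first truncate by $\psi^{\varepsilon}$ to reduce to compactly supported elements, then mollify to land in $\mathcal{C}_{0}^{\infty}\subset\mathcal{S}$. The only cosmetic differences are that the paper works with the lattice norm $\left\Vert\cdot\right\Vert_{\mathbf{s},p,\mathbb{Z}^{n},\chi}$ and controls the tail $\sum_{\gamma\notin F}$ explicitly (a discrete dominated-convergence argument), whereas you use the continuous norm and the Lebesgue dominated convergence theorem directly; in the mollification step the paper invokes the equivalence of $l^{p}$ and $l^{2}$ on the finite index set $F$ together with Lemma~\ref{ks2}, while you use Proposition~\ref{ks4} to pull the factor $\tau_{y}\chi$ outside---both routes ultimately rest on the identification $\mathcal{K}_{2}^{\mathbf{s}}=\mathcal{H}^{\mathbf{s}}$.
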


\begin{proof}
\texttt{(i)} Let $\psi \in \mathcal{C}_{0}^{\infty }\left( 
%TCIMACRO{\U{211d} }%
%BeginExpansion
\mathbb{R}
%EndExpansion
^{n}\right) $ be such that $\psi =1$ on $B\left( 0,1\right) $, $\psi
^{\varepsilon }\left( x\right) =\psi \left( \varepsilon x\right) $, $%
0<\varepsilon \leq 1$, $x\in 
%TCIMACRO{\U{211d} }%
%BeginExpansion
\mathbb{R}
%EndExpansion
^{n}$. If $u\in \mathcal{H}^{\mathbf{s}}\left( 
%TCIMACRO{\U{211d} }%
%BeginExpansion
\mathbb{R}
%EndExpansion
^{n}\right) $, then $\psi ^{\varepsilon }u\rightarrow u$ in $\mathcal{H}^{%
\mathbf{s}}\left( 
%TCIMACRO{\U{211d} }%
%BeginExpansion
\mathbb{R}
%EndExpansion
^{n}\right) $. Moreover we have 
\begin{equation*}
\left\Vert \psi ^{\varepsilon }u\right\Vert _{\mathcal{H}^{\mathbf{s}}}\leq
C\left( s,n,\psi \right) \left\Vert u\right\Vert _{\mathcal{H}^{\mathbf{s}%
}},\quad 0<\varepsilon \leq 1,
\end{equation*}%
where 
\begin{eqnarray*}
C\left( s,n,\psi \right) &=&\left( 2\pi \right) ^{-n}2^{\left\vert \mathbf{s}%
\right\vert _{1}/2}\sup_{0<\varepsilon \leq 1}\left( \int \left\langle \eta
\right\rangle ^{\left\vert \mathbf{s}\right\vert _{1}}\varepsilon
^{-n}\left\vert \widehat{\psi }\left( \eta /\varepsilon \right) \right\vert 
\mathtt{d}\eta \right) \\
&=&\left( 2\pi \right) ^{-n}2^{\left\vert \mathbf{s}\right\vert
_{1}/2}\sup_{0<\varepsilon \leq 1}\left( \int \left\langle \varepsilon \eta
\right\rangle ^{\left\vert \mathbf{s}\right\vert _{1}}\left\vert \widehat{%
\psi }\left( \eta \right) \right\vert \mathtt{d}\eta \right) \\
&=&\left( 2\pi \right) ^{-n}2^{\left\vert \mathbf{s}\right\vert
_{1}/2}\left( \int \left\langle \eta \right\rangle ^{\left\vert \mathbf{s}%
\right\vert _{1}}\left\vert \widehat{\psi }\left( \eta \right) \right\vert 
\mathtt{d}\eta \right) .
\end{eqnarray*}

\texttt{(ii)} Suppose that $u\in \mathcal{K}_{p}^{\mathbf{s}}\left( 
%TCIMACRO{\U{211d} }%
%BeginExpansion
\mathbb{R}
%EndExpansion
^{n}\right) $. Let $F\subset 
%TCIMACRO{\U{2124} }%
%BeginExpansion
\mathbb{Z}
%EndExpansion
^{n}$ be an arbitrary finite subset. Then the subadditivity property of the
norm $\left\Vert \cdot \right\Vert _{l^{p}}$ implies that:%
\begin{multline*}
\left\Vert \psi ^{\varepsilon }u-u\right\Vert _{\mathbf{s},p,%
%TCIMACRO{\U{2124} }%
%BeginExpansion
\mathbb{Z}
%EndExpansion
^{n},\chi }\leq \left( \sum_{\gamma \in F}\left\Vert \psi ^{\varepsilon
}u\tau _{\gamma }\chi -u\tau _{\gamma }\chi \right\Vert _{\mathcal{H}^{%
\mathbf{s}}}^{p}\right) ^{\frac{1}{p}}+\left( \sum_{\gamma \in 
%TCIMACRO{\U{2124} }%
%BeginExpansion
\mathbb{Z}
%EndExpansion
^{n}\smallsetminus F}\left\Vert \psi ^{\varepsilon }u\tau _{\gamma }\chi
\right\Vert _{\mathcal{H}^{\mathbf{s}}}^{p}\right) ^{\frac{1}{p}} \\
+\left( \sum_{\gamma \in 
%TCIMACRO{\U{2124} }%
%BeginExpansion
\mathbb{Z}
%EndExpansion
^{n}\smallsetminus F}\left\Vert u\tau _{\gamma }\chi \right\Vert _{\mathcal{H%
}^{\mathbf{s}}}^{p}\right) ^{\frac{1}{p}} \\
\leq \left( \sum_{\gamma \in F}\left\Vert \psi ^{\varepsilon }u\tau _{\gamma
}\chi -u\tau _{\gamma }\chi \right\Vert _{\mathcal{H}^{\mathbf{s}%
}}^{p}\right) ^{\frac{1}{p}}+\left( C\left( s,n,\psi \right) +1\right)
\left( \sum_{\gamma \in 
%TCIMACRO{\U{2124} }%
%BeginExpansion
\mathbb{Z}
%EndExpansion
^{n}\smallsetminus F}\left\Vert u\tau _{\gamma }\chi \right\Vert _{\mathcal{H%
}^{\mathbf{s}}}^{p}\right) ^{\frac{1}{p}}
\end{multline*}%
By making $\varepsilon \rightarrow 0$ we deduce that 
\begin{equation*}
\limsup_{\varepsilon \rightarrow 0}\left\Vert \psi ^{\varepsilon
}u-u\right\Vert _{\mathbf{s},p,%
%TCIMACRO{\U{2124} }%
%BeginExpansion
\mathbb{Z}
%EndExpansion
^{n},\chi }\leq \left( C\left( s,n,\psi \right) +1\right) \left(
\sum_{\gamma \in 
%TCIMACRO{\U{2124} }%
%BeginExpansion
\mathbb{Z}
%EndExpansion
^{n}\smallsetminus F}\left\Vert u\tau _{\gamma }\chi \right\Vert _{\mathcal{H%
}^{\mathbf{s}}}^{p}\right) ^{\frac{1}{p}}
\end{equation*}%
for any $F\subset 
%TCIMACRO{\U{2124} }%
%BeginExpansion
\mathbb{Z}
%EndExpansion
^{n}$ finite subset. Hence $\lim_{\varepsilon \rightarrow 0}\psi
^{\varepsilon }u=u$ in $\mathcal{K}_{p}^{\mathbf{s}}\left( 
%TCIMACRO{\U{211d} }%
%BeginExpansion
\mathbb{R}
%EndExpansion
^{n}\right) $. The immediate consequence is that

\texttt{(iii)} $\mathcal{E}^{\prime }\left( 
%TCIMACRO{\U{211d} }%
%BeginExpansion
\mathbb{R}
%EndExpansion
^{n}\right) \cap \mathcal{K}_{p}^{\mathbf{s}}\left( 
%TCIMACRO{\U{211d} }%
%BeginExpansion
\mathbb{R}
%EndExpansion
^{n}\right) $ is dense in $\mathcal{K}_{p}^{\mathbf{s}}\left( 
%TCIMACRO{\U{211d} }%
%BeginExpansion
\mathbb{R}
%EndExpansion
^{n}\right) $.

\texttt{(iv)} Suppose that $u\in \mathcal{E}^{\prime }\left( 
%TCIMACRO{\U{211d} }%
%BeginExpansion
\mathbb{R}
%EndExpansion
^{n}\right) \cap \mathcal{K}_{p}^{\mathbf{s}}\left( 
%TCIMACRO{\U{211d} }%
%BeginExpansion
\mathbb{R}
%EndExpansion
^{n}\right) $. Let $\varphi \in \mathcal{C}_{0}^{\infty }\left( 
%TCIMACRO{\U{211d} }%
%BeginExpansion
\mathbb{R}
%EndExpansion
^{n}\right) $ be such that \texttt{supp}$\varphi \subset B\left( 0;1\right) $%
, $\int \varphi \left( x\right) \mathtt{d}x=1$. For $\varepsilon \in \left(
0,1\right] $, we set $\varphi _{\varepsilon }=\varepsilon ^{-n}\varphi
\left( \cdot /\varepsilon \right) $. Let $K=$\texttt{supp}$u+\overline{%
B\left( 0;1\right) }$. Let $\chi \in \mathcal{C}_{0}^{\infty }\left( 
%TCIMACRO{\U{211d} }%
%BeginExpansion
\mathbb{R}
%EndExpansion
^{n}\right) \smallsetminus 0$. Then there is a finite set $F=F_{K,\chi
}\subset 
%TCIMACRO{\U{2124} }%
%BeginExpansion
\mathbb{Z}
%EndExpansion
^{n}$ such that $\left( \tau _{\gamma }\chi \right) \left( \varphi
_{\varepsilon }\ast u-u\right) =0$ for any $\gamma \in 
%TCIMACRO{\U{2124} }%
%BeginExpansion
\mathbb{Z}
%EndExpansion
^{n}\smallsetminus F$. It follows that 
\begin{eqnarray*}
\left\Vert \varphi _{\varepsilon }\ast u-u\right\Vert _{\mathbf{s},p,%
%TCIMACRO{\U{2124} }%
%BeginExpansion
\mathbb{Z}
%EndExpansion
^{n},\chi } &=&\left( \sum_{\gamma \in F}\left\Vert \left( \tau _{\gamma
}\chi \right) \left( \varphi _{\varepsilon }\ast u-u\right) \right\Vert _{%
\mathcal{H}^{\mathbf{s}}}^{p}\right) ^{\frac{1}{p}} \\
&\approx &\left( \sum_{\gamma \in F}\left\Vert \left( \tau _{\gamma }\chi
\right) \left( \varphi _{\varepsilon }\ast u-u\right) \right\Vert _{\mathcal{%
H}^{\mathbf{s}}}^{2}\right) ^{\frac{1}{2}} \\
&\approx &\left\Vert \varphi _{\varepsilon }\ast u-u\right\Vert _{\mathcal{H}%
^{\mathbf{s}}}\rightarrow 0,\quad \text{\textit{as }}\varepsilon \rightarrow
0.
\end{eqnarray*}
\end{proof}

\section{Wiener-L\'{e}vy theorem for Kato-Sobolev algebras}

We shall work only in the case $j=1$, i.e. only in the case of the usual
Kato-Sobolev spaces. The case $j>1$ can be treated in the same way but with
more complicated notations and statements which can hide the ideas and the
beauty of some arguments. So 
\begin{gather*}
\mathcal{H}^{s}\left( 
%TCIMACRO{\U{211d} }%
%BeginExpansion
\mathbb{R}
%EndExpansion
^{n}\right) =\left\{ u\in \mathcal{S}^{\prime }\left( 
%TCIMACRO{\U{211d} }%
%BeginExpansion
\mathbb{R}
%EndExpansion
^{n}\right) :\left( 1-\triangle _{%
%TCIMACRO{\U{211d} }%
%BeginExpansion
\mathbb{R}
%EndExpansion
^{n}}\right) ^{s/2}u\in L^{2}\left( 
%TCIMACRO{\U{211d} }%
%BeginExpansion
\mathbb{R}
%EndExpansion
^{n}\right) \right\} , \\
\left\Vert u\right\Vert _{\mathcal{H}^{\mathbf{s}}}=\left\Vert \left(
1-\triangle _{%
%TCIMACRO{\U{211d} }%
%BeginExpansion
\mathbb{R}
%EndExpansion
^{n}}\right) ^{s/2}u\right\Vert _{L^{2}},\quad u\in \mathcal{H}^{\mathbf{s}},
\end{gather*}

Let $1\leq p\leq \infty $, $s\in 
%TCIMACRO{\U{211d} }%
%BeginExpansion
\mathbb{R}
%EndExpansion
$ and $u\in \mathcal{D}^{\prime }\left( 
%TCIMACRO{\U{211d} }%
%BeginExpansion
\mathbb{R}
%EndExpansion
^{n}\right) $. We say that $u$ belongs to $u\in \mathcal{K}_{p}^{s}\left( 
%TCIMACRO{\U{211d} }%
%BeginExpansion
\mathbb{R}
%EndExpansion
^{n}\right) $ if there is $\chi \in \mathcal{C}_{0}^{\infty }\left( \mathbb{R%
}^{n}\right) \smallsetminus 0$ such that the measurable function $%
%TCIMACRO{\U{211d} }%
%BeginExpansion
\mathbb{R}
%EndExpansion
^{n}\ni y\rightarrow \left\Vert u\tau _{y}\chi \right\Vert _{\mathcal{H}%
^{s}}\in 
%TCIMACRO{\U{211d} }%
%BeginExpansion
\mathbb{R}
%EndExpansion
$ belongs to $L^{p}\left( 
%TCIMACRO{\U{211d} }%
%BeginExpansion
\mathbb{R}
%EndExpansion
^{n}\right) $. We put%
\begin{eqnarray*}
\left\Vert u\right\Vert _{s,p,\chi } &=&\left( \int \left\Vert u\tau
_{y}\chi \right\Vert _{\mathcal{H}^{s}}^{p}\mathtt{d}y\right) ^{\frac{1}{p}%
},\quad 1\leq p<\infty , \\
\left\Vert u\right\Vert _{s,\infty ,\chi } &\equiv &\left\Vert u\right\Vert
_{s,\mathtt{ul},\chi }=\sup_{y}\left\Vert u\tau _{y}\chi \right\Vert _{%
\mathcal{H}^{s}}.
\end{eqnarray*}%
In Kato's notation $\mathcal{K}_{\infty }^{s}\left( 
%TCIMACRO{\U{211d} }%
%BeginExpansion
\mathbb{R}
%EndExpansion
^{n}\right) \equiv \mathcal{H}_{\mathtt{ul}}^{s}\left( 
%TCIMACRO{\U{211d} }%
%BeginExpansion
\mathbb{R}
%EndExpansion
^{n}\right) $ the uniformly local Sobolev space of order $s$.

\begin{lemma}
$\left( \mathtt{a}\right) $ $\mathcal{BC}^{m}\left( 
%TCIMACRO{\U{211d} }%
%BeginExpansion
\mathbb{R}
%EndExpansion
^{n}\right) \subset \mathcal{H}_{\mathtt{ul}}^{m}\left( 
%TCIMACRO{\U{211d} }%
%BeginExpansion
\mathbb{R}
%EndExpansion
^{n}\right) $ for any $m\in 
%TCIMACRO{\U{2115} }%
%BeginExpansion
\mathbb{N}
%EndExpansion
$.

$\left( \mathtt{b}\right) $ $\mathcal{BC}^{\left[ \left\vert s\right\vert %
\right] +1}\left( 
%TCIMACRO{\U{211d} }%
%BeginExpansion
\mathbb{R}
%EndExpansion
^{n}\right) \subset \mathcal{H}_{\mathtt{ul}}^{s}\left( 
%TCIMACRO{\U{211d} }%
%BeginExpansion
\mathbb{R}
%EndExpansion
^{n}\right) $ for any $s\in 
%TCIMACRO{\U{211d} }%
%BeginExpansion
\mathbb{R}
%EndExpansion
$.
\end{lemma}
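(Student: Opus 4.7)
My plan is to prove (a) by a direct Leibniz computation on the equivalent integer Sobolev norm, and then derive (b) from (a) by splitting according to the sign of $s$.

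For part (a), I would fix an arbitrary $\chi \in \mathcal{C}_{0}^{\infty}(\mathbb{R}^{n}) \setminus 0$ and use the fact that, since $m$ is a nonnegative integer, the $\mathcal{H}^{m}(\mathbb{R}^{n})$ norm is equivalent to $\bigl(\sum_{|\alpha| \leq m} \|\partial^{\alpha} u\|_{L^{2}}^{2}\bigr)^{1/2}$. For $f \in \mathcal{BC}^{m}(\mathbb{R}^{n})$ and $y \in \mathbb{R}^{n}$, the Leibniz rule gives
\[
\partial^{\alpha}\bigl(f \, \tau_{y}\chi\bigr) = \sum_{\beta \leq \alpha} \binom{\alpha}{\beta}\, (\partial^{\beta} f)\, \tau_{y}(\partial^{\alpha - \beta}\chi),
\]
and the translation invariance of $L^{2}$ yields $\|\tau_{y}\partial^{\alpha - \beta}\chi\|_{L^{2}} = \|\partial^{\alpha - \beta}\chi\|_{L^{2}}$. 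Combining these with $\|\partial^{\beta} f\|_{L^{\infty}} \leq \|f\|_{\mathcal{BC}^{m}}$ for $|\beta| \leq m$, one obtains
\[
\|f \, \tau_{y}\chi\|_{\mathcal{H}^{m}} \leq C(n,m)\, \|f\|_{\mathcal{BC}^{m}}\, \|\chi\|_{H^{m}}
\]
with a constant independent of $y$. Taking the supremum over $y$ gives $f \in \mathcal{K}_{\infty}^{m} = \mathcal{H}_{\mathtt{ul}}^{m}$, which is exactly (a).

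For part (b), I would split into two cases. If $s \geq 0$, then $[|s|]+1 = [s]+1 \geq s$, so by the monotonicity of the scale (Proposition \ref{ks15}(e)) we have $\mathcal{H}_{\mathtt{ul}}^{[s]+1} \subset \mathcal{H}_{\mathtt{ul}}^{s}$, and (a) immediately yields $\mathcal{BC}^{[s]+1} \subset \mathcal{H}_{\mathtt{ul}}^{s}$. If $s < 0$, then $\langle \xi \rangle^{s} \leq 1$, so $L^{2} = \mathcal{H}^{0} \hookrightarrow \mathcal{H}^{s}$ continuously. For any $f \in \mathcal{BC}^{[|s|]+1} \subset \mathcal{BC}^{0}$, $f$ is bounded, so
\[
\|f \, \tau_{y}\chi\|_{\mathcal{H}^{s}} \leq \|f \, \tau_{y}\chi\|_{L^{2}} \leq \|f\|_{L^{\infty}}\, \|\chi\|_{L^{2}},
\]
uniformly in $y$, hence $f \in \mathcal{H}_{\mathtt{ul}}^{s}$. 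There is no real obstacle in either part: (a) is a pure Leibniz plus translation-invariance argument, and (b) reduces to (a) on one side and to the trivial embedding $L^{\infty}_{\mathrm{loc}} \cap L^{\infty} \hookrightarrow \mathcal{H}_{\mathtt{ul}}^{s}$ on the other. The only comment worth making is that the bound $[|s|]+1$ is not sharp when $s < 0$ — boundedness of $f$ alone suffices — but the uniform statement is the convenient one for later reference.
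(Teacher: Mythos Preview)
Your proof is correct and part (a) is identical to the paper's argument: Leibniz expansion of $\partial^{\alpha}(u\,\tau_{y}\chi)$ followed by the translation-invariance of the $L^{2}$ norm. For part (b) the paper dispenses with the case split and simply writes the chain $\mathcal{BC}^{[|s|]+1}\subset\mathcal{H}_{\mathtt{ul}}^{[|s|]+1}\subset\mathcal{H}_{\mathtt{ul}}^{|s|}\subset\mathcal{H}_{\mathtt{ul}}^{s}$, which covers all $s$ at once via (a) and monotonicity; your treatment of $s<0$ via the trivial embedding $L^{2}\hookrightarrow\mathcal{H}^{s}$ is a harmless (and, as you note, slightly sharper) variant.
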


\begin{proof}
$\left( \mathtt{a}\right) $ Let $u\in \mathcal{BC}^{m}\left( 
%TCIMACRO{\U{211d} }%
%BeginExpansion
\mathbb{R}
%EndExpansion
^{n}\right) $ and $\chi \in \mathcal{S}\left( 
%TCIMACRO{\U{211d} }%
%BeginExpansion
\mathbb{R}
%EndExpansion
^{n}\right) $. Then using Leibniz's formula 
\begin{equation*}
\partial ^{\alpha }\left( u\tau _{y}\chi \right) =\sum_{\beta \leq \alpha }%
\binom{\alpha }{\beta }\partial ^{\beta }u\cdot \tau _{y}\partial ^{\alpha
-\beta }\chi
\end{equation*}%
we get that $\partial ^{\alpha }\left( u\tau _{y}\chi \right) \in
L^{2}\left( 
%TCIMACRO{\U{211d} }%
%BeginExpansion
\mathbb{R}
%EndExpansion
^{n}\right) $ for any $\alpha \in 
%TCIMACRO{\U{2115} }%
%BeginExpansion
\mathbb{N}
%EndExpansion
^{n}$ with $\left\vert \alpha \right\vert \leq m$. Also there is $C=C\left(
m,n\right) >0$ such that 
\begin{equation*}
\left\Vert u\tau _{y}\chi \right\Vert _{\mathcal{H}^{m}}\approx \left(
\sum_{\left\vert \alpha \right\vert \leq m}\left\Vert \partial ^{\alpha
}\left( u\tau _{y}\chi \right) \right\Vert _{L^{2}}^{2}\right) ^{1/2}\leq
C\left\Vert u\right\Vert _{\mathcal{BC}^{m}}\left\Vert \chi \right\Vert _{%
\mathcal{H}^{m}},\quad y\in 
%TCIMACRO{\U{211d} }%
%BeginExpansion
\mathbb{R}
%EndExpansion
^{n}
\end{equation*}%
which implies 
\begin{equation*}
\left\Vert u\right\Vert _{m,\mathtt{ul},\chi }\leq C\left\Vert u\right\Vert
_{\mathcal{BC}^{m}}\left\Vert \chi \right\Vert _{\mathcal{H}^{m}}.
\end{equation*}%
$\left( \mathtt{b}\right) $ We have $\mathcal{BC}^{\left[ \left\vert
s\right\vert \right] +1}\left( 
%TCIMACRO{\U{211d} }%
%BeginExpansion
\mathbb{R}
%EndExpansion
^{n}\right) \subset \mathcal{H}_{\mathtt{ul}}^{\left[ \left\vert
s\right\vert \right] +1}\left( 
%TCIMACRO{\U{211d} }%
%BeginExpansion
\mathbb{R}
%EndExpansion
^{n}\right) \subset \mathcal{H}_{\mathtt{ul}}^{\left\vert s\right\vert
}\left( 
%TCIMACRO{\U{211d} }%
%BeginExpansion
\mathbb{R}
%EndExpansion
^{n}\right) \subset \mathcal{H}_{\mathtt{ul}}^{s}\left( 
%TCIMACRO{\U{211d} }%
%BeginExpansion
\mathbb{R}
%EndExpansion
^{n}\right) $.
\end{proof}

Let $\varphi \in \mathcal{C}_{0}^{\infty }\left( 
%TCIMACRO{\U{211d} }%
%BeginExpansion
\mathbb{R}
%EndExpansion
^{n}\right) $, $\varphi \geq 0$ be such that \texttt{supp}$\varphi \subset
B\left( 0;1\right) $, $\int \varphi \left( x\right) \mathtt{d}x=1$. For $%
\varepsilon \in \left( 0,1\right] $, we set $\varphi _{\varepsilon
}=\varepsilon ^{-n}\varphi \left( \cdot /\varepsilon \right) $.

\begin{lemma}
If $s^{\prime }\leq s$, then 
\begin{equation*}
\left\Vert \varphi _{\varepsilon }\ast u-u\right\Vert _{\mathcal{H}%
^{s^{\prime }}}\leq 2^{1-\min \left\{ s-s^{\prime },1\right\} }\varepsilon
^{\min \left\{ s-s^{\prime },1\right\} }\left\Vert u\right\Vert _{\mathcal{H}%
^{s}},\quad u\in \mathcal{H}^{s}\left( 
%TCIMACRO{\U{211d} }%
%BeginExpansion
\mathbb{R}
%EndExpansion
^{n}\right) .
\end{equation*}
\end{lemma}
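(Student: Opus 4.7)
The plan is to work on the Fourier side, since $\mathcal{H}^{s}$ is defined by an $L^{2}$ condition on $\langle\xi\rangle^{s}\widehat{u}$. Using the convolution theorem, $\widehat{\varphi_{\varepsilon}\ast u}(\xi)=\widehat{\varphi_{\varepsilon}}(\xi)\,\widehat{u}(\xi)=\widehat{\varphi}(\varepsilon\xi)\,\widehat{u}(\xi)$, so
\begin{equation*}
\left\Vert \varphi _{\varepsilon }\ast u-u\right\Vert _{\mathcal{H}^{s^{\prime }}}^{2}=\mathrm{const}\cdot \int \langle \xi \rangle ^{2s^{\prime }}\bigl|\widehat{\varphi }(\varepsilon \xi )-1\bigr|^{2}\bigl|\widehat{u}(\xi )\bigr|^{2}\,\mathtt{d}\xi .
\end{equation*}
Thus everything reduces to a pointwise estimate of the multiplier $\langle\xi\rangle^{s'}|\widehat\varphi(\varepsilon\xi)-1|$ by $\varepsilon^{\alpha}\langle\xi\rangle^{s}$ with $\alpha=\min\{s-s',1\}$.

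First I would derive the uniform bound
\begin{equation*}
\bigl|\widehat{\varphi }(\eta )-1\bigr|\leq \min \{2,|\eta |\},\quad \eta \in \mathbb{R}^{n}.
\end{equation*}
Since $\int\varphi=1$, we have $\widehat{\varphi}(\eta)-1=\int\varphi(x)(\mathtt{e}^{-\mathtt{i}\langle x,\eta\rangle}-1)\mathtt{d}x$; the trivial bound $|\mathtt{e}^{-\mathtt{i}t}-1|\le 2$ yields the $2$, while $|\mathtt{e}^{-\mathtt{i}t}-1|\le|t|$ together with $\mathtt{supp}\,\varphi\subset B(0;1)$ and $\varphi\ge 0$ yields the $|\eta|$. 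Interpolating these two bounds (writing $\min\{2,t\}=\min\{2,t\}^{\alpha}\min\{2,t\}^{1-\alpha}$) gives for every $\alpha\in[0,1]$
\begin{equation*}
\bigl|\widehat{\varphi }(\eta )-1\bigr|\leq 2^{1-\alpha }|\eta |^{\alpha }.
\end{equation*}

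Now I would choose $\alpha=\min\{s-s',1\}\in[0,1]$ and apply this with $\eta=\varepsilon\xi$, giving $|\widehat{\varphi}(\varepsilon\xi)-1|\le 2^{1-\alpha}\varepsilon^{\alpha}|\xi|^{\alpha}\le 2^{1-\alpha}\varepsilon^{\alpha}\langle\xi\rangle^{\alpha}$. Since $\alpha\le s-s'$, we have $s'+\alpha\le s$ and hence $\langle\xi\rangle^{s'+\alpha}\le\langle\xi\rangle^{s}$. Substituting into the Fourier-side integral,
\begin{equation*}
\left\Vert \varphi _{\varepsilon }\ast u-u\right\Vert _{\mathcal{H}^{s^{\prime }}}^{2}\leq 2^{2(1-\alpha )}\varepsilon ^{2\alpha }\cdot \mathrm{const}\cdot \int \langle \xi \rangle ^{2s}\bigl|\widehat{u}(\xi )\bigr|^{2}\mathtt{d}\xi =2^{2(1-\alpha )}\varepsilon ^{2\alpha }\left\Vert u\right\Vert _{\mathcal{H}^{s}}^{2},
\end{equation*}
and taking square roots delivers the stated inequality.

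There is no serious obstacle: the only delicate point is the interpolation step producing the $2^{1-\alpha}$ constant with the correct exponent $\alpha=\min\{s-s',1\}$, which is pinned down by a direct case analysis of $\min\{2,t\}\le 2^{1-\alpha}t^{\alpha}$ on $\{t\le 2\}$ and $\{t>2\}$. The restriction $\alpha\le 1$ is essential and reflects the fact that mollification by a fixed $\varphi$ cannot give approximation order better than $\varepsilon^{1}$ without further cancellation assumptions on $\varphi$ (vanishing moments).
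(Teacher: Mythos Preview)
Your proof is correct and follows essentially the same approach as the paper: pass to the Fourier side, use $\widehat{\varphi}(0)=1$ together with $|\mathtt{e}^{-\mathtt{i}t}-1|\le\min\{2,|t|\}$ and $\mathtt{supp}\,\varphi\subset B(0;1)$ to get $|\widehat\varphi(\varepsilon\xi)-1|\le\min\{2,\varepsilon|\xi|\}$, then interpolate to obtain the factor $2^{1-\alpha}\varepsilon^{\alpha}\langle\xi\rangle^{\alpha}$ with $\alpha=\min\{s-s',1\}$. The only cosmetic difference is that the paper splits into the two cases $0\le s-s'\le 1$ and $s-s'\ge 1$ separately, whereas you handle both at once via the single exponent $\alpha=\min\{s-s',1\}$; the arguments are otherwise identical.
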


\begin{proof}
We have 
\begin{equation*}
\mathcal{F}\left( \varphi _{\varepsilon }\ast u-u\right) \left( \xi \right)
=\left( \widehat{\varphi }\left( \varepsilon \xi \right) -1\right) \widehat{u%
}\left( \xi \right)
\end{equation*}%
with 
\begin{equation*}
\widehat{\varphi }\left( \varepsilon \xi \right) -1=\int \left( \mathtt{e}^{%
\mathbb{-}\mathtt{i}\left\langle x,\varepsilon \xi \right\rangle }-1\right)
\varphi \left( x\right) \mathtt{d}x
\end{equation*}%
Since $\left\vert \mathtt{e}^{\mathbb{-}\mathtt{i}\lambda }-1\right\vert
\leq \left\vert \lambda \right\vert $ we get%
\begin{equation*}
\left\vert \widehat{\varphi }\left( \varepsilon \xi \right) -1\right\vert
\leq \left\{ 
\begin{array}{c}
2\int \varphi \left( x\right) \mathtt{d}x \\ 
\varepsilon \left\vert \xi \right\vert \int \left\vert x\right\vert \varphi
\left( x\right) \mathtt{d}x%
\end{array}%
\right. \leq \left\{ 
\begin{array}{c}
2 \\ 
\varepsilon \left\vert \xi \right\vert%
\end{array}%
\right.
\end{equation*}%
If $0\leq s-s^{\prime }\leq 1$, then 
\begin{eqnarray*}
\left\vert \widehat{\varphi }\left( \varepsilon \xi \right) -1\right\vert
&=&\left\vert \widehat{\varphi }\left( \varepsilon \xi \right) -1\right\vert
^{1-\left( s-s^{\prime }\right) }\left\vert \widehat{\varphi }\left(
\varepsilon \xi \right) -1\right\vert ^{s-s^{\prime }} \\
&\leq &2^{1-\left( s-s^{\prime }\right) }\varepsilon ^{s-s^{\prime
}}\left\vert \xi \right\vert ^{s-s^{\prime }}\leq 2^{1-\left( s-s^{\prime
}\right) }\varepsilon ^{s-s^{\prime }}\left\langle \xi \right\rangle
^{s-s^{\prime }}
\end{eqnarray*}%
which implies that 
\begin{equation*}
\left\Vert \varphi _{\varepsilon }\ast u-u\right\Vert _{\mathcal{H}%
^{s^{\prime }}}\leq 2^{1-\left( s-s^{\prime }\right) }\varepsilon
^{s-s^{\prime }}\left\Vert u\right\Vert _{\mathcal{H}^{s}},\quad u\in 
\mathcal{H}^{s}\left( 
%TCIMACRO{\U{211d} }%
%BeginExpansion
\mathbb{R}
%EndExpansion
^{n}\right) .
\end{equation*}

If $s^{\prime }\leq s-1$, then 
\begin{equation*}
\left\Vert \varphi _{\varepsilon }\ast u-u\right\Vert _{\mathcal{H}%
^{s^{\prime }}}\leq \varepsilon \left\Vert u\right\Vert _{\mathcal{H}%
^{s^{\prime }+1}}\leq \varepsilon \left\Vert u\right\Vert _{\mathcal{H}%
^{s}},\quad u\in \mathcal{H}^{s}\left( 
%TCIMACRO{\U{211d} }%
%BeginExpansion
\mathbb{R}
%EndExpansion
^{n}\right) .
\end{equation*}
\end{proof}

Let $\chi ,\chi _{0}\in \mathcal{C}_{0}^{\infty }\left( 
%TCIMACRO{\U{211d} }%
%BeginExpansion
\mathbb{R}
%EndExpansion
^{n}\right) \smallsetminus 0$ be such that $\chi _{0}=1$ on \texttt{supp}$%
\chi +B\left( 0;1\right) $. Let $u\in \mathcal{H}_{\mathtt{ul}}^{s}\left( 
%TCIMACRO{\U{211d} }%
%BeginExpansion
\mathbb{R}
%EndExpansion
^{n}\right) $. Then for $0<\varepsilon \leq 1$ we have%
\begin{equation*}
\tau _{y}\chi \left( \varphi _{\varepsilon }\ast u-u\right) =\tau _{y}\chi
\left( \varphi _{\varepsilon }\ast \left( u\tau _{y}\chi _{0}\right) -u\tau
_{y}\chi _{0}\right) .
\end{equation*}%
Proposition \ref{ks3} and the previous lemma imply 
\begin{eqnarray*}
\left\Vert \tau _{y}\chi \left( \varphi _{\varepsilon }\ast u-u\right)
\right\Vert _{\mathcal{H}^{s^{\prime }}} &\leq &C_{s^{\prime },\chi
}\left\Vert \varphi _{\varepsilon }\ast \left( u\tau _{y}\chi _{0}\right)
-u\tau _{y}\chi _{0}\right\Vert _{\mathcal{H}^{s^{\prime }}} \\
&\leq &C_{s^{\prime },\chi }2^{1-\min \left\{ s-s^{\prime },1\right\}
}\varepsilon ^{\min \left\{ s-s^{\prime },1\right\} }\left\Vert u\tau
_{y}\chi _{0}\right\Vert _{\mathcal{H}^{s}}
\end{eqnarray*}%
It follows that%
\begin{equation*}
\left\Vert \varphi _{\varepsilon }\ast u-u\right\Vert _{s^{\prime },\mathtt{%
ul},\chi }\leq C_{s^{\prime },\chi }2^{1-\min \left\{ s-s^{\prime
},1\right\} }\varepsilon ^{\min \left\{ s-s^{\prime },1\right\} }\left\Vert
u\right\Vert _{s,\mathtt{ul},\chi _{0}}
\end{equation*}

\begin{definition}
$\mathcal{H}_{\mathtt{ul}}^{s\left( s^{\prime }\right) }\left( 
%TCIMACRO{\U{211d} }%
%BeginExpansion
\mathbb{R}
%EndExpansion
^{n}\right) \equiv \left( \mathcal{H}_{\mathtt{ul}}^{s}\left( 
%TCIMACRO{\U{211d} }%
%BeginExpansion
\mathbb{R}
%EndExpansion
^{n}\right) ,\left\Vert \cdot \right\Vert _{s^{\prime },\mathtt{ul}}\right)
. $
\end{definition}

\begin{corollary}
\label{ks10}$\left( \mathtt{a}\right) $ If $s^{\prime }<s$, then $\mathcal{H}%
_{\mathtt{ul}}^{s}\left( 
%TCIMACRO{\U{211d} }%
%BeginExpansion
\mathbb{R}
%EndExpansion
^{n}\right) \cap \mathcal{C}^{\infty }\left( 
%TCIMACRO{\U{211d} }%
%BeginExpansion
\mathbb{R}
%EndExpansion
^{n}\right) $ is dense in $\mathcal{H}_{\mathtt{ul}}^{s\left( s^{\prime
}\right) }\left( 
%TCIMACRO{\U{211d} }%
%BeginExpansion
\mathbb{R}
%EndExpansion
^{n}\right) $.

$\left( \mathtt{b}\right) $ If $\frac{n}{2}<s^{\prime }<s$, then $\mathcal{BC%
}^{\infty }\left( 
%TCIMACRO{\U{211d} }%
%BeginExpansion
\mathbb{R}
%EndExpansion
^{n}\right) $ is dense in $\mathcal{H}_{\mathtt{ul}}^{s\left( s^{\prime
}\right) }\left( 
%TCIMACRO{\U{211d} }%
%BeginExpansion
\mathbb{R}
%EndExpansion
^{n}\right) $.
\end{corollary}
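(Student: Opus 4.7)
The plan is to use mollification: given $u \in \mathcal{H}_{\mathtt{ul}}^{s}(\mathbb{R}^{n})$, consider the approximants $\varphi_{\varepsilon}\ast u$ built from the nonnegative mollifier $\varphi \in \mathcal{C}_{0}^{\infty}(\mathbb{R}^{n})$ introduced just above. The displayed estimate
\[
\|\varphi_{\varepsilon}\ast u - u\|_{s',\mathtt{ul},\chi} \leq C_{s',\chi}\,2^{1-\min\{s-s',1\}}\,\varepsilon^{\min\{s-s',1\}}\,\|u\|_{s,\mathtt{ul},\chi_{0}}
\]
established immediately before the definition of $\mathcal{H}_{\mathtt{ul}}^{s(s')}$ already yields $\varphi_{\varepsilon}\ast u \to u$ in the norm $\|\cdot\|_{s',\mathtt{ul}}$ whenever $s' < s$, since then the exponent $\min\{s-s',1\}$ is strictly positive. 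This handles the convergence for both parts, so the only remaining task is to verify that the approximants actually sit in the asserted dense subspace.

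For part (\texttt{a}), smoothness of $\varphi_{\varepsilon}\ast u$ is automatic, so I only need to check $\varphi_{\varepsilon}\ast u \in \mathcal{H}_{\mathtt{ul}}^{s}(\mathbb{R}^{n})$. Using the same support decomposition as before, namely $(\varphi_{\varepsilon}\ast u)\tau_{y}\chi = \bigl(\varphi_{\varepsilon}\ast(u\tau_{y}\chi_{0})\bigr)\tau_{y}\chi$ with $\chi_{0} \equiv 1$ on $\mathtt{supp}\,\chi + B(0;1)$, I will combine the multiplication bound of Proposition \ref{ks4} applied to the factor $\tau_{y}\chi \in \mathcal{BC}^{m_{s}}$ with the elementary estimate $\|\varphi_{\varepsilon}\ast g\|_{\mathcal{H}^{s}} \leq \|\varphi_{\varepsilon}\|_{L^{1}}\|g\|_{\mathcal{H}^{s}} = \|g\|_{\mathcal{H}^{s}}$ (Plancherel plus $|\widehat{\varphi}_{\varepsilon}| \leq 1$) to obtain the uniform-in-$y$ bound
\[
\|(\varphi_{\varepsilon}\ast u)\tau_{y}\chi\|_{\mathcal{H}^{s}} \leq C\,\|\chi\|_{\mathcal{BC}^{m_{s}}}\,\|u\tau_{y}\chi_{0}\|_{\mathcal{H}^{s}} \leq C'\,\|u\|_{s,\mathtt{ul},\chi_{0}}.
\]

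For part (\texttt{b}), the hypothesis $s > s' > n/2$ together with Proposition \ref{ks15}(\texttt{h}) gives $\mathcal{H}_{\mathtt{ul}}^{s}(\mathbb{R}^{n}) \subset \mathcal{BC}(\mathbb{R}^{n})$, so $u$ itself is bounded and continuous. Since $\partial^{\alpha}(\varphi_{\varepsilon}\ast u) = (\partial^{\alpha}\varphi_{\varepsilon})\ast u$ is the convolution of an $L^{1}$ function with a bounded function, we get $\varphi_{\varepsilon}\ast u \in \mathcal{BC}^{\infty}(\mathbb{R}^{n})$, which is what is needed. The only real content of the proof is the estimate that was already proved above the corollary; the remaining obstacle, and the one place where some care is required, is simply the support-localization trick of part (\texttt{a}) to confirm that the mollified approximants genuinely belong to $\mathcal{H}_{\mathtt{ul}}^{s}$ rather than merely to some locally Sobolev class.
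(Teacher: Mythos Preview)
Your proof is correct and follows essentially the same route as the paper. The paper's own argument is extremely terse: it takes part (\texttt{a}) as an immediate consequence of the displayed mollifier estimate (tacitly using that $\varphi_{\varepsilon}\ast u$ remains in $\mathcal{H}_{\mathtt{ul}}^{s}$, a fact established separately in the very next lemma via a duality argument), and for part (\texttt{b}) it writes only the single line ``If $s>n/2$, then $\mathcal{H}_{\mathtt{ul}}^{s}\subset\mathcal{BC}$, therefore $\varphi_{\varepsilon}\ast\mathcal{H}_{\mathtt{ul}}^{s}\subset\mathcal{BC}^{\infty}$'', which is exactly your reasoning. Your treatment of (\texttt{a}) is in fact more self-contained than the paper's, since you verify $\varphi_{\varepsilon}\ast u\in\mathcal{H}_{\mathtt{ul}}^{s}$ directly via the same support-localization trick used to derive the convergence estimate, rather than deferring to a subsequent lemma.
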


\begin{proof}
$\left( \mathtt{b}\right) $ If $s>\frac{n}{2}$, then $\mathcal{H}_{\mathtt{ul%
}}^{s}\left( 
%TCIMACRO{\U{211d} }%
%BeginExpansion
\mathbb{R}
%EndExpansion
^{n}\right) \subset \mathcal{BC}\left( 
%TCIMACRO{\U{211d} }%
%BeginExpansion
\mathbb{R}
%EndExpansion
^{n}\right) $. Therefore $\varphi _{\varepsilon }\ast \mathcal{H}_{\mathtt{ul%
}}^{s}\left( 
%TCIMACRO{\U{211d} }%
%BeginExpansion
\mathbb{R}
%EndExpansion
^{n}\right) \subset \mathcal{BC}^{\infty }\left( 
%TCIMACRO{\U{211d} }%
%BeginExpansion
\mathbb{R}
%EndExpansion
^{n}\right) $.
\end{proof}

We need another auxiliary result.

\begin{lemma}
The map 
\begin{equation*}
\mathcal{C}_{0}^{\infty }\left( 
%TCIMACRO{\U{211d} }%
%BeginExpansion
\mathbb{R}
%EndExpansion
^{n}\right) \times \mathcal{H}_{\mathtt{ul}}^{s}\left( 
%TCIMACRO{\U{211d} }%
%BeginExpansion
\mathbb{R}
%EndExpansion
^{n}\right) \ni \left( \varphi ,u\right) \rightarrow \varphi \ast u\in 
\mathcal{H}_{\mathtt{ul}}^{s}\left( 
%TCIMACRO{\U{211d} }%
%BeginExpansion
\mathbb{R}
%EndExpansion
^{n}\right)
\end{equation*}%
is well defined and for any $\chi \in \mathcal{S}\left( 
%TCIMACRO{\U{211d} }%
%BeginExpansion
\mathbb{R}
%EndExpansion
^{n}\right) \smallsetminus 0$ we have the estimate%
\begin{equation*}
\left\Vert \varphi \ast u\right\Vert _{s,\mathtt{ul},\chi }\leq \left\Vert
\varphi \right\Vert _{L^{1}}\left\Vert u\right\Vert _{s,\mathtt{ul},\chi
},\quad \left( \varphi ,u\right) \in \mathcal{C}_{0}^{\infty }\left( 
%TCIMACRO{\U{211d} }%
%BeginExpansion
\mathbb{R}
%EndExpansion
^{n}\right) \times \mathcal{H}_{\mathtt{ul}}^{s}\left( 
%TCIMACRO{\U{211d} }%
%BeginExpansion
\mathbb{R}
%EndExpansion
^{n}\right) .
\end{equation*}
\end{lemma}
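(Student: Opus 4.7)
The plan is to derive a uniform-in-$z$ bound on $\|(\varphi\ast u)\tau_z\chi\|_{\mathcal{H}^s}$ by expressing this quantity as a superposition of translates of the $\mathcal{H}^s$-elements $u\,\tau_{z-y}\chi$, and then estimating via the duality $\mathcal{H}^s=(\mathcal{H}^{-s})^*$. The starting point is the pointwise translation identity
\[
(\tau_y u)(\tau_z\chi) \;=\; \tau_y\bigl(u\,\tau_{z-y}\chi\bigr),
\]
valid in $\mathcal{S}'(\mathbb{R}^n)$ and checked by pairing both sides with an arbitrary $\eta\in\mathcal{S}(\mathbb{R}^n)$ and simplifying $\tau_{-y}((\tau_z\chi)\eta)=(\tau_{z-y}\chi)(\tau_{-y}\eta)$. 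Combined with (\ref{ks9}) (used after the standard adjoint relation $\langle\varphi\ast u,\psi\rangle=\langle u,\widetilde{\varphi}\ast\psi\rangle$ with $\widetilde{\varphi}(x)=\varphi(-x)$), this yields, for every $\eta\in\mathcal{S}(\mathbb{R}^n)$,
\[
\langle(\varphi\ast u)\tau_z\chi,\eta\rangle
=\langle\varphi\ast u,(\tau_z\chi)\eta\rangle
=\int\varphi(y)\,\langle\tau_y u,(\tau_z\chi)\eta\rangle\,\mathtt{d}y
=\int\varphi(y)\,\langle\tau_y(u\,\tau_{z-y}\chi),\eta\rangle\,\mathtt{d}y.
\]

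For each fixed $y$ one has $u\,\tau_{z-y}\chi\in\mathcal{H}^s(\mathbb{R}^n)$ with $\|u\,\tau_{z-y}\chi\|_{\mathcal{H}^s}\le\|u\|_{s,\mathtt{ul},\chi}$, and the $\mathcal{H}^s$-$\mathcal{H}^{-s}$ duality pairing is translation invariant; the triangle inequality therefore gives
\[
|\langle(\varphi\ast u)\tau_z\chi,\eta\rangle|
\le\int|\varphi(y)|\,\|u\,\tau_{z-y}\chi\|_{\mathcal{H}^s}\,\|\eta\|_{\mathcal{H}^{-s}}\,\mathtt{d}y
\le\|\varphi\|_{L^1}\,\|u\|_{s,\mathtt{ul},\chi}\,\|\eta\|_{\mathcal{H}^{-s}}.
\]
Since $\mathcal{S}(\mathbb{R}^n)$ is dense in $\mathcal{H}^{-s}(\mathbb{R}^n)$, this shows that the distribution $(\varphi\ast u)\tau_z\chi$ extends to a continuous linear form on $\mathcal{H}^{-s}$, hence defines an element of $\mathcal{H}^s$ with $\|(\varphi\ast u)\tau_z\chi\|_{\mathcal{H}^s}\le\|\varphi\|_{L^1}\|u\|_{s,\mathtt{ul},\chi}$. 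Taking the supremum over $z\in\mathbb{R}^n$ produces the required estimate and simultaneously proves that $\varphi\ast u\in\mathcal{H}_{\mathtt{ul}}^s(\mathbb{R}^n)$.

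The only nontrivial step is the distributional identity in the first display; once that is in hand, the rest is routine triangle inequality and duality. That identity is a direct consequence of formula (\ref{ks9}) proved earlier, and the compact support of $\varphi$ ensures that $y\mapsto\langle\tau_y u,(\tau_z\chi)\eta\rangle$ is a continuous function with compact support, so all $y$-integrals are ordinary Lebesgue integrals and no vector-valued integration theory is required.
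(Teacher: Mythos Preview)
Your proof is correct and follows essentially the same route as the paper: both arguments pair $(\varphi\ast u)\tau_z\chi$ with a test function, invoke formula (\ref{ks9}) (via the adjoint relation $\langle\varphi\ast u,\psi\rangle=\langle u,\check{\varphi}\ast\psi\rangle$) to rewrite it as $\int\varphi(y)\langle(\tau_{z-y}\chi)u,\tau_{-y}\psi\rangle\,\mathtt{d}y$, and then use the $\mathcal{H}^s$--$\mathcal{H}^{-s}$ duality together with translation invariance to extract the $\|\varphi\|_{L^1}\|u\|_{s,\mathtt{ul},\chi}$ bound. One minor slip in your closing paragraph: it is not $y\mapsto\langle\tau_y u,(\tau_z\chi)\eta\rangle$ that has compact support, but rather the product $\varphi(y)\langle\tau_y u,(\tau_z\chi)\eta\rangle$; the conclusion is unaffected.
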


\begin{proof}
Let $\left( \varphi ,u\right) \in \mathcal{C}_{0}^{\infty }\left( 
%TCIMACRO{\U{211d} }%
%BeginExpansion
\mathbb{R}
%EndExpansion
^{n}\right) \times \mathcal{H}_{\mathtt{ul}}^{s}\left( 
%TCIMACRO{\U{211d} }%
%BeginExpansion
\mathbb{R}
%EndExpansion
^{n}\right) $, $\chi \in \mathcal{S}\left( 
%TCIMACRO{\U{211d} }%
%BeginExpansion
\mathbb{R}
%EndExpansion
^{n}\right) \smallsetminus 0$ and $\psi \in \mathcal{S}\left( 
%TCIMACRO{\U{211d} }%
%BeginExpansion
\mathbb{R}
%EndExpansion
^{n}\right) $. Then using (\ref{ks9}) we obtain%
\begin{eqnarray*}
\left\langle \tau _{z}\chi \left( \varphi \ast u\right) ,\psi \right\rangle
&=&\left\langle u,\check{\varphi}\ast \left( \left( \tau _{z}\chi \right)
\psi \right) \right\rangle =\int \check{\varphi}\left( y\right) \left\langle
u,\tau _{y}\left( \left( \tau _{z}\chi \right) \psi \right) \right\rangle 
\mathtt{d}y \\
&=&\int \varphi \left( y\right) \left\langle u,\tau _{-y}\left( \left( \tau
_{z}\chi \right) \psi \right) \right\rangle \mathtt{d}y=\int \varphi \left(
y\right) \left\langle \left( \tau _{z-y}\chi \right) u,\tau _{-y}\psi
\right\rangle \mathtt{d}y,
\end{eqnarray*}%
where $\check{\varphi}\left( y\right) =\varphi \left( -y\right) $. Since 
\begin{equation*}
\left\vert \left\langle \left( \tau _{z-y}\chi \right) u,\tau _{-y}\psi
\right\rangle \right\vert \leq \left\Vert \left( \tau _{z-y}\chi \right)
u\right\Vert _{\mathcal{H}^{s}}\left\Vert \tau _{-y}\psi \right\Vert _{%
\mathcal{H}^{-s}}\leq \left\Vert u\right\Vert _{s,\mathtt{ul},\chi
}\left\Vert \psi \right\Vert _{\mathcal{H}^{-s}}
\end{equation*}%
it follows that 
\begin{equation*}
\left\vert \left\langle \tau _{z}\chi \left( \varphi \ast u\right) ,\psi
\right\rangle \right\vert \leq \left\Vert \varphi \right\Vert
_{L^{1}}\left\Vert u\right\Vert _{s,\mathtt{ul},\chi }\left\Vert \psi
\right\Vert _{\mathcal{H}^{-s}}
\end{equation*}%
Hence $\tau _{z}\chi \left( \varphi \ast u\right) \in \mathcal{H}^{s}\left( 
%TCIMACRO{\U{211d} }%
%BeginExpansion
\mathbb{R}
%EndExpansion
^{n}\right) $ and $\left\Vert \tau _{z}\chi \left( \varphi \ast u\right)
\right\Vert _{\mathcal{H}^{s}}\leq \left\Vert \varphi \right\Vert
_{L^{1}}\left\Vert u\right\Vert _{s,\mathtt{ul},\chi }$ for every $z\in 
%TCIMACRO{\U{211d} }%
%BeginExpansion
\mathbb{R}
%EndExpansion
^{n}$, i.e. $\varphi \ast u\in \mathcal{H}_{\mathtt{ul}}^{s}\left( 
%TCIMACRO{\U{211d} }%
%BeginExpansion
\mathbb{R}
%EndExpansion
^{n}\right) $ and%
\begin{equation*}
\left\Vert \varphi \ast u\right\Vert _{s,\mathtt{ul},\chi }\leq \left\Vert
\varphi \right\Vert _{L^{1}}\left\Vert u\right\Vert _{s,\mathtt{ul},\chi }
\end{equation*}
\end{proof}

\begin{theorem}[Wiener-L\'{e}vy for $\mathcal{H}_{\mathtt{ul}}^{s}\left( 
%TCIMACRO{\U{211d} }%
%BeginExpansion
\mathbb{R}
%EndExpansion
^{n}\right) $, weak form]
\label{ks11}Let $\mathit{\Omega =\mathring{\Omega}}\subset 
%TCIMACRO{\U{2102} }%
%BeginExpansion
\mathbb{C}
%EndExpansion
^{d}$ and $\mathit{\Phi }:\mathit{\Omega }\rightarrow 
%TCIMACRO{\U{2102} }%
%BeginExpansion
\mathbb{C}
%EndExpansion
$ a holomorphic function. Let $s>n/2$.

$\left( \mathtt{a}\right) $ If $u=\left( u_{1},...,u_{d}\right) \in \mathcal{%
H}_{\mathtt{ul}}^{s}\left( 
%TCIMACRO{\U{211d} }%
%BeginExpansion
\mathbb{R}
%EndExpansion
^{n}\right) ^{d}$ satisfies the condition $\overline{u\left( 
%TCIMACRO{\U{211d} }%
%BeginExpansion
\mathbb{R}
%EndExpansion
^{n}\right) }\subset \mathit{\Omega }$, then 
\begin{equation*}
\mathit{\Phi }\circ u\equiv \mathit{\Phi }\left( u\right) \in \mathcal{H}_{%
\mathtt{ul}}^{s^{\prime }}\left( 
%TCIMACRO{\U{211d} }%
%BeginExpansion
\mathbb{R}
%EndExpansion
^{n}\right) ,\quad \forall s^{\prime }<s.
\end{equation*}

$\left( \mathtt{b}\right) $ Suppose that $s^{\prime }\in \left( n/2,s\right) 
$. If $u,u_{\varepsilon }\in \mathcal{H}_{\mathtt{ul}}^{s}\left( 
%TCIMACRO{\U{211d} }%
%BeginExpansion
\mathbb{R}
%EndExpansion
^{n}\right) ^{d}$, $0<\varepsilon \leq 1$, $\overline{u\left( 
%TCIMACRO{\U{211d} }%
%BeginExpansion
\mathbb{R}
%EndExpansion
^{n}\right) }\subset \mathit{\Omega }$ and $u_{\varepsilon }\rightarrow u$
in $\mathcal{H}_{\mathtt{ul}}^{s^{\prime }}\left( 
%TCIMACRO{\U{211d} }%
%BeginExpansion
\mathbb{R}
%EndExpansion
^{n}\right) ^{d}$ as $\varepsilon \rightarrow 0$, then there is $\varepsilon
_{0}\in \left( 0,1\right] $ such that $\overline{u_{\varepsilon }\left( 
%TCIMACRO{\U{211d} }%
%BeginExpansion
\mathbb{R}
%EndExpansion
^{n}\right) }\subset \mathit{\Omega }$ for every $0<\varepsilon \leq
\varepsilon _{0}$ and $\mathit{\Phi }\left( u_{\varepsilon }\right)
\rightarrow \mathit{\Phi }\left( u\right) $ in $\mathcal{H}_{\mathtt{ul}%
}^{s^{\prime }}\left( 
%TCIMACRO{\U{211d} }%
%BeginExpansion
\mathbb{R}
%EndExpansion
^{n}\right) $ as $\varepsilon \rightarrow 0$.
\end{theorem}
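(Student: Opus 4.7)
My plan for part $\left( \mathtt{a}\right) $ is to approximate $u$ componentwise by mollifications $u_{\varepsilon }=\varphi _{\varepsilon }\ast u$, where $\varphi _{\varepsilon }$ is the standard mollifier family introduced just before Theorem \ref{ks11}, and to show that $\left\{ \mathit{\Phi }\left( u_{\varepsilon }\right) \right\} _{\varepsilon }$ is Cauchy in $\mathcal{H}_{\mathtt{ul}}^{s^{\prime }}\left( \mathbb{R}^{n}\right) $. The convolution lemma preceding the theorem gives $u_{\varepsilon }\in \mathcal{BC}^{\infty }$ with the uniform bound $\left\Vert u_{\varepsilon }\right\Vert _{s,\mathtt{ul},\chi }\leq \left\Vert u\right\Vert _{s,\mathtt{ul},\chi }$; the mollification estimate above Corollary \ref{ks10} then yields $u_{\varepsilon }\rightarrow u$ in $\mathcal{H}_{\mathtt{ul}}^{s\left( s^{\prime }\right) }$, and this is also uniform convergence since $s^{\prime }>n/2$ forces $\mathcal{H}_{\mathtt{ul}}^{s^{\prime }}\subset \mathcal{BC}$. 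Fixing a compact $K$ with $\overline{u\left( \mathbb{R}^{n}\right) }\subset \mathring{K}\subset K\subset \mathit{\Omega }$, we have $u_{\varepsilon }\left( \mathbb{R}^{n}\right) \subset K$ for small $\varepsilon $, hence $\mathit{\Phi }\left( u_{\varepsilon }\right) \in \mathcal{BC}^{\infty }\subset \mathcal{H}_{\mathtt{ul}}^{s}$.

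The comparison of two such approximations proceeds via the mean value identity
\begin{equation*}
\mathit{\Phi }\left( u_{\varepsilon }\right) -\mathit{\Phi }\left( u_{\delta }\right) =\sum _{j=1}^{d}\left( u_{\varepsilon ,j}-u_{\delta ,j}\right) M_{j}^{\varepsilon ,\delta },\quad M_{j}^{\varepsilon ,\delta }=\int _{0}^{1}\left( \partial _{j}\mathit{\Phi }\right) \left( u_{\delta }+t\left( u_{\varepsilon }-u_{\delta }\right) \right) \mathtt{d}t.
\end{equation*}
Because $s>n/2$ makes $s+s^{\prime }-n/2>s^{\prime }$, Proposition \ref{ks12} with $p=q=r=\infty $ provides the multiplication $\mathcal{H}_{\mathtt{ul}}^{s}\cdot \mathcal{H}_{\mathtt{ul}}^{s^{\prime }}\subset \mathcal{H}_{\mathtt{ul}}^{s^{\prime }}$, so
\begin{equation*}
\left\Vert \mathit{\Phi }\left( u_{\varepsilon }\right) -\mathit{\Phi }\left( u_{\delta }\right) \right\Vert _{\mathcal{H}_{\mathtt{ul}}^{s^{\prime }}}\leq C\sum _{j=1}^{d}\sup _{0\leq t\leq 1}\left\Vert \left( \partial _{j}\mathit{\Phi }\right) \left( u_{\delta }+t\left( u_{\varepsilon }-u_{\delta }\right) \right) \right\Vert _{\mathcal{H}_{\mathtt{ul}}^{s}}\left\Vert u_{\varepsilon ,j}-u_{\delta ,j}\right\Vert _{\mathcal{H}_{\mathtt{ul}}^{s^{\prime }}}.
\end{equation*}

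The crux, and the main obstacle, is a uniform composition estimate of Moser type for uniformly local Sobolev spaces: for $G$ holomorphic on a neighborhood of $K$ one needs $\left\Vert G\left( v\right) \right\Vert _{\mathcal{H}_{\mathtt{ul}}^{s}}$ bounded by a continuous function of $\left\Vert v\right\Vert _{\mathcal{H}_{\mathtt{ul}}^{s}}$ (depending on $K$ and $G$), uniformly over $v\in \mathcal{BC}^{\infty }$ with $v\left( \mathbb{R}^{n}\right) \subset K$. I would establish this first for integer $s=m>n/2$ by the Fa\`{a} di Bruno chain rule combined with the Banach algebra property of $\mathcal{H}_{\mathtt{ul}}^{m}$ (the corollary to Proposition \ref{ks12}) and Gagliardo--Nirenberg control of products of mixed-order derivatives, and then extend to fractional $s$ by interpolation and the patch decomposition of Lemma \ref{ks2}. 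Applied to $G=\partial _{j}\mathit{\Phi }$ and $v=u_{\delta }+t\left( u_{\varepsilon }-u_{\delta }\right) $, whose $\mathcal{H}_{\mathtt{ul}}^{s}$-norm is uniformly bounded by $\left\Vert u\right\Vert _{s,\mathtt{ul},\chi }$, the estimate produces a uniform bound on $M_{j}^{\varepsilon ,\delta }$ in $\mathcal{H}_{\mathtt{ul}}^{s}$, so $\left\{ \mathit{\Phi }\left( u_{\varepsilon }\right) \right\} $ is Cauchy in $\mathcal{H}_{\mathtt{ul}}^{s^{\prime }}$. By completeness, Proposition \ref{ks15} $\left( \mathtt{f}\right) $, it converges there, and the limit must equal $\mathit{\Phi }\left( u\right) $ by pointwise convergence since $\mathit{\Phi }$ is continuous on $K$ and $u_{\varepsilon }\rightarrow u$ uniformly.

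For part $\left( \mathtt{b}\right) $, the same mean value identity---now applied to the given sequence $u_{\varepsilon }$ rather than mollifications---reduces the continuity assertion to a uniform $\mathcal{H}_{\mathtt{ul}}^{s}$-bound on $M_{j}^{\varepsilon ,0}$ along the segments from $u$ to $u_{\varepsilon }$. Uniform convergence $u_{\varepsilon }\rightarrow u$ (which again follows from $s^{\prime }>n/2$) forces these segments to have range in $K$ for small $\varepsilon $; the composition estimate then supplies the required bound (using that $\left\{ u_{\varepsilon }\right\} $ is norm-bounded in $\mathcal{H}_{\mathtt{ul}}^{s}$, as is implicit in the statement), and the multiplication inequality of the second paragraph yields $\left\Vert \mathit{\Phi }\left( u_{\varepsilon }\right) -\mathit{\Phi }\left( u\right) \right\Vert _{\mathcal{H}_{\mathtt{ul}}^{s^{\prime }}}\leq C^{\prime }\left\Vert u_{\varepsilon }-u\right\Vert _{\mathcal{H}_{\mathtt{ul}}^{s^{\prime }}}\rightarrow 0$.
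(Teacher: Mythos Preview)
Your approach differs substantially from the paper's and carries two gaps that are not easy to close with the tools developed here.

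The paper argues via a Calder\'{o}n-type integral representation. Fix $s^{\prime }\in \left( n/2,s\right) $, equip $\mathcal{H}_{\mathtt{ul}}^{s^{\prime }}$ with a Banach algebra norm, and pick by Corollary \ref{ks10} some $v\in \mathcal{BC}^{\infty }\left( \mathbb{R}^{n}\right) ^{d}$ with $\left\vert \left\vert \left\vert u-v\right\vert \right\vert \right\vert _{s^{\prime },\mathtt{ul}}<r/C$, where $8r=\mathtt{dist}\left( \overline{u\left( \mathbb{R}^{n}\right) },\mathbb{C}^{d}\smallsetminus \mathit{\Omega }\right) $. On the distinguished boundary $\Gamma \left( r\right) =\left( \partial \mathbb{D}\left( 0,3r\right) \right) ^{d}$ each factor $\zeta _{k}+v_{k}-u_{k}$ is invertible in $\mathcal{H}_{\mathtt{ul}}^{s^{\prime }}$ by a Neumann series, while $\zeta \mapsto \mathit{\Phi }\left( \zeta +v\right) $ is continuous from $\Gamma \left( r\right) $ into $\mathcal{BC}^{\left[ s^{\prime }\right] +1}\subset \mathcal{H}_{\mathtt{ul}}^{s^{\prime }}$. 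Hence
\begin{equation*}
h=\frac{1}{\left( 2\pi \mathtt{i}\right) ^{d}}\int _{\Gamma \left( r\right) }\frac{\mathit{\Phi }\left( \zeta +v\right) }{\left( \zeta _{1}+v_{1}-u_{1}\right) \cdots \left( \zeta _{d}+v_{d}-u_{d}\right) }\,\mathtt{d}\zeta \in \mathcal{H}_{\mathtt{ul}}^{s^{\prime }},
\end{equation*}
and point evaluation via Cauchy's formula identifies $h=\mathit{\Phi }\left( u\right) $. Part $\left( \mathtt{b}\right) $ follows from the very same formula with $u_{\varepsilon }$ in the denominator: the integrand depends continuously on $u_{\varepsilon }$ in the $\mathcal{H}_{\mathtt{ul}}^{s^{\prime }}$-topology, so $\mathit{\Phi }\left( u_{\varepsilon }\right) \rightarrow \mathit{\Phi }\left( u\right) $ there. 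Crucially, only the $\mathcal{H}_{\mathtt{ul}}^{s^{\prime }}$-norm of $u_{\varepsilon }$ enters; no control in $\mathcal{H}_{\mathtt{ul}}^{s}$ beyond membership is used.

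Your route has two concrete problems. First, the Moser-type composition bound at fractional $s$ is the entire difficulty, and ``extend to fractional $s$ by interpolation'' is not justified: the map $v\mapsto G\left( v\right) $ is nonlinear, so complex interpolation of the integer-order estimates does not produce the fractional one. A genuine proof at noninteger $s$ typically requires paraproduct or Littlewood--Paley machinery that is nowhere set up in this paper; Lemma \ref{ks2} alone does not bridge the gap. Second, in part $\left( \mathtt{b}\right) $ you apply the composition estimate to $w=u+t\left( u_{\varepsilon }-u\right) $, which lies only in $\mathcal{H}_{\mathtt{ul}}^{s}$, not in $\mathcal{BC}^{\infty }$; your estimate was set up for smooth $v$. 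Even granting an extension to all of $\mathcal{H}_{\mathtt{ul}}^{s}$, you would need $\left\{ u_{\varepsilon }\right\} $ uniformly bounded in $\mathcal{H}_{\mathtt{ul}}^{s}$, and this is \emph{not} part of the hypotheses---the paper's proof shows the assumption is unnecessary.
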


\begin{proof}
On $%
%TCIMACRO{\U{2102} }%
%BeginExpansion
\mathbb{C}
%EndExpansion
^{d}$ we shall consider the distance given by the norm 
\begin{equation*}
\left\vert z\right\vert _{\infty }=\max \left\{ \left\vert z_{1}\right\vert
,...,\left\vert z_{d}\right\vert \right\} ,\quad z\in 
%TCIMACRO{\U{2102} }%
%BeginExpansion
\mathbb{C}
%EndExpansion
^{d}.
\end{equation*}
Let $r=\mathtt{dist}\left( \overline{u\left( 
%TCIMACRO{\U{211d} }%
%BeginExpansion
\mathbb{R}
%EndExpansion
^{n}\right) },%
%TCIMACRO{\U{2102} }%
%BeginExpansion
\mathbb{C}
%EndExpansion
^{d}\smallsetminus \mathit{\Omega }\right) /8$. Since $\overline{u\left( 
%TCIMACRO{\U{211d} }%
%BeginExpansion
\mathbb{R}
%EndExpansion
^{n}\right) }\subset \mathit{\Omega }$ it follows that $r>0$ and 
\begin{equation*}
\bigcup_{y\in \overline{u\left( 
%TCIMACRO{\U{211d} }%
%BeginExpansion
\mathbb{R}
%EndExpansion
^{n}\right) }}\overline{B\left( y;4r\right) }\subset \mathit{\Omega }.
\end{equation*}

Let $s^{\prime }\in \left( n/2,s\right) $. On $\mathcal{H}_{\mathtt{ul}%
}^{s^{\prime }}\left( 
%TCIMACRO{\U{211d} }%
%BeginExpansion
\mathbb{R}
%EndExpansion
^{n}\right) ^{d}$ we shall consider the norm 
\begin{equation*}
\left\vert \left\vert \left\vert u\right\vert \right\vert \right\vert
_{s^{\prime },\mathtt{ul}}=\max \left\{ \left\Vert u_{1}\right\Vert
_{s^{\prime },\mathtt{ul}},...,\left\Vert u_{d}\right\Vert _{s^{\prime },%
\mathtt{ul}}\right\} ,\quad u\in \mathcal{H}_{\mathtt{ul}}^{s^{\prime
}}\left( 
%TCIMACRO{\U{211d} }%
%BeginExpansion
\mathbb{R}
%EndExpansion
^{n}\right) ^{d},
\end{equation*}%
where $\left\Vert \cdot \right\Vert _{s^{\prime },\mathtt{ul}}$ is a fixed
Banach algebra norm on $\mathcal{H}_{\mathtt{ul}}^{s^{\prime }}\left( 
%TCIMACRO{\U{211d} }%
%BeginExpansion
\mathbb{R}
%EndExpansion
^{n}\right) $, and on $\mathcal{BC}\left( 
%TCIMACRO{\U{211d} }%
%BeginExpansion
\mathbb{R}
%EndExpansion
^{n}\right) ^{d}$ we shall consider the norm 
\begin{equation*}
\left\vert \left\vert \left\vert u\right\vert \right\vert \right\vert
_{\infty }=\max \left\{ \left\Vert u_{1}\right\Vert _{\infty
},...,\left\Vert u_{d}\right\Vert _{\infty }\right\} ,\quad u\in \mathcal{BC}%
\left( 
%TCIMACRO{\U{211d} }%
%BeginExpansion
\mathbb{R}
%EndExpansion
^{n}\right) ^{d}.
\end{equation*}%
Since $\mathcal{H}_{\mathtt{ul}}^{s^{\prime }}\left( 
%TCIMACRO{\U{211d} }%
%BeginExpansion
\mathbb{R}
%EndExpansion
^{n}\right) \subset \mathcal{BC}\left( 
%TCIMACRO{\U{211d} }%
%BeginExpansion
\mathbb{R}
%EndExpansion
^{n}\right) $ there is $C\geq 1$ so that 
\begin{equation*}
\left\Vert \cdot \right\Vert _{\infty }\leq C\left\Vert \cdot \right\Vert
_{s^{\prime },\mathtt{ul}}
\end{equation*}%
According to Corollary \ref{ks10} $\mathcal{BC}^{\infty }\left( 
%TCIMACRO{\U{211d} }%
%BeginExpansion
\mathbb{R}
%EndExpansion
^{n}\right) $ is dense in $\mathcal{H}_{\mathtt{ul}}^{s\left( s^{\prime
}\right) }\left( 
%TCIMACRO{\U{211d} }%
%BeginExpansion
\mathbb{R}
%EndExpansion
^{n}\right) $. Therefore we find $v=\left( v_{1},...,v_{d}\right) \in 
\mathcal{BC}^{\infty }\left( 
%TCIMACRO{\U{211d} }%
%BeginExpansion
\mathbb{R}
%EndExpansion
^{n}\right) ^{d}$ so that 
\begin{equation*}
\left\vert \left\vert \left\vert u-v\right\vert \right\vert \right\vert
_{s^{\prime },\mathtt{ul}}<r/C.
\end{equation*}

Then 
\begin{equation*}
\left\vert \left\vert \left\vert u-v\right\vert \right\vert \right\vert
_{\infty }\leq C\left\vert \left\vert \left\vert u-v\right\vert \right\vert
\right\vert _{s^{\prime },\mathtt{ul}}<r.
\end{equation*}%
Using the last estimate we show that $\overline{v\left( 
%TCIMACRO{\U{211d} }%
%BeginExpansion
\mathbb{R}
%EndExpansion
^{n}\right) }\subset \bigcup_{x\in 
%TCIMACRO{\U{211d} }%
%BeginExpansion
\mathbb{R}
%EndExpansion
^{n}}B\left( u\left( x\right) ;r\right) $. Indeed, if $z\in \overline{%
v\left( 
%TCIMACRO{\U{211d} }%
%BeginExpansion
\mathbb{R}
%EndExpansion
^{n}\right) }$, then there is $x\in 
%TCIMACRO{\U{211d} }%
%BeginExpansion
\mathbb{R}
%EndExpansion
^{n}$ such that 
\begin{equation*}
\left\vert z-v\left( x\right) \right\vert _{\infty }<r-\left\vert \left\vert
\left\vert v-u\right\vert \right\vert \right\vert _{\infty }
\end{equation*}%
It follows that 
\begin{eqnarray*}
\left\vert z-u\left( x\right) \right\vert _{\infty } &\leq &\left\vert
z-v\left( x\right) \right\vert _{\infty }+\left\vert v\left( x\right)
-u\left( x\right) \right\vert _{\infty } \\
&\leq &\left\vert z-v\left( x\right) \right\vert _{\infty }+\left\vert
\left\vert \left\vert v-u\right\vert \right\vert \right\vert _{\infty } \\
&<&r-\left\vert \left\vert \left\vert v-u\right\vert \right\vert \right\vert
_{\infty }+\left\vert \left\vert \left\vert v-u\right\vert \right\vert
\right\vert _{\infty }=r
\end{eqnarray*}%
so $z\in B\left( u\left( x\right) ;r\right) $.

From $\overline{v\left( 
%TCIMACRO{\U{211d} }%
%BeginExpansion
\mathbb{R}
%EndExpansion
^{n}\right) }\subset \bigcup_{x\in 
%TCIMACRO{\U{211d} }%
%BeginExpansion
\mathbb{R}
%EndExpansion
^{n}}B\left( u\left( x\right) ;r\right) $ we get 
\begin{equation*}
\overline{v\left( 
%TCIMACRO{\U{211d} }%
%BeginExpansion
\mathbb{R}
%EndExpansion
^{n}\right) }+\overline{B\left( 0;3r\right) }\subset \bigcup_{x\in 
%TCIMACRO{\U{211d} }%
%BeginExpansion
\mathbb{R}
%EndExpansion
^{n}}B\left( u\left( x\right) ;4r\right) \subset \mathit{\Omega },
\end{equation*}
hence the map%
\begin{equation*}
%TCIMACRO{\U{211d} }%
%BeginExpansion
\mathbb{R}
%EndExpansion
^{n}\times \overline{B\left( 0;3r\right) }\ni \left( x,\zeta \right)
\rightarrow \mathit{\Phi }\left( v\left( x\right) +\zeta \right) \in 
%TCIMACRO{\U{2102} }%
%BeginExpansion
\mathbb{C}
%EndExpansion
.
\end{equation*}%
is well defined. Let $\Gamma \left( r\right) $ denote the polydisc $\left(
\partial \mathbb{D}\left( 0,3r\right) \right) ^{d}$. Since $\overline{%
v\left( 
%TCIMACRO{\U{211d} }%
%BeginExpansion
\mathbb{R}
%EndExpansion
^{n}\right) }+\Gamma \left( r\right) \subset \mathit{\Omega }$ is a compact
subset, the map 
\begin{equation*}
\Gamma \left( r\right) \ni \zeta \rightarrow \mathit{\Phi }\left( \zeta
+v\right) \in \mathcal{BC}^{\left[ s^{\prime }\right] +1}\left( 
%TCIMACRO{\U{211d} }%
%BeginExpansion
\mathbb{R}
%EndExpansion
^{n}\right) \subset \mathcal{H}_{\mathtt{ul}}^{s^{\prime }}\left( 
%TCIMACRO{\U{211d} }%
%BeginExpansion
\mathbb{R}
%EndExpansion
^{n}\right)
\end{equation*}%
is continuous.

On the other hand we have 
\begin{equation*}
\left( \zeta _{1}+v_{1}-u_{1}\right) ^{-1},...,\left( \zeta
_{d}+v_{d}-u_{d}\right) ^{-1}\in \mathcal{H}_{\mathtt{ul}}^{s^{\prime
}}\left( 
%TCIMACRO{\U{211d} }%
%BeginExpansion
\mathbb{R}
%EndExpansion
^{n}\right)
\end{equation*}%
because $\left\Vert u_{1}-v_{1}\right\Vert _{s^{\prime },\mathtt{ul}%
},...,\left\Vert u_{d}-v_{d}\right\Vert _{s^{\prime },\mathtt{ul}}<r/C\leq r$
and $\left\vert \zeta _{1}\right\vert =...=\left\vert \zeta _{d}\right\vert
=3r$.

It follows that the integral 
\begin{equation}
h=\frac{1}{\left( 2\pi \mathtt{i}\right) ^{d}}\int_{\Gamma \left( r\right) }%
\frac{\mathit{\Phi }\left( \zeta +v\right) }{\left( \zeta
_{1}+v_{1}-u_{1}\right) ...\left( \zeta _{d}+v_{d}-u_{d}\right) }\mathtt{d}%
\zeta  \label{ksc}
\end{equation}%
defines an element $h\in \mathcal{H}_{\mathtt{ul}}^{s^{\prime }}\left( 
%TCIMACRO{\U{211d} }%
%BeginExpansion
\mathbb{R}
%EndExpansion
^{n}\right) $.

Let 
\begin{equation*}
\delta _{x}:\mathcal{H}_{\mathtt{ul}}^{s^{\prime }}\left( 
%TCIMACRO{\U{211d} }%
%BeginExpansion
\mathbb{R}
%EndExpansion
^{n}\right) \subset \mathcal{BC}\left( 
%TCIMACRO{\U{211d} }%
%BeginExpansion
\mathbb{R}
%EndExpansion
^{n}\right) \rightarrow 
%TCIMACRO{\U{2102} }%
%BeginExpansion
\mathbb{C}
%EndExpansion
,\quad w\rightarrow w\left( x\right) ,
\end{equation*}%
be the evaluation functional at $x\in 
%TCIMACRO{\U{211d} }%
%BeginExpansion
\mathbb{R}
%EndExpansion
^{n}$. Then 
\begin{eqnarray*}
h\left( x\right) &=&\frac{1}{\left( 2\pi \mathtt{i}\right) ^{d}}\int_{\Gamma
\left( r\right) }\frac{\mathit{\Phi }\left( \zeta +v\left( x\right) \right) 
}{\left( \zeta _{1}-\left( u_{1}\left( x\right) -v_{1}\left( x\right)
\right) \right) ...\left( \zeta _{d}-\left( u_{d}\left( x\right)
-v_{d}\left( x\right) \right) \right) }\mathtt{d}\zeta \\
&=&\mathit{\Phi }\left( \zeta +v\left( x\right) \right) |_{\zeta =u\left(
x\right) -v\left( x\right) }=\mathit{\Phi }\left( u\left( x\right) \right)
\end{eqnarray*}%
because $\left\vert u\left( x\right) -v\left( x\right) \right\vert _{\infty
}\leq \left\vert \left\vert \left\vert u-v\right\vert \right\vert
\right\vert _{\infty }<r$, so $u\left( x\right) -v\left( x\right) $ is
within polydisc $\Gamma \left( r\right) $. Hence $h=\mathit{\Phi }\circ
u\equiv \mathit{\Phi }\left( u\right) \in \mathcal{H}_{\mathtt{ul}%
}^{s^{\prime }}\left( 
%TCIMACRO{\U{211d} }%
%BeginExpansion
\mathbb{R}
%EndExpansion
^{n}\right) $, for any $s^{\prime }\in \left( n/2,s\right) $ so 
\begin{equation*}
\mathit{\Phi }\circ u\equiv \mathit{\Phi }\left( u\right) \in \mathcal{H}_{%
\mathtt{ul}}^{s^{\prime }}\left( 
%TCIMACRO{\U{211d} }%
%BeginExpansion
\mathbb{R}
%EndExpansion
^{n}\right) ,\quad \forall s^{\prime }<s.
\end{equation*}

$\left( \mathtt{b}\right) $ Let $\varepsilon _{0}\in \left( 0,1\right] $ be
such that for any $0<\varepsilon \leq \varepsilon _{0}$ we have%
\begin{equation*}
\left\vert \left\vert \left\vert u-u_{\varepsilon }\right\vert \right\vert
\right\vert _{s^{\prime },\mathtt{ul}}<r/C.
\end{equation*}%
Then $\left\vert \left\vert \left\vert u-u_{\varepsilon }\right\vert
\right\vert \right\vert _{\infty }\leq C\left\vert \left\vert \left\vert
u-u_{\varepsilon }\right\vert \right\vert \right\vert _{s^{\prime },\mathtt{%
ul}}<r$ and $\overline{u_{\varepsilon }\left( 
%TCIMACRO{\U{211d} }%
%BeginExpansion
\mathbb{R}
%EndExpansion
^{n}\right) }\subset \bigcup_{x\in 
%TCIMACRO{\U{211d} }%
%BeginExpansion
\mathbb{R}
%EndExpansion
^{n}}B\left( u\left( x\right) ;r\right) \subset \mathit{\Omega }$ for every $%
0<\varepsilon \leq \varepsilon _{0}$.

On the other hand we have $\left\vert \left\vert \left\vert v-u_{\varepsilon
}\right\vert \right\vert \right\vert _{s^{\prime },\mathtt{ul}}\leq
\left\vert \left\vert \left\vert v-u\right\vert \right\vert \right\vert
_{s^{\prime },\mathtt{ul}}+\left\vert \left\vert \left\vert u-u_{\varepsilon
}\right\vert \right\vert \right\vert _{s^{\prime },\mathtt{ul}}<r/C+r/C\leq
2r$. It follows that 
\begin{equation*}
\left( \zeta _{1}+v_{1}-u_{\varepsilon 1}\right) ^{-1},...,\left( \zeta
_{d}+v_{d}-u_{\varepsilon d}\right) ^{-1}\in \mathcal{H}_{\mathtt{ul}%
}^{s^{\prime }}\left( 
%TCIMACRO{\U{211d} }%
%BeginExpansion
\mathbb{R}
%EndExpansion
^{n}\right)
\end{equation*}%
because $\left\Vert u_{\varepsilon 1}-v_{1}\right\Vert _{s^{\prime },\mathtt{%
ul}},...,\left\Vert u_{\varepsilon d}-v_{d}\right\Vert _{s^{\prime },\mathtt{%
ul}}<2r$ and $\left\vert \zeta _{1}\right\vert =...=\left\vert \zeta
_{d}\right\vert =3r$.

We obtain that 
\begin{eqnarray*}
\mathit{\Phi }\left( u_{\varepsilon }\right) &=&\frac{1}{\left( 2\pi \mathtt{%
i}\right) ^{d}}\int_{\Gamma \left( r\right) }\frac{\mathit{\Phi }\left(
\zeta +v\right) }{\left( \zeta _{1}+v_{1}-u_{\varepsilon 1}\right) ...\left(
\zeta _{d}+v_{d}-u_{\varepsilon d}\right) }\mathtt{d}\zeta \\
&\rightarrow &\frac{1}{\left( 2\pi \mathtt{i}\right) ^{d}}\int_{\Gamma
\left( r\right) }\frac{\mathit{\Phi }\left( \zeta +v\right) }{\left( \zeta
_{1}+v_{1}-u_{1}\right) ...\left( \zeta _{d}+v_{d}-u_{d}\right) }\mathtt{d}%
\zeta =\mathit{\Phi }\left( u\right)
\end{eqnarray*}%
as $\varepsilon \rightarrow 0$.
\end{proof}

\begin{remark}
According to Coquand and Stolzenberg \cite{Coquand}, this type of
representation formula, $($\ref{ksc}$)$, was introduced more than 60 years
ago by A. P. Calder\'{o}n.
\end{remark}

\begin{lemma}
Suppose that $s>\max \left\{ n/2,3/4\right\} $. Let $\mathit{\Omega =%
\mathring{\Omega}}\subset 
%TCIMACRO{\U{2102} }%
%BeginExpansion
\mathbb{C}
%EndExpansion
^{d}$ and $\mathit{\Phi }:\mathit{\Omega }\rightarrow 
%TCIMACRO{\U{2102} }%
%BeginExpansion
\mathbb{C}
%EndExpansion
$ a holomorphic function. If $u=\left( u_{1},...,u_{d}\right) \in \mathcal{H}%
_{\mathtt{ul}}^{s}\left( 
%TCIMACRO{\U{211d} }%
%BeginExpansion
\mathbb{R}
%EndExpansion
^{n}\right) ^{d}$ satisfies the condition $\overline{u\left( 
%TCIMACRO{\U{211d} }%
%BeginExpansion
\mathbb{R}
%EndExpansion
^{n}\right) }\subset \mathit{\Omega }$, then 
\begin{equation*}
\partial _{j}\mathit{\Phi }\left( u\right) =\sum_{k=1}^{d}\frac{\partial 
\mathit{\Phi }}{\partial z_{k}}\left( u\right) \cdot \partial
_{j}u_{k},\quad \text{in }\mathcal{D}^{\prime }\left( 
%TCIMACRO{\U{211d} }%
%BeginExpansion
\mathbb{R}
%EndExpansion
^{n}\right) ,\quad j=1,...,n.
\end{equation*}
\end{lemma}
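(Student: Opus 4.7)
The strategy is to establish the identity by approximation: mollify $u$, apply the classical chain rule to the smooth approximation, and pass to the limit in $\mathcal{D}'\bigl(\mathbb{R}^{n}\bigr)$. Pick $s'\in\bigl(\max\{n/2,3/4\},\,s\bigr)$ with $s'+(s-1)>n/2$; the hypothesis on $s$ guarantees that such an $s'$ exists. Indeed, in dimension $n=1$ the bound $s>3/4$ is exactly what forces $2s-1>1/2$, while for $n\geq 2$ the inequality $s>n/2$ already yields $2s-1>n-1\geq n/2$, so the $3/4$ clause is automatic.

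Set $u_{\varepsilon}=\varphi_{\varepsilon}\ast u$. Since $s>n/2$ gives $u\in\mathcal{BC}(\mathbb{R}^{n})^{d}$, each $u_{\varepsilon}\in\mathcal{BC}^{\infty}(\mathbb{R}^{n})^{d}$; the mollifier estimate proved just before Corollary \ref{ks10} shows that $u_{\varepsilon}\to u$ in $\mathcal{H}_{\mathtt{ul}}^{s(s')}(\mathbb{R}^{n})^{d}$. Theorem \ref{ks11}$(\mathtt{b})$ then supplies $\varepsilon_{0}>0$ such that $\overline{u_{\varepsilon}(\mathbb{R}^{n})}\subset\Omega$ and $\Phi(u_{\varepsilon})\to\Phi(u)$ in $\mathcal{H}_{\mathtt{ul}}^{s'}$ for $0<\varepsilon\leq\varepsilon_{0}$. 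Applying Theorem \ref{ks11}$(\mathtt{b})$ once more, now to each holomorphic function $\partial\Phi/\partial z_{k}\colon\Omega\to\mathbb{C}$, we also get $(\partial_{k}\Phi)(u_{\varepsilon})\to(\partial_{k}\Phi)(u)$ in $\mathcal{H}_{\mathtt{ul}}^{s'}$.

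For each such $\varepsilon$, the classical chain rule for smooth functions gives pointwise
\begin{equation*}
\partial_{j}\Phi(u_{\varepsilon})\;=\;\sum_{k=1}^{d}(\partial_{k}\Phi)(u_{\varepsilon})\cdot\partial_{j}u_{\varepsilon,k}.
\end{equation*}
The left-hand side converges to $\partial_{j}\Phi(u)$ in $\mathcal{H}_{\mathtt{ul}}^{s'-1}$, hence in $\mathcal{D}'$. For the right-hand side, $\partial_{j}u_{\varepsilon,k}=\varphi_{\varepsilon}\ast\partial_{j}u_{k}$ and the same mollifier estimate applied to $\partial_{j}u_{k}\in\mathcal{H}_{\mathtt{ul}}^{s-1}$ yields $\partial_{j}u_{\varepsilon,k}\to\partial_{j}u_{k}$ in $\mathcal{H}_{\mathtt{ul}}^{(s-1)(s'-1)}$. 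Proposition \ref{ks12} (with $p=q=r=\infty$) provides a continuous bilinear pointwise product
\begin{equation*}
\mathcal{H}_{\mathtt{ul}}^{s'}(\mathbb{R}^{n})\times\mathcal{H}_{\mathtt{ul}}^{s-1}(\mathbb{R}^{n})\;\longrightarrow\;\mathcal{H}_{\mathtt{ul}}^{\sigma}(\mathbb{R}^{n})
\end{equation*}
for a suitable $\sigma$, precisely because $s'+(s-1)>n/2$. Consequently $(\partial_{k}\Phi)(u_{\varepsilon})\cdot\partial_{j}u_{\varepsilon,k}\to(\partial_{k}\Phi)(u)\cdot\partial_{j}u_{k}$ in $\mathcal{H}_{\mathtt{ul}}^{\sigma}$, hence in $\mathcal{D}'$. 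Passing to the limit in the chain rule identity produces the claimed formula.

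The only delicate point is the convergence of the product on the right-hand side: one needs joint sequential continuity of pointwise multiplication with respect to norms in which both factors converge. Proposition \ref{ks12} supplies exactly this, and its hypothesis $s'+(s-1)>n/2$ is the source of the threshold $s>\max\{n/2,3/4\}$ imposed in the statement.
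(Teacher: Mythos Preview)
Your approach is essentially identical to the paper's: mollify, apply the smooth chain rule, invoke Theorem~\ref{ks11}$(\mathtt{b})$ for both $\Phi$ and each $\partial\Phi/\partial z_k$, and pass to the limit via the product estimate of Proposition~\ref{ks12}.

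There is, however, a small slip in the last step. You invoke the bilinear map
\[
\mathcal{H}_{\mathtt{ul}}^{s'}\times\mathcal{H}_{\mathtt{ul}}^{s-1}\longrightarrow\mathcal{H}_{\mathtt{ul}}^{\sigma},
\]
but as you yourself record, the second factor $\partial_j u_{\varepsilon,k}$ converges to $\partial_j u_k$ only in the $\mathcal{H}_{\mathtt{ul}}^{s'-1}$ norm, not in $\mathcal{H}_{\mathtt{ul}}^{s-1}$. Continuity of the product in the pair of norms $(s',s-1)$ does not by itself yield convergence of the product when one factor converges only in the weaker norm $s'-1$. The fix is immediate: invoke instead the map
\[
\mathcal{H}_{\mathtt{ul}}^{s'}\times\mathcal{H}_{\mathtt{ul}}^{s'-1}\longrightarrow\mathcal{H}_{\mathtt{ul}}^{s'-1},
\]
which is available because your choice $s'>\max\{n/2,3/4\}$ already forces $2s'-1>n/2$ (this is precisely your own case analysis on $n$). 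Both factors converge in these norms, and the conclusion follows. This is exactly the route the paper takes: it selects $s'$ with $\max\{n/2,3/4,s-1\}<s'<s$, notes $s'+s'-1>n/2$, and applies Proposition~\ref{ks12} with exponents $s'$ and $s'-1$ to land in $\mathcal{H}_{\mathtt{ul}}^{s'-1}$. So the relevant threshold is $2s'-1>n/2$ rather than $s'+(s-1)>n/2$; the latter is a weaker condition and does not suffice for the convergence you need.
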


\begin{proof}
Let $s^{\prime }$ be such that $\max \left\{ n/2,3/4,s-1\right\} <s^{\prime
}<s$. Then $s^{\prime }+s^{\prime }-1>n/2$. Let $u=\left(
u_{1},...,u_{d}\right) \in \left( \mathcal{H}_{\mathtt{ul}}^{s}\left( 
%TCIMACRO{\U{211d} }%
%BeginExpansion
\mathbb{R}
%EndExpansion
^{n}\right) \right) ^{d}$. We consider the family $\left\{ u_{\varepsilon
}\right\} _{0<\varepsilon \leq 1}$ 
\begin{equation*}
u_{\varepsilon }=\varphi _{\varepsilon }\ast u=\left( \varphi _{\varepsilon
}\ast u_{1},...,\varphi _{\varepsilon }\ast u_{d}\right) \in \mathcal{H}_{%
\mathtt{ul}}^{s}\left( 
%TCIMACRO{\U{211d} }%
%BeginExpansion
\mathbb{R}
%EndExpansion
^{n}\right) ^{d}
\end{equation*}%
Then $u_{\varepsilon }\rightarrow u$ in $\mathcal{H}_{\mathtt{ul}%
}^{s^{\prime }}\left( 
%TCIMACRO{\U{211d} }%
%BeginExpansion
\mathbb{R}
%EndExpansion
^{n}\right) ^{d}$ as $\varepsilon \rightarrow 0$, $\partial
_{j}u_{\varepsilon }=\varphi _{\varepsilon }\ast \partial _{j}u\in \mathcal{H%
}_{\mathtt{ul}}^{s-1}\left( 
%TCIMACRO{\U{211d} }%
%BeginExpansion
\mathbb{R}
%EndExpansion
^{n}\right) ^{d}$ and $\partial _{j}u_{\varepsilon }\rightarrow \partial
_{j}u$ in $\mathcal{H}_{\mathtt{ul}}^{s^{\prime }-1}\left( 
%TCIMACRO{\U{211d} }%
%BeginExpansion
\mathbb{R}
%EndExpansion
^{n}\right) ^{d}$ as $\varepsilon \rightarrow 0$. Since $\mathit{\Phi }%
\left( u_{\varepsilon }\right) \rightarrow \mathit{\Phi }\left( u\right) $
in $\mathcal{H}_{\mathtt{ul}}^{s^{\prime }}\left( 
%TCIMACRO{\U{211d} }%
%BeginExpansion
\mathbb{R}
%EndExpansion
^{n}\right) \subset \mathcal{BC}\left( 
%TCIMACRO{\U{211d} }%
%BeginExpansion
\mathbb{R}
%EndExpansion
^{n}\right) \subset \mathcal{D}^{\prime }\left( 
%TCIMACRO{\U{211d} }%
%BeginExpansion
\mathbb{R}
%EndExpansion
^{n}\right) $ (Theorem \ref{ks11} $\left( \mathtt{b}\right) $), it follows
that $\partial _{j}\mathit{\Phi }\left( u_{\varepsilon }\right) \rightarrow
\partial _{j}\mathit{\Phi }\left( u\right) $ in $\mathcal{D}^{\prime }\left( 
%TCIMACRO{\U{211d} }%
%BeginExpansion
\mathbb{R}
%EndExpansion
^{n}\right) $, $j=1,...,n$.

On the other hand we have%
\begin{equation*}
\partial _{j}\mathit{\Phi }\left( u_{\varepsilon }\right) =\sum_{k=1}^{d}%
\frac{\partial \mathit{\Phi }}{\partial z_{k}}\left( u_{\varepsilon }\right)
\cdot \partial _{j}u_{\varepsilon k},\quad \text{\textit{in} }\mathcal{C}%
^{\infty }\left( 
%TCIMACRO{\U{211d} }%
%BeginExpansion
\mathbb{R}
%EndExpansion
^{n}\right) ,\quad j=1,...,n.
\end{equation*}%
Let $\delta >0$ be such that $s^{\prime }-n/2-\delta >0$. Then 
\begin{equation*}
s^{\prime }-1=\min \left\{ s^{\prime },s^{\prime }-1,s^{\prime }+s^{\prime
}-1-n/2-\delta \right\} .
\end{equation*}%
Since 
\begin{gather*}
\frac{\partial \mathit{\Phi }}{\partial z_{k}}\left( u_{\varepsilon }\right)
\rightarrow \frac{\partial \mathit{\Phi }}{\partial z_{k}}\left( u\right)
,\quad \text{\textit{in} }\mathcal{H}_{\mathtt{ul}}^{s^{\prime }}\left( 
%TCIMACRO{\U{211d} }%
%BeginExpansion
\mathbb{R}
%EndExpansion
^{n}\right) ,\text{ }(\text{Theorem\textit{\ }}\ref{ks11}\left( \mathtt{b}%
\right) ),\quad k=1,...,d, \\
\partial _{j}u_{\varepsilon }\rightarrow \partial _{j}u,\quad \text{\textit{%
in} }\mathcal{H}_{\mathtt{ul}}^{s^{\prime }-1}\left( 
%TCIMACRO{\U{211d} }%
%BeginExpansion
\mathbb{R}
%EndExpansion
^{n}\right) ^{d},\quad j=1,...,n,
\end{gather*}%
using Proposition \ref{ks12} we get that%
\begin{equation*}
\partial _{j}\mathit{\Phi }\left( u_{\varepsilon }\right) =\sum_{k=1}^{d}%
\frac{\partial \mathit{\Phi }}{\partial z_{k}}\left( u_{\varepsilon }\right)
\cdot \partial _{j}u_{\varepsilon k}\rightarrow \partial _{j}\mathit{\Phi }%
\left( u\right) =\sum_{k=1}^{d}\frac{\partial \mathit{\Phi }}{\partial z_{k}}%
\left( u\right) \cdot \partial _{j}u_{k},\quad \text{\textit{in} }\mathcal{H}%
_{\mathtt{ul}}^{s^{\prime }-1}\left( 
%TCIMACRO{\U{211d} }%
%BeginExpansion
\mathbb{R}
%EndExpansion
^{n}\right)
\end{equation*}%
for $j=1,...,n$. Hence 
\begin{equation*}
\partial _{j}\mathit{\Phi }\left( u\right) =\sum_{k=1}^{d}\frac{\partial 
\mathit{\Phi }}{\partial z_{k}}\left( u\right) \cdot \partial
_{j}u_{k},\quad \text{in }\mathcal{D}^{\prime }\left( 
%TCIMACRO{\U{211d} }%
%BeginExpansion
\mathbb{R}
%EndExpansion
^{n}\right) ,\quad j=1,...,n.
\end{equation*}
\end{proof}

\begin{remark}
Let us note that $\partial _{j}u_{\varepsilon k}=\varphi _{\varepsilon }\ast
\partial _{j}u_{k}\rightarrow \partial _{j}u_{k}$ in $\mathcal{H}_{\mathtt{ul%
}}^{s^{\prime }-1}\left( 
%TCIMACRO{\U{211d} }%
%BeginExpansion
\mathbb{R}
%EndExpansion
^{n}\right) $, but $\partial _{j}u_{k}$ $\in \mathcal{H}_{\mathtt{ul}%
}^{s-1}\left( 
%TCIMACRO{\U{211d} }%
%BeginExpansion
\mathbb{R}
%EndExpansion
^{n}\right) $, $j=1,...,n$, $k=1,...,d$. This remark leads to the complete
version of the Wiener-L\'{e}vy theorem.
\end{remark}

\begin{theorem}[Wiener-L\'{e}vy for $\mathcal{H}_{\mathtt{ul}}^{s}\left( 
%TCIMACRO{\U{211d} }%
%BeginExpansion
\mathbb{R}
%EndExpansion
^{n}\right) $]
Suppose that $s>\max \left\{ n/2,3/4\right\} $. Let $\mathit{\Omega =%
\mathring{\Omega}}\subset 
%TCIMACRO{\U{2102} }%
%BeginExpansion
\mathbb{C}
%EndExpansion
^{d}$ and $\mathit{\Phi }:\mathit{\Omega }\rightarrow 
%TCIMACRO{\U{2102} }%
%BeginExpansion
\mathbb{C}
%EndExpansion
$ a holomorphic function. If $u=\left( u_{1},...,u_{d}\right) \in \mathcal{H}%
_{\mathtt{ul}}^{s}\left( 
%TCIMACRO{\U{211d} }%
%BeginExpansion
\mathbb{R}
%EndExpansion
^{n}\right) ^{d}$ satisfies the condition $\overline{u\left( 
%TCIMACRO{\U{211d} }%
%BeginExpansion
\mathbb{R}
%EndExpansion
^{n}\right) }\subset \mathit{\Omega }$, then%
\begin{equation*}
\mathit{\Phi }\circ u\equiv \mathit{\Phi }\left( u\right) \in \mathcal{H}_{%
\mathtt{ul}}^{s}\left( 
%TCIMACRO{\U{211d} }%
%BeginExpansion
\mathbb{R}
%EndExpansion
^{n}\right) .
\end{equation*}
\end{theorem}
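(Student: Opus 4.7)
The plan is to upgrade the weak form (Theorem \ref{ks11}) by one half-step, using Proposition \ref{ks15} $(\mathtt{g})$ (for $p = \infty$ and scalar $\mathbf{s} = s$) as a bootstrap: it suffices to show that both $\Phi(u)$ and the first-order partials $\partial_j \Phi(u)$, $j = 1, \dots, n$, lie in $\mathcal{H}_{\mathtt{ul}}^{s-1}(\mathbb{R}^n)$. The membership $\Phi(u) \in \mathcal{H}_{\mathtt{ul}}^{s-1}$ is immediate from the weak form combined with the monotonicity in Proposition \ref{ks15} $(\mathtt{e})$: pick any $s' \in (s-1, s)$ and use $\mathcal{H}_{\mathtt{ul}}^{s'} \subset \mathcal{H}_{\mathtt{ul}}^{s-1}$.

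The real work is on the derivatives. The preceding lemma supplies the chain rule
\begin{equation*}
\partial_{j}\Phi(u) \;=\; \sum_{k=1}^{d} \frac{\partial \Phi}{\partial z_{k}}(u) \cdot \partial_{j} u_{k} \quad \text{in } \mathcal{D}'(\mathbb{R}^{n}).
\end{equation*}
Each $\partial \Phi/\partial z_{k}$ is holomorphic on $\Omega$, so the weak form gives $(\partial \Phi/\partial z_{k})(u) \in \mathcal{H}_{\mathtt{ul}}^{s'}$ for every $s' < s$, while $u_{k} \in \mathcal{H}_{\mathtt{ul}}^{s}$ forces $\partial_{j} u_{k} \in \mathcal{H}_{\mathtt{ul}}^{s-1}$ by Proposition \ref{ks15} $(\mathtt{g})$. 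To conclude, I apply Proposition \ref{ks12} with $p = q = r = \infty$ to the pair of exponents $(s', s-1)$: provided $s' + s - 1 > n/2$, the product lies in $\mathcal{H}_{\mathtt{ul}}^{\sigma}$ with $\sigma = \min\{s', s-1, s' + s - 1 - n/2 - \varepsilon\}$. Taking $s'$ slightly larger than $\max\{s-1, n/2\}$ and $\varepsilon \in (0, s' - n/2)$ arranges $\sigma = s-1$, so that $\partial_{j}\Phi(u) \in \mathcal{H}_{\mathtt{ul}}^{s-1}$ for every $j$, and the converse direction of Proposition \ref{ks15} $(\mathtt{g})$ closes the bootstrap.

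The key obstacle is the feasibility of this choice of $s'$: the strict inequality $s' + s - 1 > n/2$ with $s' < s$ demands $2s - 1 > n/2$. This is precisely what the hypothesis $s > \max\{n/2, 3/4\}$ secures --- for $n = 1$ the condition reads $s > 3/4 = (n+2)/4$, while for $n \geq 2$ one has $n/2 \geq (n+2)/4$, so $s > n/2$ already suffices. This is the only point at which the lower bound $3/4$ is used, and it explains the slightly unusual form of the hypothesis: both constraints enter exactly here, through the multiplicative regularity threshold of Proposition \ref{ks12}.
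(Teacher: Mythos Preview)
Your proof is correct and follows essentially the same route as the paper: weak form for $\Phi(u)$ and $\partial\Phi/\partial z_k(u)$ at level $s'$, chain rule from the preceding lemma, Proposition~\ref{ks12} to place the products $\partial\Phi/\partial z_k(u)\cdot\partial_j u_k$ in $\mathcal{H}_{\mathtt{ul}}^{s-1}$, and Proposition~\ref{ks15}$(\mathtt{g})$ to bootstrap to $\mathcal{H}_{\mathtt{ul}}^{s}$. Your explicit identification of $s>(n+2)/4$ as the feasibility threshold, and how it matches $\max\{n/2,3/4\}$ according to whether $n=1$ or $n\ge2$, makes the role of the hypothesis clearer than the paper's own writeup.
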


\begin{proof}
Let $s^{\prime }$ be such that $\max \left\{ n/2,3/4,s-1\right\} <s^{\prime
}<s$. Then $s^{\prime }+s^{\prime }-1>n/2$. Let $\delta >0$ be such that $%
s^{\prime }-n/2-\delta >0$. Then 
\begin{equation*}
s-1=\min \left\{ s^{\prime },s-1,s^{\prime }+s-1-n/2-\delta \right\} .
\end{equation*}%
Since 
\begin{gather*}
\frac{\partial \mathit{\Phi }}{\partial z_{k}}\left( u\right) \in \mathcal{H}%
_{\mathtt{ul}}^{s^{\prime }}\left( 
%TCIMACRO{\U{211d} }%
%BeginExpansion
\mathbb{R}
%EndExpansion
^{n}\right) ,\text{ }(\text{Theorem\textit{\ }}\ref{ks11}\left( \mathtt{a}%
\right) ),\quad k=1,...,d, \\
\partial _{j}u\in \mathcal{H}_{\mathtt{ul}}^{s-1}\left( 
%TCIMACRO{\U{211d} }%
%BeginExpansion
\mathbb{R}
%EndExpansion
^{n}\right) ^{d},\quad j=1,...,n,
\end{gather*}%
using Proposition \ref{ks12} we get that%
\begin{equation*}
\partial _{j}\mathit{\Phi }\left( u\right) =\sum_{k=1}^{d}\frac{\partial 
\mathit{\Phi }}{\partial z_{k}}\left( u\right) \cdot \partial _{j}u_{k}\in 
\mathcal{H}_{\mathtt{ul}}^{s-1}\left( 
%TCIMACRO{\U{211d} }%
%BeginExpansion
\mathbb{R}
%EndExpansion
^{n}\right) ,\quad j=1,...,n.
\end{equation*}%
Now $\mathit{\Phi }\left( u\right) \in \mathcal{H}_{\mathtt{ul}}^{s^{\prime
}}\left( 
%TCIMACRO{\U{211d} }%
%BeginExpansion
\mathbb{R}
%EndExpansion
^{n}\right) \subset \mathcal{H}_{\mathtt{ul}}^{s-1}\left( 
%TCIMACRO{\U{211d} }%
%BeginExpansion
\mathbb{R}
%EndExpansion
^{n}\right) $ and $\partial _{j}\mathit{\Phi }\left( u\right) \in \mathcal{H}%
_{\mathtt{ul}}^{s-1}\left( 
%TCIMACRO{\U{211d} }%
%BeginExpansion
\mathbb{R}
%EndExpansion
^{n}\right) $, $j=1,...,n$ imply $\mathit{\Phi }\left( u\right) \in \mathcal{%
H}_{\mathtt{ul}}^{s}\left( 
%TCIMACRO{\U{211d} }%
%BeginExpansion
\mathbb{R}
%EndExpansion
^{n}\right) $.
\end{proof}

\begin{corollary}[Kato]
\label{ks13}Suppose that $s>\max \left\{ n/2,3/4\right\} $.

$\left( \mathtt{a}\right) $ If $u\in \mathcal{H}_{\mathtt{ul}}^{s}\left( 
%TCIMACRO{\U{211d} }%
%BeginExpansion
\mathbb{R}
%EndExpansion
^{n}\right) $ satisfies the condition 
\begin{equation*}
\left\vert u\left( x\right) \right\vert \geq c>0,\quad x\in 
%TCIMACRO{\U{211d} }%
%BeginExpansion
\mathbb{R}
%EndExpansion
^{n},
\end{equation*}%
then 
\begin{equation*}
\frac{1}{u}\in \mathcal{H}_{\mathtt{ul}}^{s}\left( 
%TCIMACRO{\U{211d} }%
%BeginExpansion
\mathbb{R}
%EndExpansion
^{n}\right) .
\end{equation*}

$\left( \mathtt{b}\right) $ If $u\in \mathcal{H}_{\mathtt{ul}}^{s}\left( 
%TCIMACRO{\U{211d} }%
%BeginExpansion
\mathbb{R}
%EndExpansion
^{n}\right) $, then $\overline{u\left( 
%TCIMACRO{\U{211d} }%
%BeginExpansion
\mathbb{R}
%EndExpansion
^{n}\right) }$ is the spectrum of the element $u$.
\end{corollary}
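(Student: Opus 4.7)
The plan is to derive both statements as direct applications of the Wiener--L\'evy theorem just established, together with the observation that, since $s>n/2$, the space $\mathcal{H}_{\mathtt{ul}}^{s}(\mathbb{R}^{n})$ is a unital Banach algebra under pointwise multiplication contained in $\mathcal{BC}(\mathbb{R}^{n})$. The unit is the constant $1$, which lies in $\mathcal{H}_{\mathtt{ul}}^{s}$ because $\mathcal{BC}^{\infty}(\mathbb{R}^{n})\subset \mathcal{H}_{\mathtt{ul}}^{s}(\mathbb{R}^{n})$, and the algebra structure comes from Proposition \ref{ks12} applied with $p=q=r=\infty$, $\mathbf{s}=\mathbf{t}$.

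For part $(\mathtt{a})$, I would take $\mathit{\Omega}=\{z\in\mathbb{C}:|z|>c/2\}$, which is open, and set $\mathit{\Phi}(z)=1/z$, which is holomorphic on $\mathit{\Omega}$. The hypothesis $|u(x)|\geq c$ for every $x\in\mathbb{R}^{n}$ gives $\overline{u(\mathbb{R}^{n})}\subset\{|z|\geq c\}\subset\mathit{\Omega}$, so the Wiener--L\'evy theorem (with $d=1$) applies and yields $1/u=\mathit{\Phi}(u)\in\mathcal{H}_{\mathtt{ul}}^{s}(\mathbb{R}^{n})$.

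For part $(\mathtt{b})$, denote by $\sigma(u)$ the spectrum of $u$ in the unital Banach algebra $\mathcal{H}_{\mathtt{ul}}^{s}(\mathbb{R}^{n})$. I would prove the two inclusions separately. If $\lambda\notin\overline{u(\mathbb{R}^{n})}$, then there exists $c>0$ with $|u(x)-\lambda|\geq c$ for all $x\in\mathbb{R}^{n}$; applying part $(\mathtt{a})$ to the element $u-\lambda\cdot 1\in\mathcal{H}_{\mathtt{ul}}^{s}$ produces its inverse in the algebra, so $\lambda\notin\sigma(u)$. Conversely, if $\lambda\in\overline{u(\mathbb{R}^{n})}$, pick a sequence $x_{k}$ with $u(x_{k})\to\lambda$; if $u-\lambda\cdot 1$ had an inverse $v\in\mathcal{H}_{\mathtt{ul}}^{s}\subset\mathcal{BC}(\mathbb{R}^{n})$, pointwise evaluation of $(u-\lambda)v=1$ would force $|v(x_{k})|=|u(x_{k})-\lambda|^{-1}\to\infty$, contradicting the boundedness of $v$; hence $\lambda\in\sigma(u)$.

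There is no substantial obstacle here: the entire argument is a ``Wiener-type lemma'' packaging of the Wiener--L\'evy theorem. The only subtlety worth flagging is ensuring that the pointwise product agrees with the Banach algebra product (which is the case since the embedding $\mathcal{H}_{\mathtt{ul}}^{s}\hookrightarrow\mathcal{BC}$ is multiplicative) so that the spectral characterization via pointwise values is legitimate.
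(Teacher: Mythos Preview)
Your proposal is correct and matches the paper's intended argument: the paper states Corollary~\ref{ks13} without proof, treating it as an immediate consequence of the Wiener--L\'evy theorem, and your derivation of both parts is exactly the standard unpacking of that implication. The only stylistic difference is that for the inclusion $\overline{u(\mathbb{R}^n)}\subset\sigma(u)$ the paper (in the subsequent joint-spectrum corollary) invokes the point evaluations $\delta_x$ as multiplicative linear functionals rather than arguing by contradiction with boundedness, but the two arguments are equivalent.
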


\begin{corollary}
Suppose that $s>\max \left\{ n/2,3/4\right\} $. If $u=\left(
u_{1},...,u_{d}\right) \in \mathcal{H}_{\mathtt{ul}}^{s}\left( 
%TCIMACRO{\U{211d} }%
%BeginExpansion
\mathbb{R}
%EndExpansion
^{n}\right) ^{d}$, then 
\begin{equation*}
\sigma _{\mathcal{H}_{\mathtt{ul}}^{s}}\left( u_{1},...,u_{d}\right) =%
\overline{u\left( 
%TCIMACRO{\U{211d} }%
%BeginExpansion
\mathbb{R}
%EndExpansion
^{n}\right) },
\end{equation*}%
where $\sigma _{\mathcal{H}_{\mathtt{ul}}^{s}}\left( u_{1},...,u_{d}\right) $
is the joint spectrum of the elements $u_{1},...,u_{d}\in \mathcal{H}_{%
\mathtt{ul}}^{s}\left( 
%TCIMACRO{\U{211d} }%
%BeginExpansion
\mathbb{R}
%EndExpansion
^{n}\right) $.
\end{corollary}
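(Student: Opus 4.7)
The plan is to combine Corollary \ref{ks13} with the standard algebraic characterization of the joint spectrum. Recall that in a commutative unital Banach algebra $\mathcal{A}$, a point $z=(z_{1},\dots,z_{d})\in\mathbb{C}^{d}$ lies outside $\sigma_{\mathcal{A}}(a_{1},\dots,a_{d})$ if and only if the ideal $(a_{1}-z_{1})\mathcal{A}+\cdots+(a_{d}-z_{d})\mathcal{A}$ contains $1$. Here $\mathcal{H}_{\mathtt{ul}}^{s}$ is indeed a commutative unital Banach algebra: the constant $1$ belongs to $\mathcal{BC}^{\infty}\subset\mathcal{H}_{\mathtt{ul}}^{s}$, the product is continuous by Proposition \ref{ks12} with $p=q=\infty$ and $\mathbf{s}=\mathbf{t}=s$, and by Proposition \ref{ks15}$(\mathtt{h})$ it embeds continuously in $\mathcal{BC}(\mathbb{R}^{n})$, so every point evaluation $\delta_{x}:w\mapsto w(x)$ is a character.

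First I would dispatch the easy inclusion $\overline{u(\mathbb{R}^{n})}\subset\sigma_{\mathcal{H}_{\mathtt{ul}}^{s}}(u_{1},\dots,u_{d})$. For any $x\in\mathbb{R}^{n}$, applying $\delta_{x}$ to a hypothetical relation $\sum_{k}v_{k}(u_{k}-u_{k}(x))=1$ forces $0=1$, so $u(x)$ lies in the joint spectrum. Since the joint spectrum is closed in $\mathbb{C}^{d}$, taking the closure gives the inclusion.

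For the reverse inclusion, fix $z\notin\overline{u(\mathbb{R}^{n})}$ and set $\delta=\mathrm{dist}(z,\overline{u(\mathbb{R}^{n})})>0$. The idea is to write down a resolving identity explicitly by setting
\begin{equation*}
f=\sum_{k=1}^{d}(u_{k}-z_{k})\overline{(u_{k}-z_{k})}=\sum_{k=1}^{d}\left\vert u_{k}-z_{k}\right\vert ^{2}.
\end{equation*}
Complex conjugation is an isometry of $\mathcal{H}^{s}$ (the Fourier multiplier $(1+\left\vert \xi\right\vert ^{2})^{s/2}$ is real and even, so a Plancherel computation gives $\Vert\overline{w}\Vert_{\mathcal{H}^{s}}=\Vert w\Vert_{\mathcal{H}^{s}}$), and choosing a real cut-off $\chi$ in the definition of the $\mathcal{H}_{\mathtt{ul}}^{s}$-norm propagates this to an isometry of $\mathcal{H}_{\mathtt{ul}}^{s}$. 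Therefore each $\left\vert u_{k}-z_{k}\right\vert ^{2}$ is a product of two elements of the Banach algebra $\mathcal{H}_{\mathtt{ul}}^{s}$, so $f\in\mathcal{H}_{\mathtt{ul}}^{s}$. Moreover $f(x)\geq\delta^{2}>0$ for every $x\in\mathbb{R}^{n}$, so Corollary \ref{ks13}$(\mathtt{a})$ yields $1/f\in\mathcal{H}_{\mathtt{ul}}^{s}$. Setting $v_{k}=\overline{(u_{k}-z_{k})}/f\in\mathcal{H}_{\mathtt{ul}}^{s}$ gives the identity $\sum_{k=1}^{d}v_{k}(u_{k}-z_{k})=1$ in $\mathcal{H}_{\mathtt{ul}}^{s}$, which witnesses $z\notin\sigma_{\mathcal{H}_{\mathtt{ul}}^{s}}(u_{1},\dots,u_{d})$.

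The only genuinely technical input is the conjugation invariance of $\mathcal{H}_{\mathtt{ul}}^{s}$; once that is in hand, the nontrivial inclusion is essentially a one-line application of the Wiener-type Corollary \ref{ks13} to the globally bounded-below function $f$. The distance $\delta$ may be measured in any norm on $\mathbb{C}^{d}$ since all that matters is its positivity.
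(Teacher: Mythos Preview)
Your proof is correct and follows essentially the same route as the paper: both define the same sum-of-squares function $f=\sum_{k}\left\vert u_{k}-z_{k}\right\vert^{2}$, invoke Corollary~\ref{ks13} to invert it, and set $v_{k}=\overline{(u_{k}-z_{k})}/f$. The only differences are cosmetic: the paper cites a theorem of H\"{o}rmander for the easy inclusion where you argue directly via point evaluations, and it takes the conjugation invariance of $\mathcal{H}_{\mathtt{ul}}^{s}$ for granted where you supply the Plancherel justification.
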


\begin{proof}
Since 
\begin{equation*}
\delta _{x}:\mathcal{H}_{\mathtt{ul}}^{s^{\prime }}\left( 
%TCIMACRO{\U{211d} }%
%BeginExpansion
\mathbb{R}
%EndExpansion
^{n}\right) \subset \mathcal{BC}\left( 
%TCIMACRO{\U{211d} }%
%BeginExpansion
\mathbb{R}
%EndExpansion
^{n}\right) \rightarrow 
%TCIMACRO{\U{2102} }%
%BeginExpansion
\mathbb{C}
%EndExpansion
,\quad w\rightarrow w\left( x\right) ,
\end{equation*}%
is a multiplicative linear functional, Theorem 3.1.14 of \cite{Hormander 2}
implies the inclusion $\overline{u\left( 
%TCIMACRO{\U{211d} }%
%BeginExpansion
\mathbb{R}
%EndExpansion
^{n}\right) }\subset \sigma _{\mathcal{H}_{\mathtt{ul}}^{s}}\left(
u_{1},...,u_{d}\right) $. On the other hand, if $\lambda =\left( \lambda
_{1},...,\lambda _{d}\right) \notin \overline{u\left( 
%TCIMACRO{\U{211d} }%
%BeginExpansion
\mathbb{R}
%EndExpansion
^{n}\right) }$, then 
\begin{equation*}
u_{\lambda }=\overline{\left( u_{1}-\lambda _{1}\right) }\left(
u_{1}-\lambda _{1}\right) +...+\overline{\left( u_{d}-\lambda _{d}\right) }%
\left( u_{d}-\lambda _{d}\right) \in \mathcal{H}_{\mathtt{ul}}^{s}\left( 
%TCIMACRO{\U{211d} }%
%BeginExpansion
\mathbb{R}
%EndExpansion
^{n}\right)
\end{equation*}%
satisfies the condition 
\begin{equation*}
u_{\lambda }\left( x\right) \geq c>0,\quad x\in 
%TCIMACRO{\U{211d} }%
%BeginExpansion
\mathbb{R}
%EndExpansion
^{n}.
\end{equation*}%
It follows that 
\begin{equation*}
\frac{1}{u_{\lambda }}\in \mathcal{H}_{\mathtt{ul}}^{s}\left( 
%TCIMACRO{\U{211d} }%
%BeginExpansion
\mathbb{R}
%EndExpansion
^{n}\right)
\end{equation*}%
and%
\begin{equation*}
v_{1}\left( u_{1}-\lambda _{1}\right) +...+v_{d}\left( u_{d}-\lambda
_{d}\right) =1
\end{equation*}%
with $v_{1}=\overline{\left( u_{1}-\lambda _{1}\right) }/u_{\lambda
},...,v_{d}=\overline{\left( u_{d}-\lambda _{d}\right) }/u_{\lambda }\in 
\mathcal{H}_{\mathtt{ul}}^{s}\left( 
%TCIMACRO{\U{211d} }%
%BeginExpansion
\mathbb{R}
%EndExpansion
^{n}\right) $. The last equality expresses precisely that $\lambda \notin
\sigma _{\mathcal{H}_{\mathtt{ul}}^{s}}\left( u_{1},...,u_{d}\right) $.
\end{proof}

\begin{corollary}
Suppose that $s>\max \left\{ n/2,3/4\right\} $. Let $\mathit{\Omega =%
\mathring{\Omega}}\subset 
%TCIMACRO{\U{2102} }%
%BeginExpansion
\mathbb{C}
%EndExpansion
^{d}$ and $\mathit{\Phi }:\mathit{\Omega }\rightarrow 
%TCIMACRO{\U{2102} }%
%BeginExpansion
\mathbb{C}
%EndExpansion
$ a holomorphic function.

$\left( \mathtt{a}\right) $ Let $1\leq p<\infty $. If $u=\left(
u_{1},...,u_{d}\right) \in \mathcal{K}_{p}^{\mathbf{s}}\left( 
%TCIMACRO{\U{211d} }%
%BeginExpansion
\mathbb{R}
%EndExpansion
^{n}\right) ^{d}$ satisfies the condition $\overline{u_{1}\left( 
%TCIMACRO{\U{211d} }%
%BeginExpansion
\mathbb{R}
%EndExpansion
^{n}\right) }\times ...\times \overline{u_{d}\left( 
%TCIMACRO{\U{211d} }%
%BeginExpansion
\mathbb{R}
%EndExpansion
^{n}\right) }\subset \mathit{\Omega }$ and if $\mathit{\Phi }\left( 0\right)
=0$, then $\mathit{\Phi }\left( u\right) \in \mathcal{K}_{p}^{\mathbf{s}%
}\left( 
%TCIMACRO{\U{211d} }%
%BeginExpansion
\mathbb{R}
%EndExpansion
^{n}\right) $.

$\left( \mathtt{b}\right) $ If $u=\left( u_{1},...,u_{d}\right) \in \mathcal{%
H}^{s}\left( 
%TCIMACRO{\U{211d} }%
%BeginExpansion
\mathbb{R}
%EndExpansion
^{n}\right) ^{d}$ satisfies the condition $\overline{u_{1}\left( 
%TCIMACRO{\U{211d} }%
%BeginExpansion
\mathbb{R}
%EndExpansion
^{n}\right) }\times ...\times \overline{u_{d}\left( 
%TCIMACRO{\U{211d} }%
%BeginExpansion
\mathbb{R}
%EndExpansion
^{n}\right) }\subset \mathit{\Omega }$ and if $\mathit{\Phi }\left( 0\right)
=0$, then $\mathit{\Phi }\left( u\right) \in \mathcal{H}^{s}\left( 
%TCIMACRO{\U{211d} }%
%BeginExpansion
\mathbb{R}
%EndExpansion
^{n}\right) $.
\end{corollary}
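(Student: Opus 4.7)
The plan is to reduce the corollary to the Wiener--L\'evy theorem proved above plus the ideal property of $\mathcal{K}_{p}^{s}$ in $\mathcal{H}_{\mathtt{ul}}^{s}$ (the corollary following Proposition \ref{ks12}). First I would note that in both situations one actually has $0\in\overline{u_{k}\left(\mathbb{R}^{n}\right)}$ for every $k$: in case $(\mathtt{a})$, because $u\in\mathcal{K}_{p}^{s}$ with $p<\infty$ forces $\left\Vert u\tau_{\gamma}\chi\right\Vert _{\mathcal{H}^{s}}\rightarrow0$ as $\left|\gamma\right|\rightarrow\infty$, and $\mathcal{H}^{s}\hookrightarrow\mathcal{BC}$ for $s>n/2$, so $u(x)\rightarrow0$; in case $(\mathtt{b})$, by Proposition \ref{ks3} $(\mathtt{d})$, $\mathcal{H}^{s}\subset\mathcal{C}_{\infty}$. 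In particular $\Phi$ is holomorphic at the origin, which is the whole point of the hypothesis $\Phi(0)=0$.

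Next, using that $\overline{u_{1}\left(\mathbb{R}^{n}\right)}\times\ldots\times\overline{u_{d}\left(\mathbb{R}^{n}\right)}$ is a compact subset of the open $\mathit{\Omega}$ and that $0$ belongs to each factor, I would pick $\delta>0$ so small that the open product
\begin{equation*}
U=U_{1}\times\ldots\times U_{d},\quad U_{k}=\left\{ z\in\mathbb{C}:\mathtt{dist}\left(z,\overline{u_{k}\left(\mathbb{R}^{n}\right)}\right)<\delta\right\},
\end{equation*}
is contained in $\mathit{\Omega}$, replacing each $U_{k}$ by its connected component through $0$ if necessary. Since $\Phi(0)=0$ and $0\in U_{k}$ for every $k$, I would iteratively factor out one variable at a time via
\begin{equation*}
\Phi\left(z_{1},\ldots,z_{d}\right)-\Phi\left(0,z_{2},\ldots,z_{d}\right)=z_{1}\cdot\frac{\Phi\left(z_{1},\ldots,z_{d}\right)-\Phi\left(0,z_{2},\ldots,z_{d}\right)}{z_{1}},
\end{equation*}
the quotient being holomorphic on $U$ by the Riemann removable singularity theorem applied in the $z_{1}$ variable (the $z_{1}$-slices of $U$ are connected and contain $0$). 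After $d$ steps one obtains holomorphic functions $\Psi_{1},\ldots,\Psi_{d}:U\rightarrow\mathbb{C}$ with
\begin{equation*}
\Phi\left(z\right)=\sum_{k=1}^{d}z_{k}\Psi_{k}\left(z\right),\quad z\in U.
\end{equation*}

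With this factorization in hand the proof becomes mechanical. Since $u\in\mathcal{H}_{\mathtt{ul}}^{s}\left(\mathbb{R}^{n}\right)^{d}$ (using $\mathcal{K}_{p}^{s}\subset\mathcal{K}_{\infty}^{s}\equiv\mathcal{H}_{\mathtt{ul}}^{s}$ and $\mathcal{H}^{s}=\mathcal{K}_{2}^{s}\subset\mathcal{H}_{\mathtt{ul}}^{s}$) and $\overline{u\left(\mathbb{R}^{n}\right)}\subset U$, the Wiener--L\'evy theorem applied to each $\Psi_{k}$ on $U$ yields $\Psi_{k}\left(u\right)\in\mathcal{H}_{\mathtt{ul}}^{s}\left(\mathbb{R}^{n}\right)$. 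For part $(\mathtt{a})$, since $s>n/2$, the corollary after Proposition \ref{ks12} asserts that $\mathcal{K}_{p}^{s}$ is an ideal in $\mathcal{H}_{\mathtt{ul}}^{s}$, so $u_{k}\cdot\Psi_{k}\left(u\right)\in\mathcal{K}_{p}^{s}\left(\mathbb{R}^{n}\right)$ and therefore
\begin{equation*}
\Phi\left(u\right)=\sum_{k=1}^{d}u_{k}\Psi_{k}\left(u\right)\in\mathcal{K}_{p}^{s}\left(\mathbb{R}^{n}\right).
\end{equation*}
Part $(\mathtt{b})$ follows by specializing $p=2$ and invoking the identity $\mathcal{H}^{s}=\mathcal{K}_{2}^{s}$ proved in the previous section.

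The only nontrivial point is the global holomorphic factorization in Step~2; the rest is a direct assembly of results already available in the paper. I expect that making the factorization rigorous---in particular producing a product neighborhood $U\subset\mathit{\Omega}$ for which the Riemann extension argument in each variable is valid---will be the main technical obstacle, but it is standard once one exploits that the hypothesis $\overline{u_{1}\left(\mathbb{R}^{n}\right)}\times\ldots\times\overline{u_{d}\left(\mathbb{R}^{n}\right)}\subset\mathit{\Omega}$ already gives the needed product structure.
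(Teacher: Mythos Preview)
Your proof is correct and follows essentially the same route as the paper's: shrink to a product domain containing $0$, factor $\Phi$ through the coordinates using $\Phi(0)=0$, apply the Wiener--L\'evy theorem to the holomorphic cofactors to land them in $\mathcal{H}_{\mathtt{ul}}^{s}$, and then use the ideal property of $\mathcal{K}_{p}^{s}$ in $\mathcal{H}_{\mathtt{ul}}^{s}$. The paper writes the factorization as an induction on $d$ (splitting off $z_{1}$ first and recursing on $\Phi(0,z_{2},\dots,z_{d})$), while you unroll it to $\Phi(z)=\sum_{k}z_{k}\Psi_{k}(z)$ in one step; and the paper deduces $0\in\overline{u_{k}(\mathbb{R}^{n})}$ from the spectral characterization (Corollary~\ref{ks13}) rather than from decay at infinity, but these are cosmetic differences.
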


\begin{proof}
$\left( \mathtt{a}\right) $ Since $\mathcal{K}_{p}^{\mathbf{s}}\left( 
%TCIMACRO{\U{211d} }%
%BeginExpansion
\mathbb{R}
%EndExpansion
^{n}\right) $ is an ideal in the algebra $\mathcal{H}_{\mathtt{ul}%
}^{s}\left( 
%TCIMACRO{\U{211d} }%
%BeginExpansion
\mathbb{R}
%EndExpansion
^{n}\right) $, it follows that $0$ belongs to the spectrum of any element of 
$\mathcal{K}_{p}^{\mathbf{s}}\left( 
%TCIMACRO{\U{211d} }%
%BeginExpansion
\mathbb{R}
%EndExpansion
^{n}\right) $. Hence $0\in \overline{u_{1}\left( 
%TCIMACRO{\U{211d} }%
%BeginExpansion
\mathbb{R}
%EndExpansion
^{n}\right) }\times ...\times \overline{u_{d}\left( 
%TCIMACRO{\U{211d} }%
%BeginExpansion
\mathbb{R}
%EndExpansion
^{n}\right) }\subset \mathit{\Omega }$. Shrinking $\mathit{\Omega }$ if
necessary, we can assume that $\mathit{\Omega }=\mathit{\Omega }_{1}\times
...\times \mathit{\Omega }_{d}$ with $\overline{u_{k}\left( 
%TCIMACRO{\U{211d} }%
%BeginExpansion
\mathbb{R}
%EndExpansion
^{n}\right) }\subset \mathit{\Omega }_{k}$, $k=1,...,d$. Now we continue by
induction on $d$.

Let $F:\mathit{\Omega }\rightarrow 
%TCIMACRO{\U{2102} }%
%BeginExpansion
\mathbb{C}
%EndExpansion
$ be the holomorphic function defined by 
\begin{equation*}
F\left( z_{1},...,z_{d}\right) =\left\{ 
\begin{array}{ccc}
\frac{\mathit{\Phi }\left( z_{1},...,z_{d}\right) -\mathit{\Phi }\left(
0,...,z_{d}\right) }{z_{1}} & if & z_{1}\neq 0, \\ 
\frac{\partial \mathit{\Phi }}{\partial z_{1}}\left( 0,...,z_{d}\right) & if
& z_{1}=0.%
\end{array}%
\right.
\end{equation*}%
Then $\mathit{\Phi }\left( z_{1},...,z_{d}\right) =z_{1}F\left(
z_{1},...,z_{d}\right) +\mathit{\Phi }\left( 0,...,z_{d}\right) $, so 
\begin{equation*}
\mathit{\Phi }\left( u\right) =u_{1}F\left( u\right) +\mathit{\Phi }\left(
0,...,u_{d}\right) \in \mathcal{K}_{p}^{\mathbf{s}}\left( 
%TCIMACRO{\U{211d} }%
%BeginExpansion
\mathbb{R}
%EndExpansion
^{n}\right)
\end{equation*}%
because $u_{1}F\left( u\right) \in \mathcal{K}_{p}^{\mathbf{s}}\left( 
%TCIMACRO{\U{211d} }%
%BeginExpansion
\mathbb{R}
%EndExpansion
^{n}\right) \cdot \mathcal{H}_{\mathtt{ul}}^{s}\left( 
%TCIMACRO{\U{211d} }%
%BeginExpansion
\mathbb{R}
%EndExpansion
^{n}\right) \subset \mathcal{K}_{p}^{\mathbf{s}}\left( 
%TCIMACRO{\U{211d} }%
%BeginExpansion
\mathbb{R}
%EndExpansion
^{n}\right) $ and $\mathit{\Phi }\left( 0,...,u_{d}\right) \in \mathcal{K}%
_{p}^{\mathbf{s}}\left( 
%TCIMACRO{\U{211d} }%
%BeginExpansion
\mathbb{R}
%EndExpansion
^{n}\right) $ by inductive hypothesis.

$\left( \mathtt{b}\right) $ is a consequence of $\left( \mathtt{a}\right) $.
\end{proof}

\begin{corollary}[A division lemma]
Suppose that $s>\max \left\{ n/2,3/4\right\} $. Let $t\in 
%TCIMACRO{\U{211d} }%
%BeginExpansion
\mathbb{R}
%EndExpansion
$ such that $s+t>n/2$. Let $u\in \mathcal{H}^{t}\left( 
%TCIMACRO{\U{211d} }%
%BeginExpansion
\mathbb{R}
%EndExpansion
^{n}\right) \cap \mathcal{E}^{\prime }\left( 
%TCIMACRO{\U{211d} }%
%BeginExpansion
\mathbb{R}
%EndExpansion
^{n}\right) $ and $v\in \mathcal{H}_{\mathtt{ul}}^{s}\left( 
%TCIMACRO{\U{211d} }%
%BeginExpansion
\mathbb{R}
%EndExpansion
^{n}\right) $. If $v$ satisfies the condition%
\begin{equation*}
\left\vert v\left( x\right) \right\vert \geq c>0,\quad x\in \mathtt{supp}u,
\end{equation*}%
then 
\begin{equation*}
\frac{u}{v}\in \mathcal{H}^{\min \left\{ s,t\right\} }\left( 
%TCIMACRO{\U{211d} }%
%BeginExpansion
\mathbb{R}
%EndExpansion
^{n}\right) .
\end{equation*}
\end{corollary}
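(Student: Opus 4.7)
The plan is to reduce to the Wiener-type Corollary \ref{ks13} by constructing a surrogate in $\mathcal{H}_{\mathtt{ul}}^s$ that agrees with $v$ near $\mathtt{supp}\,u$ yet is nowhere vanishing on $\mathbb{R}^n$. Since $s>n/2$, Proposition \ref{ks15} $(\mathtt h)$ gives $v\in\mathcal{BC}(\mathbb{R}^n)$, so the open set $U:=\{|v|>c/2\}$ contains a neighborhood of the compact set $\mathtt{supp}\,u$. Fix $\chi\in\mathcal{C}_0^\infty(\mathbb{R}^n)$ with $0\le\chi\le1$, $\chi\equiv1$ on a neighborhood of $\mathtt{supp}\,u$, and $\mathtt{supp}\,\chi\subset U$. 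A direct attempt to modify the complex-valued $v$ (e.g.\ $\chi v+(1-\chi)c_0$ for a constant $c_0$) generally fails, since a convex combination of $v(x)\in\mathbb{C}$ with $c_0$ can pass through $0$. The key move is instead to modify the real, nonnegative function $|v|^2$: set
\begin{equation*}
W:=\chi\,|v|^2+(1-\chi)\,\tfrac{c^2}{4}.
\end{equation*}

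Since $\mathcal{H}_{\mathtt{ul}}^s$ is a Banach algebra for $s>n/2$ (the Corollary following Proposition \ref{ks12}) and is closed under complex conjugation, $|v|^2=v\bar v\in\mathcal{H}_{\mathtt{ul}}^s$; combined with $\chi\in\mathcal{H}^s\subset\mathcal{H}_{\mathtt{ul}}^s$ and $1\in\mathcal{BC}^\infty\subset\mathcal{H}_{\mathtt{ul}}^s$ (the first lemma of Section 4), this yields $W\in\mathcal{H}_{\mathtt{ul}}^s$. On $\mathtt{supp}\,\chi\subset U$ one has $|v|^2\ge c^2/4$, so $W$ is a convex combination of nonnegative reals each $\ge c^2/4$ and thus $W\ge c^2/4$ there; outside $\mathtt{supp}\,\chi$, $W\equiv c^2/4$. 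Hence $W\ge c^2/4>0$ globally, and Corollary \ref{ks13} gives $1/W\in\mathcal{H}_{\mathtt{ul}}^s$. The algebra property then yields $\bar v/W\in\mathcal{H}_{\mathtt{ul}}^s$.

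Define $g:=u\cdot(\bar v/W)$. By Corollary \ref{ks14}, the hypotheses $s+t>n/2$ together with $s>n/2$ give $\mathcal{H}^t\cdot\mathcal{H}_{\mathtt{ul}}^s\subset\mathcal{H}^{\min\{s,t\}}$, so $g\in\mathcal{H}^{\min\{s,t\}}(\mathbb{R}^n)$. To identify $g$ with $u/v$, it suffices to verify $v\,g=u$. Computing, $v\,g=u\cdot(|v|^2/W)$, and
\begin{equation*}
\frac{|v|^2}{W}-1=\frac{(1-\chi)(|v|^2-c^2/4)}{W}
\end{equation*}
vanishes on the open neighborhood $\{\chi=1\}$ of $\mathtt{supp}\,u$. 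Picking $\psi\in\mathcal{C}_0^\infty$ with $\mathtt{supp}\,\psi\subset\{\chi=1\}$ and $\psi\equiv1$ on $\mathtt{supp}\,u$, so that $\psi u=u$ by Proposition \ref{ks3} $(\mathtt b)$, one computes
\begin{equation*}
u\cdot\Bigl(\frac{|v|^2}{W}-1\Bigr)=(\psi u)\cdot\Bigl(\frac{|v|^2}{W}-1\Bigr)=u\cdot\psi\Bigl(\frac{|v|^2}{W}-1\Bigr)=0,
\end{equation*}
so $v\,g=u$ and $g=u/v$.

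The principal obstacle is the construction of $W$: modifying the complex-valued $v$ by a cutoff cannot, in general, produce an element of $\mathcal{H}_{\mathtt{ul}}^s$ agreeing with $v$ near $\mathtt{supp}\,u$ and avoiding zero everywhere, because convex combinations in $\mathbb{C}$ may pass through the origin. Passing to the nonnegative real function $|v|^2$ removes this topological obstruction and makes Corollary \ref{ks13} directly applicable; the factor $\bar v$ then recovers $1/v$ from $1/|v|^2$ on $\mathtt{supp}\,u$.
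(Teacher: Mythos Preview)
Your proof is correct and follows essentially the same approach as the paper's: both construct the auxiliary function $W=\chi|v|^2+(1-\chi)\,c^2/4$ (the paper writes $w=\varphi|v|^2+c^2(1-\varphi)/4$), invoke Corollary \ref{ks13} to obtain $1/W\in\mathcal{H}_{\mathtt{ul}}^s$, and then apply Corollary \ref{ks14} to land in $\mathcal{H}^{\min\{s,t\}}$. The only cosmetic difference is the order of operations: the paper first forms $u/w\in\mathcal{H}^{\min\{s,t\}}$ and then multiplies by $\bar v$, applying Corollary \ref{ks14} twice, whereas you first form $\bar v/W\in\mathcal{H}_{\mathtt{ul}}^s$ and multiply once by $u$; your explicit verification that $vg=u$ on $\mathtt{supp}\,u$ is slightly more detailed than the paper's tacit identification $u/|v|^2=u/w$.
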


\begin{proof}
Let $\varphi \in \mathcal{C}_{0}^{\infty }\left( 
%TCIMACRO{\U{211d} }%
%BeginExpansion
\mathbb{R}
%EndExpansion
^{n}\right) $, $0\leq \varphi \leq 1$, $\varphi =1$ on \texttt{supp}$u$, be
such that%
\begin{equation*}
\left\vert v\left( x\right) \right\vert \geq c/2>0,\quad x\in \text{\texttt{%
supp}}\varphi .
\end{equation*}%
Then $w=\varphi \left\vert v\right\vert ^{2}+c^{2}\left( 1-\varphi \right)
/4\in \mathcal{H}_{\mathtt{ul}}^{s}\left( 
%TCIMACRO{\U{211d} }%
%BeginExpansion
\mathbb{R}
%EndExpansion
^{n}\right) $ satisfies $w\geq \left( \varphi +\left( 1-\varphi \right)
\right) c^{2}/4=c^{2}/4$. If $\delta $ satisfies $0<\delta $ $<\min \left\{
s+t-n/2,s-n/2\right\} $, then 
\begin{equation*}
\min \left\{ s,t\right\} =\min \left\{ s,t,s+t-n/2-\delta \right\} .
\end{equation*}%
By using Corollary \ref{ks13} and Corollary \ref{ks14} we obtain%
\begin{equation*}
\frac{u}{\left\vert v\right\vert ^{2}}=\frac{u}{w}\in \mathcal{H}^{t}\left( 
%TCIMACRO{\U{211d} }%
%BeginExpansion
\mathbb{R}
%EndExpansion
^{n}\right) \cdot \mathcal{H}_{\mathtt{ul}}^{s}\left( 
%TCIMACRO{\U{211d} }%
%BeginExpansion
\mathbb{R}
%EndExpansion
^{n}\right) \subset \mathcal{H}^{\min \left\{ s,t\right\} }\left( 
%TCIMACRO{\U{211d} }%
%BeginExpansion
\mathbb{R}
%EndExpansion
^{n}\right)
\end{equation*}%
This proves the lemma since 
\begin{equation*}
\frac{u}{v}=\overline{v}\cdot \frac{u}{\left\vert v\right\vert ^{2}}\in 
\mathcal{H}_{\mathtt{ul}}^{s}\left( 
%TCIMACRO{\U{211d} }%
%BeginExpansion
\mathbb{R}
%EndExpansion
^{n}\right) \cdot \mathcal{H}^{\min \left\{ s,t\right\} }\left( 
%TCIMACRO{\U{211d} }%
%BeginExpansion
\mathbb{R}
%EndExpansion
^{n}\right) \subset \mathcal{H}^{\min \left\{ s,t\right\} }\left( 
%TCIMACRO{\U{211d} }%
%BeginExpansion
\mathbb{R}
%EndExpansion
^{n}\right) .
\end{equation*}
\end{proof}

\section{Kato-Sobolev spaces and Schatten-von Neumann class properties for
pseudo-differential operators}

We begin this section with some interpolation results of Kato-Sobolev spaces.

For $\mathbf{s}=\left( s_{1},...,s_{j}\right) \in 
%TCIMACRO{\U{211d} }%
%BeginExpansion
\mathbb{R}
%EndExpansion
^{j}$, if 
\begin{gather*}
F_{\mathbf{s}}=\left\{ v\in \mathcal{S}^{\prime }\left( 
%TCIMACRO{\U{211d} }%
%BeginExpansion
\mathbb{R}
%EndExpansion
^{n}\right) :\left\langle \left\langle \cdot \right\rangle \right\rangle ^{%
\mathbf{s}}v\in L^{2}\left( 
%TCIMACRO{\U{211d} }%
%BeginExpansion
\mathbb{R}
%EndExpansion
^{n}\right) \right\} , \\
\left\Vert v\right\Vert _{\mathcal{H}^{\mathbf{s}}}=\left\Vert \left\langle
\left\langle \cdot \right\rangle \right\rangle ^{\mathbf{s}}v\right\Vert
_{L^{2}},\quad v\in F_{\mathbf{s}},
\end{gather*}%
then the Fourier transform $\mathcal{F}$ is an isometry (up to a constant
factor) from $\mathcal{H}^{\mathbf{s}}\left( 
%TCIMACRO{\U{211d} }%
%BeginExpansion
\mathbb{R}
%EndExpansion
^{n}\right) $ onto $F_{\mathbf{s}}$ and the inverse Fourier transform $%
\mathcal{F}^{-1}$ is an isometry (up to a constant factor) from $F_{\mathbf{s%
}}$ onto $\mathcal{H}^{\mathbf{s}}\left( 
%TCIMACRO{\U{211d} }%
%BeginExpansion
\mathbb{R}
%EndExpansion
^{n}\right) $. The interpolation property implies then that $\mathcal{F}$
maps continuously $\left[ \mathcal{H}^{\mathbf{s}}\left( 
%TCIMACRO{\U{211d} }%
%BeginExpansion
\mathbb{R}
%EndExpansion
^{n}\right) ,\mathcal{H}^{\mathbf{t}}\left( 
%TCIMACRO{\U{211d} }%
%BeginExpansion
\mathbb{R}
%EndExpansion
^{n}\right) \right] _{\theta }$ into $\left[ F_{\mathbf{s}},F_{\mathbf{t}}%
\right] _{\theta }$ and $\mathcal{F}^{-1}$ maps continuously $\left[ F_{%
\mathbf{s}},F_{\mathbf{t}}\right] _{\theta }$ into $\left[ \mathcal{H}^{%
\mathbf{s}}\left( 
%TCIMACRO{\U{211d} }%
%BeginExpansion
\mathbb{R}
%EndExpansion
^{n}\right) ,\mathcal{H}^{\mathbf{t}}\left( 
%TCIMACRO{\U{211d} }%
%BeginExpansion
\mathbb{R}
%EndExpansion
^{n}\right) \right] _{\theta }$, so that $\left[ \mathcal{H}^{\mathbf{s}%
}\left( 
%TCIMACRO{\U{211d} }%
%BeginExpansion
\mathbb{R}
%EndExpansion
^{n}\right) ,\mathcal{H}^{\mathbf{t}}\left( 
%TCIMACRO{\U{211d} }%
%BeginExpansion
\mathbb{R}
%EndExpansion
^{n}\right) \right] _{\theta }$ coincides with the tempered distributions
whose Fourier transform belongs to $\left[ F_{\mathbf{s}},F_{\mathbf{t}}%
\right] _{\theta }$ (and one deduces in the same way that it is an isometry
if one uses the corresponding norms). Identifying interpolation spaces
between Sobolev spaces $\mathcal{H}^{\mathbf{s}}\left( 
%TCIMACRO{\U{211d} }%
%BeginExpansion
\mathbb{R}
%EndExpansion
^{n}\right) $ is then the same question as interpolating between some $L^{2}$
spaces with weights. The following lemma is a consequence of this remark and
Theorem 1.18.5 in \cite{Triebel}.

\begin{lemma}
If $\mathbf{s},\mathbf{t}\in 
%TCIMACRO{\U{211d} }%
%BeginExpansion
\mathbb{R}
%EndExpansion
^{j}$ and $0<\theta <1$ then 
\begin{equation*}
\left[ \mathcal{H}^{\mathbf{s}}\left( 
%TCIMACRO{\U{211d} }%
%BeginExpansion
\mathbb{R}
%EndExpansion
^{n}\right) ,\mathcal{H}^{\mathbf{t}}\left( 
%TCIMACRO{\U{211d} }%
%BeginExpansion
\mathbb{R}
%EndExpansion
^{n}\right) \right] _{\theta }=\mathcal{H}^{\left( 1-\theta \right) \mathbf{s%
}+\theta \mathbf{t}}\left( 
%TCIMACRO{\U{211d} }%
%BeginExpansion
\mathbb{R}
%EndExpansion
^{n}\right) .
\end{equation*}
\end{lemma}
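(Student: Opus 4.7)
The plan is to transport the interpolation question from the Sobolev side to the weighted $L^2$ side via the Fourier transform, where a standard theorem applies directly. As noted in the paragraph preceding the lemma, the Fourier transform $\mathcal{F}$ is, up to a multiplicative constant, an isometry from $\mathcal{H}^{\mathbf{s}}(\mathbb{R}^n)$ onto $F_{\mathbf{s}}=L^2\bigl(\mathbb{R}^n,\langle\langle\cdot\rangle\rangle^{2\mathbf{s}}\mathtt{d}\xi\bigr)$, with inverse given (up to constant) by $\mathcal{F}^{-1}$. By the functoriality of the complex interpolation functor $[\cdot,\cdot]_\theta$ applied to the bounded operators $\mathcal{F}:\mathcal{H}^{\mathbf{m}}\to F_{\mathbf{m}}$ (for $\mathbf{m}=\mathbf{s},\mathbf{t}$) and its inverse, the identification
\begin{equation*}
\bigl[\mathcal{H}^{\mathbf{s}}(\mathbb{R}^n),\mathcal{H}^{\mathbf{t}}(\mathbb{R}^n)\bigr]_\theta=\mathcal{F}^{-1}\bigl[F_{\mathbf{s}},F_{\mathbf{t}}\bigr]_\theta
\end{equation*}
holds isometrically (with the natural norms).

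Next, I reduce the right-hand side to a weighted $L^2$ space. The spaces $F_{\mathbf{s}}$ and $F_{\mathbf{t}}$ are both $L^2$ spaces on $\mathbb{R}^n$ with respect to Lebesgue measure, weighted by $w_0=\langle\langle\cdot\rangle\rangle^{\mathbf{s}}$ and $w_1=\langle\langle\cdot\rangle\rangle^{\mathbf{t}}$ respectively. Since
\begin{equation*}
w_0^{1-\theta}w_1^{\theta}=\langle\langle\cdot\rangle\rangle^{(1-\theta)\mathbf{s}+\theta\mathbf{t}},
\end{equation*}
Theorem 1.18.5 of \cite{Triebel} on the complex interpolation of weighted $L^p$ spaces with the same exponent yields
\begin{equation*}
\bigl[F_{\mathbf{s}},F_{\mathbf{t}}\bigr]_\theta=F_{(1-\theta)\mathbf{s}+\theta\mathbf{t}}
\end{equation*}
with equivalent (in fact equal) norms.

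Applying $\mathcal{F}^{-1}$ to both sides and using once more that $\mathcal{F}^{-1}:F_{(1-\theta)\mathbf{s}+\theta\mathbf{t}}\to\mathcal{H}^{(1-\theta)\mathbf{s}+\theta\mathbf{t}}(\mathbb{R}^n)$ is an (essentially) isometric isomorphism, I obtain
\begin{equation*}
\bigl[\mathcal{H}^{\mathbf{s}}(\mathbb{R}^n),\mathcal{H}^{\mathbf{t}}(\mathbb{R}^n)\bigr]_\theta=\mathcal{H}^{(1-\theta)\mathbf{s}+\theta\mathbf{t}}(\mathbb{R}^n),
\end{equation*}
which is the desired identity. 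The only nontrivial input is Triebel's theorem on weighted $L^p$ interpolation; everything else is the observation that $\mathcal{F}$ intertwines the two scales of spaces and that interpolation is preserved under isomorphism of Banach couples. There is no real obstacle here since the weight $\langle\langle\cdot\rangle\rangle^{\mathbf{s}}=\langle\cdot\rangle_{\mathbb{R}^{n_1}}^{s_1}\otimes\cdots\otimes\langle\cdot\rangle_{\mathbb{R}^{n_j}}^{s_j}$ is a positive measurable function on $\mathbb{R}^n$, which is exactly the setting of Theorem 1.18.5 in \cite{Triebel}; the tensor-product structure of the weight plays no role at this step and is only important when one later wants to discuss further properties of $\mathcal{H}^{\mathbf{s}}$.
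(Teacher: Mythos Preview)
Your proof is correct and follows exactly the approach the paper takes: the paragraph preceding the lemma already explains that $\mathcal{F}$ reduces the problem to complex interpolation of weighted $L^2$ spaces, and the paper then invokes Theorem~1.18.5 in \cite{Triebel} just as you do. Your write-up simply fills in the explicit computation $w_0^{1-\theta}w_1^{\theta}=\langle\langle\cdot\rangle\rangle^{(1-\theta)\mathbf{s}+\theta\mathbf{t}}$ and the observation that the tensor-product form of the weight is irrelevant for this step.
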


Using the results of \cite{Triebel} Subsection 1.18.1 we obtain the
following corollary.

\begin{corollary}
Let $\mathbf{s},\mathbf{t}\in 
%TCIMACRO{\U{211d} }%
%BeginExpansion
\mathbb{R}
%EndExpansion
^{j}$, $1\leq p_{0}<\infty $, $1\leq p_{1}\leq \infty $, $0<\theta <1$ and 
\begin{equation*}
\frac{1}{p}=\frac{1-\theta }{p_{0}}+\frac{\theta }{p_{1}},\quad \mathbf{%
\sigma }=\left( 1-\theta \right) \mathbf{s}+\theta \mathbf{t}.
\end{equation*}%
Then 
\begin{equation*}
\left[ l^{p_{0}}\left( 
%TCIMACRO{\U{2124} }%
%BeginExpansion
\mathbb{Z}
%EndExpansion
^{n},\mathcal{H}^{\mathbf{s}}\left( 
%TCIMACRO{\U{211d} }%
%BeginExpansion
\mathbb{R}
%EndExpansion
^{n}\right) \right) ,l^{p_{1}}\left( 
%TCIMACRO{\U{2124} }%
%BeginExpansion
\mathbb{Z}
%EndExpansion
^{n},\mathcal{H}^{\mathbf{t}}\left( 
%TCIMACRO{\U{211d} }%
%BeginExpansion
\mathbb{R}
%EndExpansion
^{n}\right) \right) \right] _{\theta }=l^{p}\left( 
%TCIMACRO{\U{2124} }%
%BeginExpansion
\mathbb{Z}
%EndExpansion
^{n},\mathcal{H}^{\mathbf{\sigma }}\left( 
%TCIMACRO{\U{211d} }%
%BeginExpansion
\mathbb{R}
%EndExpansion
^{n}\right) \right) .
\end{equation*}
\end{corollary}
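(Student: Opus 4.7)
The plan is to reduce the statement to a general interpolation theorem for vector-valued sequence spaces, combined with the interpolation identity for $\mathcal{H}^{\mathbf{s}}$ established in the preceding lemma. Specifically, Theorem 1.18.1 of \cite{Triebel} asserts that if $(A_0,A_1)$ is an interpolation couple of Banach spaces, $1\leq p_0<\infty$, $1\leq p_1\leq \infty$, and $(\Omega,\mu)$ is a $\sigma$-finite measure space, then
\begin{equation*}
\bigl[L^{p_0}(\Omega,A_0),L^{p_1}(\Omega,A_1)\bigr]_{\theta}=L^{p}(\Omega,[A_0,A_1]_{\theta}),\qquad \tfrac{1}{p}=\tfrac{1-\theta}{p_0}+\tfrac{\theta}{p_1},\ 0<\theta<1.
\end{equation*}
Taking $\Omega=\mathbb{Z}^{n}$ with counting measure (so $L^{p}(\Omega,A)=l^{p}(\mathbb{Z}^{n},A)$), and applying this with $A_0=\mathcal{H}^{\mathbf{s}}(\mathbb{R}^{n})$, $A_1=\mathcal{H}^{\mathbf{t}}(\mathbb{R}^{n})$ reduces the claim to identifying the middle term $[\mathcal{H}^{\mathbf{s}},\mathcal{H}^{\mathbf{t}}]_{\theta}$.

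First I would verify that $(\mathcal{H}^{\mathbf{s}},\mathcal{H}^{\mathbf{t}})$ is genuinely an interpolation couple: both are continuously embedded in the Hausdorff topological vector space $\mathcal{S}^{\prime}(\mathbb{R}^{n})$, so they admit a common ambient space, which is the only compatibility condition required. Second, the $\sigma$-finiteness of $(\mathbb{Z}^{n},\text{counting})$ is automatic, and the separability hypothesis (needed when $p_1=\infty$ one must be careful, but Triebel's statement handles $p_1=\infty$ provided $p_0<\infty$, which is our standing assumption).

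Third, I would invoke the previous lemma to identify
\begin{equation*}
[\mathcal{H}^{\mathbf{s}}(\mathbb{R}^{n}),\mathcal{H}^{\mathbf{t}}(\mathbb{R}^{n})]_{\theta}=\mathcal{H}^{(1-\theta)\mathbf{s}+\theta \mathbf{t}}(\mathbb{R}^{n})=\mathcal{H}^{\boldsymbol{\sigma}}(\mathbb{R}^{n}),
\end{equation*}
and substitute this into the right-hand side of Triebel's identity to conclude
\begin{equation*}
\bigl[l^{p_0}(\mathbb{Z}^{n},\mathcal{H}^{\mathbf{s}}),l^{p_1}(\mathbb{Z}^{n},\mathcal{H}^{\mathbf{t}})\bigr]_{\theta}=l^{p}(\mathbb{Z}^{n},\mathcal{H}^{\boldsymbol{\sigma}}(\mathbb{R}^{n})).
\end{equation*}

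The only subtle point, and the one I expect to be the main obstacle if the reader wishes to see explicit details, is checking that the general $L^{p}$-valued interpolation theorem from Triebel really applies verbatim here in the borderline case $p_{1}=\infty$. The statement in \cite{Triebel}, Subsection 1.18.1 covers this range thanks to the assumption $p_{0}<\infty$, so no separate argument is necessary; otherwise one would have to invoke the duality or K-functional characterization. Everything else is purely a substitution, and no calculation specific to Kato--Sobolev spaces is required at this stage beyond the already-proved interpolation identity for $\mathcal{H}^{\mathbf{s}}$.
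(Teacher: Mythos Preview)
Your proposal is correct and matches the paper's approach exactly: the paper simply states that the corollary follows from the results of \cite{Triebel}, Subsection~1.18.1, combined with the preceding lemma identifying $[\mathcal{H}^{\mathbf{s}},\mathcal{H}^{\mathbf{t}}]_{\theta}=\mathcal{H}^{(1-\theta)\mathbf{s}+\theta\mathbf{t}}$. Your write-up is in fact more detailed than the paper's one-line justification, and your remark about the borderline case $p_{1}=\infty$ being covered because $p_{0}<\infty$ is precisely the reason the hypotheses are stated that way.
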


We pass now to the Kato-Sobolev spaces. We choose $\chi _{%
%TCIMACRO{\U{2124} }%
%BeginExpansion
\mathbb{Z}
%EndExpansion
^{n}}\in \mathcal{C}_{0}^{\infty }\left( \mathbb{R}^{n}\right) $ so that $%
\sum_{k\in 
%TCIMACRO{\U{2124} }%
%BeginExpansion
\mathbb{Z}
%EndExpansion
^{n}}\chi _{%
%TCIMACRO{\U{2124} }%
%BeginExpansion
\mathbb{Z}
%EndExpansion
^{n}}\left( \cdot -k\right) =1$. For $k\in 
%TCIMACRO{\U{2124} }%
%BeginExpansion
\mathbb{Z}
%EndExpansion
^{n}$ we define the operator%
\begin{equation*}
S_{k}:\mathcal{D}^{\prime }\left( 
%TCIMACRO{\U{211d} }%
%BeginExpansion
\mathbb{R}
%EndExpansion
^{n}\right) \rightarrow \mathcal{D}^{\prime }\left( 
%TCIMACRO{\U{211d} }%
%BeginExpansion
\mathbb{R}
%EndExpansion
^{n}\right) ,\quad S_{k}u=\left( \tau _{k}\chi _{%
%TCIMACRO{\U{2124} }%
%BeginExpansion
\mathbb{Z}
%EndExpansion
^{n}}\right) u.
\end{equation*}%
Now from the definition of $\mathcal{K}_{p}^{\mathbf{s}}\left( 
%TCIMACRO{\U{211d} }%
%BeginExpansion
\mathbb{R}
%EndExpansion
^{n}\right) $ it follows that the linear operator 
\begin{equation*}
S:\mathcal{K}_{p}^{\mathbf{s}}\left( 
%TCIMACRO{\U{211d} }%
%BeginExpansion
\mathbb{R}
%EndExpansion
^{n}\right) \rightarrow l^{p}\left( 
%TCIMACRO{\U{2124} }%
%BeginExpansion
\mathbb{Z}
%EndExpansion
^{n},\mathcal{H}^{\mathbf{s}}\left( 
%TCIMACRO{\U{211d} }%
%BeginExpansion
\mathbb{R}
%EndExpansion
^{n}\right) \right) ,\quad Su=\left( S_{k}u\right) _{k\in 
%TCIMACRO{\U{2124} }%
%BeginExpansion
\mathbb{Z}
%EndExpansion
^{n}}
\end{equation*}%
is well defined and continuous.

On the other hand, for any $\chi \in \mathcal{C}_{0}^{\infty }\left( \mathbb{%
R}^{n}\right) $ the operator 
\begin{gather*}
R_{\chi }:l^{p}\left( 
%TCIMACRO{\U{2124} }%
%BeginExpansion
\mathbb{Z}
%EndExpansion
^{n},\mathcal{H}^{\mathbf{s}}\left( 
%TCIMACRO{\U{211d} }%
%BeginExpansion
\mathbb{R}
%EndExpansion
^{n}\right) \right) \rightarrow \mathcal{K}_{p}^{\mathbf{s}}\left( 
%TCIMACRO{\U{211d} }%
%BeginExpansion
\mathbb{R}
%EndExpansion
^{n}\right) , \\
R_{\chi }\left( \left( u_{k}\right) _{k\in 
%TCIMACRO{\U{2124} }%
%BeginExpansion
\mathbb{Z}
%EndExpansion
^{n}}\right) =\sum_{k\in 
%TCIMACRO{\U{2124} }%
%BeginExpansion
\mathbb{Z}
%EndExpansion
^{n}}\left( \tau _{k}\chi \right) u_{k}
\end{gather*}%
is well defined and continuous.

Let $\underline{\mathbf{u}}=\left( u_{k}\right) _{k\in 
%TCIMACRO{\U{2124} }%
%BeginExpansion
\mathbb{Z}
%EndExpansion
^{n}}\in l^{p}\left( 
%TCIMACRO{\U{2124} }%
%BeginExpansion
\mathbb{Z}
%EndExpansion
^{n},\mathcal{H}^{\mathbf{s}}\left( 
%TCIMACRO{\U{211d} }%
%BeginExpansion
\mathbb{R}
%EndExpansion
^{n}\right) \right) $. Using Proposition \ref{ks4} we get 
\begin{equation*}
\left\Vert \left( \tau _{k^{\prime }}\chi _{%
%TCIMACRO{\U{2124} }%
%BeginExpansion
\mathbb{Z}
%EndExpansion
^{n}}\right) \left( \tau _{k}\chi \right) u_{k}\right\Vert _{\mathcal{H}^{%
\mathbf{s}}}\leq Cst\sup_{\left\vert \alpha +\beta \right\vert \leq m_{%
\mathbf{s}}}\left\vert \left( \left( \tau _{k^{\prime }}\partial ^{\alpha
}\chi _{%
%TCIMACRO{\U{2124} }%
%BeginExpansion
\mathbb{Z}
%EndExpansion
^{n}}\right) \left( \tau _{k}\partial ^{\beta }\chi \right) \right)
\right\vert \left\Vert u_{k}\right\Vert _{\mathcal{H}^{\mathbf{s}}}.
\end{equation*}%
where $m_{\mathbf{s}}=\left[ \left\vert \mathbf{s}\right\vert _{1}+\frac{n+1%
}{2}\right] $ $+1$. Now for some continuous seminorm $p=p_{n,\mathbf{s}}$ on 
$\mathcal{S}\left( \mathbb{R}^{n}\right) $ we have 
\begin{eqnarray*}
\left\vert \left( \left( \tau _{k^{\prime }}\partial ^{\alpha }\chi _{%
%TCIMACRO{\U{2124} }%
%BeginExpansion
\mathbb{Z}
%EndExpansion
^{n}}\right) \left( \tau _{k}\partial ^{\beta }\chi \right) \right) \left(
x\right) \right\vert &\leq &p\left( \chi _{%
%TCIMACRO{\U{2124} }%
%BeginExpansion
\mathbb{Z}
%EndExpansion
^{n}}\right) p\left( \chi \right) \left\langle x-k^{\prime }\right\rangle
^{-2\left( n+1\right) }\left\langle x-k\right\rangle ^{-2\left( n+1\right) }
\\
&\leq &2^{n+1}p\left( \chi _{%
%TCIMACRO{\U{2124} }%
%BeginExpansion
\mathbb{Z}
%EndExpansion
^{n}}\right) p\left( \chi \right) \left\langle 2x-k^{\prime }-k\right\rangle
^{-n-1}\left\langle k^{\prime }-k\right\rangle ^{-n-1} \\
&\leq &2^{n+1}p\left( \chi _{%
%TCIMACRO{\U{2124} }%
%BeginExpansion
\mathbb{Z}
%EndExpansion
^{n}}\right) p\left( \chi \right) \left\langle k^{\prime }-k\right\rangle
^{-n-1},\quad \left\vert \alpha +\beta \right\vert \leq m_{\mathbf{s}}.
\end{eqnarray*}%
Hence 
\begin{gather*}
\sup_{\left\vert \alpha +\beta \right\vert \leq m_{\mathbf{s}}}\left\vert
\left( \left( \tau _{k^{\prime }}\partial ^{\alpha }\chi _{%
%TCIMACRO{\U{2124} }%
%BeginExpansion
\mathbb{Z}
%EndExpansion
^{n}}\right) \left( \tau _{k}\partial ^{\beta }\chi \right) \right)
\right\vert \leq 2^{n+1}p\left( \chi _{%
%TCIMACRO{\U{2124} }%
%BeginExpansion
\mathbb{Z}
%EndExpansion
^{n}}\right) p\left( \chi \right) \left\langle k^{\prime }-k\right\rangle
^{-n-1}, \\
\left\Vert \left( \tau _{k^{\prime }}\chi _{%
%TCIMACRO{\U{2124} }%
%BeginExpansion
\mathbb{Z}
%EndExpansion
^{n}}\right) \left( \tau _{k}\chi \right) u_{k}\right\Vert _{\mathcal{H}^{%
\mathbf{s}}}\leq C\left( n,\mathbf{s,}\chi _{%
%TCIMACRO{\U{2124} }%
%BeginExpansion
\mathbb{Z}
%EndExpansion
^{n}},\chi \right) \left\langle k^{\prime }-k\right\rangle ^{-n-1}\left\Vert
u_{k}\right\Vert _{\mathcal{H}^{\mathbf{s}}}.
\end{gather*}%
The last estimate implies that 
\begin{equation*}
\left\Vert \left( \tau _{k^{\prime }}\chi _{%
%TCIMACRO{\U{2124} }%
%BeginExpansion
\mathbb{Z}
%EndExpansion
^{n}}\right) R_{\chi }\left( \underline{\mathbf{u}}\right) \right\Vert _{%
\mathcal{H}^{\mathbf{s}}}\leq C\left( n,\mathbf{s,}\chi _{%
%TCIMACRO{\U{2124} }%
%BeginExpansion
\mathbb{Z}
%EndExpansion
^{n}},\chi \right) \sum_{k\in 
%TCIMACRO{\U{2124} }%
%BeginExpansion
\mathbb{Z}
%EndExpansion
^{n}}\left\langle k^{\prime }-k\right\rangle ^{-n-1}\left\Vert
u_{k}\right\Vert _{\mathcal{H}^{\mathbf{s}}}.
\end{equation*}%
Now Schur's lemma implies the result%
\begin{equation*}
\left( \sum_{k^{\prime }\in 
%TCIMACRO{\U{2124} }%
%BeginExpansion
\mathbb{Z}
%EndExpansion
^{n}}\left\Vert \left( \tau _{k^{\prime }}\chi _{%
%TCIMACRO{\U{2124} }%
%BeginExpansion
\mathbb{Z}
%EndExpansion
^{n}}\right) R_{\chi }\left( \underline{\mathbf{u}}\right) \right\Vert _{%
\mathcal{H}^{\mathbf{s}}}^{p}\right) ^{\frac{1}{p}}\leq C^{\prime }\left( n,%
\mathbf{s,}\chi _{%
%TCIMACRO{\U{2124} }%
%BeginExpansion
\mathbb{Z}
%EndExpansion
^{n}},\chi \right) \left\Vert \left\langle \cdot \right\rangle
^{-n-1}\right\Vert _{L^{1}}\left( \sum_{k\in 
%TCIMACRO{\U{2124} }%
%BeginExpansion
\mathbb{Z}
%EndExpansion
^{n}}\left\Vert u_{k}\right\Vert _{\mathcal{H}^{\mathbf{s}}}^{p}\right) ^{%
\frac{1}{p}}.
\end{equation*}

If $\chi =1$ on a neighborhood of $\mathtt{supp}\chi _{%
%TCIMACRO{\U{2124} }%
%BeginExpansion
\mathbb{Z}
%EndExpansion
^{n}}$, then $\chi \chi _{%
%TCIMACRO{\U{2124} }%
%BeginExpansion
\mathbb{Z}
%EndExpansion
^{n}}=\chi _{%
%TCIMACRO{\U{2124} }%
%BeginExpansion
\mathbb{Z}
%EndExpansion
^{n}}$ and as a consequence $R_{\chi }S=\mathtt{Id}_{S_{w}^{p}\left( \mathbb{%
R}^{n}\right) }$:%
\begin{eqnarray*}
R_{\chi }Su &=&\sum_{k\in 
%TCIMACRO{\U{2124} }%
%BeginExpansion
\mathbb{Z}
%EndExpansion
^{n}}\left( \tau _{k}\chi \right) S_{k}u=\sum_{k\in 
%TCIMACRO{\U{2124} }%
%BeginExpansion
\mathbb{Z}
%EndExpansion
^{n}}\left( \tau _{k}\chi \right) \left( \tau _{k}\chi _{%
%TCIMACRO{\U{2124} }%
%BeginExpansion
\mathbb{Z}
%EndExpansion
^{n}}\right) u \\
&=&\sum_{k\in 
%TCIMACRO{\U{2124} }%
%BeginExpansion
\mathbb{Z}
%EndExpansion
^{n}}\left( \tau _{k}\chi _{%
%TCIMACRO{\U{2124} }%
%BeginExpansion
\mathbb{Z}
%EndExpansion
^{n}}\right) u=u.
\end{eqnarray*}%
Thus we proved the following result.

\begin{proposition}
Under the above conditions, the operator $R_{\chi }:l^{p}\left( 
%TCIMACRO{\U{2124} }%
%BeginExpansion
\mathbb{Z}
%EndExpansion
^{n},\mathcal{H}^{\mathbf{s}}\right) \rightarrow \mathcal{K}_{p}^{\mathbf{s}%
} $ is a retraction and the operator $S:\mathcal{K}_{p}^{\mathbf{s}%
}\rightarrow l^{p}\left( 
%TCIMACRO{\U{2124} }%
%BeginExpansion
\mathbb{Z}
%EndExpansion
^{n},\mathcal{H}^{\mathbf{s}}\right) $ is a coretraction.
\end{proposition}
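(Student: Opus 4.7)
The plan is to assemble, in the proper order, three ingredients that have essentially been prepared in the computations immediately preceding the statement, and to verify that they really fit together.

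First, I would check that $S:\mathcal{K}_{p}^{\mathbf{s}}\rightarrow l^{p}(\mathbb{Z}^{n},\mathcal{H}^{\mathbf{s}})$ is well defined and continuous. This is a direct consequence of Proposition \ref{ks15}$(\mathtt{c})$ applied to the lattice $\Gamma=\mathbb{Z}^{n}$ and to the window $\chi_{\mathbb{Z}^{n}}\in\mathcal{C}_{0}^{\infty}(\mathbb{R}^{n})$: the hypothesis $\sum_{k\in\mathbb{Z}^{n}}\chi_{\mathbb{Z}^{n}}(\cdot-k)=1$ trivially forces $\Psi_{\mathbb{Z}^{n},\chi_{\mathbb{Z}^{n}}}=\sum_{k}|\tau_{k}\chi_{\mathbb{Z}^{n}}|^{2}>0$, so the $\ell^{p}$-sum $(\sum_{k}\|S_{k}u\|_{\mathcal{H}^{\mathbf{s}}}^{p})^{1/p}=\|u\|_{\mathbf{s},p,\mathbb{Z}^{n},\chi_{\mathbb{Z}^{n}}}$ is an admissible norm on $\mathcal{K}_{p}^{\mathbf{s}}$. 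No additional work is needed here.

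Second, I would confirm continuity of $R_{\chi}:l^{p}(\mathbb{Z}^{n},\mathcal{H}^{\mathbf{s}})\rightarrow\mathcal{K}_{p}^{\mathbf{s}}$. This is precisely the estimate obtained in the paragraph above the proposition: for any $\underline{\mathbf{u}}=(u_{k})\in l^{p}(\mathbb{Z}^{n},\mathcal{H}^{\mathbf{s}})$, Proposition \ref{ks4} combined with the Schwartz-seminorm bound on $(\tau_{k'}\partial^{\alpha}\chi_{\mathbb{Z}^{n}})(\tau_{k}\partial^{\beta}\chi)$ gives the decay
\begin{equation*}
\bigl\|(\tau_{k'}\chi_{\mathbb{Z}^{n}})(\tau_{k}\chi)u_{k}\bigr\|_{\mathcal{H}^{\mathbf{s}}}\leq C(n,\mathbf{s},\chi_{\mathbb{Z}^{n}},\chi)\,\langle k'-k\rangle^{-n-1}\,\|u_{k}\|_{\mathcal{H}^{\mathbf{s}}},
\end{equation*}
and Schur's lemma (since $\langle\cdot\rangle^{-n-1}$ is summable on $\mathbb{Z}^{n}$) yields
\begin{equation*}
\Bigl(\sum_{k'\in\mathbb{Z}^{n}}\|(\tau_{k'}\chi_{\mathbb{Z}^{n}})R_{\chi}(\underline{\mathbf{u}})\|_{\mathcal{H}^{\mathbf{s}}}^{p}\Bigr)^{1/p}\leq C'(n,\mathbf{s},\chi_{\mathbb{Z}^{n}},\chi)\|\underline{\mathbf{u}}\|_{l^{p}(\mathbb{Z}^{n},\mathcal{H}^{\mathbf{s}})},
\end{equation*}
with the obvious modification for $p=\infty$. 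By the first step this is precisely continuity into $\mathcal{K}_{p}^{\mathbf{s}}$.

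Third — and this is really the only algebraic identity to check — I would verify the retraction relation $R_{\chi}\circ S=\mathtt{Id}_{\mathcal{K}_{p}^{\mathbf{s}}}$. Here the hypothesis $\chi=1$ on a neighborhood of $\mathtt{supp}\,\chi_{\mathbb{Z}^{n}}$ is used: it gives $\chi\cdot\chi_{\mathbb{Z}^{n}}=\chi_{\mathbb{Z}^{n}}$, hence $(\tau_{k}\chi)(\tau_{k}\chi_{\mathbb{Z}^{n}})=\tau_{k}\chi_{\mathbb{Z}^{n}}$ for every $k\in\mathbb{Z}^{n}$, so that for any $u\in\mathcal{K}_{p}^{\mathbf{s}}$
\begin{equation*}
R_{\chi}Su=\sum_{k\in\mathbb{Z}^{n}}(\tau_{k}\chi)(\tau_{k}\chi_{\mathbb{Z}^{n}})u=\sum_{k\in\mathbb{Z}^{n}}(\tau_{k}\chi_{\mathbb{Z}^{n}})u=u,
\end{equation*}
the last equality being the partition of unity property of $\chi_{\mathbb{Z}^{n}}$. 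By definition this makes $R_{\chi}$ a retraction and $S$ its coretraction.

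The only step that is not a trivial bookkeeping assembly is the Schur-type bound behind step two, but that estimate has already been derived verbatim in the discussion preceding the proposition, so the proof is little more than a statement that the pieces fit.
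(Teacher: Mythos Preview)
Your proposal is correct and follows essentially the same approach as the paper: the three ingredients (continuity of $S$, continuity of $R_\chi$ via the Schur estimate, and the algebraic identity $R_\chi S=\mathtt{Id}$ from $\chi\chi_{\mathbb{Z}^n}=\chi_{\mathbb{Z}^n}$) are exactly the ones established in the discussion immediately preceding the proposition, and you have assembled them in the right order. Your only addition is making explicit the reference to Proposition~\ref{ks15}$(\mathtt{c})$ for the first step, which the paper leaves as ``from the definition''.
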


\begin{corollary}
\label{ks17}Let $\mathbf{s},\mathbf{t}\in 
%TCIMACRO{\U{211d} }%
%BeginExpansion
\mathbb{R}
%EndExpansion
^{j}$, $1\leq p_{0}<\infty $, $1\leq p_{1}\leq \infty $, $0<\theta <1$ and 
\begin{equation*}
\frac{1}{p}=\frac{1-\theta }{p_{0}}+\frac{\theta }{p_{1}},\quad \mathbf{%
\sigma }=\left( 1-\theta \right) \mathbf{s}+\theta \mathbf{t}.
\end{equation*}%
Then 
\begin{equation*}
\left[ \mathcal{K}_{p_{0}}^{\mathbf{s}}\left( 
%TCIMACRO{\U{211d} }%
%BeginExpansion
\mathbb{R}
%EndExpansion
^{n}\right) ,\mathcal{K}_{p_{1}}^{\mathbf{t}}\left( 
%TCIMACRO{\U{211d} }%
%BeginExpansion
\mathbb{R}
%EndExpansion
^{n}\right) \right] _{\theta }=\mathcal{K}_{p}^{\mathbf{\sigma }}\left( 
%TCIMACRO{\U{211d} }%
%BeginExpansion
\mathbb{R}
%EndExpansion
^{n}\right) .
\end{equation*}
\end{corollary}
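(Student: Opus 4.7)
The plan is to apply the standard retraction/coretraction principle of interpolation theory, using the preceding proposition together with the vector-valued $l^{p}$ interpolation corollary already established above. The crucial point is that the operators
\begin{equation*}
S\colon u\mapsto \left(\left(\tau _{k}\chi _{\mathbb{Z}^{n}}\right)u\right)_{k\in \mathbb{Z}^{n}},\qquad R_{\chi }\colon \left(u_{k}\right)_{k\in \mathbb{Z}^{n}}\mapsto \sum_{k\in \mathbb{Z}^{n}}\left(\tau _{k}\chi \right)u_{k}
\end{equation*}
are defined by formulas that do not refer to $\mathbf{s}$ or $p$. Hence, by the preceding proposition applied to both endpoint data $(\mathbf{s},p_{0})$ and $(\mathbf{t},p_{1})$, $S$ is simultaneously bounded from $\mathcal{K}_{p_{i}}^{\mathbf{s}_{i}}(\mathbb{R}^{n})$ to $l^{p_{i}}(\mathbb{Z}^{n},\mathcal{H}^{\mathbf{s}_{i}}(\mathbb{R}^{n}))$ for $i=0,1$ (with $\mathbf{s}_{0}=\mathbf{s}$, $\mathbf{s}_{1}=\mathbf{t}$), and $R_{\chi }$ is simultaneously bounded in the reverse direction, with $R_{\chi }S=\mathtt{Id}$ on each Kato-Sobolev space.

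Next I would invoke the interpolation property of the complex method together with the previous corollary on $l^{p}(\mathbb{Z}^{n},\mathcal{H}^{\mathbf{s}})$: $S$ extends to a bounded operator
\begin{equation*}
S\colon \bigl[\mathcal{K}_{p_{0}}^{\mathbf{s}}(\mathbb{R}^{n}),\mathcal{K}_{p_{1}}^{\mathbf{t}}(\mathbb{R}^{n})\bigr]_{\theta }\longrightarrow \bigl[l^{p_{0}}(\mathbb{Z}^{n},\mathcal{H}^{\mathbf{s}}),l^{p_{1}}(\mathbb{Z}^{n},\mathcal{H}^{\mathbf{t}})\bigr]_{\theta }=l^{p}(\mathbb{Z}^{n},\mathcal{H}^{\mathbf{\sigma }}(\mathbb{R}^{n})),
\end{equation*}
and $R_{\chi }$ extends to a bounded operator $l^{p}(\mathbb{Z}^{n},\mathcal{H}^{\mathbf{\sigma }}(\mathbb{R}^{n}))\to \bigl[\mathcal{K}_{p_{0}}^{\mathbf{s}},\mathcal{K}_{p_{1}}^{\mathbf{t}}\bigr]_{\theta }$. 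The two inclusions then follow at once from the identity $R_{\chi }S=\mathtt{Id}$: if $u\in \bigl[\mathcal{K}_{p_{0}}^{\mathbf{s}},\mathcal{K}_{p_{1}}^{\mathbf{t}}\bigr]_{\theta }$, then $Su\in l^{p}(\mathbb{Z}^{n},\mathcal{H}^{\mathbf{\sigma }})$ and the retraction property in the single-space setting gives $u=R_{\chi }Su\in \mathcal{K}_{p}^{\mathbf{\sigma }}$; conversely, if $u\in \mathcal{K}_{p}^{\mathbf{\sigma }}$, then $Su\in l^{p}(\mathbb{Z}^{n},\mathcal{H}^{\mathbf{\sigma }})=\bigl[l^{p_{0}}(\mathbb{Z}^{n},\mathcal{H}^{\mathbf{s}}),l^{p_{1}}(\mathbb{Z}^{n},\mathcal{H}^{\mathbf{t}})\bigr]_{\theta }$, and applying the interpolated $R_{\chi }$ yields $u=R_{\chi }Su\in \bigl[\mathcal{K}_{p_{0}}^{\mathbf{s}},\mathcal{K}_{p_{1}}^{\mathbf{t}}\bigr]_{\theta }$. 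Equivalence of norms is automatic from the boundedness of the four maps involved.

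There is no real obstacle; the only point requiring a small amount of care is that the endpoint spaces must be compatible (both continuously embedded into a common Hausdorff topological vector space), which here is guaranteed by $\mathcal{K}_{p_{0}}^{\mathbf{s}},\mathcal{K}_{p_{1}}^{\mathbf{t}}\hookrightarrow \mathcal{S}^{\prime }(\mathbb{R}^{n})$ from Proposition \ref{ks15}(d), and that the previous corollary applies, which requires $1\leq p_{0}<\infty $ (matching our hypothesis). The fact that $S$, $R_{\chi }$, and their composition are literally the same operators on the sum spaces is what allows the interpolation argument to close cleanly.
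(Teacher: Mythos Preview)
Your proposal is correct and follows essentially the same approach as the paper: both invoke the retraction/coretraction principle (the paper citing Triebel's Theorem~1.2.4 directly, while you spell out the two inclusions by hand), together with the preceding proposition on $S$ and $R_{\chi}$, the $l^{p}(\mathbb{Z}^{n},\mathcal{H}^{\mathbf{s}})$ interpolation corollary, and Proposition~\ref{ks15}(d) for compatibility.
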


\begin{proof}
The last part of Theorem \ref{ks15} $\left( \mathtt{d}\right) $ shows that $%
\left\{ \mathcal{K}_{p_{0}}^{\mathbf{s}}\left( 
%TCIMACRO{\U{211d} }%
%BeginExpansion
\mathbb{R}
%EndExpansion
^{n}\right) ,\mathcal{K}_{p_{1}}^{\mathbf{t}}\left( 
%TCIMACRO{\U{211d} }%
%BeginExpansion
\mathbb{R}
%EndExpansion
^{n}\right) \right\} $ is an interpolation couple (in the sense of the
notations of Subsection 1.2.1 of \cite{Triebel} one can choose $\mathcal{A}=%
\mathcal{S}^{\prime }\left( 
%TCIMACRO{\U{211d} }%
%BeginExpansion
\mathbb{R}
%EndExpansion
^{n}\right) $). If $F$ is an interpolation functor, then one obtains by
Theorem 1.2.4 of \cite{Triebel} that%
\begin{equation*}
\left\Vert u\right\Vert _{F\left( \left\{ \mathcal{K}_{p_{0}}^{\mathbf{s}%
}\left( 
%TCIMACRO{\U{211d} }%
%BeginExpansion
\mathbb{R}
%EndExpansion
^{n}\right) ,\mathcal{K}_{p_{1}}^{\mathbf{t}}\left( 
%TCIMACRO{\U{211d} }%
%BeginExpansion
\mathbb{R}
%EndExpansion
^{n}\right) \right\} \right) }\sim \left\Vert \left( S_{k}u\right) _{k\in 
%TCIMACRO{\U{2124} }%
%BeginExpansion
\mathbb{Z}
%EndExpansion
^{n}}\right\Vert _{F\left( \left\{ l^{p_{0}}\left( 
%TCIMACRO{\U{2124} }%
%BeginExpansion
\mathbb{Z}
%EndExpansion
^{n},\mathcal{H}^{\mathbf{s}}\left( 
%TCIMACRO{\U{211d} }%
%BeginExpansion
\mathbb{R}
%EndExpansion
^{n}\right) \right) ,l^{p_{1}}\left( 
%TCIMACRO{\U{2124} }%
%BeginExpansion
\mathbb{Z}
%EndExpansion
^{n},\mathcal{H}^{\mathbf{t}}\left( 
%TCIMACRO{\U{211d} }%
%BeginExpansion
\mathbb{R}
%EndExpansion
^{n}\right) \right) \right\} \right) }
\end{equation*}%
By specialization we obtain 
\begin{eqnarray*}
\left\Vert u\right\Vert _{\left[ \mathcal{K}_{p_{0}}^{\mathbf{s}}\left( 
%TCIMACRO{\U{211d} }%
%BeginExpansion
\mathbb{R}
%EndExpansion
^{n}\right) ,\mathcal{K}_{p_{1}}^{\mathbf{t}}\left( 
%TCIMACRO{\U{211d} }%
%BeginExpansion
\mathbb{R}
%EndExpansion
^{n}\right) \right] _{\theta }} &\sim &\left\Vert \left( S_{k}u\right)
_{k\in 
%TCIMACRO{\U{2124} }%
%BeginExpansion
\mathbb{Z}
%EndExpansion
^{n}}\right\Vert _{\left[ l^{p_{0}}\left( 
%TCIMACRO{\U{2124} }%
%BeginExpansion
\mathbb{Z}
%EndExpansion
^{n},\mathcal{H}^{\mathbf{s}}\left( 
%TCIMACRO{\U{211d} }%
%BeginExpansion
\mathbb{R}
%EndExpansion
^{n}\right) \right) ,l^{p_{1}}\left( 
%TCIMACRO{\U{2124} }%
%BeginExpansion
\mathbb{Z}
%EndExpansion
^{n},\mathcal{H}^{\mathbf{t}}\left( 
%TCIMACRO{\U{211d} }%
%BeginExpansion
\mathbb{R}
%EndExpansion
^{n}\right) \right) \right] _{\theta }} \\
&\sim &\left\Vert \left( S_{k}u\right) _{k\in 
%TCIMACRO{\U{2124} }%
%BeginExpansion
\mathbb{Z}
%EndExpansion
^{n}}\right\Vert _{l^{p}\left( 
%TCIMACRO{\U{2124} }%
%BeginExpansion
\mathbb{Z}
%EndExpansion
^{n},\mathcal{H}^{\mathbf{\sigma }}\left( 
%TCIMACRO{\U{211d} }%
%BeginExpansion
\mathbb{R}
%EndExpansion
^{n}\right) \right) } \\
&\sim &\left\Vert u\right\Vert _{\mathcal{K}_{p}^{\mathbf{\sigma }}\left( 
%TCIMACRO{\U{211d} }%
%BeginExpansion
\mathbb{R}
%EndExpansion
^{n}\right) }
\end{eqnarray*}
\end{proof}

In addition to the above interpolation results we need an embedding theorem
which we shall prove below. First we shall recall the definition of spaces
that appear in this theorem.

\begin{definition}
Let $1\leq p\leq \infty $. We say that a distribution $u\in \mathcal{D}%
^{\prime }\left( \mathbb{R}^{n}\right) $ belongs to $S_{w}^{p}\left( \mathbb{%
R}^{n}\right) $ if there is $\chi \in \mathcal{C}_{0}^{\infty }\left( 
\mathbb{R}^{n}\right) \smallsetminus 0$ such that the measurable function%
\begin{gather*}
U_{\chi ,p}:\mathbb{R}^{n}\rightarrow \left[ 0,+\infty \right) , \\
U_{\chi ,p}\left( \xi \right) =\left\{ 
\begin{array}{ccc}
\sup_{y\in \mathbb{R}^{n}}\left\vert \widehat{u\tau _{y}\chi }\left( \xi
\right) \right\vert & \text{\textit{if}} & p=\infty \\ 
\left( \int \left\vert \widehat{u\tau _{y}\chi }\left( \xi \right)
\right\vert ^{p}\mathtt{d}y\right) ^{1/p} & \text{\textit{if}} & 1\leq
p<\infty%
\end{array}%
\right. , \\
\widehat{u\tau _{y}\chi }\left( \xi \right) =\left\langle u,\mathtt{e}^{-%
\mathtt{i}\left\langle \cdot ,\xi \right\rangle }\chi \left( \cdot -y\right)
\right\rangle .
\end{gather*}%
belongs to $L^{1}\left( \mathbb{R}^{n}\right) $.
\end{definition}

These spaces are special cases of modulation spaces. They were used by many
authors (Boulkhemair, Gr\"{o}chenig, Heil, Sj\"{o}strand, Toft ...) in the
analysis of pseudo-differential operators defined by symbols more general
than usual.

We now list some properties of these spaces.

\begin{proposition}
$(\mathtt{a})$ Let $u\in S_{w}^{p}\left( \mathbb{R}^{n}\right) $ and let $%
\chi \in \mathcal{C}_{0}^{\infty }\left( \mathbb{R}^{n}\right) $. Then the
measurable function%
\begin{gather*}
U_{\chi ,p}:\mathbb{R}^{n}\rightarrow \left[ 0,+\infty \right) , \\
U_{\chi ,p}\left( \xi \right) =\left\{ 
\begin{array}{ccc}
\sup_{y\in \mathbb{R}^{n}}\left\vert \widehat{u\tau _{y}\chi }\left( \xi
\right) \right\vert & \text{\textit{if}} & p=\infty \\ 
\left( \int \left\vert \widehat{u\tau _{y}\chi }\left( \xi \right)
\right\vert ^{p}\mathtt{d}y\right) ^{1/p} & \text{\textit{if}} & 1\leq
p<\infty%
\end{array}%
\right. , \\
\widehat{u\tau _{y}\chi }\left( \xi \right) =\left\langle u,\mathtt{e}^{-%
\mathtt{i}\left\langle \cdot ,\xi \right\rangle }\chi \left( \cdot -y\right)
\right\rangle .
\end{gather*}%
belongs to $L^{1}\left( \mathbb{R}^{n}\right) $.

$(\mathtt{b})$ If we fix $\chi \in \mathcal{C}_{0}^{\infty }\left( \mathbb{R}%
^{n}\right) \smallsetminus 0$ and if we put%
\begin{equation*}
\left\Vert u\right\Vert _{S_{w}^{p},\chi }=\int U_{\chi ,p}\left( \xi
\right) \mathtt{d}\xi =\left\Vert U_{\chi ,p}\right\Vert _{L^{1}},\quad u\in
S_{w}\left( \mathbb{R}^{n}\right) ,
\end{equation*}%
then $\left\Vert \cdot \right\Vert _{S_{w}^{p},\chi }$ is a norm on $%
S_{w}^{p}\left( \mathbb{R}^{n}\right) $ and the topology that defines does
not depend on the choice of the function $\chi \in \mathcal{C}_{0}^{\infty
}\left( \mathbb{R}^{n}\right) \smallsetminus 0$.

$(\mathtt{c})$ Let $1\leq p\leq q\leq \infty $. Then 
\begin{equation*}
S_{w}^{1}\left( \mathbb{R}^{n}\right) \subset S_{w}^{p}\left( \mathbb{R}%
^{n}\right) \subset S_{w}^{q}\left( \mathbb{R}^{n}\right) \subset
S_{w}^{\infty }\left( \mathbb{R}^{n}\right) =S_{w}\left( \mathbb{R}%
^{n}\right) \subset \mathcal{BC}\left( \mathbb{R}^{n}\right) \subset 
\mathcal{S}^{\prime }\left( \mathbb{R}^{n}\right) .
\end{equation*}

$(\mathtt{d})$ If $\lambda \in \mathtt{End}_{%
%TCIMACRO{\U{211d} }%
%BeginExpansion
\mathbb{R}
%EndExpansion
}\left( 
%TCIMACRO{\U{211d} }%
%BeginExpansion
\mathbb{R}
%EndExpansion
^{n}\right) $ is invertible and $u\in S_{w}^{p}\left( \mathbb{R}^{n}\right) $%
, then $u_{\lambda }=u\circ \lambda \in S_{w}^{p}\left( \mathbb{R}%
^{n}\right) $ and there is $C\in \left( 0,+\infty \right) $ independent of $%
u $ and $\lambda $ such that%
\begin{equation*}
\left\Vert u_{\lambda }\right\Vert _{S_{w}^{p}}\leq C\left\vert \det \lambda
\right\vert ^{-n/p}\left( 1+\left\Vert \lambda \right\Vert \right)
^{n}\left\Vert u\right\Vert _{S_{w}^{p}}.
\end{equation*}
\end{proposition}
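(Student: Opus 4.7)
The plan is to prove the four parts in sequence, with parts (a) and (b) handled together.

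For (a)--(b), the key is to show equivalence of the norms $\|U_{\chi,p}\|_{L^1}$ and $\|U_{\widetilde{\chi},p}\|_{L^1}$ for $\chi,\widetilde{\chi}\in\mathcal{C}_0^\infty\setminus 0$. I would start with identity \eqref{ks5} taken with $\psi=|\chi|^2$, apply it to the test function $\varphi(x) = \mathtt{e}^{-\mathtt{i}\langle x,\xi\rangle}\tau_z\widetilde{\chi}(x)$, and obtain
\[
\|\chi\|_{L^2}^2\,\widehat{u\tau_z\widetilde{\chi}}(\xi) = \int \widehat{(u\tau_y\chi)(\tau_y\overline{\chi})(\tau_z\widetilde{\chi})}(\xi)\,\mathtt{d}y.
\]
Using the convolution theorem on the right produces a pointwise bound involving $|\widehat{u\tau_y\chi}(\xi-\eta)|$ multiplied by $|\widehat{(\tau_y\overline{\chi})(\tau_z\widetilde{\chi})}(\eta)|$. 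Since $(\tau_y\overline{\chi})(\tau_z\widetilde{\chi})$ vanishes unless $y-z$ lies in a fixed compact set and is smooth in $x$ with norms controlled uniformly by translation, repeated integration by parts yields a kernel estimate of the form $|\widehat{(\tau_y\overline{\chi})(\tau_z\widetilde{\chi})}(\eta)| \leq CF(\eta)\langle y-z\rangle^{-N}$ for any $N$, with $F\in\mathcal{S}$. Minkowski followed by Young in the $z$-variable gives $U_{\widetilde{\chi},p}(\xi) \leq C(F * U_{\chi,p})(\xi)$ pointwise, and Young in $\xi$ concludes $\|U_{\widetilde{\chi},p}\|_{L^1}\leq C_{\chi,\widetilde{\chi}}\|U_{\chi,p}\|_{L^1}$; the roles of $\chi,\widetilde{\chi}$ are interchangeable, establishing (a) and (b).

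For (c), the inclusions $S_w^1\subset S_w^p\subset S_w^q\subset S_w^\infty$ follow from a lattice discretization parallel to Proposition \ref{ks8}. Choose a lattice $\Gamma$ and $\chi_0$ with $\Psi=\sum_{\gamma\in\Gamma}|\tau_\gamma\chi_0|^2 > 0$; then identity \eqref{ks6} (with $1/\Psi\in\mathcal{BC}^\infty$ absorbed into the bump) gives, by the same kernel computation as in (b), the pointwise-in-$\xi$ equivalence $U_{\chi,p}(\xi) \sim (F * V_{p,\Gamma})(\xi)$, where $V_{p,\Gamma}(\xi) = \|\widehat{u\tau_\gamma\chi_0}(\xi)\|_{l^p_\gamma}$. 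Because $l^p\subset l^q$ for $p\leq q$, we have $V_{q,\Gamma}(\xi)\leq V_{p,\Gamma}(\xi)$ pointwise, so $U_{\chi,q}\lesssim F*U_{\chi,p}$, and integrating in $\xi$ yields the inclusion. The embedding $S_w^\infty\subset\mathcal{BC}$ uses Fourier inversion: with $\chi(0)\neq 0$,
\[
u(x)\chi(0) = (u\tau_x\chi)(x) = (2\pi)^{-n}\int \widehat{u\tau_x\chi}(\xi)\mathtt{e}^{\mathtt{i}\langle x,\xi\rangle}\,\mathtt{d}\xi,
\]
which is bounded in absolute value by $(2\pi)^{-n}\|U_{\chi,\infty}\|_{L^1}$ uniformly in $x$; the same representation delivers continuity.

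For (d), change of variables $z=\lambda x$ produces
\[
\widehat{u_\lambda\tau_y\chi_0}(\xi) = |\det\lambda|^{-1}\widehat{u\tau_{\lambda y}\widetilde{\chi}}(\lambda^{-T}\xi), \qquad \widetilde{\chi}(x)=\chi_0(\lambda^{-1}x),
\]
and a direct computation with changes of variable $y'=\lambda y$ and $\xi'=\lambda^{-T}\xi$ gives $\|u_\lambda\|_{S_w^p,\chi_0} = |\det\lambda|^{-1/p}\|u\|_{S_w^p,\widetilde{\chi}}$. Since $\widetilde{\chi}$ has support of diameter $\lesssim(1+\|\lambda\|)$, I would pick a $\mathbb{Z}^n$-translate partition of unity $\{\varphi(\cdot-\gamma)\}_\gamma$ and write $\widetilde{\chi}=\sum_{\gamma\in K}\tau_\gamma\Phi_\gamma$ with $\Phi_\gamma(x)=\chi_0(\lambda^{-1}(x+\gamma))\varphi(x)$ supported in the fixed set $\operatorname{supp}\varphi$ and with $|K|\lesssim(1+\|\lambda\|)^n$. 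Triangle inequality in $y$ together with translation invariance gives $U^u_{\widetilde{\chi},p}\leq\sum_\gamma U^u_{\Phi_\gamma,p}$, and applying (b) to each piece with the fixed reference $\chi_1$ and summing yields the factor $(1+\|\lambda\|)^n$ and an estimate of the announced type.

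The hardest technical step is the kernel estimate in (b): extracting the $\langle y-z\rangle^{-N}$ decay requires combining the compactness of the $y,z$-support of $(\tau_y\overline{\chi})(\tau_z\widetilde{\chi})$ with integration by parts in $x$, and this same mechanism drives the lattice comparison in (c). Part (d) carries the additional subtlety of verifying that the constants from (b) applied to the translated pieces $\Phi_\gamma$ stay uniformly bounded in $\gamma$; the derivatives of $\Phi_\gamma$ bring in factors of $\|\lambda^{-1}\|$, and one must arrange the decomposition (or the seminorms used in (b)) so that these factors are absorbed into constants depending only on $\chi_0$, $\chi_1$ and a fixed partition of unity.
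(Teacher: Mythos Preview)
The paper does not give its own proof of this proposition; it simply refers to \cite{Arsu}. Your plan for (a)--(c) is sound and in fact parallels the machinery the paper develops for Proposition~\ref{ks8}: the identities \eqref{ks5}--\eqref{ks6} feed a convolution-kernel estimate, Minkowski and Young in the right order give the pointwise bound $U_{\widetilde\chi,p}\leq C\,F*U_{\chi,p}$, and the lattice comparison reduces the monotonicity in $p$ to $l^p\subset l^q$.

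There is, however, a genuine gap in (d), precisely the one you flag at the end. Your proof of (b) obtains the $\eta$--decay of the kernel by \emph{integration by parts}, so the resulting constant $C_{\Phi_\gamma,\chi_1}$ depends on a finite family of $\sup$-norms of derivatives of $\Phi_\gamma$. But $\Phi_\gamma(x)=\chi_0(\lambda^{-1}(x+\gamma))\varphi(x)$, and each $x$-derivative of the first factor produces a power of $\|\lambda^{-1}\|$. These powers are not dominated by $(1+\|\lambda\|)^n$ and do not cancel against anything else, so as written the argument does not yield the stated bound.

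The fix is to sharpen the kernel estimate in (b) so that the constant depends on the window only through $\|\widehat{\,\cdot\,}\|_{L^1}$ and the diameter of its support. Replace integration by parts with the elementary bound
\[
\bigl|\widehat{(\tau_y\overline{\chi_1})(\tau_z\psi)}(\eta)\bigr|
\;\le\;(2\pi)^{-n}\bigl(|\widehat{\overline{\chi_1}}|*|\widehat{\psi}|\bigr)(\eta)\cdot
\mathbf{1}\bigl\{\,y-z\in\operatorname{supp}\chi_1-\operatorname{supp}\psi\,\bigr\}.
\]
Then the constant in (b) is $\lesssim\|\widehat{\chi_1}\|_{L^1}\,\|\widehat{\psi}\|_{L^1}\cdot\bigl|\operatorname{supp}\chi_1-\operatorname{supp}\psi\bigr|$. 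For $\psi=\Phi_\gamma$ the support diameter is fixed (it sits in $\operatorname{supp}\varphi$), and since $|\widehat{\Phi_\gamma}|\le(2\pi)^{-n}|\widehat{\widetilde\chi}|*|\widehat{\varphi}|$ with $\|\widehat{\widetilde\chi}\|_{L^1}=\|\widehat{\chi_0}\|_{L^1}$ (change variables $\zeta\mapsto\lambda^T\zeta$), one gets $\|\widehat{\Phi_\gamma}\|_{L^1}\le(2\pi)^{-n}\|\widehat{\chi_0}\|_{L^1}\|\widehat{\varphi}\|_{L^1}$, uniformly in $\gamma$ and $\lambda$. The factor $(1+\|\lambda\|)^n$ then comes solely from the cardinality $|K|$ of the decomposition, exactly as you intended.
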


A proof of this proposition can be found for instance in \cite{Arsu}.

\begin{lemma}
Suppose that $\mathbb{R}^{n}=\mathbb{R}^{n_{1}}\times ...\times \mathbb{R}%
^{n_{j}}$. If $s_{1}>n_{1},...,s_{j}>n_{j}$ and $1\leq p\leq \infty $, then $%
\mathcal{K}_{p}^{\mathbf{s}}\left( 
%TCIMACRO{\U{211d} }%
%BeginExpansion
\mathbb{R}
%EndExpansion
^{n}\right) \hookrightarrow S_{w}^{p}\left( \mathbb{R}^{n}\right) $.
\end{lemma}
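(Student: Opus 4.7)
The plan is to reduce the embedding to a single uniform-in-$y$ pointwise estimate on $\widehat{u\tau_y\chi}$. Concretely, I will prove that for $\chi\in\mathcal{C}_0^\infty(\mathbb{R}^n)\smallsetminus 0$ there is a constant $C>0$ (depending on $\chi$ and $\mathbf{s}$ but not on $u$ or $y$) such that
\begin{equation*}
|\widehat{u\tau_y\chi}(\xi)|\le C\,\langle\langle\xi\rangle\rangle^{-\mathbf{s}}\,\|u\tau_y\chi\|_{\mathcal{H}^{\mathbf{s}}},\qquad \xi,y\in\mathbb{R}^{n}.
\end{equation*}
Once this is available, taking the $L^{p}$-norm in $y$ (respectively the supremum when $p=\infty$) and integrating in $\xi$ gives
\begin{equation*}
\|u\|_{S_{w}^{p},\chi}\le C\,\|\langle\langle\cdot\rangle\rangle^{-\mathbf{s}}\|_{L^{1}(\mathbb{R}^{n})}\,\|u\|_{\mathbf{s},p,\chi},
\end{equation*}
and the weight $\langle\langle\cdot\rangle\rangle^{-\mathbf{s}}=\prod_{l}\langle\cdot\rangle_{\mathbb{R}^{n_{l}}}^{-s_{l}}$ lies in $L^{1}(\mathbb{R}^{n})$ iff $s_{l}>n_{l}$ for every $l$, which is exactly where the hypothesis enters.

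To establish the pointwise bound, pick an auxiliary $\chi_{0}\in\mathcal{C}_{0}^{\infty}(\mathbb{R}^{n})$ with $\chi_{0}\equiv 1$ on $\mathtt{supp}\,\chi$, so that $v_{y}:=u\tau_{y}\chi=(\tau_{y}\chi_{0})\,v_{y}$. Since $s_{l}>n_{l}>0$, one has $\mathcal{H}^{\mathbf{s}}(\mathbb{R}^{n})\subset L^{2}(\mathbb{R}^{n})$, hence $v_{y}\in\mathcal{E}'(\mathbb{R}^{n})\cap L^{2}(\mathbb{R}^{n})$ and the Fourier transform of the product gives
\begin{equation*}
\widehat{v_{y}}(\xi)=(2\pi)^{-n}\int\widehat{\tau_{y}\chi_{0}}(\xi-\eta)\,\widehat{v_{y}}(\eta)\,\mathtt{d}\eta,
\end{equation*}
so that $|\widehat{v_{y}}(\xi)|\le (2\pi)^{-n}(|\widehat{\chi_{0}}|*|\widehat{v_{y}}|)(\xi)$, using $|\widehat{\tau_{y}\chi_{0}}|=|\widehat{\chi_{0}}|$. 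Splitting $|\widehat{v_{y}}(\eta)|=\langle\langle\eta\rangle\rangle^{-\mathbf{s}}\cdot\langle\langle\eta\rangle\rangle^{\mathbf{s}}|\widehat{v_{y}}(\eta)|$ and applying the Cauchy--Schwarz inequality one gets
\begin{equation*}
|\widehat{v_{y}}(\xi)|^{2}\le (2\pi)^{-2n}\,\|v_{y}\|_{\mathcal{H}^{\mathbf{s}}}^{2}\int|\widehat{\chi_{0}}(\xi-\eta)|^{2}\,\langle\langle\eta\rangle\rangle^{-2\mathbf{s}}\,\mathtt{d}\eta .
\end{equation*}
Peetre's inequality from Section~2, applied with $s_{l}>0$ so that $|\mathbf{s}|=\mathbf{s}$, yields $\langle\langle\eta\rangle\rangle^{-2\mathbf{s}}\le 2^{|\mathbf{s}|_{1}}\langle\langle\xi\rangle\rangle^{-2\mathbf{s}}\langle\langle\xi-\eta\rangle\rangle^{2\mathbf{s}}$, and the change of variable $\zeta=\xi-\eta$ then turns the right-hand side into $2^{|\mathbf{s}|_{1}}\langle\langle\xi\rangle\rangle^{-2\mathbf{s}}\|\chi_{0}\|_{\mathcal{H}^{\mathbf{s}}}^{2}$, giving the claimed pointwise inequality (with $C=(2\pi)^{-n}2^{|\mathbf{s}|_{1}/2}\|\chi_{0}\|_{\mathcal{H}^{\mathbf{s}}}$).

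There is no serious obstacle: the argument is a Cauchy--Schwarz split controlled by the Peetre-type inequality already recorded in Section~2, in the same spirit as the proofs of Propositions~\ref{ks4} and~\ref{ks7}. The only design choice is the cutoff $\chi_{0}$, whose purpose is to rewrite $\widehat{v_{y}}$ as a convolution with a Schwartz-class function $|\widehat{\chi_{0}}|$, and whose contribution to the constant is independent of $y$ because of the modulus. The hypothesis $s_{l}>n_{l}$ is invoked only at the very last step, to secure the integrability of $\langle\langle\cdot\rangle\rangle^{-\mathbf{s}}$ on $\mathbb{R}^{n_{1}}\times\cdots\times\mathbb{R}^{n_{j}}$, and is therefore sharp for this method.
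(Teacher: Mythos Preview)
Your argument is correct and follows essentially the same route as the paper's proof: introduce an auxiliary cutoff, rewrite $\widehat{u\tau_y\chi}$ as a convolution, and combine Cauchy--Schwarz with the Peetre-type inequality to obtain the pointwise bound $|\widehat{u\tau_y\chi}(\xi)|\lesssim\langle\langle\xi\rangle\rangle^{-\mathbf{s}}\cdot(\text{local }\mathcal{H}^{\mathbf{s}}\text{-norm})$, after which the hypothesis $s_l>n_l$ yields the integrability of $\langle\langle\cdot\rangle\rangle^{-\mathbf{s}}$. The only cosmetic difference is which factor carries the $\mathcal{H}^{\mathbf{s}}$-norm: the paper writes $u\tau_y\chi=(u\tau_y\widetilde{\chi})(\tau_y\chi)$ and bounds by $\|u\tau_y\widetilde{\chi}\|_{\mathcal{H}^{\mathbf{s}}}$, whereas you write $u\tau_y\chi=(\tau_y\chi_0)(u\tau_y\chi)$ and bound by $\|u\tau_y\chi\|_{\mathcal{H}^{\mathbf{s}}}$; either way one lands on an equivalent $\mathcal{K}_p^{\mathbf{s}}$-norm.
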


\begin{proof}
Let $u\in \mathcal{K}_{p}^{\mathbf{s}}\left( 
%TCIMACRO{\U{211d} }%
%BeginExpansion
\mathbb{R}
%EndExpansion
^{n}\right) $. Let $\chi ,\widetilde{\chi }\in \mathcal{C}_{0}^{\infty
}\left( \mathbb{R}^{n}\right) \smallsetminus 0$ be such that $\widetilde{%
\chi }=1$ on $\mathtt{supp}\chi $. For $y\in 
%TCIMACRO{\U{211d} }%
%BeginExpansion
\mathbb{R}
%EndExpansion
^{n}$ we have 
\begin{equation*}
u\tau _{y}\chi =\left( u\tau _{y}\widetilde{\chi }\right) \left( \tau
_{y}\chi \right) \text{ }\Rightarrow \text{ }\widehat{u\tau _{y}\chi }%
=\left( 2\pi \right) ^{-n}\widehat{u\tau _{y}\widetilde{\chi }}\ast \widehat{%
\tau _{y}\chi }.
\end{equation*}%
Then by Peetre's inequality and by Cauchy-Schwarz inequality, we obtain 
\begin{eqnarray*}
\left\langle \left\langle \xi \right\rangle \right\rangle ^{\mathbf{s}%
}\left\vert \widehat{u\tau _{y}\chi }\left( \xi \right) \right\vert &\leq
&2^{\left\vert \mathbf{s}\right\vert _{1}/2}\left( 2\pi \right) ^{-n}\int
\left\langle \left\langle \eta \right\rangle \right\rangle ^{\mathbf{s}%
}\left\vert \widehat{u\tau _{y}\widetilde{\chi }}\left( \eta \right)
\right\vert \left\langle \left\langle \xi -\eta \right\rangle \right\rangle
^{\mathbf{s}}\left\vert \widehat{\tau _{y}\chi }\left( \xi -\eta \right)
\right\vert \mathtt{d}\xi \\
&\leq &C_{s,n}\left\Vert \left\langle \left\langle \cdot \right\rangle
\right\rangle ^{\mathbf{s}}\widehat{u\tau _{y}\widetilde{\chi }}\right\Vert
_{L^{2}}\left\Vert \left\langle \left\langle \cdot \right\rangle
\right\rangle ^{\mathbf{s}}\widehat{\tau _{y}\chi }\right\Vert _{L^{2}} \\
&=&C_{s,n}^{\prime }\left\Vert u\tau _{y}\widetilde{\chi }\right\Vert _{%
\mathcal{H}^{\mathbf{s}}}\left\Vert \chi \right\Vert _{\mathcal{H}^{\mathbf{s%
}}},
\end{eqnarray*}%
hence 
\begin{equation*}
\left\vert \widehat{u\tau _{y}\chi }\left( \xi \right) \right\vert \leq
C_{s,n}^{\prime }\left\Vert u\tau _{y}\widetilde{\chi }\right\Vert _{%
\mathcal{H}^{\mathbf{s}}}\left\Vert \chi \right\Vert _{\mathcal{H}^{\mathbf{s%
}}}\left\langle \left\langle \xi \right\rangle \right\rangle ^{-\mathbf{s}}.
\end{equation*}%
It follows that%
\begin{gather*}
U_{\chi ,p}\left( \xi \right) \leq C_{s,n}^{\prime }\left\Vert \chi
\right\Vert _{\mathcal{H}^{\mathbf{s}}}\left\langle \left\langle \xi
\right\rangle \right\rangle ^{-\mathbf{s}}\left\{ 
\begin{array}{ccc}
\sup_{y\in \mathbb{R}^{n}}\left\Vert u\tau _{y}\widetilde{\chi }\right\Vert
_{\mathcal{H}^{\mathbf{s}}} & \text{\textit{if}} & p=\infty \\ 
\left( \int \left\Vert u\tau _{y}\widetilde{\chi }\right\Vert _{\mathcal{H}^{%
\mathbf{s}}}^{p}\mathtt{d}y\right) ^{\frac{1}{p}} & \text{\textit{if}} & 
1\leq p<\infty%
\end{array}%
\right. \\
\leq C_{s,n}^{\prime }\left\Vert \chi \right\Vert _{\mathcal{H}^{\mathbf{s}%
}}\left\langle \left\langle \xi \right\rangle \right\rangle ^{-\mathbf{s}%
}\left\{ 
\begin{array}{ccc}
\left\Vert u\right\Vert _{\mathbf{s},\infty ,\widetilde{\chi }} & \text{%
\textit{if}} & p=\infty \\ 
\left\Vert u\right\Vert _{\mathbf{s},p,\widetilde{\chi }} & \text{\textit{if}%
} & 1\leq p<\infty%
\end{array}%
\right. \\
\leq C_{s,n}^{\prime }\left\Vert \chi \right\Vert _{\mathcal{H}^{\mathbf{s}%
}}\left\Vert u\right\Vert _{\mathbf{s},p,\widetilde{\chi }}\left\langle
\left\langle \xi \right\rangle \right\rangle ^{-\mathbf{s}},
\end{gather*}%
which implies that%
\begin{equation*}
\left\Vert u\right\Vert _{S_{w}^{p},\chi }=\left\Vert U_{\chi ,p}\right\Vert
_{L^{1}}\leq C_{s,n}^{\prime }\left\Vert \chi \right\Vert _{\mathcal{H}^{%
\mathbf{s}}}\left\Vert \left\langle \left\langle \cdot \right\rangle
\right\rangle ^{-\mathbf{s}}\right\Vert _{L^{1}}\left\Vert u\right\Vert _{%
\mathbf{s},p,\widetilde{\chi }},\quad u\in \mathcal{K}_{p}^{\mathbf{s}%
}\left( 
%TCIMACRO{\U{211d} }%
%BeginExpansion
\mathbb{R}
%EndExpansion
^{n}\right) .
\end{equation*}
\end{proof}

In fact, the result proved in the particular orthogonal decomposition $%
\mathbb{R}^{n}=\mathbb{R}^{n_{1}}\times ...\times \mathbb{R}^{n_{j}}$ is
true for arbitrary orthogonal decomposition of $\mathbb{R}^{n}$.

First we shall associate to an orthogonal decomposition a family of Banach
spaces such as those introduced in Section 2 and Section 3. Let $j\in
\left\{ 1,...,n\right\} $, $1\leq p\leq \infty $ and $\mathbf{s}=\left(
s_{1},...,s_{j}\right) \in 
%TCIMACRO{\U{211d} }%
%BeginExpansion
\mathbb{R}
%EndExpansion
^{j}$. Let $\mathbb{V}=\left( V_{1},...,V_{j}\right) $ denote an orthogonal
decomposition, i.e.%
\begin{equation*}
\mathbb{R}^{n}=V_{1}\oplus ...\oplus V_{j}.
\end{equation*}%
We introduce the Banach space $\mathcal{H}_{\mathbb{V}}^{\mathbf{s}}\left( 
%TCIMACRO{\U{211d} }%
%BeginExpansion
\mathbb{R}
%EndExpansion
^{n}\right) =\mathcal{H}_{V_{1},...,V_{j}}^{s_{1},...,s_{j}}\left( 
%TCIMACRO{\U{211d} }%
%BeginExpansion
\mathbb{R}
%EndExpansion
^{n}\right) $ defined by%
\begin{eqnarray*}
\mathcal{H}_{\mathbb{V}}^{\mathbf{s}}\left( 
%TCIMACRO{\U{211d} }%
%BeginExpansion
\mathbb{R}
%EndExpansion
^{n}\right) &=&\left\{ u\in \mathcal{S}^{\prime }\left( 
%TCIMACRO{\U{211d} }%
%BeginExpansion
\mathbb{R}
%EndExpansion
^{n}\right) :\left( 1-\triangle _{V_{1}}\right) ^{s_{1}/2}\otimes ...\otimes
\left( 1-\triangle _{V_{k}}\right) ^{s_{j}/2}u\in L^{2}\left( \mathbb{R}%
^{n}\right) \right\} , \\
\left\Vert u\right\Vert _{\mathcal{H}_{\mathbb{V}}^{\mathbf{s}}}
&=&\left\Vert \left( 1-\triangle _{V_{1}}\right) ^{s_{1}/2}\otimes
...\otimes \left( 1-\triangle _{V_{k}}\right) ^{s_{j}/2}u\right\Vert
_{L^{2}},\quad u\in \mathcal{H}_{\mathbb{V}}^{\mathbf{s}}.
\end{eqnarray*}%
We recall that if $\lambda :%
%TCIMACRO{\U{211d} }%
%BeginExpansion
\mathbb{R}
%EndExpansion
^{n}\rightarrow V$ is an isometric linear isomorphism, where $V$ is an
euclidean space, then $\left( 1-\triangle _{V}\right) ^{\gamma /2}u=\left(
1-\triangle \right) ^{\gamma /2}\left( u\circ \lambda \right) \circ \lambda
^{-1}$ (see appendix \ref{ks16}).

\begin{definition}
Let $1\leq p\leq \infty $, $\mathbf{s}\in 
%TCIMACRO{\U{211d} }%
%BeginExpansion
\mathbb{R}
%EndExpansion
^{j}$ and $u\in \mathcal{D}^{\prime }\left( 
%TCIMACRO{\U{211d} }%
%BeginExpansion
\mathbb{R}
%EndExpansion
^{n}\right) $. We say that $u$ belongs to $\mathcal{K}_{p,\mathbb{V}}^{%
\mathbf{s}}\left( 
%TCIMACRO{\U{211d} }%
%BeginExpansion
\mathbb{R}
%EndExpansion
^{n}\right) $ if there is $\chi \in \mathcal{C}_{0}^{\infty }\left( \mathbb{R%
}^{n}\right) \smallsetminus 0$ such that the measurable function $%
%TCIMACRO{\U{211d} }%
%BeginExpansion
\mathbb{R}
%EndExpansion
^{n}\ni y\rightarrow \left\Vert u\tau _{y}\chi \right\Vert _{\mathcal{H}_{%
\mathbb{V}}^{\mathbf{s}}}\in 
%TCIMACRO{\U{211d} }%
%BeginExpansion
\mathbb{R}
%EndExpansion
$ belongs to $L^{p}\left( 
%TCIMACRO{\U{211d} }%
%BeginExpansion
\mathbb{R}
%EndExpansion
^{n}\right) $. We put%
\begin{eqnarray*}
\left\Vert u\right\Vert _{\mathcal{K}_{p,\mathbb{V}}^{\mathbf{s}},\chi }
&=&\left( \int \left\Vert u\tau _{y}\chi \right\Vert _{\mathcal{H}_{\mathbb{V%
}}^{\mathbf{s}}}^{p}\mathtt{d}y\right) ^{\frac{1}{p}},\quad 1\leq p<\infty ,
\\
\left\Vert u\right\Vert _{\mathcal{K}_{\infty ,\mathbb{V}}^{\mathbf{s}},\chi
} &\equiv &\left\Vert u\right\Vert _{\mathbf{s},\mathtt{ul},\chi
}=\sup_{y}\left\Vert u\tau _{y}\chi \right\Vert _{\mathcal{H}^{\mathbf{s}}}.
\end{eqnarray*}
\end{definition}

If $\lambda :%
%TCIMACRO{\U{211d} }%
%BeginExpansion
\mathbb{R}
%EndExpansion
^{n}\rightarrow \mathbb{R}^{n}$ is a orthogonal transformation satisfying $%
\lambda \left( \mathbb{R}^{n_{1}}\right) =V_{1}$, ..., $\lambda \left( 
\mathbb{R}^{n_{k}}\right) =V_{k}$, then the operators%
\begin{eqnarray*}
\mathcal{H}_{\mathbb{V}}^{\mathbf{s}}\left( 
%TCIMACRO{\U{211d} }%
%BeginExpansion
\mathbb{R}
%EndExpansion
^{n}\right) &\ni &u\rightarrow u\circ \lambda \in \mathcal{H}^{\mathbf{s}%
}\left( 
%TCIMACRO{\U{211d} }%
%BeginExpansion
\mathbb{R}
%EndExpansion
^{n}\right) , \\
\mathcal{K}_{p,\mathbb{V}}^{\mathbf{s}}\left( 
%TCIMACRO{\U{211d} }%
%BeginExpansion
\mathbb{R}
%EndExpansion
^{n}\right) &\ni &u\rightarrow u\circ \lambda \in \mathcal{K}_{p}^{\mathbf{s}%
}\left( 
%TCIMACRO{\U{211d} }%
%BeginExpansion
\mathbb{R}
%EndExpansion
^{n}\right) ,
\end{eqnarray*}%
are linear isometric isomorphisms (see appendix \ref{ks16}).

A consequence of Corollay \ref{ks17} is is the following remark.

\begin{remark}
Let $\mathbf{s},\mathbf{t}\in 
%TCIMACRO{\U{211d} }%
%BeginExpansion
\mathbb{R}
%EndExpansion
^{j}$, $1\leq p_{0}<\infty $, $1\leq p_{1}\leq \infty $, $0<\theta <1$ and 
\begin{equation*}
\frac{1}{p}=\frac{1-\theta }{p_{0}}+\frac{\theta }{p_{1}},\quad \mathbf{%
\sigma }=\left( 1-\theta \right) \mathbf{s}+\theta \mathbf{t}.
\end{equation*}%
Then 
\begin{equation*}
\left[ \mathcal{K}_{p_{0},\mathbb{V}}^{\mathbf{s}}\left( 
%TCIMACRO{\U{211d} }%
%BeginExpansion
\mathbb{R}
%EndExpansion
^{n}\right) ,\mathcal{K}_{p_{1},\mathbb{V}}^{\mathbf{t}}\left( 
%TCIMACRO{\U{211d} }%
%BeginExpansion
\mathbb{R}
%EndExpansion
^{n}\right) \right] _{\theta }=\mathcal{K}_{p,\mathbb{V}}^{\mathbf{\sigma }%
}\left( 
%TCIMACRO{\U{211d} }%
%BeginExpansion
\mathbb{R}
%EndExpansion
^{n}\right) .
\end{equation*}
\end{remark}

The previous result and the invariance of the ideals $S_{w}^{p}$ to the
composition with $\lambda \in \mathtt{GL}\left( n,%
%TCIMACRO{\U{211d} }%
%BeginExpansion
\mathbb{R}
%EndExpansion
\right) $ imply the following corollary.

\begin{corollary}
Suppose that $\mathbb{R}^{n}=V_{1}\oplus ...\oplus V_{j}$. If $s_{1}>\dim
V_{1},...,s_{j}>\dim V_{j}$ and $1\leq p\leq \infty $, then $\mathcal{K}_{p,%
\mathbb{V}}^{\mathbf{s}}\left( 
%TCIMACRO{\U{211d} }%
%BeginExpansion
\mathbb{R}
%EndExpansion
^{n}\right) \hookrightarrow S_{w}^{p}\left( \mathbb{R}^{n}\right) $.
\end{corollary}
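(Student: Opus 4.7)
The plan is to reduce the arbitrary orthogonal decomposition case to the coordinate decomposition case already handled in the previous lemma, by transporting everything through a single orthogonal transformation.

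First I would choose an orthogonal linear isomorphism $\lambda:\mathbb{R}^{n}\rightarrow \mathbb{R}^{n}$ satisfying $\lambda(\mathbb{R}^{n_{1}})=V_{1},\ldots ,\lambda(\mathbb{R}^{n_{j}})=V_{j}$, where $n_{i}=\dim V_{i}$. Such a $\lambda $ exists because the $V_{i}$ are mutually orthogonal and span $\mathbb{R}^{n}$. By the invariance properties of the Laplacian along linear subspaces recalled just before the definition of $\mathcal{K}_{p,\mathbb{V}}^{\mathbf{s}}$ (see the appendix alluded to as \ref{ks16}), composition with $\lambda $ induces linear isometric isomorphisms $\mathcal{H}_{\mathbb{V}}^{\mathbf{s}}\left( \mathbb{R}^{n}\right) \ni u\mapsto u\circ \lambda \in \mathcal{H}^{\mathbf{s}}\left( \mathbb{R}^{n}\right) $ and consequently $\mathcal{K}_{p,\mathbb{V}}^{\mathbf{s}}\left( \mathbb{R}^{n}\right) \ni u\mapsto u\circ \lambda \in \mathcal{K}_{p}^{\mathbf{s}}\left( \mathbb{R}^{n}\right) $.

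Next, since $s_{i}>\dim V_{i}=n_{i}$ for each $i$, the previous lemma (for the coordinate decomposition) yields the continuous embedding $\mathcal{K}_{p}^{\mathbf{s}}\left( \mathbb{R}^{n}\right) \hookrightarrow S_{w}^{p}\left( \mathbb{R}^{n}\right) $, so $u\circ \lambda \in S_{w}^{p}\left( \mathbb{R}^{n}\right) $ with norm controlled by $\left\Vert u\circ \lambda \right\Vert _{\mathcal{K}_{p}^{\mathbf{s}}}=\left\Vert u\right\Vert _{\mathcal{K}_{p,\mathbb{V}}^{\mathbf{s}}}$.

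Finally I would invoke part $(\mathtt{d})$ of the proposition recalling the properties of $S_{w}^{p}$: for any invertible $\mu \in \mathtt{End}_{\mathbb{R}}(\mathbb{R}^{n})$ and $v\in S_{w}^{p}(\mathbb{R}^{n})$ one has $v\circ \mu \in S_{w}^{p}$ with $\left\Vert v\circ \mu \right\Vert _{S_{w}^{p}}\leq C\left\vert \det \mu \right\vert ^{-n/p}(1+\left\Vert \mu \right\Vert )^{n}\left\Vert v\right\Vert _{S_{w}^{p}}$. Applying this with $v=u\circ \lambda $ and $\mu =\lambda ^{-1}$, which is orthogonal so that $\left\vert \det \mu \right\vert =1$ and $\left\Vert \mu \right\Vert =1$, gives $u=(u\circ \lambda )\circ \lambda ^{-1}\in S_{w}^{p}(\mathbb{R}^{n})$ with $\left\Vert u\right\Vert _{S_{w}^{p}}\leq 2^{n}C\left\Vert u\circ \lambda \right\Vert _{S_{w}^{p}}\leq C^{\prime }\left\Vert u\right\Vert _{\mathcal{K}_{p,\mathbb{V}}^{\mathbf{s}}}$, which is the claimed continuous embedding. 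There is no real obstacle here: the argument is essentially bookkeeping, the only thing to verify carefully is the isometry property of the composition map on $\mathcal{K}_{p,\mathbb{V}}^{\mathbf{s}}$, which in turn is immediate from the analogous statement for $\mathcal{H}_{\mathbb{V}}^{\mathbf{s}}$ together with the definition of the $\mathcal{K}$-norm via translates of a test function $\chi $ (one uses $\chi \circ \lambda $ as the test function on the $\mathcal{K}_{p}^{\mathbf{s}}$ side).
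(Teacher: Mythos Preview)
Your proposal is correct and follows exactly the approach indicated in the paper: reduce to the coordinate decomposition via an orthogonal transformation using the isometric isomorphism $\mathcal{K}_{p,\mathbb{V}}^{\mathbf{s}}\ni u\mapsto u\circ\lambda\in\mathcal{K}_{p}^{\mathbf{s}}$, apply the previous lemma, and come back using the $\mathtt{GL}(n,\mathbb{R})$-invariance of $S_{w}^{p}$ stated in part $(\mathtt{d})$ of the proposition on $S_{w}^{p}$. The paper records this only as a one-line remark; you have simply written out the details.
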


The embedding theorem allows us to deal with Schatten-von Neumann class
properties of pseudo-differential operators.

Let $\tau \in \mathtt{End}_{%
%TCIMACRO{\U{211d} }%
%BeginExpansion
\mathbb{R}
%EndExpansion
}\left( 
%TCIMACRO{\U{211d} }%
%BeginExpansion
\mathbb{R}
%EndExpansion
^{n}\right) \equiv M_{n\times n}\left( 
%TCIMACRO{\U{211d} }%
%BeginExpansion
\mathbb{R}
%EndExpansion
\right) $, $a\in \mathcal{S}\left( \mathbb{R}^{n}\times \mathbb{R}%
^{n}\right) $, $v\in \mathcal{S}\left( \mathbb{R}^{n}\times \mathbb{R}%
^{n}\right) $. We define 
\begin{eqnarray*}
\mathtt{Op}_{\tau }\left( a\right) v\left( x\right) &=&a^{\tau }\left(
X,D\right) v\left( x\right) \\
&=&\left( 2\pi \right) ^{-n}\iint \mathtt{e}^{\mathtt{i}\left\langle
x-y,\eta \right\rangle }a\left( \left( 1-\tau \right) x+\tau y,\eta \right)
v\left( y\right) \mathtt{d}y\mathtt{d}\eta .
\end{eqnarray*}%
If $u,v\in \mathcal{S}\left( \mathbb{R}^{n}\right) $, then%
\begin{eqnarray*}
\left\langle \mathtt{Op}_{\tau }\left( a\right) v,u\right\rangle &=&\left(
2\pi \right) ^{-n}\iiint \mathtt{e}^{\mathtt{i}\left\langle x-y,\eta
\right\rangle }a\left( \left( 1-\tau \right) x+\tau y,\eta \right) u\left(
x\right) v\left( y\right) \mathtt{d}x\mathtt{d}y\mathtt{d}\eta \\
&=&\left\langle \left( \left( 1\otimes \mathcal{F}^{-1}\right) a\right)
\circ \mathtt{C}_{\tau },u\otimes v\right\rangle ,
\end{eqnarray*}%
where 
\begin{equation*}
\mathtt{C}_{\tau }:\mathbb{R}^{n}\times \mathbb{R}^{n}\rightarrow \mathbb{R}%
^{n}\times \mathbb{R}^{n},\quad \mathtt{C}_{\tau }\left( x,y\right) =\left(
\left( 1-\tau \right) x+\tau y,x-y\right) .
\end{equation*}%
We can define $\mathtt{Op}_{\tau }\left( a\right) $ as an operator in $%
\mathcal{B}\left( \mathcal{S},\mathcal{S}^{\prime }\right) $ for any $a\in 
\mathcal{S}^{\prime }\left( \mathbb{R}^{n}\times \mathbb{R}^{n}\right) $ by%
\begin{eqnarray*}
\left\langle \mathtt{Op}_{\tau }\left( a\right) v,u\right\rangle _{\mathcal{S%
},\mathcal{S}^{\prime }} &=&\left\langle \mathcal{K}_{\mathtt{Op}_{\tau
}\left( a\right) },u\otimes v\right\rangle , \\
\mathcal{K}_{\mathtt{Op}_{\tau }\left( a\right) } &=&\left( \left( 1\otimes 
\mathcal{F}^{-1}\right) a\right) \circ \mathtt{C}_{\tau }
\end{eqnarray*}

\begin{theorem}
Suppose that $\mathbb{R}^{n}\times \mathbb{R}^{n}=V_{1}\oplus ...\oplus
V_{j} $.

$(\mathtt{a})$ Let $1\leq p<\infty $, $\tau \in \mathtt{End}_{%
%TCIMACRO{\U{211d} }%
%BeginExpansion
\mathbb{R}
%EndExpansion
}\left( 
%TCIMACRO{\U{211d} }%
%BeginExpansion
\mathbb{R}
%EndExpansion
^{n}\right) \equiv M_{n\times n}\left( 
%TCIMACRO{\U{211d} }%
%BeginExpansion
\mathbb{R}
%EndExpansion
\right) $ and $a\in \mathcal{K}_{p,\mathbb{V}}^{\mathbf{s}}\left( \mathbb{R}%
^{n}\times \mathbb{R}^{n}\right) $. If $s_{1}>\dim V_{1},...,s_{j}>\dim
V_{j} $, then 
\begin{equation*}
\mathtt{Op}_{\tau }\left( a\right) =a^{\tau }\left( X,D\right) \in \mathcal{B%
}_{p}\left( L^{2}\left( 
%TCIMACRO{\U{211d} }%
%BeginExpansion
\mathbb{R}
%EndExpansion
^{n}\right) \right)
\end{equation*}%
where $\mathcal{B}_{p}\left( L^{2}\left( 
%TCIMACRO{\U{211d} }%
%BeginExpansion
\mathbb{R}
%EndExpansion
^{n}\right) \right) $ denote the Schatten ideal of compact operators whose
singular values lie in $l^{p}$. We have 
\begin{equation*}
\left\Vert \mathtt{Op}_{\tau }\left( a\right) \right\Vert _{\mathcal{B}%
_{p}}\leq Cst\left\Vert a\right\Vert \left\Vert u\right\Vert _{\mathcal{K}%
_{p,\mathbb{V}}^{\mathbf{s}}}.
\end{equation*}%
Moreover, the mapping%
\begin{equation*}
\mathtt{End}_{%
%TCIMACRO{\U{211d} }%
%BeginExpansion
\mathbb{R}
%EndExpansion
}\left( 
%TCIMACRO{\U{211d} }%
%BeginExpansion
\mathbb{R}
%EndExpansion
^{n}\right) \ni \tau \rightarrow \mathtt{Op}_{\tau }\left( a\right) =a^{\tau
}\left( X,D\right) \in \mathcal{B}_{p}\left( L^{2}\left( 
%TCIMACRO{\U{211d} }%
%BeginExpansion
\mathbb{R}
%EndExpansion
^{n}\right) \right)
\end{equation*}%
is continuous.

$(\mathtt{b})$ Let $\tau \in \mathtt{End}_{%
%TCIMACRO{\U{211d} }%
%BeginExpansion
\mathbb{R}
%EndExpansion
}\left( 
%TCIMACRO{\U{211d} }%
%BeginExpansion
\mathbb{R}
%EndExpansion
^{n}\right) \equiv M_{n\times n}\left( 
%TCIMACRO{\U{211d} }%
%BeginExpansion
\mathbb{R}
%EndExpansion
\right) $ and $a\in \mathcal{K}_{\infty ,\mathbb{V}}^{\mathbf{s}}\left( 
\mathbb{R}^{n}\times \mathbb{R}^{n}\right) $. If $s_{1}>\dim V_{1}$, $...$, $%
s_{j}>\dim V_{j}$, then 
\begin{equation*}
\mathtt{Op}_{\tau }\left( a\right) =a^{\tau }\left( X,D\right) \in \mathcal{B%
}\left( L^{2}\left( 
%TCIMACRO{\U{211d} }%
%BeginExpansion
\mathbb{R}
%EndExpansion
^{n}\right) \right)
\end{equation*}%
We have 
\begin{equation*}
\left\Vert \mathtt{Op}_{\tau }\left( a\right) \right\Vert _{\mathcal{B}%
\left( L^{2}\left( 
%TCIMACRO{\U{211d} }%
%BeginExpansion
\mathbb{R}
%EndExpansion
^{n}\right) \right) }\leq Cst\left\Vert a\right\Vert _{\mathcal{K}_{\infty ,%
\mathbb{V}}^{\mathbf{s}}}.
\end{equation*}%
Moreover, the mapping%
\begin{equation*}
\mathtt{End}_{%
%TCIMACRO{\U{211d} }%
%BeginExpansion
\mathbb{R}
%EndExpansion
}\left( 
%TCIMACRO{\U{211d} }%
%BeginExpansion
\mathbb{R}
%EndExpansion
^{n}\right) \ni \tau \rightarrow \mathtt{Op}_{\tau }\left( a\right) =a^{\tau
}\left( X,D\right) \in \mathcal{B}\left( L^{2}\left( 
%TCIMACRO{\U{211d} }%
%BeginExpansion
\mathbb{R}
%EndExpansion
^{n}\right) \right)
\end{equation*}%
is continuous.
\end{theorem}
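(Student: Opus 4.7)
The plan is to reduce the statement directly to the corresponding Schatten-von Neumann theorem for the modulation spaces $S_w^p(\mathbb{R}^n\times \mathbb{R}^n)$, using the embedding corollary proved immediately above as the bridge. The results needed for $S_w^p$ are those of Boulkhemair, Gr\"{o}chenig--Heil, Sj\"{o}strand and Toft, and in the precise form adapted to $\tau$-quantizations they appear in \cite{Arsu}: for every $1\leq p\leq \infty$ and every $\tau\in \mathtt{End}_{\mathbb{R}}(\mathbb{R}^n)$, the map $a\mapsto \mathtt{Op}_{\tau}(a)$ sends $S_w^p(\mathbb{R}^n\times \mathbb{R}^n)$ continuously into $\mathcal{B}_p(L^2(\mathbb{R}^n))$ (with $\mathcal{B}_\infty := \mathcal{B}$), and $\tau \mapsto \mathtt{Op}_{\tau}(a)$ is continuous in the relevant operator-ideal norm.

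Granting these facts, the proof has essentially one step. Given $a\in \mathcal{K}_{p,\mathbb{V}}^{\mathbf{s}}(\mathbb{R}^n\times \mathbb{R}^n)$ with $s_l>\dim V_l$ for every $l$, the previous corollary yields a continuous embedding
\begin{equation*}
\mathcal{K}_{p,\mathbb{V}}^{\mathbf{s}}(\mathbb{R}^n\times \mathbb{R}^n)\hookrightarrow S_w^p(\mathbb{R}^n\times \mathbb{R}^n),
\end{equation*}
together with a norm estimate $\left\Vert a\right\Vert _{S_w^p}\leq Cst\left\Vert a\right\Vert _{\mathcal{K}_{p,\mathbb{V}}^{\mathbf{s}}}$. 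Composing with the $S_w^p$-to-$\mathcal{B}_p$ continuous mapping $a\mapsto \mathtt{Op}_{\tau}(a)$ gives for part $(\mathtt{a})$
\begin{equation*}
\left\Vert \mathtt{Op}_{\tau}(a)\right\Vert _{\mathcal{B}_p}\leq Cst\left\Vert a\right\Vert _{S_w^p}\leq Cst\left\Vert a\right\Vert _{\mathcal{K}_{p,\mathbb{V}}^{\mathbf{s}}},
\end{equation*}
and, in exactly the same way, part $(\mathtt{b})$ follows with $\mathcal{B}_p$ replaced by $\mathcal{B}(L^2(\mathbb{R}^n))$ when $p=\infty$.

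For the continuity of $\tau \mapsto \mathtt{Op}_{\tau}(a)$, I would simply invoke the corresponding continuity statement for $S_w^p$ symbols from \cite{Arsu}: since $a$ is a fixed element of $S_w^p(\mathbb{R}^n\times \mathbb{R}^n)$ through the embedding above, the continuity of $\tau\mapsto \mathtt{Op}_{\tau}(a)$ as a map into $\mathcal{B}_p(L^2(\mathbb{R}^n))$ (respectively $\mathcal{B}(L^2(\mathbb{R}^n))$ for $p=\infty$) is already established there. No additional density or approximation argument is needed.

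The only potential obstacle is purely notational: one has to be sure that the orthogonal decomposition $\mathbb{R}^n\times \mathbb{R}^n=V_1\oplus \ldots \oplus V_j$ used for the symbol space matches the hypothesis $s_l>\dim V_l$ of the embedding corollary. This is the reason the author formulated the embedding in full orthogonal-decomposition generality (via the invariance of $\mathcal{K}_{p,\mathbb{V}}^{\mathbf{s}}$ and $S_w^p$ under composition with orthogonal transformations $\lambda\in \mathtt{GL}(2n,\mathbb{R})$ in the preceding paragraph): once those invariances are cited, no genuine difficulty remains, and the theorem follows by direct application of the embedding plus the $S_w^p$ Schatten-von Neumann machinery.
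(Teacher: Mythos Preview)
Your proposal is correct and follows exactly the paper's own argument: the proof in the paper consists of a single sentence stating that the theorem is a consequence of the embedding corollary $\mathcal{K}_{p,\mathbb{V}}^{\mathbf{s}}\hookrightarrow S_w^p$ together with the known Schatten--von Neumann results for $S_w^p$ symbols (citing \cite{Arsu} for $1\leq p<\infty$ and \cite{Boulkhemair 2} for $p=\infty$). Your write-up is simply a more detailed unpacking of that same reduction.
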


\begin{proof}
This theorem is a consequence of the previous corollary and the fact that it
is true for pseudo-differential operators with symbols in $S_{w}^{p}\left( 
\mathbb{R}^{n}\right) $ (see for instance \cite{Arsu} for $1\leq p<\infty $
and \cite{Boulkhemair 2} for $p=\infty $).
\end{proof}

\begin{theorem}
Suppose that $\mathbb{R}^{n}\times \mathbb{R}^{n}=V_{1}\oplus ...\oplus
V_{j} $. Let $\mathbf{n}=\left( \dim V_{1},...,\dim V_{j}\right) $, $\mu >1$
and $1\leq p<\infty $. If $\tau \in \mathtt{End}_{%
%TCIMACRO{\U{211d} }%
%BeginExpansion
\mathbb{R}
%EndExpansion
}\left( 
%TCIMACRO{\U{211d} }%
%BeginExpansion
\mathbb{R}
%EndExpansion
^{n}\right) \equiv M_{n\times n}\left( 
%TCIMACRO{\U{211d} }%
%BeginExpansion
\mathbb{R}
%EndExpansion
\right) $ and $a\in \mathcal{K}_{p,\mathbb{V}}^{\mu \mathbf{n}\left\vert
1-2/p\right\vert }\left( \mathbb{R}^{n}\times \mathbb{R}^{n}\right) $ then 
\begin{equation*}
\mathtt{Op}_{\tau }\left( a\right) =a^{\tau }\left( X,D\right) \in \mathcal{B%
}_{p}\left( L^{2}\left( 
%TCIMACRO{\U{211d} }%
%BeginExpansion
\mathbb{R}
%EndExpansion
^{n}\right) \right) .
\end{equation*}%
Moreover, the mapping%
\begin{equation*}
\mathtt{End}_{%
%TCIMACRO{\U{211d} }%
%BeginExpansion
\mathbb{R}
%EndExpansion
}\left( 
%TCIMACRO{\U{211d} }%
%BeginExpansion
\mathbb{R}
%EndExpansion
^{n}\right) \ni \tau \rightarrow \mathtt{Op}_{\tau }\left( a\right) =a^{\tau
}\left( X,D\right) \in \mathcal{B}_{p}\left( L^{2}\left( 
%TCIMACRO{\U{211d} }%
%BeginExpansion
\mathbb{R}
%EndExpansion
^{n}\right) \right)
\end{equation*}%
is continuous.
\end{theorem}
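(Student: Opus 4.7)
The plan is to establish the result via complex interpolation using three endpoint estimates. The underlying observation is that the Schatten index $|1-2/p|$ appearing in the statement vanishes exactly at $p=2$, where the theorem should follow from the Hilbert--Schmidt characterization, while at $p=1$ and in the limit $p\to\infty$ the hypothesis $\mu>1$ reproduces the condition $s_l>\dim V_l$ needed by the preceding theorem.

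First, I would fix the three endpoints. At $p=1$: since $|1-2/1|=1$, the hypothesis is $a\in\mathcal{K}_{1,\mathbb{V}}^{\mu\mathbf{n}}$, and because $\mu>1$ gives $\mu\dim V_l>\dim V_l$ for every $l$, the previous theorem (part (a) with $p=1$) yields $\mathtt{Op}_\tau(a)\in\mathcal{B}_1(L^2(\mathbb{R}^n))$ together with continuity in $\tau$. At $p=\infty$: the analogous hypothesis $a\in\mathcal{K}_{\infty,\mathbb{V}}^{\mu\mathbf{n}}$, again covered by $\mu>1$, gives boundedness on $L^2(\mathbb{R}^n)$ continuously in $\tau$ by part (b) of the preceding theorem. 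At $p=2$: using the Coifman--Meyer identity $\mathcal{H}^{\mathbf{s}}=\mathcal{K}_2^{\mathbf{s}}$ (applied to the decomposition $\mathbb{V}$ via the orthogonal change of variables described just before the previous theorem), one has $\mathcal{K}_{2,\mathbb{V}}^{\mathbf{0}}=L^2(\mathbb{R}^n\times\mathbb{R}^n)$. For $a\in L^2$, the kernel $\mathcal{K}_{\mathtt{Op}_\tau(a)}=\bigl((1\otimes\mathcal{F}^{-1})a\bigr)\circ\mathtt{C}_\tau$ belongs to $L^2(\mathbb{R}^n\times\mathbb{R}^n)$ by Plancherel in the second variable and because $\mathtt{C}_\tau$ has Jacobian $\pm 1$, so $\mathtt{Op}_\tau(a)$ is Hilbert--Schmidt with norm $(2\pi)^{-n/2}\|a\|_{L^2}$, again continuous in $\tau$.

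Next, I would interpolate. Complex interpolation of Schatten ideals gives $[\mathcal{B}_{p_0},\mathcal{B}_{p_1}]_\theta=\mathcal{B}_p$ with $1/p=(1-\theta)/p_0+\theta/p_1$, and the remark following Corollary \ref{ks17} gives $\bigl[\mathcal{K}_{p_0,\mathbb{V}}^{\mathbf{s}},\mathcal{K}_{p_1,\mathbb{V}}^{\mathbf{t}}\bigr]_\theta=\mathcal{K}_{p,\mathbb{V}}^{(1-\theta)\mathbf{s}+\theta\mathbf{t}}$. Since $a\mapsto\mathtt{Op}_\tau(a)$ is linear, interpolating the endpoint estimates yields the full range. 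For $1\leq p\leq 2$ I would take $(p_0,\mathbf{s})=(1,\mu\mathbf{n})$ and $(p_1,\mathbf{t})=(2,\mathbf{0})$ with $\theta=2-2/p$, so that $(1-\theta)\mu\mathbf{n}+\theta\mathbf{0}=(2/p-1)\mu\mathbf{n}=\mu\mathbf{n}|1-2/p|$. For $2\leq p<\infty$ I would take $(p_0,\mathbf{s})=(2,\mathbf{0})$ and $(p_1,\mathbf{t})=(\infty,\mu\mathbf{n})$ with $\theta=1-2/p$, so that $\theta\mu\mathbf{n}=\mu\mathbf{n}|1-2/p|$. In both cases the target Schatten index is $p$. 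The continuity in $\tau$ of the operator-valued map, already present at the endpoints, is preserved by interpolation because the endpoint estimates are uniform in $\tau$ on compact subsets of $\mathtt{End}_\mathbb{R}(\mathbb{R}^n)$; alternatively one reduces to $a\in\mathcal{S}(\mathbb{R}^{2n})$ by density and passes to the limit.

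The principal obstacle is the careful bookkeeping at the $p=\infty$ endpoint: complex interpolation in the Schatten scale requires interpreting $\mathcal{B}_\infty$ as $\mathcal{B}(L^2)$ and invoking the classical theorem that $[\mathcal{B}_{p_0},\mathcal{B}(L^2)]_\theta=\mathcal{B}_{p_0/(1-\theta)}$, together with checking that the interpolation couple $\bigl\{\mathcal{K}_{2,\mathbb{V}}^{\mathbf{0}},\mathcal{K}_{\infty,\mathbb{V}}^{\mu\mathbf{n}}\bigr\}$ is admissible in the sense needed (for which $\mathcal{S}'(\mathbb{R}^{2n})$ serves as a common ambient space, exactly as in the proof of Corollary \ref{ks17}). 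Once this is set up, the remaining verifications are routine arithmetic on the interpolation parameters.
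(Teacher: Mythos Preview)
Your proposal is correct and follows essentially the same route as the paper: establish the Hilbert--Schmidt bound at $p=2$ via $\mathcal{K}_{2,\mathbb{V}}^{\mathbf{0}}=L^2$ and the explicit kernel formula, use the preceding theorem at $p=1$ and $p=\infty$, and then complex-interpolate the Kato--Sobolev scale against the Schatten scale, splitting into the ranges $1\leq p\leq 2$ and $2\leq p<\infty$ exactly as you describe. Your remark about the care needed at the $p=\infty$ endpoint (interpreting $\mathcal{B}_\infty$ as $\mathcal{B}(L^2)$) is apt and matches what the paper tacitly assumes.
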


\begin{proof}
The Schwartz kernel of the operator $\mathtt{Op}_{\tau }\left( a\right) $ is 
$\left( \left( 1\otimes \mathcal{F}^{-1}\right) a\right) \circ \mathtt{C}%
_{\tau }$. Therefore, $a\in \mathcal{K}_{2,\mathbb{V}}^{\mathbf{0}}\left( 
\mathbb{R}^{n}\times \mathbb{R}^{n}\right) \equiv L^{2}\left( \mathbb{R}%
^{n}\times \mathbb{R}^{n}\right) $ implies that $\mathtt{Op}_{\tau }\left(
a\right) \in \mathcal{B}_{2}\left( L^{2}\left( 
%TCIMACRO{\U{211d} }%
%BeginExpansion
\mathbb{R}
%EndExpansion
^{n}\right) \right) $. Next we use the interpolation properties of
Kato-Sobolev spaces $\mathcal{K}_{p,\mathbb{V}}^{\mathbf{s}}\left( 
%TCIMACRO{\U{211d} }%
%BeginExpansion
\mathbb{R}
%EndExpansion
^{n}\right) $ and of the Schatten ideals $\mathcal{B}_{p}\left( L^{2}\left( 
%TCIMACRO{\U{211d} }%
%BeginExpansion
\mathbb{R}
%EndExpansion
^{n}\right) \right) $ to finish the theorem.%
\begin{equation*}
\begin{array}{c}
\left[ \mathcal{K}_{2,\mathbb{V}}^{\mathbf{0}}\left( 
%TCIMACRO{\U{211d} }%
%BeginExpansion
\mathbb{R}
%EndExpansion
^{n}\times \mathbb{R}^{n}\right) ,\mathcal{K}_{1,\mathbb{V}}^{\mu \mathbf{n}%
}\left( 
%TCIMACRO{\U{211d} }%
%BeginExpansion
\mathbb{R}
%EndExpansion
^{n}\times \mathbb{R}^{n}\right) \right] _{\frac{2}{p}-1}=\mathcal{K}_{p,%
\mathbb{V}}^{\mu \mathbf{n}\left( 2/p-1\right) }\left( \mathbb{R}^{n}\times 
\mathbb{R}^{n}\right)  \\ 
\left[ \mathcal{B}_{2}\left( L^{2}\left( 
%TCIMACRO{\U{211d} }%
%BeginExpansion
\mathbb{R}
%EndExpansion
^{n}\right) \right) ,\mathcal{B}_{1}\left( L^{2}\left( 
%TCIMACRO{\U{211d} }%
%BeginExpansion
\mathbb{R}
%EndExpansion
^{n}\right) \right) \right] _{\frac{2}{p}-1}=\mathcal{B}_{p}\left(
L^{2}\left( 
%TCIMACRO{\U{211d} }%
%BeginExpansion
\mathbb{R}
%EndExpansion
^{n}\right) \right) 
\end{array}%
,\quad 1\leq p\leq 2,
\end{equation*}%
\begin{equation*}
\begin{array}{c}
\left[ \mathcal{K}_{2,\mathbb{V}}^{\mathbf{0}}\left( 
%TCIMACRO{\U{211d} }%
%BeginExpansion
\mathbb{R}
%EndExpansion
^{n}\times \mathbb{R}^{n}\right) ,\mathcal{K}_{\infty ,\mathbb{V}}^{\mu 
\mathbf{n}}\left( 
%TCIMACRO{\U{211d} }%
%BeginExpansion
\mathbb{R}
%EndExpansion
^{n}\times \mathbb{R}^{n}\right) \right] _{1-\frac{2}{p}}=\mathcal{K}_{p,%
\mathbb{V}}^{\mu \mathbf{n}\left( 2/p-1\right) }\left( \mathbb{R}^{n}\times 
\mathbb{R}^{n}\right)  \\ 
\left[ \mathcal{B}_{2}\left( L^{2}\left( 
%TCIMACRO{\U{211d} }%
%BeginExpansion
\mathbb{R}
%EndExpansion
^{n}\right) \right) ,\mathcal{B}\left( L^{2}\left( 
%TCIMACRO{\U{211d} }%
%BeginExpansion
\mathbb{R}
%EndExpansion
^{n}\right) \right) \right] _{1-\frac{2}{p}}=\mathcal{B}_{p}\left(
L^{2}\left( 
%TCIMACRO{\U{211d} }%
%BeginExpansion
\mathbb{R}
%EndExpansion
^{n}\right) \right) 
\end{array}%
,\quad 2\leq p<\infty .
\end{equation*}
\end{proof}

\appendix{}

\section{Convolution operators\label{ks16}}

Let $V$ be an $n$ dimensional real vector, $V^{\prime }$ its dual and $%
\left\langle \cdot ,\cdot \right\rangle _{V,V^{\prime }}:V\times V^{\prime
}\rightarrow 
%TCIMACRO{\U{211d} }%
%BeginExpansion
\mathbb{R}
%EndExpansion
$ the duality form. We define the (Fourier) transforms 
\begin{eqnarray*}
\mathcal{F}_{V},\ \overline{\mathcal{F}}_{V} &:&\mathcal{S}^{\prime }\left(
V\right) \rightarrow \mathcal{S}^{\prime }\left( V^{\prime }\right) , \\
\mathcal{F}_{V^{\prime }},\ \overline{\mathcal{F}}_{V^{\prime }} &:&\mathcal{%
S}^{\prime }\left( V^{\prime }\right) \rightarrow \mathcal{S}^{\prime
}\left( V\right) ,
\end{eqnarray*}%
by 
\begin{eqnarray*}
(\mathcal{F}_{V}u)(\xi ) &=&\int_{V}\mathtt{e}^{-\mathtt{i}\left\langle
x,\xi \right\rangle _{V,V^{\prime }}}u(x)\mathtt{d}\mu \left( x\right) , \\
(\overline{\mathcal{F}}_{V}u)(\xi ) &=&\int_{V}\mathtt{e}^{+\mathtt{i}%
\left\langle x,\xi \right\rangle _{V,V^{\prime }}}u(x)\mathtt{d}\mu \left(
x\right) , \\
(\mathcal{F}_{V^{\prime }}v)(x) &=&\int_{V^{\prime }}\mathtt{e}^{-\mathtt{i}%
\left\langle x,\xi \right\rangle _{V,V^{\prime }}}v(\xi )\mathtt{d}\mu
^{\prime }\left( \xi \right) , \\
(\overline{\mathcal{F}}_{V^{\prime }}v)(x) &=&\int_{V^{\prime }}\mathtt{e}^{+%
\mathtt{i}\left\langle x,\xi \right\rangle _{V,V^{\prime }}}v(p\xi )\mathtt{d%
}\mu ^{\prime }\left( \xi \right) .
\end{eqnarray*}%
Here $\mu $ is a Lebesgue (Haar) measure in $V$ and $\mu ^{\prime }$ is the
dual one in $V^{\prime }$ such that Fourier's inversion formulas hold: $%
\overline{\mathcal{F}}_{V^{\prime }}\circ \mathcal{F}_{V}=\mathtt{1}_{%
\mathcal{S}^{\prime }\left( V\right) }$, $\mathcal{F}_{V}\circ \overline{%
\mathcal{F}}_{V^{\prime }}=\mathtt{1}_{\mathcal{S}^{\prime }\left( V^{\prime
}\right) }$. Replacing $\mu $ by $c\mu $ one must change $\mu ^{\prime }$ to 
$c^{-1}\mu ^{\prime }$. These (Fourier) transforms defined above are unitary
operators $\mathcal{F}_{V},\overline{\mathcal{F}}_{V}:L^{2}\left( V\right)
\rightarrow L^{2}\left( V^{\prime }\right) $ and $\mathcal{F}_{V^{\prime }},%
\overline{\mathcal{F}}_{V^{\prime }}:L^{2}\left( V^{\prime }\right)
\rightarrow L^{2}\left( V\right) $.

If $V=%
%TCIMACRO{\U{211d} }%
%BeginExpansion
\mathbb{R}
%EndExpansion
^{n}$ we use $\mu =m$ -Lebesgue measure in $V$ with the dual $\mu ^{\prime
}=\left( 2\pi \right) ^{-n}m$ in $V^{\prime }$, so that $\mathcal{F}_{V}=%
\mathcal{F}$ and $\overline{\mathcal{F}}_{V^{\prime }}=\mathcal{F}^{-1}$,
where $\mathcal{F}$ is the usual Fourier transform.

Let $\lambda :%
%TCIMACRO{\U{211d} }%
%BeginExpansion
\mathbb{R}
%EndExpansion
^{n}\rightarrow V$ be a linear isomorphism. Then there is $c_{\lambda }\in 
%TCIMACRO{\U{2102} }%
%BeginExpansion
\mathbb{C}
%EndExpansion
^{\ast }$ such that for $u\in \mathcal{S}\left( V\right) $ we have 
\begin{equation*}
\mathcal{F}\left( u\circ \lambda \right) =c_{\lambda }\left( \mathcal{F}%
_{V}u\right) \circ {}^{t}\lambda ^{-1}.
\end{equation*}%
Indeed, the measure $m\circ \lambda ^{-1}$ is a Haar measure in $V$ and
therefore there is $c_{\lambda }\in 
%TCIMACRO{\U{2102} }%
%BeginExpansion
\mathbb{C}
%EndExpansion
^{\ast }$ such that $m\circ \lambda ^{-1}=c_{\lambda }\mu $. Then 
\begin{eqnarray*}
\mathcal{F}\left( u\circ \lambda \right) &=&\int_{%
%TCIMACRO{\U{211d} }%
%BeginExpansion
\mathbb{R}
%EndExpansion
^{n}}\mathtt{e}^{-\mathtt{i}\left\langle y,\cdot \right\rangle }\left(
u\circ \lambda \right) \left( y\right) \mathtt{d}y=\int_{V}\mathtt{e}^{-%
\mathtt{i}\left\langle \lambda ^{-1}x,\cdot \right\rangle }u\left( x\right) 
\mathtt{d}\left( m\circ \lambda ^{-1}\right) \left( x\right) \\
&=&c_{\lambda }\int_{V}\mathtt{e}^{-\mathtt{i}\left\langle x,^{t}\lambda
^{-1}\cdot \right\rangle _{V,V^{\prime }}}u\left( x\right) \mathtt{d}\mu
\left( x\right) =c_{\lambda }\left( \mathcal{F}_{V}u\right) \circ
{}^{t}\lambda ^{-1}.
\end{eqnarray*}%
Similarly it is shown that there is $c_{\lambda }^{\prime }\in 
%TCIMACRO{\U{2102} }%
%BeginExpansion
\mathbb{C}
%EndExpansion
^{\ast }$ such that for $w\in \mathcal{S}\left( V^{\prime }\right) $ we have%
\begin{equation*}
\mathcal{F}^{-1}\left( w\circ {}^{t}\lambda ^{-1}\right) =c_{\lambda
}^{\prime }\left( \overline{\mathcal{F}}_{V^{\prime }}w\right) \circ \lambda
.
\end{equation*}%
Let us note that the above formulas hold for elements in $L^{2}$, $\mathcal{S%
}^{\prime }$, .... Since the measures $\mu $ and $\mu ^{\prime }$ are dual,
we obtain that $c_{\lambda }c_{\lambda }^{\prime }=1$.%
\begin{eqnarray*}
u\circ \lambda &=&\mathcal{F}^{-1}\mathcal{F}\left( u\circ \lambda \right)
=c_{\lambda }\mathcal{F}^{-1}\left( \left( \mathcal{F}_{V}u\right) \circ
{}^{t}\lambda ^{-1}\right) \\
&=&c_{\lambda }c_{\lambda }^{\prime }\left( \overline{\mathcal{F}}%
_{V^{\prime }}\mathcal{F}_{V}u\right) \circ \lambda =c_{\lambda }c_{\lambda
}^{\prime }\left( u\circ \lambda \right) \\
&\Rightarrow &c_{\lambda }c_{\lambda }^{\prime }=1.
\end{eqnarray*}

Let $a:V^{\prime }\rightarrow 
%TCIMACRO{\U{2102} }%
%BeginExpansion
\mathbb{C}
%EndExpansion
$ be a measurable function. We introduce the densely defined operator $%
a\left( D_{V}\right) $ defined by 
\begin{equation*}
a\left( D_{V}\right) =\overline{\mathcal{F}}_{V^{\prime }}M_{a}\mathcal{F}%
_{V}.
\end{equation*}

\begin{lemma}
Let $\lambda :%
%TCIMACRO{\U{211d} }%
%BeginExpansion
\mathbb{R}
%EndExpansion
^{n}\rightarrow V$ be a linear isomorphism. Then

$\left( \mathtt{a}\right) $ $u\in \mathcal{D}\left( a\left( D_{V}\right)
\right) $ if and only if $u\circ \lambda \in \mathcal{D}\left( \left( a\circ
{}^{t}\lambda ^{-1}\right) \left( D\right) \right) $.

$\left( \mathtt{b}\right) $ For any $u\in \mathcal{D}\left( a\left(
D_{V}\right) \right) $ 
\begin{equation*}
\left( a\circ {}^{t}\lambda ^{-1}\right) \left( D\right) \left( u\circ
\lambda \right) =a\left( D_{V}\right) u\circ \lambda .
\end{equation*}
\end{lemma}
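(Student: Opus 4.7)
The plan is to compose the two Fourier identities established immediately above the lemma,
\begin{equation*}
\mathcal{F}\left( u\circ \lambda \right) =c_{\lambda }\left( \mathcal{F}_{V}u\right) \circ {}^{t}\lambda ^{-1},\qquad \mathcal{F}^{-1}\left( w\circ {}^{t}\lambda ^{-1}\right) =c_{\lambda }^{\prime }\left( \overline{\mathcal{F}}_{V^{\prime }}w\right) \circ \lambda ,
\end{equation*}
together with the relation $c_{\lambda }c_{\lambda }^{\prime }=1$, in order to conjugate the spectral multiplier $M_{a}$ through pullback by $\lambda $. Unwinding the definition $a\left( D_{V}\right) =\overline{\mathcal{F}}_{V^{\prime }}M_{a}\mathcal{F}_{V}$ one has
\begin{equation*}
\mathcal{D}\left( a\left( D_{V}\right) \right) =\left\{ u\in L^{2}\left( V\right) :a\cdot \mathcal{F}_{V}u\in L^{2}\left( V^{\prime }\right) \right\} ,
\end{equation*}
and likewise $\mathcal{D}\bigl(\left( a\circ {}^{t}\lambda ^{-1}\right) \left( D\right) \bigr)=\{v\in L^{2}\left( \mathbb{R}^{n}\right) :\left( a\circ {}^{t}\lambda ^{-1}\right) \cdot \mathcal{F}v\in L^{2}\left( \mathbb{R}^{n}\right) \}$.

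For part $\left( \mathtt{a}\right) $ I would first note that pullback by a linear isomorphism is an $L^{2}$-bijection up to a constant Jacobian factor: $u\in L^{2}\left( V\right) $ iff $u\circ \lambda \in L^{2}\left( \mathbb{R}^{n}\right) $, and similarly for ${}^{t}\lambda ^{-1}:\mathbb{R}^{n}\rightarrow V^{\prime }$. Then the first Fourier identity yields
\begin{equation*}
\left( a\circ {}^{t}\lambda ^{-1}\right) \mathcal{F}\left( u\circ \lambda \right) =c_{\lambda }\bigl(\left( a\cdot \mathcal{F}_{V}u\right) \circ {}^{t}\lambda ^{-1}\bigr),
\end{equation*}
so the left-hand side lies in $L^{2}\left( \mathbb{R}^{n}\right) $ iff $a\cdot \mathcal{F}_{V}u\in L^{2}\left( V^{\prime }\right) $, which is exactly the equivalence asserted in $\left( \mathtt{a}\right) $.

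For part $\left( \mathtt{b}\right) $ I would simply apply $\mathcal{F}^{-1}$ to the displayed identity and invoke the second Fourier identity:
\begin{equation*}
\left( a\circ {}^{t}\lambda ^{-1}\right) \left( D\right) \left( u\circ \lambda \right) =\mathcal{F}^{-1}\bigl(c_{\lambda }\left( a\cdot \mathcal{F}_{V}u\right) \circ {}^{t}\lambda ^{-1}\bigr)=c_{\lambda }c_{\lambda }^{\prime }\,\overline{\mathcal{F}}_{V^{\prime }}\left( a\cdot \mathcal{F}_{V}u\right) \circ \lambda ,
\end{equation*}
and since $c_{\lambda }c_{\lambda }^{\prime }=1$ the right-hand side equals $\overline{\mathcal{F}}_{V^{\prime }}M_{a}\mathcal{F}_{V}u\circ \lambda =a\left( D_{V}\right) u\circ \lambda $. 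There is essentially no obstacle: all of the delicate bookkeeping on dual measures and constants was already carried out in the paragraph preceding the lemma, and the only point requiring a moment's care is the $L^{2}$-change-of-variables used in Step 2, which is standard for linear isomorphisms.
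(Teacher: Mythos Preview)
Your proof is correct and follows essentially the same route as the paper: the paper proves $(\mathtt{a})$ by listing a chain of six equivalent statements that amounts exactly to your key identity $(a\circ{}^{t}\lambda^{-1})\mathcal{F}(u\circ\lambda)=c_{\lambda}\bigl((a\cdot\mathcal{F}_{V}u)\circ{}^{t}\lambda^{-1}\bigr)$ together with the $L^{2}$ change-of-variables, and the paper's computation for $(\mathtt{b})$ is line-for-line the same as yours.
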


\begin{proof}
$\left( \mathtt{a}\right) $ The following statements are equivalent:

$\left( \mathtt{i}\right) $ $u\in \mathcal{D}\left( a\left( D_{V}\right)
\right) .$

$\left( \mathtt{ii}\right) $ $a\mathcal{F}_{V}u\in L^{2}\left( V\right) .$

$\left( \mathtt{iii}\right) $ $\left( a\mathcal{F}_{V}u\right) \circ
{}^{t}\lambda ^{-1}\in L^{2}\left( 
%TCIMACRO{\U{211d} }%
%BeginExpansion
\mathbb{R}
%EndExpansion
^{n}\right) .$

$\left( \mathtt{iv}\right) $ $\left( a\circ {}^{t}\lambda ^{-1}\right)
\left( \mathcal{F}_{V}u\circ {}^{t}\lambda ^{-1}\right) \in L^{2}\left( 
%TCIMACRO{\U{211d} }%
%BeginExpansion
\mathbb{R}
%EndExpansion
^{n}\right) .$

$\left( \mathtt{v}\right) $ $\left( a\circ {}^{t}\lambda ^{-1}\right) 
\mathcal{F}\left( u\circ \lambda \right) \in L^{2}\left( 
%TCIMACRO{\U{211d} }%
%BeginExpansion
\mathbb{R}
%EndExpansion
^{n}\right) .$

$\left( \mathtt{vi}\right) $ $u\circ \lambda \in \mathcal{D}\left( \left(
a\circ {}^{t}\lambda ^{-1}\right) \left( D\right) \right) .$

$\left( \mathtt{b}\right) $ Let $u\in \mathcal{D}\left( a\left( D_{V}\right)
\right) $. Then 
\begin{eqnarray*}
\left( a\circ {}^{t}\lambda ^{-1}\right) \left( D\right) \left( u\circ
\lambda \right) &=&\mathcal{F}^{-1}M_{a\circ {}^{t}\lambda ^{-1}}\mathcal{F}%
\left( u\circ \lambda \right) =c_{\lambda }\mathcal{F}^{-1}\left[ \left(
M_{a}\mathcal{F}_{V}u\right) \circ {}^{t}\lambda ^{-1}\right] \\
&=&c_{\lambda }c_{\lambda }^{\prime }\left( \overline{\mathcal{F}}%
_{V^{\prime }}M_{a}\mathcal{F}_{V}u\right) \circ \lambda =a\left(
D_{V}\right) u\circ \lambda .
\end{eqnarray*}
\end{proof}

If $a\in \mathcal{C}_{\mathtt{pol}}^{\infty }\left( V^{\prime }\right) $
then the operator $a\left( D_{V}\right) =\overline{\mathcal{F}}_{V^{\prime
}}M_{a}\mathcal{F}_{V}$ belongs to $\mathcal{B}\left( \mathcal{S}^{\prime
}\left( V\right) \right) $. In this case we have the following result.

\begin{lemma}
Let $\lambda :%
%TCIMACRO{\U{211d} }%
%BeginExpansion
\mathbb{R}
%EndExpansion
^{n}\rightarrow V$ be a linear isomorphism. If $a\in \mathcal{C}_{\mathtt{pol%
}}^{\infty }\left( V^{\prime }\right) $ then for any $u\in \mathcal{S}%
^{\prime }\left( V\right) $ 
\begin{equation*}
\left( a\circ {}^{t}\lambda ^{-1}\right) \left( D\right) \left( u\circ
\lambda \right) =a\left( D_{V}\right) u\circ \lambda .
\end{equation*}
\end{lemma}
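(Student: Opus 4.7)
The plan is to verify that the same chain of equalities established in the previous lemma (which assumed $u \in \mathcal{D}(a(D_V))$) still makes sense and remains valid when $u$ is merely a tempered distribution and $a \in \mathcal{C}_{\mathtt{pol}}^\infty(V')$. The key observation is that each operator in the composition $\overline{\mathcal{F}}_{V'} M_a \mathcal{F}_V$ acts continuously on the appropriate space of tempered distributions: $\mathcal{F}_V : \mathcal{S}'(V) \to \mathcal{S}'(V')$, multiplication $M_a$ by an element of $\mathcal{C}_{\mathtt{pol}}^\infty(V')$ sends $\mathcal{S}'(V')$ to itself, and $\overline{\mathcal{F}}_{V'} : \mathcal{S}'(V') \to \mathcal{S}'(V)$. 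So $a(D_V) u \in \mathcal{S}'(V)$ is unambiguously defined; similarly for $(a\circ {}^t\lambda^{-1})(D)(u \circ \lambda)$ since $a \circ {}^t\lambda^{-1} \in \mathcal{C}_{\mathtt{pol}}^\infty(\mathbb{R}^n)$ and $u\circ\lambda \in \mathcal{S}'(\mathbb{R}^n)$.

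Next I would note that the two pullback identities established in the excerpt,
\begin{equation*}
\mathcal{F}(u\circ \lambda) = c_\lambda (\mathcal{F}_V u)\circ {}^t\lambda^{-1}, \qquad \mathcal{F}^{-1}(w \circ {}^t\lambda^{-1}) = c_\lambda' (\overline{\mathcal{F}}_{V'} w)\circ \lambda,
\end{equation*}
are stated to hold for tempered distributions (this is precisely the remark just before the first lemma, ``the above formulas hold for elements in $L^2$, $\mathcal{S}'$, $\ldots$''). The only extra algebraic fact needed is that multiplication by a $\mathcal{C}_{\mathtt{pol}}^\infty$ function commutes with the pullback, i.e.
\begin{equation*}
M_{a \circ {}^t\lambda^{-1}}(v \circ {}^t\lambda^{-1}) = (M_a v) \circ {}^t\lambda^{-1}, \qquad v \in \mathcal{S}'(V'),
\end{equation*}
which is immediate from the definition of pullback of a distribution by a linear isomorphism together with the fact that $a$ is a $\mathcal{C}_{\mathtt{pol}}^\infty$ multiplier on $\mathcal{S}'$.

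With these ingredients the computation is then verbatim the same:
\begin{align*}
(a\circ {}^t\lambda^{-1})(D)(u\circ\lambda)
&= \mathcal{F}^{-1} M_{a\circ {}^t\lambda^{-1}} \mathcal{F}(u\circ\lambda) \\
&= c_\lambda\, \mathcal{F}^{-1}\bigl[M_{a\circ {}^t\lambda^{-1}}\bigl((\mathcal{F}_V u)\circ {}^t\lambda^{-1}\bigr)\bigr] \\
&= c_\lambda\, \mathcal{F}^{-1}\bigl[(M_a \mathcal{F}_V u)\circ {}^t\lambda^{-1}\bigr] \\
&= c_\lambda c_\lambda'\, (\overline{\mathcal{F}}_{V'} M_a \mathcal{F}_V u)\circ \lambda \\
&= a(D_V) u \circ \lambda,
\end{align*}
using $c_\lambda c_\lambda' = 1$ at the last step.

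The only thing that could go wrong is the interpretation of the intermediate objects, but since each arrow $\mathcal{S}'(V) \xrightarrow{\mathcal{F}_V} \mathcal{S}'(V') \xrightarrow{M_a} \mathcal{S}'(V') \xrightarrow{\overline{\mathcal{F}}_{V'}} \mathcal{S}'(V)$ is a continuous linear map and the pullback by $\lambda$ (resp.~${}^t\lambda^{-1}$) commutes with each of them in the sense recorded above, there is no genuine obstruction. If one preferred an even softer argument, one could instead invoke density: $\mathcal{S}(V)$ is sequentially weak-$*$ dense in $\mathcal{S}'(V)$, and on Schwartz functions we have $u \in \mathcal{D}(a(D_V))$ automatically (since $a \mathcal{F}_V u$ is Schwartz and hence $L^2$), so the previous lemma applies, and both sides of the claimed identity are weak-$*$ continuous in $u \in \mathcal{S}'(V)$; the identity then passes to the limit. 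Either route closes the proof with essentially no new work.
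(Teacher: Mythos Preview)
Your proof is correct. Your primary route is a direct verification that the chain of equalities from the previous lemma goes through verbatim in $\mathcal{S}'$, using that the Fourier pullback identities were already noted to hold for tempered distributions and that multiplication by $a\in\mathcal{C}_{\mathtt{pol}}^{\infty}$ commutes with pullback. The paper instead takes the density route you mention at the very end: both sides are continuous on $\mathcal{S}'(V)$, $\mathcal{S}(V)$ is dense in $\mathcal{S}'(V)$, and on $\mathcal{S}(V)$ the identity is already furnished by the previous lemma. Your explicit computation is more self-contained (it does not require checking continuity of each side as a map $\mathcal{S}'\to\mathcal{S}'$), while the paper's argument is shorter since it reuses the previous lemma as a black box; both are entirely standard and neither offers a real advantage here.
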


\begin{proof}
By continuity and density, it suffice to prove the equality for $u\in 
\mathcal{S}\left( V\right) $. But this is done in previous lemma.
\end{proof}

Suppose that $\left( V,\left\vert \cdot \right\vert _{V}\right) $ is an
euclidian space and that $a$ is symbol of the form 
\begin{equation*}
a=b\left( \left\vert \cdot \right\vert _{V^{\prime }}^{2}\right) ,
\end{equation*}%
where $b\in \mathcal{C}_{\mathtt{pol}}^{\infty }\left( 
%TCIMACRO{\U{211d} }%
%BeginExpansion
\mathbb{R}
%EndExpansion
\right) $ and $\left\vert \cdot \right\vert _{V^{\prime }}$ is the dual norm
of $\left\vert \cdot \right\vert _{V}$.

Let $\lambda :%
%TCIMACRO{\U{211d} }%
%BeginExpansion
\mathbb{R}
%EndExpansion
^{n}\rightarrow V$ be a linear isometric isomorphism. Then $^{t}\lambda
^{-1}:%
%TCIMACRO{\U{211d} }%
%BeginExpansion
\mathbb{R}
%EndExpansion
^{n}\rightarrow V^{\prime }$ is a linear isometric isomorphism so that $%
\left\vert \cdot \right\vert _{V^{\prime }}\circ {}^{t}\lambda
^{-1}=\left\vert \cdot \right\vert $, where $\left\vert \cdot \right\vert $
is the euclidian in $%
%TCIMACRO{\U{211d} }%
%BeginExpansion
\mathbb{R}
%EndExpansion
^{n}$. It follows that 
\begin{equation*}
b\left( \left\vert \cdot \right\vert _{V^{\prime }}^{2}\right) \circ
{}^{t}\lambda ^{-1}=b\left( \left\vert \cdot \right\vert ^{2}\right) .
\end{equation*}%
Using the previous lemma we obtain the following result.

\begin{corollary}
Let $\lambda :%
%TCIMACRO{\U{211d} }%
%BeginExpansion
\mathbb{R}
%EndExpansion
^{n}\rightarrow V$ be a linear isometric isomorphism. If $b\in \mathcal{C}_{%
\mathtt{pol}}^{\infty }\left( 
%TCIMACRO{\U{211d} }%
%BeginExpansion
\mathbb{R}
%EndExpansion
\right) $ then for any $u\in \mathcal{S}^{\prime }\left( V\right) $ 
\begin{equation*}
b\left( \left\vert D\right\vert ^{2}\right) \left( u\circ \lambda \right)
=b\left( \left\vert D_{V}\right\vert ^{2}\right) u\circ \lambda .
\end{equation*}
\end{corollary}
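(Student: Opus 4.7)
The plan is to recognize this corollary as a direct specialization of the preceding lemma to the symbol $a = b(|\cdot|_{V'}^{2}) \in \mathcal{C}_{\mathtt{pol}}^{\infty}(V')$ (note that $b$ polynomially bounded implies $a$ has tempered growth, and smoothness of $|\cdot|_{V'}^{2}$ as a polynomial ensures $a \in \mathcal{C}^{\infty}$).

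First I would verify that ${}^{t}\lambda^{-1}:\mathbb{R}^{n}\rightarrow V'$ is itself a linear isometric isomorphism, where $V'$ carries the dual norm $|\cdot|_{V'}$. This is the standard fact that the transpose-inverse of an isometry of Euclidean spaces is an isometry between the duals equipped with the dual norms. Consequently, for every $\eta \in \mathbb{R}^{n}$,
\begin{equation*}
|{}^{t}\lambda^{-1}\eta|_{V'} = |\eta|,
\end{equation*}
which in turn yields the pointwise identity
\begin{equation*}
\bigl(b(|\cdot|_{V'}^{2})\bigr) \circ {}^{t}\lambda^{-1} = b(|\cdot|^{2}) \quad \text{on } \mathbb{R}^{n}.
\end{equation*}

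Second, I would apply the preceding lemma with $a := b(|\cdot|_{V'}^{2})$. That lemma gives, for every $u \in \mathcal{S}'(V)$,
\begin{equation*}
\bigl(a \circ {}^{t}\lambda^{-1}\bigr)(D)\,(u \circ \lambda) = a(D_{V})\,u \circ \lambda.
\end{equation*}
Substituting the identity from the first step into the left-hand side and noting that $a(D_{V}) = b(|D_{V}|^{2})$ and $(a\circ {}^{t}\lambda^{-1})(D) = b(|D|^{2})$ by the functional calculus definition $a(D_{V}) = \overline{\mathcal{F}}_{V'} M_{a} \mathcal{F}_{V}$ yields the desired equality.

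No substantial obstacle is expected: everything reduces to the isometry property of ${}^{t}\lambda^{-1}$ together with the preceding lemma. The only minor point worth checking is that $b(|\cdot|_{V'}^{2})$ genuinely lies in $\mathcal{C}_{\mathtt{pol}}^{\infty}(V')$, which follows since $|\cdot|_{V'}^{2}$ is a smooth polynomial on $V'$ and the composition of a smooth polynomially bounded function with a polynomial remains smooth and polynomially bounded.
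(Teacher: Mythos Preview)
Your proposal is correct and follows essentially the same approach as the paper: set $a = b(|\cdot|_{V'}^{2})$, use that ${}^{t}\lambda^{-1}$ is an isometry so that $a\circ{}^{t}\lambda^{-1} = b(|\cdot|^{2})$, and then apply the preceding lemma. The paper's argument is identical, with the discussion of $a\in\mathcal{C}_{\mathtt{pol}}^{\infty}(V')$ and the isometry of ${}^{t}\lambda^{-1}$ placed just before the statement rather than inside a proof.
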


\end{document}